\title{Morita equivalence and characteristic classes of star products}
\author{H. Bursztyn, V. Dolgushev and S. Waldmann}
\date{}
\numberwithin{equation}{section}
\newcommand{\ring}[1]    {\mathsf{#1}}
\newcommand{\id}         {\mathrm{id}}
\newcommand{\ad}         {\mathrm{ad}}
\newcommand{\Hoch}{{\rm H o c h }}
\newcommand{\CH}{{\rm C H }}
\newcommand{\MC}{{\rm M C }}
\newcommand{\ma}{\mathfrak{a}}
\newcommand{\Pic}{{\rm P i c }}
\newcommand{\Def}{{\rm D e f }}
\newcommand{\End}{{\rm E n d }}
\newcommand{\Hom}{{\rm H o m }}
\newcommand{\FPois}{{\rm F P o i s s}}
\newcommand{\rE}{\mathrm{E}}
\newcommand{\E}{\mathrm{e}}
\newcommand{\DiffOp}{\operatorname{\mathrm{DiffOp}}}
\newcommand{\Exp}{\operatorname{\mathrm{Exp}}}
\newcommand{\sh}{\sharp}
\newcommand{\tcL}{\widetilde{\cal L}}
\newcommand{\tcK}{\widetilde{\cal K}}
\newcommand{\txi}{\widetilde{\xi}}
\newcommand{\tbeta}{\widetilde{\beta}}
\newcommand{\tf}{\widetilde{f}}
\newcommand{\tK}{\widetilde{K}}
\newcommand{\tmu}{\widetilde{\mu}}
\newcommand{\ttau}{\widetilde{\tau}}
\newcommand{\lrarrow}{\,\longrightarrow \,}
\newcommand{\llarrow}{\,\longleftarrow \,}
\newcommand{\brarrow}{\succ\rightarrow}
\newcommand{\SM}{{\cal S}M}
\newcommand{\scTp}{\cT^{\bul+1}_{poly}}
\newcommand{\cTp}{\cT^{\bul}_{poly}}
\newcommand{\Omb}{\Om^{\bul}}
\newcommand{\OM}{\cO_M}
\newcommand{\sCbu}{C^{\bullet+1}}
\newcommand{\Cbu}{C^{\bullet}}
\newcommand{\Linf}{L_{\infty}}
\newcommand{\Dif}{{\rm Diff}}
\newcommand{\tpi}{\tilde{\pi}}
\newcommand{\tstar}{\widetilde{*}}
\newcommand{\wstar}{\,\widetilde{\star}\,}
\newcommand{\tD}{\widetilde{D}}
\newcommand{\tA}{\widetilde{A}}
\newcommand{\tn}{\widetilde{\nabla}}
\newcommand{\X}{\mathcal{X}}
\newcommand{\al}{{\alpha}}
\newcommand{\la}{{\lambda}}
\newcommand{\vro}{{\varrho}}
\newcommand{\h}{{\hbar}}
\newcommand{\bul}{{\bullet}}
\newcommand{\mG}{{\mathfrak{G}}}
\newcommand{\mF}{{\mathfrak{F}}}
\newcommand{\mg}{{\mathfrak{g}}}
\newcommand{\mgl}{{\mathfrak{gl}}}
\newcommand{\om}{{\omega}}
\newcommand{\Om}{{\Omega}}
\newcommand{\si}{{\sigma}}
\newcommand{\ga}{{\gamma}}
\newcommand{\ve}{{\varepsilon}}
\newcommand{\ka}{{\kappa}}
\newcommand{\G}{{\Gamma}}
\newcommand{\cF}{{\cal F}}
\newcommand{\pa}{{\partial}}
\newcommand{\bs}{{\bf s}}
\newcommand{\cK}{{\cal K}}
\newcommand{\cL}{{\cal L}}
\newcommand{\cD}{{\cal D}}
\newcommand{\cA}{{\cal A}}
\newcommand{\cG}{{\cal G}}
\newcommand{\cH}{{\cal H}}
\newcommand{\cT}{{\cal T}}
\newcommand{\cB}{{\cal B}}
\newcommand{\cX}{{\cal X}}
\newcommand{\cO}{{\cal O}}
\newcommand{\bbC}{{\mathbb C}}
\newcommand{\bbR}{{\mathbb R}}
\newcommand{\bbQ}{{\mathbb Q}}
\newcommand{\La}{{\Lambda}}
\newcommand{\n}{{\nabla}}
\newcommand{\te}{\theta}
\newcommand{\de}{{\delta}}
\newcommand{\D}{{\Delta}}
\newcommand{\tQ}{{\widetilde{Q}}}
\newcommand{\tF}{{\widetilde{F}}}
\newcommand{\emp} [1]     {\textit{{#1}}}
\newcommand{\Cour}[1]      {[\![#1]\!]}
\newcommand{\Lie}        {\mathcal{L}}
\newcommand{\SP} [1]     {{\left\langle {{#1}} \right\rangle}}
\newcommand{\Pifib}   {\Pi_{\mathrm{fib}}}
\newcommand{\tPifib}  {\tilde{\Pi}_{\mathrm{fib}}}
\newcommand{\tdiamond} {\mathbin{\tilde{\diamond}}}
\newcommand{\pifib}   {\pi_{\mathrm{fib}}}
\newcommand{\mdiamond} {\mathbin{\tilde{\diamond}}}
\newcommand{\mtau}     {\tilde{\tau}}
\newcommand{\mstar}    {\mathbin{\tilde{*}}}
\newcommand{\tauEW}    {\tau^{\mathrm{EW}}}
\newcommand{\DEW}      {D^{\mathrm{EW}}}
\newcommand{\mmu}      {\tilde{\mu}}
\newcommand{\Fdiamond} {\mathbin{\diamond}_{\mathrm{F}}}
\newcommand{\Ftau}     {\tau^{\mathrm{F}}}
\newcommand{\DF}       {D^{\mathrm{F}}}
\newtheorem{defi}{Definition}[section]
\newtheorem{lem}[defi]{Lemma}
\newtheorem{teo}[defi]{Theorem}
\newtheorem{cor}[defi]{Corollary}
\newtheorem{pred}[defi]{Proposition}
\newtheorem{prop}[defi]{Proposition}
\newtheorem{claim}[defi]{Claim}
\newtheorem{cond}[defi]{Condition}
\newtheorem{example}[defi]{Example}
\newtheorem{property}{P}
\newtheorem{remark}[defi]{Remark}
\newcommand\qedsymbol{\hbox{$\Box$}}
\newcommand\qed{\relax\ifmmode\Box\else
  {\unskip\nobreak\hfil\penalty50\hskip1em\null\nobreak\hfil\qedsymbol
  \parfillskip=\z@\finalhyphendemerits=0\endgraf}\fi}
\newcommand\subqedsymbol{\hbox{$\triangledown$}}
\newcommand\subqed{\relax\ifmmode\triangledown\else
  {\unskip\nobreak\hfil\penalty50\hskip1em\null\nobreak\hfil\subqedsymbol
  \parfillskip=\z@\finalhyphendemerits=0\endgraf}\fi}
\newenvironment{proof}[1][{}]{\par\noindent Proof{#1}. }{\qed}
\newenvironment{subproof}[1][{}]{\par\noindent Proof{#1}. }{\subqed}
\begin{document}

%
% Front page
%

\maketitle

\begin{center}
    \textit{Belatedly to Giovanni Felder and Boris Tsygan on the
      occasion of their 50th birthday.}
\end{center}

\medskip

\begin{abstract}
This paper deals with two aspects of the theory of characteristic
classes of star products: first, on an arbitrary Poisson manifold,
we describe Morita equivalent star products in terms of their
Kontsevich classes; second, on symplectic manifolds, we describe the
relationship between Kontsevich's and Fedosov's characteristic
classes of star products.
\end{abstract}

\tableofcontents

\newpage

%
% Introduction
%

\section{Introduction}

%Kontsevich's $L_{\infty}$ quasi-isomorphism \cite{K}, \cite{K-alg}
%from the Lie algebra of
%polyvector fields to the DG Lie algebra of polydifferential operators
%on a smooth real manifold $M$ produces a map from the set of formal
%Poisson structures on $M$ to the set of star-products on $M$\,.
%Standard homological algebra arguments imply that this map
%induces a bijection between the corresponding sets of equivalence
%classes. Thus to every star-product one may assign an equivalence
%class of a formal Poisson structure. This class is called Kontsevich's
%characteristic class or simply Kontsevich's class of this star product.

Given a smooth real manifold $M$, consider the set $\FPois(M)$ of
equivalence classes of formal Poisson structures $\pi = \h\pi_1 +
\h^2\pi_2 + \ldots \in \Gamma(\wedge^2 TM)[[\h]]$ on $M$, and let
$\Def(M)$ denote the set of equivalence classes of star products $*$
on $M$. The celebrated Kontsevich's formality theorem \cite{K,K-alg}
provides a bijective correspondence
\begin{equation}\label{eq:Kmap}
\cK_*: \FPois(M) \longrightarrow \Def(M),
\end{equation}
in such a way that if $\pi=\h\pi_1+\ldots$ and $*$ are related by
$\cK_*$, then $*$ is a deformation quantization, in the sense of
\cite{Bayen},  of the ordinary Poisson structure $\pi_1$. In
particular, any star product on $M$ can be assigned to an
equivalence class of a formal Poisson structure via \eqref{eq:Kmap},
called the \emp{Kontsevich characteristic class} or simply the
\emp{Kontsevich class} of the star product.

For a given star product, finding the associated Kontsevich class is
often a hard problem, to which no effective solution is currently
available. There are two main approaches to tackle this problem: the
first one makes use of algebraic index theorems, see e.g.
\cite{CFW-index,CD,Pindex,Fedosov-index,FFS,NT}, while the second is
based on homological algebra arguments, such as the duality between
Hochschild cohomology and homology
\cite{CFW-duality,Modular,EG,F-Sh,Vitya,VB}.

The first goal of the present paper is to describe the Kontsevich
classes of Morita equivalent star-product algebras on a smooth real
manifold $M$ (in this paper, star products are defined on the
algebra of \textit{complex-valued} smooth functions on $M$). As
shown in \cite{B,BW}, two star products on $M$ are Morita equivalent
if and only if they lie in the same orbit of a canonical action of
the group $\Dif(M)\ltimes \Pic(M)$ on the moduli space $\Def(M)$ of
equivalence classes of star products; here $\Dif(M)$ denotes the
group of diffeomorphisms of $M$, and $\Pic(M)\cong H^2(M, {\mathbb
Z})$ is the \emp{Picard group}, i.e., the group of isomorphism
classes of complex line bundles over $M$. The action of $\Dif(M)$ on
star products is the natural one by pull-back, while the action of
$\Pic(M)$ on $\Def(M)$ is defined in a less obvious way \cite{B}.
Hence the problem of expressing Morita equivalent star products in
terms of their Kontsevich classes amounts to describing the action
of $\Dif(M)\ltimes \Pic(M)$ on the moduli space of formal Poisson
structures $\FPois(M)$ making the map $\cK_*$ in \eqref{eq:Kmap}
equivariant.

The group $\Dif(M)$ naturally acts on formal Poisson structures, and
it follows from \cite{CEFT,K-alg} that the map \eqref{eq:Kmap} is
$\Dif(M)$-equivariant; so in order to describe Morita equivalent
star products one only needs to focus on the action of the Picard
group $\Pic(M)$. A key observation is that the set of formal Poisson
structures on $M$ carries a natural action of the abelian group of
closed ($\bbC[[\h]]$-valued) 2-forms, defined by a formal version of the
\textit{gauge transformations} of \cite[Sec.~3]{SeveraWeinstein}
(also known as \emp{B-field transforms} in the context of
generalized complex geometry \cite{Gualtthesis, Hitchin1});
moreover, we prove that this action naturally descends to an action
of the abelian group $H^2(M, \bbC)[[\h]]$ on the moduli space $\FPois(M)$.
Our first main result is Theorem \ref{ona}, which asserts that two star
products are related by the action of a line bundle $L$,
representing an element in $\Pic(M)$, if and only if their classes
in $\FPois(M)$ are connected by the action of the element
$2\pi i c_1(L)$, where $c_1(L)$ is the Chern class of $L$.
For a further discussion relating this result to Morita equivalence of
Poisson manifolds, we refer to \cite{BWe}. 

%For further connections with Poisson-geometric Morita equivalence
%we would like to refer the reader to
%In other
%words, upon an integrality condition, gauge equivalent formal
%Poisson structures quantize to Morita equivalent star-product
%algebras via $\cK_*$

Morita equivalent star products have been also considered in the
physics literature in the context of noncommutative gauge theory
\cite{JSW,JSW1,SW}. The essence of the statement of our Theorem
\ref{ona} may be found in these works, as well as ideas concerning
its proof; here we provide a complete proof of this result based on
the explicit globalization of Kontsevich's formality
quasi-isomorphism constructed in \cite{thesis,CEFT,K-alg} and some general
facts about formal differential equations.

On a symplectic manifold $(M,\omega)$,  equivalence classes of star
products quantizing the associated non-degenerate Poisson bracket
are classified by their \textit{Fedosov classes}, which are elements
in
\begin{equation}
\label{space-of-Fed-classes} \frac{1}{\h} [\om] +  H^2(M,
\bbC)[[\h]]\,,
\end{equation}
see, e.g., \cite{BCG,DeWilde,Deligne,F,Fedosov-index,NT,OMY}. Hence to
each star product $*$ on $(M,\omega)$ one may assign either its
Kontsevich class, defined by the class of a formal Poisson structure
$\pi=\h\pi_1 + \dots$, where $\pi_1$ is the Poisson bivector field
defined by the symplectic form $\omega$, or its Fedosov class in
\eqref{space-of-Fed-classes}. The nondegeneracy of $\pi_1$ implies
that the series $\pi=\h\pi_1 + \dots$ can be formally inverted to a
series of closed 2-forms, defining an element in
\eqref{space-of-Fed-classes}, and the fact that this element agrees
with the Fedosov class of the star product $*$ has been conjectured
by A. Chervov and L. Rybnikov in \cite[Conjecture~4]{CR}. In this
paper, we prove this conjecture, which allows us to recover the
description of Morita equivalent star products on symplectic
manifolds of \cite{BW1} as a particular case of our Theorem
\ref{ona}. The proof of this conjecture about the relationship
between Fedosov's and Kontsevich's classes in
Theorem~\ref{F-versus-K} also partially closes the project mentioned
in item 1) of \cite[Section~0.2]{K}.

We remark that the construction of the map $\cK_*$ in
\eqref{eq:Kmap} involves choices. We prove in Theorem~\ref{nezalezh}
that the definition of the Kontsevich classes of star products does
not depend on the choices made in the globalization procedure of
\cite{CEFT}. This definition may depend, however, on the specific
choice of formality quasi-isomorphism between polyvector fields and
polydifferential operators on $\bbR^d$\,. In this paper,  we tacitly
assume that the formality quasi-isomorphism on $\bbR^d$ is the one
constructed by M. Kontsevich in \cite{K} with the angle function
defined via hyperbolic geometry of the Lobachevsky plane.

Finally, we point out that there is an alternative construction of
global star products in \cite{CFT-global}, which we believe can also
be used to study Morita equivalence as well as to relate Fedosov's
and Kontsevich's classes of star products on symplectic manifolds.

Let us briefly describe the organization of the paper.

In Section \ref{sec:prelim}, we recall Fedosov's resolutions and the
globalization of Kontsevich's formality quasi-isomorphism
\cite{CEFT,thesis}, which we use to define the Kontsevich classes of
star products. We verify in Theorem \ref{nezalezh} (whose proof is
deferred to Appendix~\ref{App-C}) that the definition of the
Kontsevich classes is independent of the choices made in the
globalization procedure.

Section \ref{sec:Morita} is devoted to the description of the
Kontsevich classes of Morita equivalent star products. The key step
consists in verifying that the map $\cK_*$ in \eqref{eq:Kmap}
satisfies an equivariance property with respect to appropriate
actions of the Picard group, as explained in Theorem \ref{ona}.

In Section \ref{sec:fedosov} we describe the relationship between
Fedosov's and Kontsevich's classes of star products on symplectic
manifolds. The main result of this section is formulated in Theorem
\ref{F-versus-K},  and its proof is divided into several parts.
First, we introduce a modification of Fedosov's construction
\cite{F}. Second, we describe a version of the Emmrich-Weinstein
connection \cite{EW}, which is then used to show that Kontsevich
star products are equivalent to the star products constructed using
the modified Fedosov construction. Finally, we show that the
original Fedosov star products coincide with the star products
obtained using the modified Fedosov construction.

In the end of the paper, Appendix~\ref{App-A} collects the necessary
facts about formal differential equations, Appendix~\ref{App-B} recalls some key
facts about DGLAs, Maurer-Cartan elements and $L_\infty$-morphisms.
Finally, Appendix~\ref{App-C} contains the somewhat technical proof of
Theorem~\ref{nezalezh}.

\noindent

\textbf{Acknowledgements:} H.B. thanks CNPq and Faperj for financial
support. The work of V.D. was partially supported by the NSF grant
DMS 0856196, Regent's Faculty Fellowship, and the Grant for Support
of Scientific Schools NSh-8065.2006.2. V.D.'s part of the work was
done when his mother-in-law Anna Vainer was baby sitting little
Nathan. V.D. would like to thank his mother-in-law for her help. The
authors thank P. Etingof, M. Gualtieri and V. Rubtsov for useful
conversations, and G. Dito and P. Schapira for the invitation to the
meeting on Algebraic Analysis and Deformation Quantization in
Scalea, which facilitated our collaboration. We also thank the
institutions that hosted us at various stages of this project:
Freiburg University (H.B.), U. C. Riverside (H.B.), Northwestern
University (V.D.), the University of Chicago (V.D.), and IMPA
(S.W.).

%
% Notation and conventions
%

\subsection{Notation and conventions}
\label{subsec:notation}

Throughout this paper $M$ is a smooth real manifold, $\cO_M$ is the
sheaf of smooth \emp{complex-valued} functions on $M$, and $\cO(M)$
is the algebra of its global sections. The algebra of smooth
complex-valued polyvector fields is denoted by $\cX^\bul(M)$; it is
equipped with the Schouten-Nijenhuis bracket $[\, , \,]_{SN}$, which
is a degree $0$ Lie bracket for the shifted grading $\cX^{\bullet +
1}(M)$. The space of complex-valued differentiable forms is denoted
by $\Omb(M)$. For a sheaf of $\cO_M$-modules $\cG$ and an open
subset $U \subset M$, we denote by $\G(U, \cG)$ the vector space of
global sections of $\cG$ and by $\cO(U)$ the algebra of smooth
complex-valued functions on $U$\,. Furthermore, we denote by
$\Omb(U, \cG)$ the graded vector space of exterior forms on $U$ with
values in $\cG$.

For a vector $v$ in a graded vector space or a cochain complex $V$,
its degree is denoted by $|v|$. By the \emp{suspension} $\bs V$
of a graded vector space (or a cochain complex) $V$ we mean $\ve
\otimes V$, where $\ve$ is a one-dimensional vector space placed in
degree $+1$. The \emp{desuspension} $\bs^{-1} V$ is the inverse
operation.  Throughout this paper we use the Koszul rule of signs.
If $V$ is a graded vector space, we denote its symmetric algebra by
$S(V)$, whereas $S^k(V)$ is the $k$-th component of this algebra.

For a unital associative algebra $\cA$, we denote by $\Cbu(\cA)$ the
(normalized) Hochschild cochain complex of $\cA$ with coefficients
in $\cA$,
\begin{equation}
    \label{rule}
    \Cbu(\cA) = \Hom(\big(\cA/\bbC \, 1\big)^{\bul},\cA).
\end{equation}
The coboundary operator $\pa^{\Hoch}$ on (\ref{rule}) is given by
\begin{equation}
\label{pa-Hoch}
\begin{aligned}
(\pa^{\Hoch} P)(a_0, a_1, \dots, a_k) = & a_0 P(a_1, \dots, a_k)
- P(a_0 a_1, \dots, a_k) +\\
&  P(a_0, a_1 a_2, a_3, \dots , a_k) - \dots + (-1)^{k} P(a_0,
\dots , a_{k-2}, a_{k-1} a_k) +\\
&  (-1)^{k+1} P(a_0, \dots , a_{k-2}, a_{k-1}) a_k ,
\end{aligned}
\end{equation}
where $P\in C^k(\cA)$ and $a_i \in \cA$\,. In particular, for a
degree-zero cochain $P \in C^0(\cA) = \cA$, we have
\begin{equation}
\label{pa-Hoch-0}
(\pa^{\Hoch} P )(a_0) = a_0\,  P -  P\, a_0\,.
\end{equation}

The Hochschild cochain complex with the shifted grading $\sCbu(\cA)$
carries the structure of a differential graded Lie algebra (or
\textit{DGLA} for short). The differential is exactly the Hochschild
coboundary operator $\pa^{\Hoch}$ (\ref{pa-Hoch}) and the Lie
bracket is the well-known Gerstenhaber bracket\footnote{Note that
our sign convention for the Gerstenhaber bracket differs from the
standard one.} \cite{Ger},
\begin{equation}
\label{Gerst}
[Q_1, Q_2]_{G} =
 \sum_{i=0}^{k_1}(-1)^{(i+k_1) k_2}
Q_1(a_0,\,\dots , Q_2 (a_i,\,\dots,a_{i+k_2}),\, \dots,
a_{k_1+k_2}) -
(-1)^{k_1 k_2} (1 \leftrightarrow 2)\,,
\end{equation}
where $Q_i \in  C^{k_i+1}(\cA)$, and $a_j \in \cA$.

As usual in this subject, we use adapted versions of Hochschild
(co)chains for the algebra $\cO(M)$; we denote by $\Cbu(\cO_M)$ the
proper subcomplex of polydifferential operators in the full
Hochschild cochain complex of $\cO(M)$.

In this paper every DGLA $(\cL, d_{\cL}, [\,,\,]_{\cL})$ is equipped
with a complete descending filtration
\begin{equation}
    \label{Filt-cL}
    \dots \supset \cF^{-2} \cL
    \supset \cF^{-1} \cL
    \supset \cF^0 \cL \supset \cF^1 \cL \supset \dots\,, \qquad \cL =
    \lim_{n} \cL/\cF^n\cL\,.
\end{equation}
In most cases this filtration will be bounded from the left. We will
often use  a formal deformation parameter $\h$ to obtain a complete
descending filtration on $\cL$. For example, extending the field of
scalars $\bbC$ to the ring $\bbC[[\h]]$ of formal power series, we
obtain from the DGLA $\sCbu(\cA)$ (resp. the graded Lie algebra
$\cX^{\bul+1}(M)$) the DGLA $\sCbu(\cA)[[\h]]$ (resp. the graded Lie
algebra $\cX^{\bul+1}(M)[[\h]]$); the descending filtrations are
\[
\cF^k \sCbu(\cA)[[\h]] = \h^k \sCbu(\cA)[[\h]]\,, \qquad \cF^k
\cX^{\bul+1}(M)[[\h]] = \h^k \cX^{\bul+1}(M)[[\h]].
\]
We assume that every morphism $\ka: \cL \to \tcL $ of two such
DGLAs is compatible with the filtrations. In addition, every
quasi-isomorphism $\ka: \cL \stackrel{\sim}{\to} \tcL $ is assumed to
satisfy the following:
\begin{cond}
    \label{condition}
    The restriction of a quasi-isomorphism $\ka$ to each filtration
    subcomplex $\cF^m \cL^{\bul}$
    \[
    \ka\,  \Big|_{\cF^m \cL^{\bul}} \, : \,
    \cF^m \cL^{\bul} \to \cF^m \tcL^{\bul}
    \]
    is a quasi-isomorphism.
\end{cond}
We will also need $\Linf$ morphisms and $\Linf$ quasi-isomorphisms of
DGLAs. We recall them in Appendix \ref{App-B}; see Definitions
\ref{Linf-morph} and \ref{Linf-q-iso}. For $\Linf$ morphisms or
$\Linf$ quasi-isomorphisms between DGLAs, we reserve the arrow
$\brarrow$.

%
% Formality and star products
%

\section{Global formality and star products}
\label{sec:prelim}

%
% Fedosov's resolutions
%

\subsection{Fedosov's resolutions}
\label{subsec-Fedosov-resolutions}

We now briefly recall Fedosov's resolutions (see
\cite[Chapter~4]{thesis}) of polyvector fields, and Hochschild
cochains of $\cO(M)$. This construction has various incarnations,
and it is referred to as the Gelfand-Fuchs trick \cite{GF}, or
formal geometry \cite{G-Kazh} in the sense of Gelfand and Kazhdan,
or mixed resolutions \cite{Ye} of Yekutieli.

We denote by $x^i$ local coordinates on $M$ and by $y^i$ fiber
coordinates in the tangent bundle $TM$. We denote by $\SM$ the
formally completed symmetric algebra of the cotangent bundle $T^*M$.
We regard $\SM$ as a sheaf of algebras over $\cO_M$, whose sections
can be viewed as formal power series in tangent coordinates $y^i$\,.
In particular, $\Cbu(\SM)$ is the \emp{sheaf} of normalized
Hochschild cochains of $\SM$ over $\cO_M$. Namely, sections of
$C^k(\SM)$ over an open subset $U\subset M$ are $\cO(U)$-linear
maps\footnote{The sheaf $\Cbu(\SM)$ is the $y$-adic completion of
the
  sheaf $\cD^{\bul}_{poly}$ of fiberwise polydifferential operators.
  It is the latter sheaf that was used in \cite{thesis} (see
  Definition 12 on page 60), and it is not hard to see
  that the sheaf $\cD^{\bul}_{poly}$ can be replaced by its
  completion $\Cbu(\SM)$ in all the constructions of \cite{thesis}.}
\begin{equation}
    \label{oni}
    P : \G(U,\SM)^{\otimes \, k } \to \G(U,\SM),
\end{equation}
which are continuous in the $y$-adic topology on $\G(U, \SM)$ and
satisfy the normalization condition
\begin{equation}
\label{normalization}
P(\dots, 1, \dots) = 0\,.
\end{equation}
We let $\cTp$ be the sheaf of fiberwise polyvector fields, which is
the cohomology of the complex of sheaves $\Cbu(\SM)$ (see
\cite[page~60]{thesis}). The grading convention for 
$\cTp$ coincides with the one for $\X^{\bul}(M)$.

It is shown in \cite[Theorem~4]{thesis} that the algebra $\Omb(M,
\SM)$ can be equipped with a differential of the form
\begin{equation}
    \label{DDD}
    D = \n - \de + A,
\end{equation}
where
\begin{equation}
    \label{nabla-prelim}
    \n = d x^i \frac{\pa}{\pa x^i} - d x^i
    \G^k_{ij}(x) y^j \frac{\pa}{\pa y^k}
\end{equation}
is a torsion free connection with Christoffel symbols $\G^k_{ij}(x)$,
\begin{equation}
    \label{delta}
    \de = d x^i \frac{\pa}{\pa y^i}\,,
\end{equation}
and
\begin{equation}
    \label{A}
    A=\sum_{p=2}^{\infty}d x^k A^j_{ki_1\dots i_p}(x) y^{i_1}
    \dots y^{i_p}\frac{\pa}{\pa y^j} \in \Om^1(\cT^1_{poly})\,.
\end{equation}
\begin{remark}
    \label{remark:FedosovDifferential}
    Although the construction of the differential (\ref{DDD}) is very
    similar to the construction of the Fedosov differential in
    \cite{F}, they are not be confused; we refer to the differential (\ref{DDD})
    as a \emp{geometric Fedosov differential} while
    the original Fedosov differential is referred to as a \emp{quantum
      Fedosov differential}.
\end{remark}

Note that $\de$ in (\ref{delta}) is also a differential on $\Omb(M,
\SM)$, and (\ref{DDD}) can be viewed as a deformation of $\de$ via
the connection $\n$. Let us recall from \cite{thesis} the following
operator on $\Omb(M, \SM)$:
\begin{equation}
    \de^{-1}(a) =
    \begin{cases}
        \begin{array}{cc}
            \displaystyle y^k \frac {\vec{\partial}} {\partial (d x^k)}
            \int\limits_0^1 a(x, t y, t d x)\frac{d t} t, & {\rm if}~
            a \in \Om^{>0}(M, \SM)\,,\\
            0, & {\rm otherwise}\,.
        \end{array}
    \end{cases}
    \label{del-1}
\end{equation}
The arrow over $\pa$ in \eqref{del-1} means that
  we use the left derivative with respect to the ``anti-com\-muting''
  variable $d x^k$.
The operator \eqref{del-1} satisfies the following properties:
\begin{align}
    \label{nilp}
&    \de^{-1} \circ  \de^{-1} =0,\\
    \label{Hodge}
  &  a=\si(a) +\delta \delta ^{-1}a + \delta ^{-1}\delta a,
    \qquad
    \forall\;
    a\in \Omb(M, \SM),
\end{align}
where
\begin{equation}
    \label{sigma}
    \si (a)= a\Big|_{y^i=d x^i=0}.
\end{equation}
\begin{remark}
    \label{remark-Hodge}
    Following \cite[Chapter~4]{thesis}, we extend the operators
    $\de$, $\de^{-1}$ and $\si$ to $\Omb(M, \cTp)$ and $\Omb(M,
    \Cbu(\SM))$ so that \eqref{Hodge} holds also for all
    $a\in \Omb(M, \cTp)$, and for all $a \in \Omb(M, \Cbu(\SM))$\,.
\end{remark}

According to \cite[Prop.~10]{thesis} the sheaves $\cTp$ and
$\Cbu(\SM)$ are equipped with a canonical action of the sheaf of Lie
algebras $\cT_{poly}^1$, and this action is compatible with the
corresponding DGLA structures. Using this action in
\cite[Chp.~4]{thesis}, the Fedosov differential (\ref{DDD}) is
extended to differentials on $\Omb(M, \cTp)$ and $\Omb(M,
\Cbu(\SM))$. Propositions 13 and 14 in \cite{thesis} provide us with
a quasi-isomorphism from the graded Lie algebra
$\mathcal{X}^{\bul+1} (M)$ to the DGLA
\begin{equation}
\label{fiber-polyvect} (\Omb(M, \scTp), D, [\,,\,]_{SN}),
\end{equation}
and a quasi-isomorphism from the DGLA
$\sCbu(\cO_M)$ to the DGLA
\begin{equation}
\label{fiber-cochains}
(\Omb(M, \sCbu(\SM)), D + \pa^{\Hoch} , [\,,\,]_{G})\,.
\end{equation}

To construct these quasi-isomorphisms
 we recall that the restriction of the map $\si$
(\ref{sigma}) to the subspace of $D$-flat sections $\G(M, \SM) \cap
\ker D$ gives a bijection
\begin{equation}
    \label{si-biject}
    \si : \G(M, \SM) \cap \ker D \to \cO(M)
\end{equation}
onto the algebra of functions $\cO(M)$. The inverse map
\begin{equation}
    \label{tau}
    \tau : \cO(M) \to \G(M, \SM) \cap \ker D
\end{equation}
is defined by iterating the equation
\begin{equation}
    \label{iter-tau}
    \tau(f)
    = f + \de^{-1}(\n \tau(f)+ A\cdot \tau(f)),
    \qquad
    f \in \cO(M),
\end{equation}
in degrees in the fiber coordinates $y$'s.

One may verify that the map $\tau$ satisfies
\begin{equation}
    \label{tau-property}
    \frac{\pa}{\pa y^{i_1}} \dots \frac{\pa}{\pa
      y^{i_p}} \, \tau (f) \Big|_{y=0} = \pa_{x^{i_1}} \dots \pa_{x^{i_p}}
    f(x)+ {\rm lower~order~ derivatives~ of}~f.
\end{equation}

To proceed further, we need the subspace
\begin{equation}
    \label{G-de}
    \G_{\de}(M, \sCbu(\SM)) = \G(M, \sCbu(\SM)) \cap \ker
    \de
\end{equation}
of $\de$-flat cochains of the sheaf $\SM$\,. This subspace consists
of $\cO_M$-linear polydifferential operators on $\SM$ whose
coefficients do not depend on the fiber coordinates $y$'s. The
graded vector space $\G_{\de}(M, \sCbu(\SM))$ is, in fact,
isomorphic to the subspace
$$
\G(M, \sCbu(\SM)) \cap \ker D
$$
of $D$-flat sections of $\sCbu(\SM)$. The corresponding isomorphism,
\begin{equation}
\label{vro} \vro : \G_{\de}(M, \sCbu(\SM))
\stackrel{\cong}{\longrightarrow} \G(M, \sCbu(\SM)) \cap \ker D,
\end{equation}
is defined by iterating the equation
\begin{equation}
\label{iter-vro}
\vro(P) = P + \de^{-1} (\n \vro(P) + [A, \vro(P)]_G)\,, \qquad
P \in \G_{\de}(M, \sCbu(\SM))
\end{equation}
in degrees in the fiber coordinates $y$'s.

On the other hand, using $\tau$ (\ref{tau}) we construct the map
\begin{align}
\label{nu-map} & \nu : \G_{\de}(M, \sCbu(\SM)) \longrightarrow \sCbu(\cO_M),\\
%\end{equation}
%\begin{equation}
%    \label{nu}
   & \nu(P)(a_0, a_1, \dots, a_k) = P(\tau(a_0), \tau(a_1),
    \dots, \tau(a_k))\Big|_{y=0}\,,\nonumber
\end{align}
for $a_i \in \cO(M)$ and $P \in \G_{\de}(M, C^{k+1}(\SM))$. Due to
property (\ref{tau-property}), the map $\nu$ is also an isomorphism
of graded vector spaces.

Composing $\vro$ with $\nu^{-1}$, we get the isomorphism
\begin{equation}
    \label{tau-iso}
    \tau_{\mathrm{ext}} = \vro \circ \nu^{-1} : \sCbu(\cO_M)
    \stackrel{\cong}{\longrightarrow} \G(M, \sCbu(\SM))\cap \ker D,
\end{equation}
as well as the following embedding (for which we keep the same
notation):
\begin{equation}
    \label{tau-need}
    \tau_{\mathrm{ext}} = \tau \circ \nu^{-1} : \sCbu(\cO_M)
    \hookrightarrow \Omb(M, \sCbu(\SM)).
\end{equation}
Restricting (\ref{tau-iso}) and (\ref{tau-need}) to the graded Lie algebra
$\mathcal{X}^{\bul+1}(M)$  of polyvector fields,  we get the  
isomorphism
\begin{equation}
\label{tau-iso1}
\tau_{\mathrm{ext}} : \mathcal{X}^{\bul+1} (M)
\stackrel{\cong}{\longrightarrow} \G(M, \scTp) \cap \ker D
\end{equation}
and the embedding (for the both maps we keep the same notation $\tau_{\mathrm{ext}}$)
\begin{equation}
\label{tau-need1}
\tau_{\mathrm{ext}} : \mathcal{X}^{\bul+1} (M)
\hookrightarrow \Omb(M, \scTp)\,,
\end{equation}
respectively.
According to \cite[Chapter~4]{thesis} both maps (\ref{tau-need}) and
(\ref{tau-need1}) are compatible with the corresponding DGLA
structures. Furthermore, the acyclicity of $D$ in positive exterior
degrees implies that the maps (\ref{tau-need}) and (\ref{tau-need1})
are quasi-isomorphisms.

To simplify our notation, we shall denote all three maps
\eqref{tau}, \eqref{tau-need}, and \eqref{tau-need1} simply by
$\tau$, avoiding the notation $\tau_{\mathrm{ext}}$ henceforth. This
simplification does not lead to confusion because the restriction of
$\tau_{\mathrm{ext}}$ (\ref{tau-need}) to
$$
\cO(M) = C^0(\cO_M) = \mathcal{X}^{0}(M)
$$
coincides with $\tau$.

%
% A sequence of quasi-isomorphisms between $\cX^{\bul+1}(M)$ and $\sCbu(\cO_M)$
%%%%%%%%%%%%%%%%%%%%%%%%%%%%%%%%%%%%%%%%%%%%%%%%%%%%%%%%%%%%%%%%%%%%%%%%%%%%%%%%%

\subsection{A sequence of quasi-isomorphisms between $\cX^{\bul+1}(M)$
  and $\sCbu(\cO_M)$}
\label{subsec:sequence}

We now outline the construction of a sequence of quasi-isomorphisms
between the DGLAs $\mathcal{X}^{\bul+1}(M)$ and $\sCbu(\cO_M)$. The
construction makes use of Kontsevich's
$L_{\infty}$-quasi-isomorphism \cite{K} $K$ from the DGLA
$\X^{\bul+1}(\bbR^d)$ of polyvector fields on $\bbR^d$ to the DGLA
$\sCbu(\cO_{\bbR^d})$ of Hochschild cochains of $\cO_{\bbR^d}$ (see
Remark \ref{rem:teich}). Let us list some key properties of the
structure maps
\begin{equation}
    \label{K-n}
    K_n : \Big( \X^{\bul+1}(\bbR^d) \Big)^{\otimes \,
      n} \to \sCbu(\cO_{\bbR^d})[1-n]\,, \qquad n \ge 1
\end{equation}
of this $L_{\infty}$-quasi-isomorphism $K$:

\begin{property}
    \label{P-symmetry}
    $K_n(\dots, \ga_1, \ga_2, \dots) = -(-1)^{|\ga_1||\ga_2|}
    K_n(\dots, \ga_2, \ga_1, \dots)$\,, where $|\ga_i|$ is the degree
    of $\ga_i$ in the vector space $\X^{\bul+1}(\bbR^d)$ with
    the shifted grading.
\end{property}

\begin{property}
    \label{P-K1}
    The map $K_1 : \mathcal{X}^{\bul+1}(\bbR^d)\to \sCbu(\cO_{\bbR^d})$ coincides
    with the canonical embedding of the space of polyvectors into the
    space of polydifferential operators.
\end{property}
\begin{property}
    \label{P-gl-inv}
    The maps $K_n$ are $\mgl_d$ equivariant.
\end{property}
\begin{property}
    \label{P-vect-linear}
    $K_n(v, \dots ) = 0$ if $n\ge 2$ and $v$ is a vector field which
    depends linearly on the coordinates of $\bbR^d$.
\end{property}
\begin{property}
    \label{P-arg-vect}
    If $v_1, v_2, \dots, v_n$ are vector fields and $n \ge 2$ then
    $K_n(v_1, \dots, v_n) = 0$.
\end{property}
\begin{property}
    \label{P-constant}
    For every $n\ge 2$ we have $K_n(\dots, c) = 0$ if $c$ is a
    constant viewed as a degree-zero polyvector field $c\in
    \mathcal{X}^{0}(\bbR^d) = \cO(\bbR^d)$.
\end{property}

Properties P~\ref{P-symmetry} -- P~\ref{P-arg-vect} allow us to
construct an $L_{\infty}$-quasi-isomorphism
\begin{equation}
    \label{K-tw}
    K^{tw} : (\Omb(\scTp), D, [\,,\,]_{SN}) \brarrow
    (\Omb(\sCbu(\SM)), D + \pa^{\Hoch}, [\,,\,]_{G})\,
\end{equation}
as follows. For every coordinate neighborhood $U$ the part
\begin{equation}
    \label{mu-D-U}
    \mu^D_U = - dx^i \G^k_{ij}(x) y^j \frac{\pa}{\pa y^k}
    - \de + A
\end{equation}
of the geometric Fedosov differential (\ref{DDD}) may be viewed as a
Maurer-Cartan element of the DGLA
\[
(\Omb(U, \scTp), d, [\,,\,]_{SN}),
\]
where $d$ is the de Rham differential.  Kontsevich's $L_{\infty}$
quasi-isomorphism \cite{K} for $\bbR^d$ may be viewed as an
$L_{\infty}$ quasi-isomorphism
\begin{equation}
    \label{K-U}
    K : (\Omb(U, \scTp), d, [\,,\,]_{SN}) \brarrow (\Omb(U,
    \sCbu(\SM)), d + \pa^{\Hoch}, [\,,\,]_{G})\,.
\end{equation}
Using this $L_{\infty}$ quasi-isomorphism and equation (\ref{MC-F-al})
from Appendix \ref{App-B}, we can
send the Maurer-Cartan element $\mu^D_U$ to the Maurer-Cartan
element
\[
\sum_{n=1}^{\infty}\frac{1}{n!} K_n (\mu^D_U, \mu^D_U, \dots,
\mu^D_U)
\]
of the DGLA $(\Omb(U, \sCbu(\SM)), d + \pa^{\Hoch}, [\,,\,]_{G})$\,.
Properties P~\ref{P-K1} and P~\ref{P-arg-vect} imply that
\[
\sum_{n=1}^{\infty}\frac{1}{n!} K_n (\mu^D_U, \mu^D_U, \dots,
\mu^D_U) = \mu^D_U\,.
\]
Therefore, twisting $K$ by the Maurer-Cartan element $\mu^D_U$ (see
Appendix \ref{App-B}) we get an $L_{\infty}$ quasi-isomorphism
\begin{equation}
    \label{K-U-tw}
    K^{\mu^D_U} : (\Omb(U, \scTp), D, [\,,\,]_{SN})
    \brarrow (\Omb(U, \sCbu(\SM)), D + \pa^{\Hoch}, [\,,\,]_{G})\,.
\end{equation}
Properties P \ref{P-gl-inv} and P \ref{P-vect-linear} imply that
$K^{\mu^D_U}$ does not depend on the choice of trivialization of the
tangent bundle $TM$ over $U$. Hence we get an $L_{\infty}$
quasi-isomorphism (\ref{K-tw}) by setting
\begin{equation}
    \label{define-K-tw}
    K^{tw}_n(\ga_1, \ga_2, \dots, \ga_n)\Big|_{U} =
    K^{\mu^D_U}_n(\ga_1, \ga_2, \dots, \ga_n)\,.
\end{equation}
Combining $K^{tw}$ with the maps (\ref{tau-need}) and
(\ref{tau-need1}), we obtain a sequence of quasi-isomorphisms

\begin{equation}
    \label{upper}
        \mathcal{X}^{\bul+1}(M)
         \stackrel{\tau}{\lrarrow}
        \Omb(M, \scTp)
         \stackrel{K^{tw}}{\brarrow}
        \Omb(M, C^{\bul+1}(\SM) )
        \stackrel{\tau}{\llarrow}  \sCbu(\cO_M).
\end{equation}
%\begin{equation}
%    \label{upper}
%    \begin{array}{ccccc}
%        \mathcal{X}^{\bul+1}(M)
%        & \stackrel{\tau}{\lrarrow}&
%        (\Omb(M, \scTp), D, [\,,\,]_{SN})
%        & \stackrel{K^{tw}}{\brarrow} & ~ \\[0.3cm]
%        ~ & \stackrel{K^{tw}}{\brarrow}
%        & (\Omb(M, C^{\bul+1}(\SM) ), D+\pa, [\,,\,]_G) &
%        \stackrel{\tau}{\llarrow} & \sCbu(\cO_M).
%    \end{array}
%\end{equation}
Using \cite[Lemma~1]{erratum}, we can reduce the sequence
(\ref{upper}) to a single $L_{\infty}$ quasi-isomorphism
\begin{equation}
    \label{cK}
    \cK : \mathcal{X}^{\bul+1}(M) \brarrow \sCbu(\cO_M)
\end{equation}
between the DGLAs $\mathcal{X}^{\bul+1}(M)$ and $\sCbu(\cO_M)$.
More precisely, in \cite[Chapter 4, Eq. (4.36)]{thesis} a chain
homotopy is constructed which contracts the complex $(\Omb(M,
C^{\bul+1}(\SM)), D + \pa^{\Hoch}) $ to its sub-DGLA
\begin{equation}
    \label{D-flat-subcomplex}
    (\G(M, C^{\bul+1}(\SM)) \cap \ker D  ,
    \pa^{\Hoch}, [\,,\,]_G)\,.
\end{equation}
Using this chain homotopy and \cite[Lemma~1]{erratum}, one
constructs an $\Linf$ quasi-isomorphism
\begin{equation}
    \label{tcK}
    \tcK : \mathcal{X}^{\bul+1}(M)
    \brarrow
    (\Omb(M, C^{\bul+1}(\SM) ), D+\pa^{\Hoch}, [\,,\,]_G)
\end{equation}
satisfying two properties: first, all the structure maps $\tcK_{n}$
of $\tcK$ take values in the sub-DGLA (\ref{D-flat-subcomplex});
second, $\tcK$ is homotopy equivalent to the composition
\[
K^{tw} \circ \tau :  \mathcal{X}^{\bul+1}(M) \brarrow (\Omb(M,
C^{\bul+1}(\SM) ), D+\pa^{\Hoch}, [\,,\,]_G)\,.
\]
Composing $\tcK$ with the inverse of the isomorphism \eqref{tau-iso}
we get the desired $\Linf$ quasi-isomorphism $\cK$ (\ref{cK}). Going
through details of this construction and using the fact that the
structure maps \eqref{K-n} land in normalized Hochschild cochains,
one verifies the following:
\begin{pred}
\label{cK-constant} For every constant $c$, viewed as a degree zero
polyvector field $c\in \mathcal{X}^{0}(M) = \cO(M)$, we have
    \[
    \cK_1(c) = c, \;\;\; \mbox{ and } \;\;\;
    \cK_n(\dots, c) = 0, \;\;\; \forall ~ n\ge 2.
    \]
\end{pred}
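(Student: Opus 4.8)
The plan is to unwind the explicit construction of $\cK$ from Section~\ref{subsec:sequence} and track what happens to a constant $c$ at each stage, using Property~P~\ref{P-constant} of Kontsevich's maps $K_n$ together with the normalization condition \eqref{normalization} on Hochschild cochains. The key observation is that the map $\tau$ of \eqref{tau} sends the constant $c\in\cO(M)=\cX^0(M)$ to the constant section $c\in\G(M,\SM)\cap\ker D$ (this is immediate from the iteration \eqref{iter-tau}, since $\n c = 0$, $A\cdot c = 0$, so $\tau(c)=c$); in particular $\tau(c)$ has no positive fiber-degree part and no positive exterior-degree part, i.e.\ $\tau(c) = c$ as an element of $\Om^0(M,S^0M)$. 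Thus feeding $c$ through the first arrow of \eqref{upper} produces an honest constant.

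First, I would treat $K^{tw}$. Since $\tau(c)=c$ is a constant polyvector field in $\Om^0(M,\scTp)$, and since $K^{tw}$ is built locally by twisting Kontsevich's $K$ by the Maurer-Cartan element $\mu^D_U$ via \eqref{define-K-tw}, the structure maps $K^{tw}_n(\dots,\tau(c))$ are, by the twisting formula \eqref{MC-F-al}, $\bbC[[\h]]$-linear combinations of expressions $K_m(\mu^D_U,\dots,\mu^D_U,\dots,c,\dots)$ with the constant $c$ still appearing as one of the arguments. For $m\ge 2$ every such term vanishes by Property~P~\ref{P-constant}; for $m=1$, Property~P~\ref{P-K1} gives $K_1(c)=c$. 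Hence $K^{tw}_1(\tau(c)) = c$ and $K^{tw}_n(\dots,\tau(c)) = 0$ for $n\ge 2$ — the constant survives unchanged through the middle arrow and lands as a constant in $\G(M,C^{\bul+1}(\SM))\cap\ker D$ (the flat section representing $c$ is again the constant $c$, by the same argument applied to \eqref{iter-vro}).

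Second, I would pass to the reduced quasi-isomorphism. The map $\tcK$ of \eqref{tcK} is homotopy equivalent to $K^{tw}\circ\tau$ via the chain homotopy from \cite[Chapter 4, Eq.~(4.36)]{thesis} contracting $(\Omb(M,C^{\bul+1}(\SM)),D+\pa^{\Hoch})$ onto \eqref{D-flat-subcomplex}; by \cite[Lemma~1]{erratum} the resulting $\Linf$ maps are obtained by composing the $K^{tw}\circ\tau$ trees with the contracting homotopy and projection. Since $K^{tw}_1(\tau(c))=c$ already lies in the flat subcomplex \eqref{D-flat-subcomplex} and equals the constant $c$, the projection fixes it and the homotopy kills nothing, so $\tcK_1(c)=c$; and since $K^{tw}_n(\dots,\tau(c))=0$ for $n\ge 2$, every higher tree contribution to $\tcK_n(\dots,c)$ vanishes, giving $\tcK_n(\dots,c)=0$ for $n\ge 2$. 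Finally, $\cK$ is obtained from $\tcK$ by post-composing with the inverse of the isomorphism \eqref{tau-iso}, which again sends the constant flat section $c$ back to the constant $c\in\cO(M)$; this yields $\cK_1(c)=c$ and $\cK_n(\dots,c)=0$ for $n\ge 2$.

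The main obstacle is bookkeeping rather than conceptual: one must check carefully that in the homotopy-transfer step (the application of \cite[Lemma~1]{erratum}) no new term involving the constant $c$ can be generated with $c$ no longer sitting in an argument slot of some $K_m$ — i.e.\ that the Leibniz/tree combinatorics of the transfer never ``uses up'' the constant via a differential or a bracket in a way that would produce a nonzero contribution. This follows because $c$ is $D$-closed and central (so $\pa^{\Hoch}$ and all Gerstenhaber brackets involving the flat lift of $c$ behave trivially in the relevant low degrees) and because the structure maps \eqref{K-n} genuinely land in \emph{normalized} cochains, so that $[\,\cdot\,,c]_G$-type contributions and insertions of $c$ into normalized cochains vanish; assembling these vanishings along every tree is the one place requiring care.
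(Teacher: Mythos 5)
Your proposal is correct and follows essentially the route the paper itself indicates: the paper offers no detailed proof, only the remark that one verifies the statement by ``going through details of this construction'' and using the fact that the structure maps \eqref{K-n} land in normalized Hochschild cochains, which is precisely what you do (via $\tau(c)=c$, Properties~P~\ref{P-K1} and P~\ref{P-constant} for the twisted maps, and normalization to control the homotopy-transfer trees). Nothing further is needed.
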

\begin{remark}
\label{remark:adding-hbar}
From now on we extend the field of scalars $\bbC$ to the ring
$\bbC[[\h]]$ in all our constructions. In other words, we replace the DGLAs
$\mathcal{X}^{\bul+1}(M)$, $\Omb(M, \scTp)$, $\Omb(M,
C^{\bul+1}(\SM))$, and $\sCbu(\cO_M)$ in (\ref{upper}) with
\begin{equation}
\label{DGLAs-hbar}
    \mathcal{X}^{\bul+1}(M)[[\h]],
    \quad
    \Omb(M, \scTp)[[\h]],
    \quad
    \Omb(M, C^{\bul+1}(\SM))[[\h]],
    \qquad \sCbu(\cO_M)[[\h]],
\end{equation} 
    respectively.
We also replace the connection form $\G$ in (\ref{nabla-prelim})
by a general formal Taylor power series in $\h$:
    \[
    \G_{\h} = \G_0 + \h \G_1 + \h^2 \G_2 + \dots,
    \]
    where $\G_0$ is an ordinary torsion free connection form and
    $\G_1$, $\G_2$, $\dots$ are global sections of $TM\otimes
S^{2}(T^*M)$\,. Finally, we allow $A$ (\ref{A}) to have
the more general form:
\begin{equation}
        \label{A-more-gen}
        A=\sum_{p=2, r=0}^{\infty} d x^k  \h^r A^j_{r; k
          i_1\dots i_p}(x) y^{i_1} \dots y^{i_p}\frac{\pa}{\pa y^j} \in
        \Om^1(M, \cT^1_{poly})[[\h]]\,.
\end{equation}
It is not hard to see that the constructions 
described in Subsection \ref{subsec-Fedosov-resolutions} 
and in this subsection can be generalized to this setting. 
Furthermore, the resulting $\Linf$ quasi-isomorphisms (\ref{upper}) 
connecting the DGLAs in (\ref{DGLAs-hbar}) agree with 
the $\h$-adic filtration and satisfy Condition \ref{condition}.
\end{remark}
\begin{remark}
\label{remark-filtr}
    We will need the following complete descending filtration on
    the sheaf of algebras $\SM[[\h]]$ (see Appendix \ref{App-C}):
    \begin{equation}
        \label{filtr-SM-h}
        \SM[[\h]] = \cF^0 \SM[[\h]] \supset \cF^1
        \SM[[\h]] \supset \cF^2 \SM[[\h]] \supset \dots ,
     \end{equation}
    where local sections of $\cF^m \SM[[\h]]$ are the series
    \[
    a = \sum_{2k+l \ge m} \h^k \, a_{k; i_1, \dots, i_l} (x)\, y^{i_1}
    y^{i_2} \dots y^{i_l}\,.
    \]
\end{remark}

%
% Star products and their equivalence classes
%%%%%%%%%%%%%%%%%%%%%%%%%%%%%%%%%%%%%%%%%%%%%%%%%%%%%%%%%%%%%%%%%%%%%%%%

\subsection{Star products and their equivalence classes}
\label{subsec:star-products}

A \emp{star product} \cite{Bayen,Ber} on a manifold $M$ is a
$\bbC[[\h]]$-linear associative product on $\cO(M)[[\h]]$ of the form
\begin{equation}
    \label{star-intro}
    f * g = fg + \sum_{k=1}^{\infty} \h^k \Pi_k(f, g),
\end{equation}
where $f, g \in \cO(M)[[\h]]$ and $\Pi_k \in C^2(\cO_M)$, i.e.,
$\Pi_k$ are normalized bidifferential operators. Because we deal
with normalized Hochschild cochains (\ref{rule}), star products
satisfy
\begin{equation}
\label{star-and-unit} f * 1 = 1 * f = f\,.
\end{equation}
Since $f * g = fg ~{\rm mod}~ \h$, star products should be viewed as
an associative (but not necessarily commutative) formal deformation
of the ordinary product of functions on $M$.

Two star products $*$ and $*'$ are \emp{equivalent} if there exist
(normalized) differential operators $T_i: \cO(M)\to \cO(M)$, $i=1,2,\ldots$, so
that the formal series $T = \id + \h T_1 + \h^2 T_2 + \dots$
intertwines the star products,
\begin{equation}
    \label{equiv-intro}
    T(f * g) = T(f) \mathbin{*'} T(g).
\end{equation}
We denote the set of equivalence classes of star products on $M$ by
$\Def(M)$.

The associativity property of a star product \eqref{star-intro} can
be equivalently expressed as the Maurer-Cartan equation
\[
\pa^{\Hoch} \Pi + \frac{1}{2}[\Pi,\Pi]_G = 0
\]
for the formal series of bidifferential operators $\Pi :=
\sum_{k=1}^{\infty} \h^k \Pi_k \in C^2(\cO_M)[[\h]]$.
Thus Maurer-Cartan elements of the DGLA $\sCbu(\cO_M)[[\h]]$ (see
Appendix \ref{App-B}) are exactly the star products on $M$.
Furthermore, one can verify that equivalent Maurer-Cartan elements
of $\sCbu(\cO_M)[[\h]]$ (in the sense of \eqref{action-A})
correspond to equivalent star products.

Maurer-Cartan elements in $\mathcal{X}^{\bul+1}(M)[[\h]]$ are formal
series of bivector fields in $\h$,
\begin{equation}
    \label{pi}
    \pi = \h \pi_1 + \h^2 \pi_2 + \dots
    \in \h\, \mathcal{X}^2(M)[[\h]]
\end{equation}
satisfying the equation
\begin{equation}
    \label{MC-pi}
    [\pi, \pi]_{SN} = 0\,.
\end{equation}
We refer to Maurer-Cartan elements in
$\mathcal{X}^{\bul+1}(M)[[\h]]$ as \emp{formal Poisson structures},
by analogy with the usual definition of Poisson structures in
geometry (cf. \cite{Dufour-Zung,Lich,Alan}); further properties of formal
Poisson structures are discussed in Section~\ref{subsec:gauge}.

As recalled in Appendix~\ref{App-B}, the prounipotent group of formal
diffeomorphisms,
\begin{equation}
    \label{group-pvect}
    \mG(\mathcal{X}^{\bul+1}(M)[[\h]])
    = \exp \left(\h \mathcal{X}^1(M)[[\h]]\right),
\end{equation}
acts on Maurer-Cartan elements of $\mathcal{X}^{\bul+1}(M)[[\h]]$
according to
\begin{equation}
    \label{action-pi}
   \pi^{\exp(X)} = \exp([\cdot, X]_{SN}) \pi,
\end{equation}
where $X \in \h \mathcal{X}^1(M)[[\h]]$. Two formal Poisson
structures $\pi$ and $\tpi$ are said to be \emp{equivalent} if they
lie in the same orbit of this action. We denote the set of
equivalence classes of formal Poisson structures by\footnote{In the
notation of Appendix \ref{App-B},
\begin{equation}
    \label{eq:Fpois}
    \Def(M) = \pi_0(\MC(\sCbu(\cO_M)[[\h]])), \;\;\;\; \FPois(M) = \pi_0(\MC(\mathcal{X}^{\bul+1}(M)[[\h]])).
\end{equation}} $\FPois(M)$.
The equivalence class of a star product $*$ and of a formal Poisson structure
$\pi$ will be denoted by
\[
[*] \in \Def(M)
\quad
\textrm{and}
\quad
[\pi]\in \FPois(M),
\]
respectively.

The $\Linf$ quasi-isomorphism $\cK$ in \eqref{cK} establishes,
according to \eqref{MC-F-al}, a correspondence between formal Poisson
structures and star products on $M$,
\begin{equation}
    \label{star-K}
    \pi \mapsto *_K,
    \;\;\mbox{ where }\;\;
    f *_{K} g =
    fg
    + \sum_{n=1}^{\infty} \cK_n(\pi, \pi, \dots, \pi)(f, g),
\end{equation}
for $f, g \in \cO(M)[[\h]]$. We refer to $*_K$ as the \emp{Kontsevich
  star product} associated with $\pi$. By
Proposition~\ref{MC-teo-Linf}, the correspondence \eqref{star-K}
induces a bijection
\begin{equation}
    \label{eq:bijec}
    \cK_*: \FPois\rightarrow \Def(M),
\end{equation}
associating to each equivalence class of formal Poisson structures 
an equivalence class of star products. 
We call the class $[\pi]=\cK_*^{-1}([*])$ in
$\FPois$ \emp{Kontsevich's class} of the star product $*$.

Regarding the choices involved in the definition of Kontsevich's
classes, we first observe that the bijection $\cK_*$ agrees with the
bijection induced by the sequence of $\Linf$ quasi-isomorphisms
\eqref{upper}; indeed, the fact that shortening the sequence
(\ref{upper}) using \cite[Lemma~1]{erratum} does not change the
correspondence between
    equivalence classes of Maurer-Cartan elements follows from Lemma~\ref{lemma-equiv} in Appendix~\ref{App-B}.
On the other hand, as discussed in Section~\ref{subsec:sequence},
the middle $\Linf$ quasi-isomorphism in the sequence (\ref{upper})
requires the choice of a Fedosov differential \eqref{DDD}. As shown
by the next result, this choice does not affect the Kontsevich class
of a star product.
\begin{teo}
    \label{nezalezh}
    The map $\cK_*$ in \eqref{eq:bijec} does not depend on the choice
    of the Fedosov differential.
    %Furthermore, it agrees with the bijection $\FPois\rightarrow \Def(M)$ induced from \eqref{upper}.
\end{teo}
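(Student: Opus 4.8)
The plan is to show that any two geometric Fedosov differentials $D$ and $D'$ of the form \eqref{DDD} give rise to $L_\infty$ quasi-isomorphisms $\cK$ and $\cK'$ that induce the same bijection on equivalence classes of Maurer-Cartan elements. The strategy is to produce an explicit homotopy between the two sequences \eqref{upper}, and then invoke Lemma~\ref{lemma-equiv} in Appendix~\ref{App-B}, which guarantees that $L_\infty$-homotopic quasi-isomorphisms induce the same map on $\pi_0$ of the Maurer-Cartan sets. Since the only data that distinguishes the two sequences is the choice of connection $\G_\h$ and the correction term $A$ entering $D$ and $D'$, the heart of the matter is to interpolate between $D$ and $D'$ within the DGLA $(\Omb(M,\scTp)[[\h]], d, [\,,\,]_{SN})$ and to carry this interpolation through the twisting and gluing construction of Section~\ref{subsec:sequence}.

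First I would fix local trivializations and observe that, over a coordinate neighborhood $U$, both $\mu^D_U$ and $\mu^{D'}_U$ (as in \eqref{mu-D-U}) are Maurer-Cartan elements of $(\Omb(U,\scTp)[[\h]], d, [\,,\,]_{SN})$ lying in $\cF^1$ with respect to the filtration of Remark~\ref{remark-filtr} modified to include the $y$-degree; in fact $\mu^D_U - \mu^{D'}_U = -\delta+\delta = 0$ in leading order, so the two differ only by terms of positive $y$-degree (and possibly positive $\h$-degree). Because $\delta$ is acyclic in positive fiber degrees (property \eqref{Hodge}) and $D$, $D'$ are both differentials, a standard Fedosov-type iteration produces a vector field $X_U \in \G(U,\cT^0_{poly})[[\h]]$, with $X_U \in \cF^1$, such that $\exp([\cdot,X_U]_{SN})$ intertwines $D$ and $D'$ on $\Omb(U,\scTp)[[\h]]$; one can arrange $X_U$ to depend only on the trivialization-independent data using properties P\,\ref{P-gl-inv} and P\,\ref{P-vect-linear} as in the original gluing argument, so the $X_U$ patch to a global gauge transformation. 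This gauge transformation relates $\mu^D_U$ and $\mu^{D'}_U$ as Maurer-Cartan elements, hence—via the standard functoriality of twisting by gauge-equivalent Maurer-Cartan elements (Appendix~\ref{App-B})—yields an $L_\infty$-homotopy between $K^{\mu^D_U}$ and $K^{\mu^{D'}_U}$, and therefore between the glued morphisms $K^{tw}$ and $(K')^{tw}$ in \eqref{K-tw}.

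Next I would propagate this through the two copies of $\tau$ in \eqref{upper}. The map $\tau$ in \eqref{tau-need1} also depends on $D$ (through $\n$ and $A$, via \eqref{iter-tau}); but the same global gauge transformation $\exp([\cdot,X]_{SN})$ that relates $D$ to $D'$ conjugates $\tau^D$ to $\tau^{D'}$, so the composite $K^{tw}\circ\tau$ changes by an $L_\infty$-homotopy. Finally, shortening the sequence via \cite[Lemma~1]{erratum} and passing to the $D$-flat subcomplex \eqref{D-flat-subcomplex} is itself done up to the explicit chain homotopy of \cite[Chapter 4, Eq. (4.36)]{thesis}, and one checks that the two contractions (for $D$ and for $D'$) are intertwined by the conjugation, so $\cK$ and $\cK'$ remain $L_\infty$-homotopic. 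Invoking Lemma~\ref{lemma-equiv} then gives $\cK_* = \cK'_*$ on $\FPois(M)$.

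The main obstacle I anticipate is bookkeeping: one must verify that all the iterative constructions ($X_U$, $\tau$, $\vro$, the contraction of \eqref{D-flat-subcomplex}) converge with respect to the combined $(y,\h)$-adic filtration, and that the local gauge vector fields $X_U$ glue—this requires re-examining that the argument showing $K^{\mu^D_U}$ is trivialization-independent (using P\,\ref{P-vect-linear}) applies equally to the interpolating data $X_U$, which is not entirely automatic since $X_U$ is a degree-zero polyvector field rather than a differential. A clean way to sidestep part of this is to work with a single ``universal'' family: consider $D_t = (1-t)D + tD'$ for a formal parameter $t$, note that each $D_t$ is again a geometric Fedosov differential, and realize $\cK$ and $\cK'$ as the $t=0$ and $t=1$ specializations of an $L_\infty$ quasi-isomorphism over $\bbC[[\h,t]]$; then one $t$-integration of the corresponding Maurer-Cartan equation produces the desired homotopy directly. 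I expect the proof to be deferred to Appendix~\ref{App-C} precisely because making this interpolation argument fully rigorous, while conceptually straightforward, involves several pages of filtration estimates and compatibility checks with the gluing of Section~\ref{subsec:sequence}.
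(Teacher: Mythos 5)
Your overall strategy---produce an $L_\infty$-homotopy between the two versions of $\cK$ and invoke Lemma~\ref{lemma-equiv}---differs from what the paper actually does, and as stated it has a gap at its central step. Lemma~\ref{lemma-equiv} compares two $\Linf$ morphisms between the \emph{same} pair of DGLAs, but the two sequences \eqref{upper} built from $D$ and $\tD$ pass through \emph{different} intermediate DGLAs: $(\Omb(M,\scTp)[[\h]],D)$ versus $(\Omb(M,\scTp)[[\h]],\tD)$, and likewise on the Hochschild side. Saying that $\mu^D_U$ and $\mu^{\tD}_U$ are gauge-equivalent Maurer--Cartan elements does not by itself produce a homotopy between $K^{\mu^D_U}$ and $K^{\mu^{\tD}_U}$ in the sense required, because these are morphisms with different (co)domains; one must first conjugate by the gauge transformation to identify the twisted DGLAs, and at that point one is no longer comparing homotopy classes of morphisms but rather tracking how a fixed Maurer--Cartan element is transported. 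That is precisely what the paper does instead: it sets $H=\tD-D$, shows $H$ is a Maurer--Cartan element of an \emph{acyclic} truncated DGLA $\cL_{\cT}$ (acyclicity being established via a filtration $\mF^\bullet$ that assigns degrees $1,1,-1,2$ to $y^i$, $dx^i$, $\pa_{y^i}$, $\h$, under which the associated graded differential is $-\de$), hence $H$ is gauge-trivial via a \emph{global} $X$; it then compares the resulting Maurer--Cartan elements $\mu$ and $H+\tmu$ inside the single DGLA $(\Omb(M,\sCbu(\SM))[[\h]],D+\pa^{\Hoch})$, kills the exterior-degree components, and constructs an explicit equivalence $T$ of the two star products. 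Your local-to-global gluing of the $X_U$ is also unsubstantiated: properties P~\ref{P-gl-inv} and P~\ref{P-vect-linear} constrain the maps $K_n$, not the gauge vector fields, and the paper avoids the issue entirely by working globally with $H$.

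Your proposed shortcut via the family $D_t=(1-t)D+t\tD$ does not work: flatness is a quadratic condition, so $D^2=0$ and $\tD^2=0$ do not imply $D_t^2=0$ (the cross term $t(1-t)(D\tD+\tD D)$ survives), and $D_t$ is therefore not a geometric Fedosov differential for intermediate $t$. One would have to re-run the recursive construction of $A$ for each $t$ to restore flatness, at which point the interpolation is no more ``universal'' than comparing two arbitrary choices directly. Finally, you correctly identify the filtration bookkeeping as the main technical burden, but the choice of filtration is not a routine check: the naive $\h$-adic filtration is insufficient (e.g.\ $H$ need not vanish mod $\h$), and the proof hinges on the mixed $(y,dx,\pa_y,\h)$-grading described above, together with a further adjustment (a $2$-morphism in the sense of Getzler) needed to arrange that the gauge transformation relating $\Pifib$ and $H+\tPifib$ has only a $C^1(\SM)$-component, so that it descends to an equivalence of star products.
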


The proof of Theorem~\ref{nezalezh} is in Appendix~\ref{App-C}.

\begin{remark}\label{rem:teich} We note that the constructions in Section~\ref{subsec:sequence} (and hence
the notion of Kontsevich's class) are based on Kontsevich's $\Linf$
quasi-isomorphism $K$ \cite{K} from $\mathcal{X}^{\bul+1}(\bbR^d)$
to $\sCbu(\cO_{\bbR^d})$, which is fixed throughout this paper. It is
  known \cite{K-conjecture} that there are other $\Linf$
  quasi-isomorphisms from $\mathcal{X}^{\bul+1}(\bbR^d)$ to
  $\sCbu(\cO_{\bbR^d})$ which are not homotopy equivalent to $K$; In
  fact, Tamarkin's proof \cite{DTT,Hinich,Dima-Proof} of Kontsevich's
  formality theorem indicates that homotopy equivalence classes of
  such $\Linf$ quasi-isomorphisms are acted upon by the
  Grothendieck-Teichm\"uller group introduced by Drinfeld in
  \cite{Drinfeld}.
\end{remark}

%
% The characteristic classes of Morita equivalent star products
%

\section{The characteristic classes of Morita equivalent star
  products}
\label{sec:Morita}

Two unital rings are called \emp{Morita equivalent} if they have
equivalent categories of modules \cite{Morita}. We view
star-product algebras on a manifold $M$ as unital algebras over the
ground ring $\mathbb{C}[[\h]]$ and consider the problem of describing
their Morita equivalence classes. Following \cite{B, BW}, this
classification is given by the orbits of a canonical action of
$\Dif(M)\ltimes \Pic(M)$ on the space $\Def(M)$. Here $\Dif(M)$ is the
group of diffeomorphisms of $M$, and $\Pic(M)\cong H^2(M,\mathbb{Z})$
is its \emp{Picard group}, i.e., the group of isomorphism classes
of complex line bundles over $M$; $\Dif(M)\ltimes \Pic(M)$ is the
semi-direct product group with respect to the action of $\Dif(M)$ on
$\Pic(M)$ by pull-back. We will briefly recall how this action is
defined, and then give its explicit description in terms of
Kontsevich's classes.

%
% An action of the Picard group on star products
%

\subsection{An action of the Picard group on star products}
\label{subsec:pic}

Given a diffeomorphism $\varphi: M \to M$ and a star product $*$, we
obtain a new star product $*_\varphi$,
\[
f *_\varphi g
= (\varphi^{-1})^*(\varphi^* f * \varphi^* g),
\]
where $f, g \in \cO(M)$, and this induces an action
\begin{equation}
    \label{eq:difact}
    \Dif(M)\times \Def(M)  \to \Def(M),\;\; (\varphi,[*]) \mapsto
    [*_\varphi].
\end{equation}
It is known \cite{Gutt} that every isomorphism between two star-product
algebras on $M$ is a composition of an equivalence
\eqref{equiv-intro} with an element in $\Dif(M)$ (viewed as an
automorphism of $\cO(M)$ via pull-back).
 This gives a simple
interpretation of the action \eqref{eq:difact}: the classes of $*$ and
$*'$ in $\Def(M)$ are in the same $\Dif(M)$-orbit if and only if the
two star-product algebras are isomorphic.

The space $\Def(M)$ also carries a natural action of $\Pic(M)$
\cite{B}. Given a complex line bundle $L\to M$, we view $\Gamma(M, L)$ as
a right module over $\cO(M)$. We denote by
$\End(\Gamma(M, L))=\Gamma(M, \End(L))$ the algebra of endomorphisms of this
module, noticing that there is a canonical identification
\begin{equation}
    \label{eq:canonical}
    \End(\Gamma(M, L)) \cong \cO(M).
\end{equation}
As shown in \cite{BW0}, for a given star product $*$ on $M$, there is
a unique way (up to equivalence) of deforming this module structure to
make $\Gamma(M, L)[[\hbar]]$ a right module over the star-product algebra
$(\cO(M)[[\h]], *)$.
For $s\in \Gamma(M, L)$, $f \in \cO(M)$ we denote the deformed
module structure by
\[
s \bullet f = s f  \mod \h,
\]
and write $\mathrm{End}(\Gamma(M, L)[[\hbar]],\bullet)$ for the algebra
of endomorphisms of this module. One can always find an identification
of $\cO(M)[[\h]]$ with $\mathrm{End}(\Gamma(M, L)[[\hbar]],\bullet)$ as
$\mathbb{C}[[h]]$-modules, in such a way that for $\h = 0$ one
recovers the natural identification \eqref{eq:canonical}. As a
consequence, we obtain a star product $*'$ on $M$ for which
\begin{equation}
    \label{eq:ident}
    (\cO(M)[[\h]],*') \cong
    \mathrm{End}(\Gamma(M, L)[[\hbar]],\bullet)
\end{equation}
is an isomorphism of $\mathbb{C}[[\h]]$-algebras deforming
\eqref{eq:canonical}. This construction defines an action
\begin{equation}
    \label{eq:actionL}
    \Phi: \Pic(M)\times \Def(M) \to \Def(M), \;\;\; (L,[*]) \mapsto
    \Phi_L([*]) = [*'],
\end{equation}
where $*'$ is characterized, up to equivalence, by \eqref{eq:ident}.

The actions \eqref{eq:difact} and \eqref{eq:actionL} provide a
convenient characterization of Morita equivalence for star products
\cite{B}: Two star products $*$ and $*'$ on $M$ are Morita
equivalent if and only if their classes in $\Def(M)$ are related by
the actions \eqref{eq:difact} and \eqref{eq:actionL}, i.e., $[*']=
\Phi_L([*_\varphi])$ for some line bundle $L$ and diffeomorphism
$\varphi$.

It will be useful to describe the action \eqref{eq:actionL} in terms
of transition functions. Let us consider a star product $*$, a line
bundle $L\to M$, and a deformed right-module structure $\bullet$ on
$\Gamma(M, L)[[\h]]$ over $(\cO(M)[[\h]], *)$.  Let $\{U_\alpha\}$
be a cover of $M$ by contractible open subsets.  We can define local
$\cO(U_\alpha)$-linear trivialization isomorphisms $\psi_\alpha :
\Gamma(U_{\al}, L)\to \cO(U_\alpha)$ and transition functions
$g_{\alpha\beta}\in \cO(U_\alpha\cap U_\beta)$ such that
$\psi_\alpha(\psi_\beta^{-1})(f) (x)=g_{\alpha\beta}(x) f(x)$, which
satisfy $g_{\alpha\beta}^{-1}=g_{\beta\alpha}$ and, on triple
intersections, the cocycle condition
\[
g_{\alpha\beta}g_{\beta\gamma}g_{\gamma\alpha}=1.
\]
As shown in \cite{BW1}, one can always find
$\mathbb{C}[[\hbar]]$-linear deformed trivialization isomorphisms
$\Psi_\alpha= \psi_\alpha \mbox{ mod } \h:
\Gamma(U_{\al}, L)[[\hbar]]\to \cO(U_\alpha)[[\hbar]]$ satisfying
\[
\Psi_\alpha(s\bullet f)=\Psi_\alpha(s) * f,
\]
and define deformed transition functions $G_{\alpha\beta}=
g_{\alpha\beta} + \mbox{ mod } \h \in \cO(U_\alpha \cap
U_\beta)[[\hbar]]$ such that
\[
\Psi_\alpha(s)= G_{\alpha\beta}*\Psi_{\beta}(s).
\]
Since $1$ is the unit for $*$ (see (\ref{star-and-unit})), it is
clear that
\begin{equation}
    \label{eq:defcocyc}
    G_{\alpha\alpha}=1,\;\;\; G_{\alpha\beta}*G_{\beta\alpha} = 1, \;\;
    G_{\alpha\beta}* G_{\beta\gamma} * G_{\gamma\alpha}=1.
\end{equation}

Let us consider a $\mathbb{C}[[\h]]$-algebra isomorphism
\[
T:(\cO(M)[[\h]],*')\to \End(\Gamma(M, L)[[\hbar]],\bullet),
\]
coinciding with \eqref{eq:canonical} at the classical limit $\h=0$.
The isomorphism $T$ is totally determined by a collection of local equivalences
$T_\alpha: (\cO(U_\alpha)[[\h]],*')\to (\cO(U_\alpha)[[\h]],*)$
satisfying
\begin{equation}
    \label{eq:localT-old}
    T_\alpha\circ T_\beta^{-1} (f)
    =  G_{\alpha \beta} * f * G_{\beta \alpha},
\end{equation}
for $f \in \cO(U_\alpha\cap U_\beta)$. One recovers $T$ from the
collection $\{T_\alpha\}$ by
\begin{equation}
    \label{eq:Talpha}
    T(f) (s) = \Psi_\alpha^{-1}(T_\alpha(f) * \Psi_\alpha(s)),
    \;\;\;
    s\in\Gamma(U_{\al}, L).
\end{equation}
\begin{prop}
    \label{prop:crit}
    Let $*$ and $*'$ be star products on $M$. The following are
    equivalent:
    \begin{itemize}
    \item[(i)] The star products $*$ and $*'$ are related by
        \eqref{eq:actionL}, i.e., there exists a line bundle $L\to M$ for
        which $\Phi_L([*])=[*']$;
    \item[(ii)] There exists an open cover $\{U_\alpha\}$ of $M$, with
        trivialization maps $\psi_\alpha$ and transition functions
        $g_{\alpha\beta}$ for $L\to M$, as well as deformed transition
        functions $G_{\alpha\beta}=g_{\alpha\beta} \mbox{ mod } \h \in
        \cO(U_\alpha\cap U_\beta)[[\h]]$ satisfying the cocycle
        conditions \eqref{eq:defcocyc} and a collection of
        equivalences $T_\alpha: (\cO(U_\alpha)[[\h]],*') \to
        (\cO(U_\alpha)[[\h]],*)$ for which the compatibility
        \eqref{eq:localT-old} holds.
    \end{itemize}
\end{prop}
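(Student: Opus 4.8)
The plan is to prove Proposition~\ref{prop:crit} by unwinding the
definitions on both sides and showing that the two descriptions of the
$\Pic(M)$-action coincide once one passes to a good cover. The statement
(i)~$\Leftrightarrow$~(ii) is essentially a reformulation: condition (i)
says that $*'$ arises as the endomorphism algebra of the deformed module
$\Gamma(M,L)[[\h]]$ equipped with a module structure $\bullet$ over
$(\cO(M)[[\h]],*)$, and condition (ii) packages the same data in terms of
local trivializations and transition functions. So the heart of the
argument is to translate the global module-theoretic statement into the
\v{C}ech-type data and back.

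First I would prove (i)~$\Rightarrow$~(ii). Assume $\Phi_L([*])=[*']$, so
there is a deformed module structure $\bullet$ on $\Gamma(M,L)[[\h]]$ over
$(\cO(M)[[\h]],*)$ and a $\bbC[[\h]]$-algebra isomorphism
$T:(\cO(M)[[\h]],*')\to \End(\Gamma(M,L)[[\h]],\bullet)$ reducing to
\eqref{eq:canonical} at $\h=0$. Pick a cover $\{U_\alpha\}$ by
contractible opens, with $\cO(U_\alpha)$-linear trivializations
$\psi_\alpha$ and transition functions $g_{\alpha\beta}$ for $L\to M$.
Using the result of \cite{BW1} quoted in the text, lift these to
$\bbC[[\h]]$-linear deformed trivializations $\Psi_\alpha$ with
$\Psi_\alpha(s\bullet f)=\Psi_\alpha(s)*f$ and deformed transition
functions $G_{\alpha\beta}=g_{\alpha\beta}\bmod\h$ satisfying
\eqref{eq:defcocyc} and $\Psi_\alpha(s)=G_{\alpha\beta}*\Psi_\beta(s)$.
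Now define $T_\alpha:(\cO(U_\alpha)[[\h]],*')\to(\cO(U_\alpha)[[\h]],*)$
by the local version of \eqref{eq:Talpha}, i.e.
$T(f)(s)=\Psi_\alpha^{-1}(T_\alpha(f)*\Psi_\alpha(s))$ for $s\in
\Gamma(U_\alpha,L)$; since $T(f)$ is a module endomorphism and $\Psi_\alpha$
is an isomorphism, $T_\alpha$ is a well-defined $\bbC[[\h]]$-linear map,
it is an equivalence because it is $\id\bmod\h$, and it intertwines $*'$
with $*$ because $T$ is an algebra homomorphism and
$\End(\Gamma(M,L)[[\h]],\bullet)$ acts on the right over $*$ through
$\Psi_\alpha$. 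Finally, comparing the two expressions for $T(f)(s)$ on an
overlap $U_\alpha\cap U_\beta$ using $\Psi_\alpha(s)=G_{\alpha\beta}*\Psi_\beta(s)$
yields the cocycle-twisted intertwining relation
$T_\alpha\circ T_\beta^{-1}(f)=G_{\alpha\beta}*f*G_{\beta\alpha}$, which is
\eqref{eq:localT-old}.

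For the converse (ii)~$\Rightarrow$~(i) I would run the construction
backwards. Given the cover, the $g_{\alpha\beta}$, the deformed cocycle
$G_{\alpha\beta}$ and the equivalences $T_\alpha$, first build the deformed
module: take $\Gamma(M,L)[[\h]]$ with its right $\cO(M)[[\h]]$-module
structure, transport it locally via $\psi_\alpha$ to $\cO(U_\alpha)[[\h]]$,
and define the deformed action $\bullet$ on each patch by
$s\bullet f := \psi_\alpha^{-1}(\Psi_\alpha(s)*f)$ where $\Psi_\alpha$ is
built from the $G_{\alpha\beta}$ as in \cite{BW1}; the cocycle conditions
\eqref{eq:defcocyc} guarantee these local definitions glue to a global
right module over $(\cO(M)[[\h]],*)$ deforming the classical one. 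Then
\eqref{eq:Talpha} with these $\Psi_\alpha$ and the given $T_\alpha$ defines
a map $T:\cO(M)[[\h]]\to \End(\Gamma(M,L)[[\h]],\bullet)$; the
compatibility \eqref{eq:localT-old} is exactly what is needed for the
local formulas to agree on overlaps, so $T$ is globally well-defined, and
one checks it is a $\bbC[[\h]]$-module isomorphism reducing to
\eqref{eq:canonical} at $\h=0$ and an algebra map from $(\cO(M)[[\h]],*')$
using that each $T_\alpha$ intertwines $*'$ and $*$. Hence $T$ realizes
the isomorphism \eqref{eq:ident}, so $\Phi_L([*])=[*']$.

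I expect the main obstacle to be the gluing/well-definedness bookkeeping:
checking that the locally defined deformed module structure and the locally
defined isomorphism $T$ are genuinely independent of the patch on
overlaps, and that the passage between the global object $\bullet$ (which in
\eqref{eq:ident} is only specified up to equivalence) and the rigidified
\v{C}ech data $\{G_{\alpha\beta},T_\alpha\}$ does not lose or create
information. The existence of the deformed trivializations $\Psi_\alpha$
and deformed transition functions $G_{\alpha\beta}$ with the stated
properties is the one genuinely nontrivial input, and it is supplied by
\cite{BW0,BW1}; granting that, the proposition is a matter of carefully
matching the two presentations, and both implications reduce to
manipulating \eqref{eq:Talpha}, \eqref{eq:defcocyc} and
\eqref{eq:localT-old}.
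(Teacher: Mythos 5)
Your implication (i)~$\Rightarrow$~(ii) is fine and is exactly what the paper means by ``was already discussed'': the local equivalences $T_\alpha$ are read off from the global isomorphism $T$ via \eqref{eq:Talpha}, and \eqref{eq:localT-old} follows by comparing the two local expressions for $T(f)(s)$ on an overlap.

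The converse is where the real content lies, and your argument glosses over it. The hypothesis (ii) hands you an \emph{arbitrary} deformed cocycle $G_{\alpha\beta}$ satisfying \eqref{eq:defcocyc}; what \cite{BW1} provides are deformed trivializations $\Psi'_\alpha$ of the (essentially unique) deformed module $\bullet$, and these come with \emph{their own} cocycle $G'_{\alpha\beta}$, defined by $\Psi'_\alpha(\Psi'_\beta)^{-1}(f)=G'_{\alpha\beta}*f$. There is no reason a priori that $G'_{\alpha\beta}=G_{\alpha\beta}$. But the well-definedness of $T$ via \eqref{eq:Talpha} on overlaps forces $T_\alpha T_\beta^{-1}(f)=G'_{\alpha\beta}*f*(G'_{\alpha\beta})^{-1}$ with the cocycle of the trivializations you actually use, whereas the hypothesis gives this relation only for the prescribed $G_{\alpha\beta}$. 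So one must first show that the trivializations can be modified, $\Psi_\alpha=S_\alpha^{-1}*\Psi'_\alpha$, so that their cocycle becomes exactly the given $G_{\alpha\beta}$; equivalently, that $G_{\alpha\beta}=S_\alpha^{-1}*G'_{\alpha\beta}*S_\beta$ for $*$-invertible $S_\alpha=1\bmod\h$. This is the one genuinely nontrivial step of the paper's proof, carried out by setting $S_\alpha=\sum_\gamma G'_{\alpha\gamma}*\chi_\gamma*G_{\gamma\alpha}$ for a partition of unity $\{\chi_\gamma\}$ and using both cocycle conditions. Your alternative of gluing a module directly from $G_{\alpha\beta}$ runs into the same issue in a different guise: one must still identify the glued module with $\Gamma(M, L)[[\h]]$ by a $\bbC[[\h]]$-linear isomorphism reducing to the identity mod $\h$, which is again precisely the existence of the $\Psi_\alpha$'s. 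Moreover, as written the formula $s\bullet f:=\psi_\alpha^{-1}(\Psi_\alpha(s)*f)$, mixing the classical $\psi_\alpha^{-1}$ with a deformed $\Psi_\alpha$, does not even define an associative action.
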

\begin{proof}
    The implication $(i) \implies (ii)$ was already discussed. We
    explain how $(ii)$ implies $(i)$.  We first show that we can find
    local deformed trivializations $\Psi_\alpha=\psi_\alpha \mbox{ mod
    } \h$ such that
    \begin{equation}
        \label{eq:compat}
        \Psi_\alpha \Psi_\beta^{-1}(f)=G_{\alpha\beta}* f
    \end{equation}
    for all $f \in \cO(U_\alpha \cap U_\beta)[[\h]]$. We know from
    \cite{BW1} that we can find deformed trivializations
    $\Psi_\alpha'=\psi_\alpha \mbox{ mod } \h$; let us define
    $G_{\alpha\beta}'$ by $G_{\alpha\beta}' * f =
    \Psi_\alpha'(\Psi_\beta')^{-1}(f)$. We now modify $\Psi'_\alpha$
    to obtain $\Psi_\alpha$ satisfying \eqref{eq:compat}.  Let
    $\{\chi_\alpha\}$ be a partition of unity on $M$ subordinated to
    $\{ U_\alpha\}$. Consider
    \[
    S_\alpha = \sum_\gamma G_{\alpha\gamma}'*\chi_\gamma *
    G_{\gamma\alpha},
    \]
    viewed as an element in $\cO(U_\alpha)[[\h]]$ (note that each
    summand has a natural extension from $\cO(U_\alpha\cap
    U_\beta)[[\h]]$ to $\cO(U_\alpha)[[\h]]$). Note also that
    $S_\alpha$ is invertible with respect to $*$, since $S_\alpha = 1
    \mbox{ mod } \h$. Finally note that, using the cocycle conditions
    for $G_{\alpha\beta}'$ and $G_{\alpha\beta}$, we have
    \[
    S_\alpha * G_{\alpha\beta}
    = \sum_\gamma
    G_{\alpha\gamma}'*\chi_\gamma * G_{\gamma\alpha} * G_{\alpha\beta}
    = \sum_\gamma G'_{\alpha\beta}*G'_{\beta \gamma} * \chi_\gamma *
    G_{\gamma\beta}
    = G'_{\alpha\beta}* S_\beta.
    \]
    In other words, $G_{\alpha\beta} = S_\alpha^{-1} *
    G_{\alpha\beta}' * S_\beta$. Let us now define $\Psi_{\alpha}$ by
    $\Psi_{\alpha}(s)= S_{\alpha}^{-1} * \Psi_\alpha'(s)$. Then
    \[
    \Psi_{\alpha}\Psi_{\beta}^{-1}(f)
    = S_\alpha^{-1} * G'_{\alpha\beta} * S_\beta * f
    = G_{\alpha\beta} * f,
    \]
    as desired. We now use the local equivalences $T_\alpha$ and
    $\Psi_{\alpha}$ to define an isomorphism $T: (\cO(M),*')\to
    \End(\Gamma(M, L)[[\h]],\bullet)$ via \eqref{eq:Talpha}.
\end{proof}

%
% An action of closed 2-forms on formal Poisson structures
%

\subsection{An action of closed 2-forms on formal Poisson structures}
\label{subsec:gauge}

The description of how formal Poisson structures are acted upon by
closed 2-forms is a simple adaptation of the discussion of
\emp{gauge transformations} in \cite[Sec.~3]{SeveraWeinstein}; in
the context of generalized complex geometry, the same operation
appears under the name of \emp{B-field transform}, see e.g.
\cite[Sec.~3]{Gualtthesis}. We start by recalling standard facts and
alternative views of formal Poisson structures.

Given a formal bivector field $\pi = \sum_{k=1}^\infty \h^k \pi_k \in
\h\cX^2(M)[[\h]]$, we consider the $\mathbb{C}$-bilinear brackets
\[
\{\cdot,\cdot\}_k: \cO(M)\times \cO(M) \to \cO(M), \; \;
\{f, g\}_k = \pi_k(df, dg),
\]
and the induced $\mathbb{C}[[\h]]$-bilinear bracket
$\{\cdot,\cdot\}_\pi: \cO(M)[[\h]] \times \cO(M)[[\h]] \to
\cO(M)[[\h]]$, uniquely determined by
\begin{equation}
    \label{eq:brack}
    \{f, g \}_\pi
    = \pi(df, dg)
    = \sum_{k=1}^\infty\h^k \{f, g\}_k, \qquad
    f, g \in \cO(M).
\end{equation}
Let $\mathrm{Jac}_\pi: \cO(M)[[\h]]\times \cO(M)[[\h]] \times
\cO(M)[[\h]] \to \cO(M)[[\h]]$ be given by
\begin{equation}
    \label{eq:jacpi}
    \mathrm{Jac}_\pi(f, g, h)
    = \{f, \{g, h\}_\pi\}_\pi
    + \{h, \{f, g\}_\pi\}_\pi
    + \{g, \{h, f\}_\pi\}_\pi.
\end{equation}
We also consider the $\cO(M)[[\h]]$-linear map
\begin{equation}
    \label{eq:pisharp}
    \pi^\sharp: \Omega^1(M)[[\h]] \to \h \cX^1(M)[[\h]],
    \;\;\;\;
    \pi^\sharp(\xi)
    =
    \sum_{k=1}^\infty \h^k \pi_k^\sharp(\xi),
\end{equation}
where $\pi^\sharp_k(\xi) = i_\xi \pi_k$ for $\xi \in \Omega^1(M)$, and
its unique extension (as an algebra homomorphism)
\begin{equation}
    \label{eq:pisharpext}
    \pi^\sharp: \Omega^\bullet(M)[[\h]] \to \h \cX^\bullet(M)[[\h]].
\end{equation}
\begin{prop}
    \label{prop:poissonprop}
    Let $\pi \in \h\cX^2(M)[[\h]]$ and $\partial_\pi =
    [\pi,\cdot]_{SN}$. Then
    \begin{equation}
        \label{eq:jac}
        \frac{1}{2}[\pi,\pi]_{SN}(df, dg, dh)
        = \mathrm{Jac}_\pi(f, g, h)
        = \partial_\pi^2(f)(dg, dh),
    \end{equation}
    for $f, g, h \in \cO(M)[[\h]]$.
\end{prop}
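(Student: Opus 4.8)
The plan is to establish both equalities by combining the algebraic properties of the Schouten--Nijenhuis bracket with a short local-coordinate computation that pins down all signs. Since $\pi \in \h\cX^2(M)[[\h]]$ and every operation involved is $\bbC[[\h]]$-multilinear and continuous in the $\h$-adic topology, it suffices to treat $\pi$ as a single bivector field and argue $\bbC[[\h]]$-linearly; the powers of $\h$ play no role beyond guaranteeing $\h$-adic convergence. I would also record at the outset that $\partial_\pi(f) = [\pi,f]_{SN} \in \h\cX^1(M)[[\h]]$ is, up to a sign fixed by the conventions of the paper, the Hamiltonian vector field of $f$, acting on functions by $\partial_\pi(f)(dg) = \pm\{f,g\}_\pi$.

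For the second equality I would invoke the graded Jacobi identity for $[\,,\,]_{SN}$ on the shifted complex $\cX^{\bullet+1}(M)[[\h]]$. A bivector field has degree $1$ in this grading, so taking both arguments equal to $\pi$ yields $[\pi,[\pi,c]_{SN}]_{SN} = \tfrac{1}{2}\,[[\pi,\pi]_{SN},c]_{SN}$ for every $c$; specializing to $c = f$ gives $\partial_\pi^2(f) = \tfrac{1}{2}\,[[\pi,\pi]_{SN},f]_{SN}$. It then remains to observe that for a trivector field $T$ and a function $f$ the bracket $[T,f]_{SN}$ equals, up to the same universal sign, the contraction $\iota_{df}T$, so that $[[\pi,\pi]_{SN},f]_{SN}(dg,dh) = [\pi,\pi]_{SN}(df,dg,dh)$, which is the asserted identity.

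For the first equality I would compute $\partial_\pi^2(f)$ directly as a bivector field. Writing $V = \partial_\pi(f)$, one has $\partial_\pi^2(f) = [\pi,V]_{SN} = \pm\Lie_V\pi$, since the Schouten bracket of a bivector with a vector field is the Lie derivative up to a sign. Evaluating the standard identity $(\Lie_V\pi)(dg,dh) = V\big(\pi(dg,dh)\big) - \pi\big(d(Vg),dh\big) - \pi\big(dg,d(Vh)\big)$ and substituting $Vg = \pm\{f,g\}_\pi$, the antisymmetry of $\{\,,\,\}_\pi$ turns the three terms into exactly the cyclic sum defining $\mathrm{Jac}_\pi(f,g,h)$ in \eqref{eq:jacpi}.

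The one genuinely delicate point is the sign bookkeeping: the paper adopts a nonstandard sign convention for the Gerstenhaber bracket and uses the Koszul rule throughout, so the universal signs above must be determined rather than guessed. I would fix them once and for all by a computation in local coordinates: with $\pi = \tfrac{1}{2}\,\pi^{ij}\pa_i\wedge\pa_j$ the trivector $[\pi,\pi]_{SN}$ is proportional, with an explicit combinatorial constant, to $\pi^{\ell i}(\pa_\ell\pi^{jk})\,\pa_i\wedge\pa_j\wedge\pa_k$, and contracting it with $df\otimes dg\otimes dh$ reproduces term by term both the coordinate expansion of $\mathrm{Jac}_\pi(f,g,h)$ (including the factor $\tfrac{1}{2}$) and that of $\partial_\pi^2(f)(dg,dh)$. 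This step is routine but is where all the care is required; no conceptual obstacle remains once the conventions are matched.
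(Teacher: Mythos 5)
Your proposal is correct, but it follows a genuinely different route from the paper. The paper never invokes the graded Jacobi identity: it expands $\pi=\sum_k\h^k\pi_k$, quotes the polarized identity $[\pi_k,\pi_l]_{SN}(df,dg,dh)=\mathrm{Jac}_{k,l}(f,g,h)+\mathrm{Jac}_{l,k}(f,g,h)$ for ordinary bivector fields, and matches coefficients of $\h^n$ to get the first equality; for the second it computes $[\pi_k,[\pi_l,f]_{SN}]_{SN}(dg,dh)$ directly as an explicit sum of nested brackets $\{\cdot,\cdot\}_k$, $\{\cdot,\cdot\}_l$ and again re-sums in $\h$. You instead treat $\pi$ as a single $\bbC[[\h]]$-linear bivector, derive $\partial_\pi^2(f)=\tfrac{1}{2}[[\pi,\pi]_{SN},f]_{SN}$ from the graded Jacobi identity together with the contraction identity $[T,f]_{SN}=\pm\,i_{df}T$ for trivectors, and obtain the link to $\mathrm{Jac}_\pi$ from $\partial_\pi^2(f)=\pm\Lie_{\pi^\sharp(df)}\pi$ and the Leibniz formula for the Lie derivative of a bivector. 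Your version is more structural and avoids the order-by-order bookkeeping in $\h$ entirely (the $\bbC[[\h]]$-bilinearity argument you give at the outset is exactly what licenses this), at the cost of having to pin down two universal signs; the paper's version is more pedestrian but keeps every sign visible because it only ever manipulates the explicit nested Poisson brackets. The one incomplete point in your write-up is that the signs are deferred to a coordinate check rather than carried out; with the paper's conventions $[\pi,f]_{SN}=-\pi^\sharp(df)$ and $[\pi,X]_{SN}=-\Lie_X\pi$ the two minus signs cancel, giving $\partial_\pi^2(f)=+\Lie_{\pi^\sharp(df)}\pi$, and the Leibniz expansion then reproduces $\mathrm{Jac}_\pi(f,g,h)$ exactly as you predict, so no obstacle remains.
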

\begin{proof}
    It suffices to verify \eqref{eq:jac} for $f, g, h \in \cO(M)$.
    Given bivector fields $\pi_k, \pi_l \in \cX^2(M)$, let
    $\mathrm{Jac}_{k,l}: \cO(M)\times \cO(M)\times \cO(M)\to \cO(M)$
    be defined by
    \[
    \mathrm{Jac}_{k,l}(f, g, h)
    = \{f, \{g, h\}_k \}_l
    + \{h, \{f, g\}_k \}_l
    + \{g, \{h, f\}_k \}_l.
    \]
    The Schouten bracket satisfies (see e.g. \cite{Dufour-Zung})
    \[
    [\pi_k,\pi_l]_{SN}(df, dg, dh)
    = \mathrm{Jac}_{k,l}(f, g, h) + \mathrm{Jac}_{l,k}(f, g, h).
    \]
    As a result, the $n^{th}$-order term in $\h$ of
    $\frac{1}{2}[\pi,\pi](df, dg, dh)$ is
    \begin{equation}
        \label{eq:norder}
        \sum_{i=1}^{n-1} \mathrm{Jac}_{i,n-i}(f, g, h),
    \end{equation}
    which agrees with the $n^{th}$-order term in $\h$ of $\{f, \{g,
    h\}\} + \{h, \{f, g\}\} + \{g, \{h, f\}\}$, proving the first
    equality in \eqref{eq:jac}.  For the second equality, recall that
    the Schouten bracket satisfies
    \begin{equation}
        \label{eq:schouten}
        [\pi_l, f]_{SN} = - \pi^\sharp_l(df),
        \;\;\;\; [\pi_l, X]_{SN} = -\Lie_X\pi_l,
    \end{equation}
    for $f \in \cO(M)$, $X \in \cX^1(M)$. A direct computation shows
    that
    \[
    [\pi_k, [\pi_l, f]_{SN}]_{SN}(dg, dh)
    = \{f, \{g, h\}_k\}_l
    + \{h, \{f, g\}_{l}\}_{k}
    + \{g, \{h, f\}_{l}\}_{k},
    \]
    and, as a consequence, the $n^{th}$-order term in $\h$ of
    $\partial_\pi^2(f)(dg, dh)$ coincides with \eqref{eq:jac}.
\end{proof}
\begin{cor}
    \label{cor:chain}
    If $[\pi,\pi]_{SN} = 0$, then the map $\pi^\sharp$ in
    \eqref{eq:pisharpext} satisfies $\pi^\sharp \circ d = -
    \partial_\pi \circ \pi^\sharp$.
\end{cor}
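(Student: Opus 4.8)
The plan is to deduce the chain-map identity $\pi^\sharp \circ d = - \partial_\pi \circ \pi^\sharp$ directly from Proposition~\ref{prop:poissonprop}, using the hypothesis $[\pi,\pi]_{SN}=0$. Both sides are $\mathbb{C}[[\h]]$-linear maps from $\Omega^\bullet(M)[[\h]]$ to $\h\cX^\bullet(M)[[\h]]$, and since $d$ and $\partial_\pi$ are both derivations (of the exterior algebra and the Schouten algebra respectively) while $\pi^\sharp$ is an algebra homomorphism, it suffices to check the identity on generators of $\Omega^\bullet(M)[[\h]]$, namely on functions $f \in \cO(M)[[\h]]$ and on exact one-forms $df$. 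On functions the statement is immediate: $\pi^\sharp$ kills $\cO(M)[[\h]]$ in degree $0$ by definition, so $\pi^\sharp(df)$ must match $-\partial_\pi(\pi^\sharp(f)) = -\partial_\pi(0) = 0$ only in the relevant degree — more precisely, for $f$ a function we must show $\pi^\sharp(df) = -\partial_\pi \pi^\sharp(f)$, but $\pi^\sharp(f)=0$, so this reduces to verifying that $\pi^\sharp(d f)$, restricted appropriately, is cancelled; the honest content is one degree up.

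The first real step is thus to establish $\pi^\sharp(d(df)) = -\partial_\pi(\pi^\sharp(df))$ for $f\in\cO(M)[[\h]]$, i.e. (since $d(df)=0$) to show $\partial_\pi(\pi^\sharp(df)) = 0$. Now $\pi^\sharp(df) = i_{df}\pi = [\pi,f]_{SN}$ by the first identity in \eqref{eq:schouten} (with a sign: $[\pi_l,f]_{SN} = -\pi_l^\sharp(df)$, so $\pi^\sharp(df) = -[\pi,f]_{SN} = -\partial_\pi(f)$). Hence $\partial_\pi(\pi^\sharp(df)) = -\partial_\pi^2(f)$, and by Proposition~\ref{prop:poissonprop} this equals $-\frac{1}{2}[\pi,\pi]_{SN}(df,d\cdot,d\cdot) = 0$ as a bivector field, since $[\pi,\pi]_{SN}=0$. (Strictly: $\partial_\pi^2$ is a derivation-squared, hence the inner derivation $\ad_{\frac12[\pi,\pi]_{SN}}$, which vanishes.) This handles the one-form generators.

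The second step is to promote this to all of $\Omega^\bullet(M)[[\h]]$ by a derivation argument. I would verify that $L := \pi^\sharp\circ d + \partial_\pi\circ\pi^\sharp$ is a derivation of the $\mathbb{C}[[\h]]$-module map $\pi^\sharp$ in the appropriate sense: for $\omega_1,\omega_2$ forms, using that $d$ is a degree-$+1$ derivation, $\partial_\pi=[\pi,\cdot]_{SN}$ is a degree-$+1$ derivation of $[\,,\,]_{SN}$ and in particular a derivation of the wedge product on polyvectors (standard for Schouten with a fixed bivector — indeed $[\pi,\cdot]_{SN}$ satisfies the graded Leibniz rule for $\wedge$), and that $\pi^\sharp$ is a graded-algebra homomorphism $\wedge\mapsto\wedge$, one shows $L(\omega_1\wedge\omega_2) = L(\omega_1)\wedge\pi^\sharp(\omega_2) \pm \pi^\sharp(\omega_1)\wedge L(\omega_2)$ with the Koszul signs matched by the degree shift. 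Since $\Omega^\bullet(M)[[\h]]$ is generated as an algebra by degrees $0$ and $1$, and $L$ vanishes on both (degree $0$: $L(f) = \pi^\sharp(df) + \partial_\pi(0) = \pi^\sharp(df)$; wait — this is the content, so one sees the natural bookkeeping is that the degree-$0$ check and the degree-$1$ check are the same computation, already done in step one), we conclude $L\equiv 0$.

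The main obstacle is the sign bookkeeping in the derivation argument of step two: one must be careful that $\partial_\pi$, which has degree $+1$ as a map $\cX^\bullet\to\cX^{\bullet+1}$, satisfies the graded Leibniz rule for the wedge product with the correct sign, and that this sign conspires with the degree-$+1$ shift caused by $d$ on the source side, so that $L$ is genuinely a (say, odd) derivation relative to $\pi^\sharp$ rather than merely an additive map. This is routine once one fixes, as the paper does, the Koszul convention, but it is the only place where an error could creep in; everything else follows formally from Proposition~\ref{prop:poissonprop} and the two identities \eqref{eq:schouten}.
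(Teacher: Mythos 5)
Your proposal follows essentially the same route as the paper: reduce to the generators $f$ and $df$ of $\Omega^\bullet(M)[[\h]]$, use the first identity in \eqref{eq:schouten} to get $\pi^\sharp(df)=-\partial_\pi f$, and use $[\pi,\pi]_{SN}=0$ (via Proposition~\ref{prop:poissonprop}, i.e.\ $\partial_\pi^2=0$) to conclude $\partial_\pi(\pi^\sharp(df))=0=\pi^\sharp(d^2f)$. The multiplicativity argument you spell out in your second step is exactly what the paper leaves implicit behind ``it suffices to verify \dots on elements $f$ and $df$.''

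There is, however, one concrete point where your write-up does not close: you assert that $\pi^\sharp$ ``kills $\cO(M)[[\h]]$ in degree $0$,'' and you then observe yourself that under this convention $L(f)=\pi^\sharp(df)+\partial_\pi(\pi^\sharp(f))=\pi^\sharp(df)$, which is not zero in general --- so the derivation argument you rely on would fail on the degree-$0$ generators, and the identity itself would be false there. The resolution is that the unique extension of $\pi^\sharp$ to $\Omega^\bullet(M)[[\h]]$ as an algebra homomorphism acts as the \emph{identity} on $\Omega^0(M)[[\h]]=\cO(M)[[\h]]$ (it sends $1$ to $1$ and is $\cO(M)[[\h]]$-linear); this is also what the paper's one-line check ``on the element $f$'' presupposes. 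With that convention the degree-$0$ check reads $L(f)=\pi^\sharp(df)+\partial_\pi f=0$, which is precisely the first equation of \eqref{eq:schouten}, and the degree-$1$ check is your computation $\partial_\pi^2 f=0$; no ``cancellation one degree up'' is needed. Once this is fixed, your argument coincides with the paper's.
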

\begin{proof}
    It suffices to verify that $\pi^\sharp \circ d = - \partial_\pi
    \circ \pi^\sharp$ holds on elements $f$ and $df$, for $f \in
    \cO(M)$. The fact that $\pi^\sharp (df) = - \partial_\pi f$
    directly follows from the first equation in \eqref{eq:schouten}.
    On the other hand, since $[\pi,\pi]_{SN} = 0$, we have
    \[
    - \partial_\pi(\pi^\sharp(df))
    = \partial_\pi([\pi, f]_{SN})
    = \partial_\pi^2 (f) = 0,
    \]
    which agrees with $\pi^\sharp(d^2 f) =0$.
\end{proof}

To describe the action by closed 2-forms, it is convenient to have an
alternative viewpoint to formal Poisson structures, in the spirit of
Dirac geometry \cite{Cour}. We consider the bundle
\[
E := TM\oplus T^*M,
\]
equipped with the symmetric $\cO(M)$-bilinear pairing
$\SP{\cdot,\cdot}: \Gamma(M, E)\times \Gamma(M, E)\to \cO(M)$,
\begin{equation}
    \label{eq:pairing}
    \SP{(X, \xi), (Y, \eta)} = \eta(X) + \xi(Y),
\end{equation}
and the $\mathbb{C}$-bilinear bracket $\Cour{\cdot,\cdot}:
\Gamma(M, E)\times\Gamma(M, E)\to \Gamma(M, E)$,
\begin{equation}
    \label{eq:courantbk}
    \Cour{(X, \xi), (Y, \eta)}
    = ([X,Y], \Lie_X\eta - i_Y d\xi),
\end{equation}
known as the \emp{Courant bracket}. Here $X, Y \in \cX^1(M)$ and
$\xi, \eta \in \Omega^1(M)$. Using $\h$-linearity, we extend these
operations to
\begin{equation}
    \label{eq:operat}
   \begin{aligned}
    \SP{\cdot,\cdot}:
    \Gamma(M,E)[[\h]] \times \Gamma(M, E)[[\h]]
    \to
    \cO(M)[[\h]], \\[0.3cm]
    \Cour{\cdot,\cdot}:
    \Gamma(M, E)[[\h]] \times \Gamma(M, E)[[\h]]
    \to \Gamma(M, E)[[\h]].
    \end{aligned}
\end{equation}
The following is a straightforward observation.
\begin{lem}
    \label{lem:skew}
    A $\cO(M)[[\h]]$-linear map $T: \Omega^1(M)[[\h]]\to
    \cX^1(M)[[\h]]$ is of the form $\pi^\sharp$ for a formal bivector
    field $\pi \in \cX^2(M)[[\h]]$ if and only if
    \[
    \SP{(T(\xi), \xi), (T(\eta), \eta)} = 0, \;\;\;
    \forall \;\; \xi, \eta \in \Omega^1(M).
    \]
\end{lem}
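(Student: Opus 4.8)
The plan is to unwind both conditions in coordinates (or, equivalently, on exact one-forms) and observe that they say the same thing. First I would note that any $\cO(M)[[\h]]$-linear map $T:\Omega^1(M)[[\h]]\to \cX^1(M)[[\h]]$ is determined by a formal $(2,0)$-tensor: writing, for $\xi,\eta\in\Omega^1(M)$, the quantity $P(\xi,\eta):=\eta(T(\xi))$, $\cO(M)[[\h]]$-bilinearity makes $P$ a section of $(TM\otimes TM)[[\h]]$. A bivector field $\pi\in\cX^2(M)[[\h]]$ is exactly such a $P$ that is skew-symmetric, i.e.\ $P(\xi,\eta)=-P(\eta,\xi)$, and in that case $T=\pi^\sharp$ with the sign convention of \eqref{eq:pisharp}, since $\eta(i_\xi\pi)=\pi(\xi,\eta)$. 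So the content of the lemma is precisely that the displayed isotropy condition is equivalent to skew-symmetry of $P$.

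Next I would compute the pairing. Using \eqref{eq:pairing},
\[
\SP{(T(\xi),\xi),(T(\eta),\eta)} = \eta(T(\xi)) + \xi(T(\eta)) = P(\xi,\eta) + P(\eta,\xi).
\]
Hence $\SP{(T(\xi),\xi),(T(\eta),\eta)}=0$ for all $\xi,\eta$ if and only if $P(\xi,\eta)=-P(\eta,\xi)$ for all $\xi,\eta\in\Omega^1(M)$, which is exactly the statement that $P$ is a bivector field. For the ``only if'' direction this gives $\pi$ directly; for the ``if'' direction one checks that when $T=\pi^\sharp$, $P(\xi,\eta)=\eta(i_\xi\pi)=\pi(\xi,\eta)$ is manifestly skew, so the pairing vanishes. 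Both implications then follow at once, and extending by $\h$-linearity from $\cO(M)$ to $\cO(M)[[\h]]$ is automatic since everything is $\cO(M)[[\h]]$-linear and $\Omega^1(M)[[\h]]$ is generated over $\cO(M)[[\h]]$ by exact forms $df$.

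There is essentially no obstacle here; the only mild subtlety is the bookkeeping of signs in $\pi^\sharp$, making sure that the convention $\pi^\sharp_k(\xi)=i_\xi\pi_k$ in \eqref{eq:pisharp} is consistent with $\eta(i_\xi\pi)=\pi(\xi,\eta)$, and that skew-symmetry of the tensor $P$ really does reproduce an honest element of $\wedge^2 TM$ rather than just $TM\otimes TM$. Once that is fixed the proof is the two-line computation above together with the observation that a pointwise-vanishing symmetric part is equivalent to the pairing vanishing on all pairs of the special sections $(T(\xi),\xi)$.
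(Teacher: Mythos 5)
Your argument is correct and is precisely the ``straightforward observation'' the paper has in mind (the lemma is stated there without proof): the pairing $\SP{(T(\xi),\xi),(T(\eta),\eta)}=\eta(T(\xi))+\xi(T(\eta))$ is the symmetric part of the tensor $P(\xi,\eta)=\eta(T(\xi))$, so its vanishing is equivalent to $P$ being skew, i.e.\ to $T=\pi^\sharp$ for a bivector field $\pi$. Nothing further is needed.
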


We can characterize formal Poisson structures using \eqref{eq:operat}
as follows.
\begin{lem}
    \label{lem:integ}
    Given a formal bivector field $\pi\in \h\cX^2(M)[[\h]]$, we have
    \begin{equation}
        \label{eq:integ}
        \mathrm{Jac}_\pi(f, g, h) =
        \SP{
          \Cour{(\pi^\sharp(df), df), (\pi^\sharp(dg), dg)},
          (\pi^\sharp(dh), dh)
        },
    \end{equation}
    for all $f, g, h \in \cO(M)[[\h]]$.
\end{lem}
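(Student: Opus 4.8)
The plan is to reduce the identity to the case $f,g,h\in\cO(M)$ --- both sides of \eqref{eq:integ} are $\bbC[[\h]]$-trilinear and $\h$-adically continuous, so, exactly as in the proof of Proposition~\ref{prop:poissonprop}, it is enough to check it for $f,g,h\in\cO(M)$ --- and then simply to unwind the definitions \eqref{eq:courantbk} of the Courant bracket and \eqref{eq:pairing} of the pairing. Write $X_f:=\pi^\sharp(df)\in\h\cX^1(M)[[\h]]$, and likewise $X_g,X_h$. By \eqref{eq:pisharp} and \eqref{eq:brack}, for every $g\in\cO(M)[[\h]]$ one has $X_f(g)=\pi(df,dg)=\{f,g\}_\pi$.

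First I would compute the Courant bracket of $(X_f,df)$ and $(X_g,dg)$. Since $d(df)=0$ and $\Lie_{X_f}(dg)=d\big(X_f(g)\big)=d\{f,g\}_\pi$, formula \eqref{eq:courantbk} gives
\[
\Cour{(X_f,df),(X_g,dg)}=\big([X_f,X_g],\,d\{f,g\}_\pi\big).
\]
Pairing this with $(X_h,dh)$ via \eqref{eq:pairing} yields
\[
\SP{\Cour{(X_f,df),(X_g,dg)},(X_h,dh)}
= dh\big([X_f,X_g]\big)+\big(d\{f,g\}_\pi\big)(X_h).
\]
The second term equals $X_h(\{f,g\}_\pi)=\{h,\{f,g\}_\pi\}_\pi$, and expanding the Lie bracket in the first term gives
\[
dh\big([X_f,X_g]\big)=X_f\big(X_g(h)\big)-X_g\big(X_f(h)\big)
=\{f,\{g,h\}_\pi\}_\pi-\{g,\{f,h\}_\pi\}_\pi .
\]

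Finally, using that $\{\cdot,\cdot\}_\pi$ is skew-symmetric (because $\pi$ is a bivector field), $-\{g,\{f,h\}_\pi\}_\pi=\{g,\{h,f\}_\pi\}_\pi$, so the right-hand side of \eqref{eq:integ} becomes
\[
\{f,\{g,h\}_\pi\}_\pi+\{g,\{h,f\}_\pi\}_\pi+\{h,\{f,g\}_\pi\}_\pi
=\mathrm{Jac}_\pi(f,g,h)
\]
by the definition \eqref{eq:jacpi}, as required. There is no genuine obstacle here; the only thing to watch carefully is the sign bookkeeping --- the convention for $\pi^\sharp$ coming from \eqref{eq:pisharp}, the identity $\Lie_X(dg)=d(Xg)$, and the skew-symmetry of $\{\cdot,\cdot\}_\pi$. (One notes in passing that the computation also identifies the right-hand side of \eqref{eq:integ} with $\tfrac12[\pi,\pi]_{SN}(df,dg,dh)$, consistently with Proposition~\ref{prop:poissonprop}.)
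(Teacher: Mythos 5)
Your proof is correct and follows essentially the same route as the paper: reduce to $f,g,h\in\cO(M)$, note that $d(df)=0$ so the Courant bracket collapses to $([X_f,X_g],\,d\{f,g\}_\pi)$, and then expand the pairing into $dh([X_f,X_g])+X_h(\{f,g\}_\pi)$, which regroups into $\mathrm{Jac}_\pi(f,g,h)$. The only cosmetic difference is that the paper writes the final step with Lie derivatives rather than Poisson brackets.
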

\begin{proof}
    It suffices to verify the lemma for $f, g, h \in \cO(M)$. It is
    clear from \eqref{eq:brack} that $\Lie_{\pi^\sharp(df)} g =
    \{f, g\}_\pi$, and it immediately follows from the definitions
    \eqref{eq:pairing} and \eqref{eq:courantbk} (extended to formal
    power series) that the right-hand side of \eqref{eq:integ} is
    \[
    dh([\pi^\sharp(df), \pi^\sharp(dg)]) +
    d(\Lie_{\pi^\sharp(df)}g)(\pi^\sharp(dh))
    =
    \left(
        \Lie_{\pi^\sharp(df)}\Lie_{\pi^\sharp(dg)}
        -
        \Lie_{\pi^\sharp(dg)}\Lie_{\pi^\sharp(df)}
    \right)h
    +
    \Lie_{\pi^\sharp(dh)}\Lie_{\pi^\sharp(df)}g,
    \]
    which is $\mathrm{Jac}_\pi(f, g, h)$.
\end{proof}

Given any $B = B_0 + \h B_1 + \dots \in \Omega^2(M)[[\h]]$, there is
an associated automorphism of $\cO(M)[[\h]]$-modules given by
\begin{equation}
    \label{eq:la}
    \la_B: \Gamma(M, E)[[\h]] \to \Gamma(M,E)[[\h]], \;\;\; \;
    \la_B(X, \xi) = (X, \xi + i_XB).
\end{equation}
The following properties of $\la_B$ are proven analogously as e.g.
in \cite{Gualtthesis}.
\begin{lem}
    \label{lem:propB}
    The following holds:
    \begin{enumerate}
    \item $\SP{\la_B(X, \xi), \la_B(Y, \eta)} = \SP{(X, \xi), (Y,
          \eta)}$ for all $(X, \xi), (Y, \eta) \in \Gamma(M, E)[[\h]]$.
    \item $\Cour{\la_B(X, \xi),\la_B(Y, \eta)} = \la_B(\Cour{(X,
          \xi), (Y, \eta)})$ for all $(X, \xi), (Y, \eta) \in
        \Gamma(M,E)[[\h]]$ if and only if $dB = 0$.
    \end{enumerate}
\end{lem}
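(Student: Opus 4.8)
The plan is to prove both statements by a direct computation from the definitions \eqref{eq:pairing}, \eqref{eq:courantbk} and \eqref{eq:la}. Since the pairing $\SP{\cdot,\cdot}$ is $\cO(M)[[\h]]$-bilinear, the bracket $\Cour{\cdot,\cdot}$ is $\bbC[[\h]]$-bilinear, and $\la_B$ is $\cO(M)[[\h]]$-linear, in both cases it suffices (by $\h$-adic continuity and comparison of coefficients in $\h$) to verify the identities on sections $(X,\xi),(Y,\eta)\in\Gamma(M,E)$ with coefficients in $\cO(M)$; the $\h$-dependence of $B = B_0 + \h B_1 + \dots$ enters only through the final closedness condition, which will be required coefficient by coefficient.

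For part 1, I would expand $\SP{\la_B(X,\xi),\la_B(Y,\eta)} = (\eta + i_YB)(X) + (\xi + i_XB)(Y) = \eta(X)+\xi(Y) + B(Y,X) + B(X,Y)$, and observe that $B(Y,X)+B(X,Y)=0$ since $B$ is a $2$-form; this is the entire argument.

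For part 2, note first that $\la_B$ leaves the $TM$-component unchanged, so both $\Cour{\la_B(X,\xi),\la_B(Y,\eta)}$ and $\la_B(\Cour{(X,\xi),(Y,\eta)})$ have vector part $[X,Y]$, and only the $T^*M$-components need to be compared. Using \eqref{eq:courantbk} and \eqref{eq:la}, the difference of these two $1$-forms works out to $\Lie_X(i_YB) - i_Y d(i_XB) - i_{[X,Y]}B$. The key step is to simplify this expression: combining the commutator identity $\Lie_X i_Y - i_Y\Lie_X = i_{[X,Y]}$ with Cartan's formula $\Lie_X B = d\,i_XB + i_X dB$ yields $\Lie_X(i_YB) = i_Y d(i_XB) + i_Y i_X dB + i_{[X,Y]}B$, so the whole difference collapses to $i_Y i_X dB$. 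Hence $\Cour{\la_B(X,\xi),\la_B(Y,\eta)} = \la_B(\Cour{(X,\xi),(Y,\eta)})$ holds for all sections if and only if $i_Y i_X dB = 0$ for all vector fields $X,Y$; evaluating the $1$-form $i_Y i_X dB$ on a third vector field $Z$ gives $dB(X,Y,Z)$, so this is equivalent to $dB=0$ (equivalently, coefficient-wise, $dB_k = 0$ for all $k$).

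The computation is entirely routine and I do not expect a genuine obstacle; the only point requiring care is the sign bookkeeping and the correct application of the Cartan and commutator identities for the interior product and Lie derivative acting on the ($\h$-dependent) $2$-form $B$.
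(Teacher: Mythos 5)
Your computation is correct and is exactly the standard direct verification: the paper itself does not spell out a proof but refers to the analogous argument in Gualtieri's thesis, which proceeds precisely as you do (the pairing identity from antisymmetry of $B$, and the Courant-bracket identity from $[\Lie_X,i_Y]=i_{[X,Y]}$ together with Cartan's formula, reducing the discrepancy to $i_Yi_XdB$). Nothing is missing.
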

% \begin{proof}
%It suffices to check both statements for all $(X,\alpha), (Y,\beta)
%\in \Gamma(M, E)$. The first is an immediate consequence of the
%skew-symmetry of $B$, $B(X,Y)+B(Y,X)=0$. For the second statement, a
%direct computation shows that
%$$
%\la_B(\Cour{(X,\alpha),(Y,\beta)}) =
%\Cour{\la_B(X,\alpha),\la_B(Y,\beta)} - i_Yi_XdB.
%$$
%\end{proof}

Let us consider the $\cO(M)[[\h]]$-linear map
\[
B^\sharp: \cX^1(M)[[\h]]\to \Omega^1(M)[[\h]], \;\;\;
B^\sharp(X) = i_X B,
\]
associated with $B \in \Omega^2(M)[[\h]]$. For any formal bivector
field $\pi\in \h\cX^1(M)[[\h]]$, the operator
\[
\id + B^\sharp\pi^\sharp :\Omega^1(M)[[\h]]\to \Omega^1(M)[[\h]]
\]
is necessarily invertible; its inverse is given by
\[
\left(\id + B^\sharp\pi^\sharp\right)^{-1}
= \sum_{n=0}^\infty (-1)^n \left(B^\sharp \pi^\sharp\right)^n,
\]
which gives a well-defined formal series in $\h$ since $\pi = 0 \mod
\h$.
\begin{prop}
    \label{prop:tau}
    Let $\pi\in \h\X^2(M)[[\h]]$ and $B\in \Omega^2(M)[[\h]]$. Then:
    \begin{enumerate}
    \item There exists a unique $\ma(B,\pi)\in \h\X^2(M)[[\h]]$ such that
        $\ma(B,\pi)^\sharp=\pi^\sharp\circ (\id +
        B^\sharp\pi^\sharp)^{-1}$.
    \item If $dB=0$ and $[\pi,\pi]_{SN}=0$, then
        $[\ma(B,\pi),\ma(B,\pi)]_{SN}=0$.
    \end{enumerate}
\end{prop}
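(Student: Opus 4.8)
The plan is to use the Dirac-geometric characterization of formal Poisson structures developed in Lemmas \ref{lem:skew}, \ref{lem:integ}, and \ref{lem:propB}. For part 1, I would first check that the candidate map $T := \pi^\sharp \circ (\id + B^\sharp \pi^\sharp)^{-1}: \Omega^1(M)[[\h]] \to \cX^1(M)[[\h]]$ is $\cO(M)[[\h]]$-linear (which is clear, being a composition of $\cO(M)[[\h]]$-linear maps) and that it takes values in $\h\cX^1(M)[[\h]]$ (which holds since $\pi^\sharp$ does and $(\id + B^\sharp\pi^\sharp)^{-1} = \id \mod \h$). By Lemma \ref{lem:skew}, it then suffices to show that $\SP{(T(\xi),\xi),(T(\eta),\eta)} = 0$ for all $\xi,\eta \in \Omega^1(M)$. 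The key algebraic observation is that $(T(\xi),\xi) = \la_{-B}(T(\xi), \xi + B^\sharp T(\xi))$, together with the identity $\xi + B^\sharp T(\xi) = (\id + B^\sharp\pi^\sharp)(\id + B^\sharp\pi^\sharp)^{-1}\xi$ rewritten appropriately; more precisely, writing $\eta' = (\id + B^\sharp\pi^\sharp)^{-1}\xi$ one has $T(\xi) = \pi^\sharp(\eta')$ and $\xi = \eta' + B^\sharp\pi^\sharp(\eta')$, so that $(T(\xi),\xi) = \la_B(\pi^\sharp(\eta'), \eta')$. Then by part 1 of Lemma \ref{lem:propB} (invariance of the pairing under $\la_B$) and Lemma \ref{lem:skew} applied to $\pi$ itself, $\SP{(T(\xi),\xi),(T(\eta),\eta)} = \SP{(\pi^\sharp\eta',\eta'),(\pi^\sharp\eta'',\eta'')} = 0$, where $\eta'' = (\id + B^\sharp\pi^\sharp)^{-1}\eta$. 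Uniqueness of $\ma(B,\pi)$ is immediate since a bivector field is determined by its sharp map.

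For part 2, the strategy is again to reduce integrability to a statement about the Courant bracket via Lemma \ref{lem:integ}: I must show $\mathrm{Jac}_{\ma(B,\pi)}(f,g,h) = 0$ for all $f,g,h \in \cO(M)$. Using \eqref{eq:integ}, this equals $\SP{\Cour{(\ma^\sharp df, df),(\ma^\sharp dg, dg)},(\ma^\sharp dh, dh)}$ where I abbreviate $\ma = \ma(B,\pi)$. By the computation in part 1, each pair $(\ma^\sharp d f, df)$ equals $\la_B(\pi^\sharp \xi_f, \xi_f)$ where $\xi_f = (\id + B^\sharp\pi^\sharp)^{-1}df$. Now invoke part 2 of Lemma \ref{lem:propB}: since $dB = 0$, $\la_B$ is a homomorphism for the Courant bracket, so $\Cour{\la_B(\pi^\sharp\xi_f,\xi_f),\la_B(\pi^\sharp\xi_g,\xi_g)} = \la_B\Cour{(\pi^\sharp\xi_f,\xi_f),(\pi^\sharp\xi_g,\xi_g)}$, and then part 1 of that lemma lets me strip the $\la_B$'s off under the pairing. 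So the whole expression reduces to $\SP{\Cour{(\pi^\sharp\xi_f,\xi_f),(\pi^\sharp\xi_g,\xi_g)},(\pi^\sharp\xi_h,\xi_h)}$.

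The remaining point is to recognize this last expression as $\mathrm{Jac}_\pi$ evaluated appropriately, which vanishes because $[\pi,\pi]_{SN} = 0$ by hypothesis together with Proposition \ref{prop:poissonprop}. The mild subtlety here is that the $\xi_f$'s are not of the form $dk$ for functions $k$, so Lemma \ref{lem:integ} does not apply verbatim. I expect this to be the main obstacle: I will need a slightly more general version of Lemma \ref{lem:integ}, namely that for \emph{any} closed 1-forms $\xi,\eta,\zeta$ (not just exact ones), $\SP{\Cour{(\pi^\sharp\xi,\xi),(\pi^\sharp\eta,\eta)},(\pi^\sharp\zeta,\zeta)}$ depends $\cO(M)[[\h]]$-linearly and tensorially on $\xi,\eta,\zeta$ and vanishes when $[\pi,\pi]_{SN}=0$. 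One way around this: observe that $\xi_f = df + (\text{terms of order }\geq 1\text{ in }\h)$, and more importantly that the three-form $(\xi,\eta,\zeta)\mapsto \SP{\Cour{(\pi^\sharp\xi,\xi),(\pi^\sharp\eta,\eta)},(\pi^\sharp\zeta,\zeta)}$ restricted to closed 1-forms is tensorial (this is essentially the statement that $[\pi,\pi]_{SN}$ is well-defined as a trivector field contracted against $d f \wedge dg \wedge dh$), so it equals $\tfrac12[\pi,\pi]_{SN}(\xi^\sharp,\eta^\sharp,\zeta^\sharp)$-type expression which vanishes identically; alternatively, since $(\id + B^\sharp\pi^\sharp)^{-1}$ is invertible, as $f,g,h$ range over $\cO(M)$ the triples $(\xi_f,\xi_g,\xi_h)$ still span enough to conclude by the $\cO(M)[[\h]]$-bilinearity established via Lemma \ref{lem:skew}-type arguments. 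I would write this out by first proving the tensoriality claim for closed 1-forms as a short lemma, then applying it.
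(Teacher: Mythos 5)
Your proposal follows the paper's proof essentially verbatim: part 1 is exactly the identification $(T(\xi),\xi)=\la_B(\pi^\sharp(\xi'),\xi')$ combined with Lemmas~\ref{lem:skew} and \ref{lem:propB}, and part 2 is the same reduction via Lemmas~\ref{lem:integ} and \ref{lem:propB} followed by Proposition~\ref{prop:poissonprop}. The only difference is that you explicitly flag and repair the step the paper compresses into ``easily follows'' --- namely that the forms $(\id+B^\sharp\pi^\sharp)^{-1}(df)$ are not exact, so one needs the tensoriality of $(\xi,\eta,\zeta)\mapsto\SP{\Cour{(\pi^\sharp(\xi),\xi),(\pi^\sharp(\eta),\eta)},(\pi^\sharp(\zeta),\zeta)}$ on the isotropic graph of $\pi$ to identify it with $\frac{1}{2}[\pi,\pi]_{SN}$ and conclude that it vanishes on all arguments --- which is the correct fix.
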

\begin{proof}
    Let $T= \pi^\sharp\circ (\id + B^\sharp\pi^\sharp)^{-1}$ and set
    $\xi'= (\id + B^\sharp\pi^\sharp)^{-1}(\xi)$, for
    $\xi \in \Omega^1(M)$. Then
    \[
    (T(\xi), \xi)
    = (\pi^\sharp(\xi'), \xi' + i_{\pi^\sharp(\xi')}B)
    = \la_B(\pi^\sharp(\xi'), \xi').
    \]
    Lemma~\ref{lem:propB} implies that
    \[
    \SP{(T(\xi), \xi), (T(\eta), \eta)} =
    \SP{(\pi^\sharp(\xi'), \xi'), (\pi^\sharp(\eta'), \eta')}
    = 0,
    \]
    so the first statement follows from Lemma~\ref{lem:skew}.  If $dB
    = 0$, we can use Lemmas~\ref{lem:integ} and \ref{lem:propB} to
    conclude that $\mathrm{Jac}_\pi(f, g, h) = \mathrm{Jac}_{\ma(B,\pi)}(f,
    g, h)$ for all $f, g, h \in \cO(M)$. The second statement easily
    follows from Prop.~\ref{prop:poissonprop}.
\end{proof}

Since $\la_{B+B'}=\la_B(\la_{B'})$, an immediate consequence of
Prop.~\ref{prop:tau} is that the operation
\[
\pi \mapsto \ma(B,\pi),
\]
for $\pi\in \h\cX^2(M)[[\h]]$ and $B\in \Omega^2(M)[[\h]]$, defines
an action of the abelian group of closed formal 2-forms
$\Omega^2_{cl}(M)[[\h]]$ on formal Poisson structures. We will now
see that this action descends to an action of $H^2(M, \bbC)[[\h]]$ on the
set $\FPois(M)$. For that, it will be convenient to view $\ma(B,
\pi)$ as a solution of a formal differential equation.

Let us consider the space $\h (\cX^\bullet(M)[t])[[\h]]$ of formal
power series in $\h$ with coefficients being polynomials in $t$. An
element $\pi_t \in \h (\cX^2(M)[t])[[\h]]$ defines, as in
\eqref{eq:pisharpext}, a map $\pi_t^\sharp: \Omega^\bullet(M)[[\h]]
\to \h (\cX^\bullet(M)[t])[[\h]]$.
\begin{lem}
    \label{urav}
    Given a formal bivector field $\pi \in \h\cX^2(M)[[\h]]$ and a
    $2$-form $B\in \Om^2(M)[[\h]]$, then $\pi_t = \ma(t B, \pi) \in \h
    (\cX^2(M)[t])[[\h]]$ is the unique solution to the formal
    differential equation
    \begin{equation}
        \label{ono}
        \frac{d}{d t}\, \pi_t =  \pi^{\sh}_t (B),
        \;\;\;
        \pi_t \Big|_{t=0} = \pi,
    \end{equation}
    In particular, $\ma(B, \pi) = \pi_t|_{t=1}$, where $\pi_t$ is the unique
    solution to \eqref{ono}.
\end{lem}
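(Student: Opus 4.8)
The plan is to show that $\pi_t := \ma(tB,\pi)$ satisfies the differential equation \eqref{ono}, and then to invoke the uniqueness of solutions of such formal differential equations (Appendix \ref{App-A}) to conclude. First I would record that, by Proposition \ref{prop:tau}(1), the bivector $\pi_t = \ma(tB,\pi)$ is characterized by the identity of $\cO(M)[[\h]]$-linear maps
\[
\pi_t^\sharp = \pi^\sharp \circ \big(\id + t\, B^\sharp \pi^\sharp\big)^{-1},
\]
and that this is a well-defined formal power series in $t$ with coefficients in $\h\cX^2(M)[[\h]]$, since $\pi \equiv 0 \bmod \h$. The initial condition $\pi_t|_{t=0} = \pi$ is then immediate. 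Because a formal bivector field is determined by its associated sharp map (the map $\pi \mapsto \pi^\sharp$ being injective, as in Lemma \ref{lem:skew}), it suffices to check that the time derivative of $\pi_t^\sharp$ equals $(\pi_t^\sharp(B))^\sharp$, the sharp map of the bivector field on the right-hand side of \eqref{ono}.

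The main computational step is therefore to differentiate the resolvent-type expression for $\pi_t^\sharp$. Writing $R_t := (\id + t B^\sharp \pi^\sharp)^{-1}$, one has the standard identity $\frac{d}{dt} R_t = - R_t \,(B^\sharp \pi^\sharp)\, R_t$, so that
\[
\frac{d}{dt}\, \pi_t^\sharp
= \pi^\sharp \circ \frac{d}{dt} R_t
= - \pi^\sharp \circ R_t \circ B^\sharp \circ \pi^\sharp \circ R_t
= - \pi_t^\sharp \circ B^\sharp \circ \pi_t^\sharp.
\]
It remains to identify $-\,\pi_t^\sharp \circ B^\sharp \circ \pi_t^\sharp$ with the sharp map of $\pi_t^\sharp(B)$. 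Here $\pi_t^\sharp(B)$ denotes the image of the $2$-form $B$ under the algebra homomorphism extension \eqref{eq:pisharpext} of $\pi_t^\sharp$, i.e. a bivector field. The point is the purely algebraic fact that for any formal bivector field $\sigma$ and any $2$-form $B$, the bivector $\sigma^\sharp(B) = i_{(\,\cdot\,)}\big(\sigma^\sharp(B)\big)$ has sharp map equal to $-\,\sigma^\sharp \circ B^\sharp \circ \sigma^\sharp$; this is checked by evaluating both sides on a $1$-form $\xi$ and expanding in a local frame (if $\sigma = \tfrac12 \sigma^{ij}\partial_i \wedge \partial_j$ and $B = \tfrac12 B_{ij}\,dx^i\wedge dx^j$, then $\sigma^\sharp(B) = -\tfrac12 \sigma^{ik}B_{kl}\sigma^{lj}\,\partial_i\wedge\partial_j$, whence the claim). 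Applying this with $\sigma = \pi_t$ gives
\[
\frac{d}{dt}\,\pi_t^\sharp = \big(\pi_t^\sharp(B)\big)^\sharp,
\]
which is exactly the sharp-map form of \eqref{ono}.

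Having verified that $\ma(tB,\pi)$ solves \eqref{ono}, I would finish by appealing to the existence-and-uniqueness statement for formal differential equations of this type recalled in Appendix \ref{App-A}: the right-hand side $\pi_t \mapsto \pi_t^\sharp(B)$ raises the order in $\h$ (since $B^\sharp$ is $\h$-independent while $\pi_t^\sharp$ lowers nothing but $\pi_t \in \h\cX^2(M)[[\h]]$ starts at order $\h$, so the first-order ODE can be solved recursively order by order in $\h$, and — after fixing the $\h$-order — order by order in $t$ as well). This guarantees that \eqref{ono} has a unique solution in $\h(\cX^2(M)[t])[[\h]]$, so that solution must coincide with $\ma(tB,\pi)$; specializing at $t = 1$ then yields $\ma(B,\pi) = \pi_t|_{t=1}$. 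I expect the only mildly delicate point to be the bookkeeping in the algebraic identity relating $\sigma^\sharp(B)$ to $-\sigma^\sharp B^\sharp \sigma^\sharp$ — in particular getting the sign and the symmetrization right — together with making sure the recursion underlying the uniqueness claim is genuinely filtered; neither is a serious obstacle.
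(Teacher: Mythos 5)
Your proposal is correct and follows essentially the same route as the paper: reduce the equation to the sharp-map level (where $(\pi_t^\sharp(B))^\sharp = -\pi_t^\sharp\circ B^\sharp\circ\pi_t^\sharp$), differentiate the resolvent $(\id+tB^\sharp\pi^\sharp)^{-1}$, and invoke the uniqueness statement of Proposition~\ref{proposition:ODEGeneral}. You merely spell out more explicitly the algebraic identity and the coordinate check that the paper leaves implicit; your signs are right.
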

\begin{proof}
    The fact that \eqref{ono} admits a unique solution follows from
    Prop.~\ref{proposition:ODEGeneral} in Appendix~\ref{App-A}.  Note
    that $\pi_t$ is a solution to \eqref{ono} if and only if
    $\pi_t^\sharp$ satisfies
    \[
    \frac{d}{d t} \pi_t^\sharp
    =
    -\pi_t^\sharp\circ B^\sharp\circ \pi_t^\sharp,
    \]
    with initial condition $\pi_t^\sharp\Big|_{t=0} = \pi^\sharp$. A direct
    computation shows that
    \[
    \frac{d}{dt} \pi^\sharp(\id+ tB^\sharp\pi^\sharp)^{-1}
    =
    - \pi^\sharp(\id+ tB^\sharp\pi^\sharp)^{-1}
    B^\sharp \pi^\sharp
    (\id + tB^\sharp\pi^\sharp)^{-1},
    \]
    so the result follows.
\end{proof}

Let $\pi \in \h\cX^2(M)[[\h]]$ be a formal Poisson structure, let
$X(t)\in \h (\cX^1(M)[t])[[\h]]$, and consider the equation
\begin{equation}
    \label{diff-eq}
    \frac{d}{d t}\pi(t) = [\pi(t), X(t)]_{SN},
    \;\;\;
    \pi(0) = \pi
\end{equation}
in $\h (\cX^2(M)[t])[[\h]]$. The following result is proven (in more
generality) in Section~\ref{subsect-example} of Appendix~\ref{App-B}.
\begin{lem}
    \label{claim}
    If $\pi(t) \in \h (\cX^2(M)[t])[[\h]]$ is the solution to
    \eqref{diff-eq}, then $\pi(1)$ satisfies $[\pi(1),\pi(1)]_{SN} =
    0$, and $\pi$ and $\pi(1)$ are equivalent formal Poisson
    structures (i.e., they lie in the same orbit of
    \eqref{action-pi}).
\end{lem}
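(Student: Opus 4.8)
The statement is an instance of a general fact about ``gauge flows'' in pro-nilpotently filtered (graded) Lie algebras, established in Section~\ref{subsect-example} of Appendix~\ref{App-B}; here is how I would argue it directly in the case at hand. First I would introduce the flow of Lie-algebra endomorphisms generated by the time-dependent inner derivation $\ad_{X(t)} := [\,\cdot\,, X(t)]_{SN}$. Since $X(t)\in \h(\cX^1(M)[t])[[\h]]$, the operator $\ad_{X(t)}$ raises the $\h$-adic order by one, so by the existence-and-uniqueness result for formal differential equations (Prop.~\ref{proposition:ODEGeneral} in Appendix~\ref{App-A}) there is a unique family $g(t)$ of $\bbC[[\h]]$-linear endomorphisms of $\cX^{\bul+1}(M)[[\h]]$ with
\[
\frac{d}{dt} g(t) = \ad_{X(t)}\circ g(t), \qquad g(0) = \id.
\]
Then $t\mapsto g(t)(\pi)$ solves \eqref{diff-eq}, so by the uniqueness clause of Prop.~\ref{proposition:ODEGeneral} we have $\pi(t) = g(t)(\pi)$.

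Next I would show that each $g(t)$ is an automorphism of the graded Lie algebra $(\cX^{\bul+1}(M)[[\h]], [\,,\,]_{SN})$. Indeed, for fixed $\al,\beta$ both $t\mapsto g(t)([\al,\beta]_{SN})$ and $t\mapsto [g(t)\al, g(t)\beta]_{SN}$ solve the same linear formal ODE $\frac{d}{dt}u = \ad_{X(t)}\,u$ (for the second one this uses that $\ad_{X(t)}$ is a derivation of $[\,,\,]_{SN}$) with the same initial value $[\al,\beta]_{SN}$, hence coincide by uniqueness. Applying $g(1)$ to the Maurer--Cartan equation $[\pi,\pi]_{SN}=0$ then gives $[\pi(1),\pi(1)]_{SN} = g(1)([\pi,\pi]_{SN}) = 0$, so $\pi(1)\in \MC(\cX^{\bul+1}(M)[[\h]])$.

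To see that $\pi$ and $\pi(1)$ are equivalent I would identify $g(1)$ with an element of the gauge group $\mG(\cX^{\bul+1}(M)[[\h]]) = \exp(\h\cX^1(M)[[\h]])$ acting as in \eqref{action-pi}, i.e. show $g(1)=\exp(\ad_Y)$ for some $Y\in\h\cX^1(M)[[\h]]$. One way is to approximate the flow by a composition of short-time exponentials $\exp(\ad_{X(t_i)}\,\Delta t)$; each factor is $\exp(\ad_Z)$ for $Z\in\h\cX^1(M)[[\h]]$, and by the Baker--Campbell--Hausdorff formula, which converges in the $\h$-adic topology (the bracket of $\h^k(\cdots)$ with $\h^l(\cdots)$ lands in $\h^{k+l}(\cdots)$), the composition of two such maps is again of this form. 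Passing to the limit gives $g(1)=\exp(\ad_Y)$ for some $Y\in\h\cX^1(M)[[\h]]$, whence $\pi(1)=\pi^{\exp(Y)}$ lies in the same orbit of \eqref{action-pi} as $\pi$. (Alternatively, $Y$ can be produced directly by a Magnus-type expansion.)

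The main obstacle is precisely the time dependence of $X(t)$: one cannot simply write $g(t)=\exp(t\,\ad_X)$, so the argument must go through the abstract existence/uniqueness theory for formal ODEs together with the product-integral/BCH description of the resulting flow — which is the content of the general lemma in Appendix~\ref{App-B} of which Lemma~\ref{claim} is a special case. All the remaining steps are routine once the flow $g(t)$ has been set up.
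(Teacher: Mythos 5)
Your argument is correct in substance but follows a genuinely different route from the paper's. The paper proves the lemma as a special case of Proposition~\ref{nuzhno} in Section~\ref{subsect-example} of Appendix~\ref{App-B}: there, preservation of the Maurer--Cartan equation is checked by showing that $\Psi(t)=\frac{1}{2}[\pi(t),\pi(t)]_{SN}$ solves a linear formal ODE with zero initial value (the same uniqueness trick you use, applied to $\Psi$ rather than to the flow operator), and the gauge equivalence of $\pi$ and $\pi(1)$ is obtained by an entirely algebraic iteration (Lemma~\ref{nuzhna}): one conjugates by time-independent exponentials $\exp(\eta_m)$ chosen so as to push the lowest $t^k\h^m$-term of the generator to higher order in $\h$, and the resulting infinite product $\La=\exp[\eta_m]\exp[\eta_{m+1}]\cdots$ converges $\h$-adically to the required group element. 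Your route instead packages everything into the flow $g(t)$ and then recognizes $g(1)$ as a single exponential; this is fine, and your automorphism argument for $g(t)$ cleanly yields $[\pi(1),\pi(1)]_{SN}=0$. The one step that needs more care is ``passing to the limit'' over Riemann products of short-time exponentials: in this purely formal setting there is no topology on $\cX^1(M)$ in which such a limit is defined a priori, so the product-integral picture is only heuristic. The Magnus-type alternative you mention can be made rigorous --- each term of the Magnus series is a formal iterated integral of polynomials in $t$, the term with $k$ brackets lies in $\h^k\cX^1(M)[[\h]]$ so the series converges $\h$-adically, and $\exp(\ad_Y)=g(1)$ is then verified order by order using the formal invertibility of $\bigl(\exp(\ad_Y)-1\bigr)/\ad_Y$ --- but that verification is comparable in length to the paper's Lemma~\ref{nuzhna}. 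What the paper's approach buys is that it applies verbatim to a DGLA with nonzero differential, where the gauge action \eqref{action-A} is affine rather than linear and the flow-operator formalism would need adapting; what yours buys is the conceptually transparent identification $\pi(t)=g(t)\pi$ with $g(t)$ a one-parameter family of Lie algebra automorphisms.
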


We can now prove the main result of this section.
\begin{pred}
    \label{dejstv}
    The action of closed 2-forms $\Om^2_{cl}(M)[[\h]]$ on the space of
    formal Poisson structures, $(B,\pi) \mapsto \ma(B, \pi)$, descends to an
    action
    \begin{equation}
        \label{eq:ma-action}
        H^2(M, \bbC)[[\h]]\times
        \FPois(M) \to \FPois(M), \;\;\;
        ([B], [\pi]) \mapsto
        [B] \cdot [\pi]:=[\ma(B,\pi)],
    \end{equation}
    on the set of equivalence classes of formal Poisson structures.
\end{pred}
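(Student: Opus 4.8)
The plan is to verify the two defining properties of a group action --- namely that $[0]\cdot[\pi]=[\pi]$ and $([B]+[B'])\cdot[\pi] = [B]\cdot([B']\cdot[\pi])$ --- while simultaneously checking that the operation in \eqref{eq:ma-action} is well-defined, i.e. independent of the representatives $B$ and $\pi$. We already know from Prop.~\ref{prop:tau} that $\ma(B,\pi)$ is again a formal Poisson structure whenever $dB=0$ and $[\pi,\pi]_{SN}=0$, and that $\pi\mapsto\ma(B,\pi)$ is an action of the group $\Om^2_{cl}(M)[[\h]]$ (using $\la_{B+B'}=\la_B\circ\la_{B'}$). So the group-action axioms on the quotients will follow formally once well-definedness is established. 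Hence the real content is: (a) if $B$ is exact, $B=d\beta$ with $\beta\in\Om^1(M)[[\h]]$, then $\ma(B,\pi)$ is equivalent to $\pi$; and (b) if $\pi$ and $\pi'$ are equivalent formal Poisson structures, then $\ma(B,\pi)$ and $\ma(B,\pi')$ are equivalent.

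For (a), I would use Lemma~\ref{urav}: $\ma(B,\pi)=\pi_t|_{t=1}$ where $\pi_t$ solves $\frac{d}{dt}\pi_t=\pi_t^\sharp(B)$, $\pi_0=\pi$. When $B=d\beta$, I want to rewrite the right-hand side as a Schouten bracket with a time-dependent vector field so that Lemma~\ref{claim} applies. The natural guess is $X(t) = -\pi_t^\sharp(\beta)\in\h(\cX^1(M)[t])[[\h]]$: indeed $[\pi_t,X(t)]_{SN} = [\pi_t, -\pi_t^\sharp(\beta)]_{SN}$, and by Corollary~\ref{cor:chain} (applied to the formal Poisson structure $\pi_t$, which is Poisson for each $t$ by Prop.~\ref{prop:tau}(2)) together with the first relation in \eqref{eq:schouten}, one gets $-\partial_{\pi_t}(\pi_t^\sharp(\beta)) = \pi_t^\sharp(d\beta) = \pi_t^\sharp(B)$, i.e. $[\pi_t,X(t)]_{SN}=\frac{d}{dt}\pi_t$. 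Thus $\pi_t$ also solves \eqref{diff-eq} with this $X(t)$, and Lemma~\ref{claim} immediately gives that $\ma(B,\pi)=\pi_1$ is equivalent to $\pi$. (One should double-check that Corollary~\ref{cor:chain}, stated for $\pi\in\h\cX^2(M)[[\h]]$, applies verbatim to the $t$-dependent family; this is routine since all identities are $t$-wise.)

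For (b), suppose $\pi'=\pi^{\exp(Y)}=\exp([\cdot,Y]_{SN})\pi$ for some $Y\in\h\cX^1(M)[[\h]]$. I would introduce the interpolating family $\pi(s)=\exp(s[\cdot,Y]_{SN})\pi$, so $\pi(0)=\pi$, $\pi(1)=\pi'$, and $\frac{d}{ds}\pi(s)=[\pi(s),Y]_{SN}$. For each fixed $s$, let $\Pi(s,t)=\ma(tB,\pi(s))$. I want to show $t\mapsto\Pi(s,t)$ and the gauge flow in $s$ fit together so that $\Pi(1,1)=\ma(B,\pi')$ and $\Pi(0,1)=\ma(B,\pi)$ are equivalent. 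The cleanest route is to show that for each fixed $t$, the family $s\mapsto\Pi(s,t)$ satisfies an equation of the form $\frac{d}{ds}\Pi(s,t)=[\Pi(s,t),Z(s,t)]_{SN}$ for a suitable $Z(s,t)\in\h(\cX^1(M))[[\h]]$, and then invoke Lemma~\ref{claim} at $t=1$. To find $Z$, differentiate the defining relation $\Pi(s,t)^\sharp=\pi(s)^\sharp\circ(\id+tB^\sharp\pi(s)^\sharp)^{-1}$ in $s$: writing $\partial_s\pi(s)^\sharp = -(\Lie_Y\pi(s))^\sharp$ (from \eqref{eq:schouten}) one computes $\partial_s\Pi(s,t)^\sharp$ as a conjugate of $(\Lie_Y\pi(s))^\sharp$ by $(\id+tB^\sharp\pi(s)^\sharp)^{-1}$, and a short manipulation identifies this with $-(\Lie_Y\Pi(s,t))^\sharp$ up to the $B^\sharp$-twist; the twist is absorbed by replacing $Y$ with $Z(s,t)=$ (an appropriate $1$-form pushed through $\Pi(s,t)^\sharp$ plus $Y$), which is exactly the kind of time-dependent vector field Lemma~\ref{claim} handles. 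Equivalently and more simply: by Lemma~\ref{urav}, $\Pi(s,t)$ solves $\partial_t\Pi=\Pi^\sharp(B)$ with $\Pi(s,0)=\pi(s)$, and since the flows in $s$ (gauge) and in $t$ act on the space of Poisson structures, one can appeal to the general machinery of Appendix~\ref{App-A}/\ref{App-B} on formal differential equations to commute them; I would present whichever is shorter.

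The main obstacle I anticipate is step (b): making the ``$Z(s,t)$'' above explicit and checking it lies in $\h(\cX^1(M))[[\h]]$ so that Lemma~\ref{claim} genuinely applies, rather than hand-waving that ``gauge equivalence is preserved.'' The algebra is a matter of carefully differentiating the geometric series $(\id+tB^\sharp\pi^\sharp)^{-1}$ and using the Leibniz-type identities \eqref{eq:schouten} and Corollary~\ref{cor:chain}, but one must be attentive to the non-commutativity of the operators $B^\sharp$ and $\pi(s)^\sharp$ and to the fact that $\Lie_Y$ does not commute with $B^\sharp$ in general (only $\Lie_Y$ applied to the closed form $B$ is again closed). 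Once (a) and (b) are in hand, the action axioms descend trivially from Prop.~\ref{prop:tau}, completing the proof.
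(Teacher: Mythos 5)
Your part (a) is exactly the paper's argument: rewrite $\pi_t^\sharp(d\beta)$ as $[\pi_t,-\pi_t^\sharp(\beta)]_{SN}$ via Corollary~\ref{cor:chain}, so that \eqref{ono} becomes an instance of \eqref{diff-eq}, and invoke Lemma~\ref{claim}. The reduction of the group-action axioms to well-definedness via $\la_{B+B'}=\la_B\circ\la_{B'}$ is also fine.

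Part (b) is where you diverge from the paper, and where your argument is not actually complete. You propose to show that $s\mapsto\Pi(s,t)=\ma(tB,\pi(s))$ satisfies a gauge-flow equation $\partial_s\Pi=[\Pi,Z(s,t)]_{SN}$, but you never produce $Z(s,t)$ --- you only assert that ``a short manipulation'' identifies the $s$-derivative with a Lie derivative ``up to the $B^\sharp$-twist'' and that the twist ``is absorbed.'' That is the whole content of step (b), and as written it is a gap; the fallback of ``commuting the flows by the general machinery of the appendices'' is not available, since the two flows do not commute (indeed the correct statement is precisely that conjugating the $B$-flow by $\exp([\cdot,X]_{SN})$ changes $B$ to $\exp(-\cL_X)B$). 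Your route can be completed --- in Dirac-geometric terms the generator of $s\mapsto\mathrm{graph}(\Pi(s,1))=\la_B(\mathrm{graph}(\pi(s)))$ is $\la_B(Y,0)=(Y,i_YB)$, which leads to $Z(s)=Y-\Pi(s,1)^\sharp(i_YB)$ (up to sign conventions), and one must then check $Z\in\h\cX^1(M)[[\h]]$ and verify the ODE --- but this is the computation you deferred. The paper avoids it entirely: it applies $\exp([\cdot,X]_{SN})$ to the whole path $\pi_t=\ma(tB,\pi)$ and observes, as in \eqref{diff-tpi}, that the transformed path solves the $B$-field equation for the $2$-form $\exp(-\cL_X)B$, giving the exact identity $\ma(\exp(-\cL_X)B,\tpi)=\exp([\cdot,X]_{SN})\ma(B,\pi)$; since $\exp(-\cL_X)B$ is cohomologous to $B$ by the Cartan formula, part (a) (together with the additivity $\ma(B_1+B_2,\pi)=\ma(B_1,\ma(B_2,\pi))$) then closes the loop. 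I would recommend adopting that argument, or else carrying out the $Z(s,t)$ computation explicitly; as submitted, step (b) is a sketch rather than a proof.
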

\begin{proof}
    Let us first show that for cohomologous $2$-forms $B, B' \in
    \Omega_{cl}(M)[[\h]]$, the formal Poisson structures $\ma(B,\pi)$ and
    $\ma(B', \pi)$ are equivalent. It suffices to show that if $B = d\xi$
    is exact, then $\ma(B, \pi)$ is equivalent to $\pi$.

    Suppose that $\xi \in \Omega^1(M)[[\h]]$, and let $B = d\xi$.
    We know that $\ma(t B, \pi)$ is a path of formal Poisson structures
    connecting $\ma(B, \pi)$ and $\pi$, and that it satisfies \eqref{ono}.
    By Corollary~\ref{cor:chain},
    \[
    \ma(tB, \pi)^\sharp(d\xi)
    =
    -[\ma(tB, \pi),\ma(tB, \pi)^\sharp(\xi)]_{SN}.
    \]
    So, in this case, equation \eqref{ono} can be rewritten as
    \[
    \frac{d}{dt}\,\ma(tB, \pi)  = -[\ma(tB, \pi), \ma(tB, \pi)^\sharp(\xi)]_{SN}.
    \]
    Now Lemma~\ref{claim} implies that $\pi$ and $\ma(B, \pi)$ are
    equivalent.

    Next, we should prove that if $B\in \Omega^2_{cl}(M)[[\h]]$ and
    the Poisson structures $\pi$ and $\tpi$ are equivalent, then so
    are the formal Poisson structures $\ma(B, \pi)$ and $\ma(B, \tpi)$. Let us
    assume that
    \begin{equation}
        \label{tpi}
        \tpi = \exp([\cdot, X]_{SN}) \pi,
    \end{equation}
    for $X \in \h\cX^1(M)[[\h]]$. Since $\pi_t = \ma(tB, \pi)$ is the
    solution to \eqref{ono}, $\exp([\cdot, X]_{SN}) \pi_t$ satisfies the
    differential equation
    \begin{equation}
        \label{diff-tpi}
        \frac{d}{d t} \exp([\cdot, X]_{SN}) \pi_t
        =
        \exp([\cdot, X]_{SN})\frac{d}{dt}\pi_t
        =
        (\exp([\cdot, X]_{SN}) \pi_t)^{\sh} (\exp(-\cL_X)\,B).
    \end{equation}
    This equation implies that
    \[
    \ma(\exp(-\cL_X)\,B , \tpi) = \exp([\cdot, X]_{SN}) \ma(B, \pi).
    \]
    On the other hand, the $2$-form $\exp(-\cL_X)B$ is always
    cohomologous to $B$, as a consequence of the Cartan-Weil formula
    for the Lie derivative,
    \[
    \exp(-\cL_X)B
    =
    \sum_{k=0}^{\infty}\frac{1}{k!} (-d i_X)^k B
    =
    B - d
    \left(
        \sum_{k=1}^\infty \frac{1}{k!} i_X (-d i_X)^{k-1} B
    \right).
    \]
    Hence $\ma(\exp(\cL_X) B, \tpi)$ is equivalent to both $\ma(B, \tpi)$ and
    $\ma(B, \pi)$. This concludes the proof of the proposition.
\end{proof}

%
% Kontsevich's classes of Morita equivalent star products
%

\subsection{Kontsevich's classes of Morita equivalent star products}
\label{subsec:class}

As discussed in Section~\ref{subsec:pic}, the groups $\Dif(M)$ and
$\Pic(M)$ naturally act on the space $\Def(M)$ of equivalence classes
of star products on a manifold $M$, and their orbits characterize
Morita equivalent star products. We now describe the corresponding
actions on the moduli space of formal Poisson structure $\FPois(M)$,
making the bijection
\[
\mathcal{K}_*: \FPois(M)\to \Def(M)
\]
equivariant. In other words, we will describe the equivalence relation
in $\FPois(M)$ which is quantized to Morita equivalence under the
Kontsevich map $\cK_*$.

The group $\Dif(M)$ acts on formal Poisson structures in a natural way
by push-forward,
\[
(\varphi, \pi) \mapsto \varphi_*\pi = \h \varphi_*\pi_1 +
\h^2\varphi_*\pi_2 +\ldots,
\]
and it descends to an action of $\Dif(M)$ on $\FPois(M)$. As a result
of \cite[Thm.~1]{thesis}, the map $\cK_*$ respects this action, i.e.,
\[
\cK_*([\varphi_*])=[*_\varphi].
\]
So we focus on the description of the action $\Phi$ of $\Pic(M)$ on
$\Def(M)$ \eqref{eq:actionL} in terms of Kontsevich's classes, which
is given by the next result.
\begin{teo}
    \label{ona}
    Let $L$ be a line bundle over $M$ representing an element in
    $\Pic(M)$, and suppose that $[*]=\cK_*([\pi])$. The action $\Phi:
    \Pic(M)\times \Def(M)\to \Def(M)$ satisfies
    \begin{equation}
        \label{Phi-om}
        \Phi_{L} ([*]) = \cK_*([\ma(B,\pi)])
    \end{equation}
    where $B\in \Omega^2(M)$ is a curvature 2-form of $L$ (i.e., $B$
    represents $ 2 \pi i c_1(L)$, where $c_1(L)$ is the Chern
    class of $L$).
\end{teo}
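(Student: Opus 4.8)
The plan is to prove Theorem~\ref{ona} by reducing the statement about equivalence classes of star products to a concrete statement about Maurer--Cartan elements, and then to transport the $B$-field transform through the globalized formality quasi-isomorphism $\cK$. Concretely, I would start from the characterization of the action $\Phi_L$ given in Proposition~\ref{prop:crit}: $\Phi_L([*]) = [*']$ precisely when there exist deformed transition functions $G_{\alpha\beta} = g_{\alpha\beta} \bmod \h$ satisfying the $*$-cocycle conditions \eqref{eq:defcocyc} together with local equivalences $T_\alpha$ satisfying \eqref{eq:localT-old}. The first step is therefore to \emph{produce}, given $\pi$ with $[\pi,\pi]_{SN}=0$ and a curvature $2$-form $B$ of $L$, an explicit star product $*'$ equivalent to $*_{\ma(B,\pi)}$ together with such data $\{G_{\alpha\beta}\}$, $\{T_\alpha\}$ relating $*'$ to $*=*_\pi$; this forces $\Phi_L([*_\pi]) = [*'] = \cK_*([\ma(B,\pi)])$.

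The key geometric idea is that the $B$-field transform $\pi \mapsto \ma(B,\pi)$ is realized, via Lemma~\ref{urav}, as the time-one flow of the formal differential equation $\frac{d}{dt}\pi_t = \pi_t^\sharp(B)$. On a contractible $U_\alpha$ we may write $B = d\xi_\alpha$ for some $\xi_\alpha \in \Omega^1(U_\alpha)[[\h]]$, and then by Corollary~\ref{cor:chain} the equation becomes $\frac{d}{dt}\pi_t = -[\pi_t, \pi_t^\sharp(\xi_\alpha)]_{SN}$, i.e.\ on each chart the $B$-field transform is realized by a \emph{gauge transformation} of the Maurer--Cartan element $\pi$ generated by the (time-dependent) vector field $-\pi_t^\sharp(\xi_\alpha)$. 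Applying the $\Linf$ quasi-isomorphism $\cK$ (using Lemma~\ref{lemma-equiv}, the functoriality of gauge equivalences under $\Linf$ morphisms recorded in Appendix~\ref{App-B}, and Proposition~\ref{proposition:ODEGeneral} on solvability of such formal ODEs), the induced star products $*_\pi$ and $*_{\ma(B,\pi)}$ become \emph{locally equivalent} on each $U_\alpha$ via some $T_\alpha = \id + O(\h)$. The essential computation is then to compare the $T_\alpha$ on overlaps: on $U_\alpha \cap U_\beta$ the one-forms $\xi_\alpha$ and $\xi_\beta$ differ by the closed form $\xi_\alpha - \xi_\beta = d\log g_{\alpha\beta}$ (here $B$ is a curvature form of $L$, so $g_{\alpha\beta}$ are the transition functions of $L$), and this discrepancy must produce exactly the inner automorphism $f \mapsto G_{\alpha\beta} * f * G_{\beta\alpha}$ of \eqref{eq:localT-old} for appropriate $G_{\alpha\beta} = g_{\alpha\beta} \bmod \h$ satisfying the $*$-cocycle conditions.

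So the heart of the proof, and the main obstacle, is this \textbf{cocycle comparison}: one must show that the ambiguity in the local trivialization of the $B$-field transform, governed at the classical level by $d\log g_{\alpha\beta}$, is quantized under $\cK$ into genuine $*$-invertible deformed transition functions $G_{\alpha\beta}$, satisfying $G_{\alpha\beta}*G_{\beta\gamma}*G_{\gamma\alpha}=1$, in such a way that \eqref{eq:localT-old} holds on the nose. I expect this to require a careful unwinding of how gauge transformations of Maurer--Cartan elements compose under $\Linf$ morphisms --- in particular, that the ``difference'' of two gauge equivalences of the same pair of Maurer--Cartan elements is an \emph{inner} automorphism of the target star-product algebra, and that inner automorphisms generated by a closed one-form $d\log g_{\alpha\beta}$ are implemented by conjugation by an element $G_{\alpha\beta}$ with classical limit $g_{\alpha\beta}$. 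This is where Propositions~\ref{proposition:ODEGeneral} and the DGLA machinery of Appendix~\ref{App-B} do the real work. Once the data $\{G_{\alpha\beta}, T_\alpha\}$ are in hand and the cocycle identities verified, Proposition~\ref{prop:crit}(ii)$\Rightarrow$(i) immediately yields $\Phi_L([*_\pi]) = [*_{\ma(B,\pi)}] = \cK_*([\ma(B,\pi)])$, completing the proof. A final remark: one should check independence of the choices (the cover, the primitives $\xi_\alpha$, the particular curvature form $B$ within its class $2\pi i\, c_1(L)$), but this follows from Proposition~\ref{dejstv}, which guarantees that $[\ma(B,\pi)]$ depends only on $[B] \in H^2(M,\bbC)[[\h]]$.
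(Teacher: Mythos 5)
Your plan follows essentially the same route as the paper's proof: reduce to the local criterion of Proposition~\ref{prop:crit}, realize $\ma(tB,\pi)$ on each contractible chart as a gauge flow generated by $-\pi_t^\sharp(\theta_\alpha)$, quantize this to local equivalences $T_\alpha^t$ by solving a formal ODE, and compare on overlaps, where the Hamiltonian discrepancy $-[\pi_t,\log g_{\alpha\beta}]_{SN}$ yields inner automorphisms and deformed transition functions $G_{\alpha\beta}(t)$ defined by another ODE. The one step you leave implicit --- why $G_{\alpha\beta}*G_{\beta\gamma}*G_{\gamma\alpha}=1$ --- is settled in the paper by Proposition~\ref{cK-constant} (constants receive no quantum corrections under $\cK$), which makes the triple product solve an ODE with constant generator $c_{\alpha\beta}+c_{\beta\gamma}+c_{\gamma\alpha}=2\pi i\,n_{\alpha\beta\gamma}$, hence equal to $e^{2\pi i n_{\alpha\beta\gamma}t}$, so the integrality of the \v{C}ech cocycle of $L$ is precisely what closes the cocycle at $t=1$.
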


This result extends the semi-classical description of $\Phi$ in
\cite{B} and, as we will see in Section~\ref{sec:fedosov}, agrees
with \cite{BW1} in the case of symplectic star products.

%\comment{comment on integral gauge transformations being Poisson analogue of algebraic Morita equivalence...}

Before moving to the proof, we need an auxiliary technical statement.
Let us consider an open subset $U \subset M$ for which
$$
B\Big|_{U} = d \theta\,, \qquad \quad \theta\in \Omega^1(U)\,.
$$
Due to Corollary \ref{cor:chain}, the restriction of $\pi_t = \ma(t
B, \pi)$ to $U$ is the unique solution to
\begin{equation}
    \label{eq:pit}
    \frac{d}{dt}\pi_{t} = [\pi_t, v^t]_{SN}, \;\;\; \pi_t\Big|_{t=0}=\pi,
\end{equation}
where $v^t = -\pi_t^\sharp(\theta) \in \h (\cX^1(U)[t])[[\h]]$. We use
the formality $\cK$ to quantize $v^t$ to a series of differential
operators $V^t \in \h(C^1(\cO_U)[t])[[\h]]$
given by
\begin{equation}
    \label{eq:Vt}
    V^t =
    \sum_{n=0}^\infty\frac{1}{n!}\cK_{n+1}(\pi_t,\ldots,\pi_t,v^t).
\end{equation}
Let us define the family of transformations $T^t: \cO(U)[[\h]]\to
\cO(U)[[\h]]$ as the solution to the differential equation (see
Proposition~\ref{proposition:ODEGeneral} in Appendix~\ref{App-A})
\begin{equation}
    \label{eq:localT}
    \frac{d}{dt}T^t(f) = T^t(V^t(f)),
    \;\;\;
    T^t\Big|_{t=0} = \id.
\end{equation}
It is not hard to see that
$$
T^t \in \id \,+\, \h(C^1(\cO_U)[t])[[\h]]
$$
and, in particular,
\begin{equation}
        \label{eq:Tclass}
        T^t\Big|_{\h=0} = \id.
\end{equation}

\begin{lem}
    \label{lem:Bexact}
    Let $*$ and $*_t$ be the Kontsevich star products associated with
    $\pi$ and $\pi_t$, as in \eqref{star-K}. If $T^t$ is the solution of
    the initial value problem (\ref{eq:localT}) then $T^t$ is an
    equivalence between the star-product algebras
    $(\cO(U)[[\h]], *_t)$ and $(\cO(U)[[\h]], *)$, i.e.,
    $T^t = \id \mod \h$ and
    \[
    T^t(f *_t g) = T^t(f) * T^t(g),
    \quad
    \textrm{for all}
    \quad
    f, g \in \cO(U)[[\h]].
    \]
\end{lem}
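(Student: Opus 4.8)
The plan is to recognize both sides of the desired identity as solutions of formal differential equations (in the variable $t$) with the same initial data, and then invoke the uniqueness statement of Proposition~\ref{proposition:ODEGeneral} from Appendix~\ref{App-A}. The first observation is that $T^t = \id \mod \h$ follows immediately from \eqref{eq:Tclass}, so the content is the intertwining identity. To set this up, I would let $\Pi_t := \sum_{k\ge 1}\h^k(\Pi_t)_k \in \h C^2(\cO_U)[t][[\h]]$ denote the bidifferential part of $*_t$, i.e. $f *_t g = fg + \Pi_t(f,g)$, so that by \eqref{star-K} we have $\Pi_t = \sum_{n\ge 1}\cK_n(\pi_t,\dots,\pi_t)$; this is the Maurer--Cartan element of $\sCbu(\cO_U)[[\h]]$ obtained by pushing $\pi_t$ through the $\Linf$ quasi-isomorphism $\cK$.

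\textbf{Key steps.} First I would differentiate $\Pi_t$ with respect to $t$. Using \eqref{eq:pit} together with the $\Linf$ relations for $\cK$ (the standard computation behind the fact that an $\Linf$ morphism sends gauge-equivalent Maurer--Cartan elements to gauge-equivalent ones, as recalled in Appendix~\ref{App-B}), one obtains that
\begin{equation}
\label{eq:Pitdot}
\frac{d}{dt}\Pi_t = \pa^{\Hoch} V^t + [\Pi_t, V^t]_G,
\end{equation}
where $V^t$ is exactly the differential operator \eqref{eq:Vt}; this is precisely the infinitesimal form of the statement that $\cK$ intertwines the flow \eqref{eq:pit} with the corresponding flow on star products, and it is the main technical input. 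Second, I would consider the two-variable object $S^t(f,g) := T^t(f *_t g) - T^t(f)*T^t(g)$ and show it satisfies a linear homogeneous ODE in $t$ with zero initial condition. Differentiating, using \eqref{eq:localT} for $\frac{d}{dt}T^t$, using \eqref{eq:Pitdot} for $\frac{d}{dt}\Pi_t$, and expanding $[\Pi_t,V^t]_G$ and $\pa^{\Hoch}V^t$ according to \eqref{Gerst} and \eqref{pa-Hoch}, all the terms reorganize — after using that $T^t$ is a formal series starting at $\id$ and that $V^t, \Pi_t$ start at order $\h$ — into an expression of the form $S^t\big(\text{lower-order-in-}\h\text{ arguments}\big)$ plus terms linear in $S^t$ with differential-operator coefficients. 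Concretely, the claim is that $\frac{d}{dt}S^t = (\text{linear operator depending on }T^t,V^t,\Pi_t)\cdot S^t$, an equation of the type covered by Proposition~\ref{proposition:ODEGeneral}; since $S^t|_{t=0} = 0$ (as $T^0 = \id$ and $*_0 = *$), uniqueness forces $S^t \equiv 0$.

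\textbf{Main obstacle.} The crux is establishing \eqref{eq:Pitdot}: one must verify that the $t$-derivative of the Maurer--Cartan element $\Pi_t = \sum_n \cK_n(\pi_t,\dots,\pi_t)$, when $\pi_t$ evolves by the gauge flow \eqref{eq:pit} with generator $v^t$, is exactly $\pa^{\Hoch}V^t + [\Pi_t,V^t]_G$ with $V^t$ given by \eqref{eq:Vt}. This is a bookkeeping identity among the structure maps $\cK_n$ that follows from the defining $\Linf$ equations, but writing it out carefully — keeping track of the symmetrization factors $\frac{1}{n!}$, the Koszul signs, and the fact that $\frac{d}{dt}\pi_t = [\pi_t, v^t]_{SN}$ is itself a bracket — requires some care. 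A clean way to package it is to appeal directly to the general lemma in Appendix~\ref{App-B} (the one used to prove Lemma~\ref{claim}) applied to the $\Linf$ quasi-isomorphism $\cK$ restricted to $U$: that lemma already asserts that the flow of a Maurer--Cartan element by $[\,\cdot\,,v^t]$ is sent by an $\Linf$ morphism to the flow of the image Maurer--Cartan element by the transformation generated by $\sum_n \frac{1}{n!}\cK_{n+1}(\pi_t,\dots,\pi_t,v^t) = V^t$, and that the intertwiner of these two flows is precisely $T^t$. Once \eqref{eq:Pitdot} is in hand, the remaining ODE-uniqueness argument is routine, so I would try to isolate \eqref{eq:Pitdot} as the single nontrivial ingredient and otherwise reduce everything to Proposition~\ref{proposition:ODEGeneral}.
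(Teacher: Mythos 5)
Your proposal follows essentially the same route as the paper: the paper likewise differentiates $\Pi_t=\sum_n\frac{1}{n!}\cK_n(\pi_t,\dots,\pi_t)$, identifies $\frac{d}{dt}\Pi_t=\pa^{\Hoch}_{*_t}(V^t)$ (your \eqref{eq:Pitdot}, justified exactly by the twisting identity \eqref{F-1-al-inter} of Appendix~\ref{App-B}), and then shows that the defect cochain $D^t(f,g)=T^t(f*_t g)-T^t(f)*T^t(g)$ satisfies the linear homogeneous equation $\frac{d}{dt}D^t=D^t(V^t\otimes\id+\id\otimes V^t)$ with $D^t|_{t=0}=0$, concluding by the uniqueness of Proposition~\ref{proposition:ODEGeneral}. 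The only difference is presentational: the paper writes out the ODE for the defect explicitly rather than leaving its precise form implicit.
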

\begin{proof}
    By definition, $f *_t g = fg + \Pi_t(f, g)$, where
    \begin{equation}
        \label{pi-t}
        \Pi_t
        =
        \sum_{n=1}^{\infty}\frac{1}{n!}
        \cK_n(\pi_t, \pi_t, \ldots, \pi_t).
    \end{equation}
    Using \eqref{eq:pit}, we have
    \begin{equation}
        \label{d-Pi-t}
        \frac{d}{dt}\Pi_t
        =
        \sum_{n=0}^\infty \frac{1}{n!}
        \cK_{n+1}(\pi_t,\ldots,\pi_t,[\pi_t,v^t]_{SN})
        = \pa^{\Hoch}_{*_t} (V^t),
    \end{equation}
    where the last equality follows from the identity
    \eqref{F-1-al-inter} in Appendix~\ref{App-B}; here
    $\pa^{\Hoch}_{*_t}$ is the Hochschild coboundary operator
    corresponding to $*_t$. It follows from \eqref{d-Pi-t} that, for
    all $f, g \in \cO(U)[[\h]]$, we have
    \begin{equation}
        \label{d-star-t}
        \frac{d}{d t} (f *_t  g)
        =
        \pa^{\Hoch}_{*_t} (V^t)(f, g)
        = V^t(f) *_t g + f *_t V^t(g) - V^t(f *_t g).
    \end{equation}
    Combining this equation with \eqref{eq:localT}, we get
    \begin{equation}
        \label{dt}
        \frac{d}{d t}\, T^t(f *_t  g)
        = T^t(V^t(f) *_t g) + T^t(f *_t V^t(g)).
    \end{equation}
    Therefore the cochain $D^t \in (C^2(\cO_U)[t])[[\h]]$,
    \begin{equation}
        \label{C-t-al}
        D^t(f, g) = T^t(f *_t  g) - T^t(f) * T^t(g),
    \end{equation}
    satisfies the following differential equation:
    \[
    \frac{d}{d t} D^t = D^t (V^t \otimes \id + \id \otimes V^t).
    \]
    Taking into account the initial condition $D^t |_{t=0} = 0$ we
    deduce that $D^t$ is identically zero. This completes the proof of
    the lemma.
\end{proof}

~\\

\begin{proof}[ of Theorem~\ref{ona}]
    Let $*$ be a Kontsevich star product on $M$ associated with the
    formal Poisson structure $\pi$. We consider a complex line bundle
    $L\to M$ equipped with a connection $\nabla^L$, and let $B\in
    \Omega^2(M)$ be the curvature of $\nabla^L$. We denote by $*_t$ the
    Kontsevich star product of $\pi_t = \ma(t B,\pi)$. We must show that
    \[
    \Phi_L([*])=[*_1],
    \]
    and for that we will use the local criterium proved in
    Proposition~\ref{prop:crit}.

    Let us consider a cover $\{U_\alpha\}$ of $M$ by contractible open
    subsets with contractible intersections $U_\alpha\cap U_\beta$. We
    fix a set of local trivializations of $L$, defining transition
    functions $g_{\alpha\beta}\in \cO(U_\alpha\cap U_\beta)$.  Then
    $\nabla^L$ is described by a collection of connection 1-forms
    $\theta_\alpha\in \Omega^1(U_\alpha)$, satisfying
    \begin{equation}
        \label{eq:connection1}
        \theta_\beta - \theta_\alpha
        =
        g_{\alpha\beta}^{-1} d g_{\alpha\beta},
        \qquad
        d\theta_\alpha = B|_{U_\alpha}.
    \end{equation}
    By Lemma~\ref{lem:Bexact}, we know that over each $U_\alpha$ there
    is an equivalence
    \[
    T^t_\alpha (f *_t g) = T^t_\alpha(f) * T^t_\alpha(g)
    \]
    for $f, g \in \cO(U_\alpha)[[\h]]$.  By \eqref{eq:connection1},
    the difference between the local vector fields
    $v_\alpha^t = -\pi^\sharp_t(\theta_\alpha)$ and
    $v_\beta^t = -\pi^\sharp_t(\theta_\beta)$ is hamiltonian:
    \[
    v_\alpha^t - v_\beta^t
    = \pi^\sharp_t(d\log(g_{\alpha\beta}))
    = -[\pi_t, \log(g_{\alpha\beta})]_{SN}.
    \]
    Note that $\log(g_{\alpha\beta})$ is well-defined since
    $U_\alpha\cap U_\beta$ is contractible. It follows that the
    corresponding operators \eqref{eq:Vt} satisfy
    \[
    V_\alpha^t - V_\beta^t
    =
    - \sum_{n=0}^\infty \frac{1}{n!}
    \cK_{n+1}(\pi_t,\dots,\pi_t,[\pi_t, \log(g_{\alpha\beta})]_{SN}).
    \]
    By \eqref{F-1-al-inter}, we can write
    \begin{equation}
        \label{V-al-beta}
        V^t_{\al} - V^t_{\beta}
        = - \pa^{\Hoch}_{*_t} h_{\al\beta}(t)
        = \ad_{*_t}(h_{\alpha\beta}(t)),
    \end{equation}
    where $\pa^{\Hoch}_{*_t}$ denotes the Hochschild coboundary
    operator (\ref{pa-Hoch}) of
    $*_t$, $\ad_{*_t}(f) (g) := f *_t g - g *_t f$, and
    \begin{equation}
        \label{h-al-beta}
        h_{\al\beta}(t)
        =
        \sum_{n=0}^{\infty} \frac{1}{n!}
        \cK_{n+1}(\pi_t, \dots, \pi_t, \log(g_{\alpha\beta})).
    \end{equation}
    We will be interested in the operators
    \[
    T_\alpha = T^t_\alpha|_{t=1},
    \]
    which are local equivalences between $*_1$ and $*$. According to
    Proposition~\ref{prop:crit}, to prove the theorem it suffices to
    define deformed transition functions $G_{\alpha\beta} =
    g_{\alpha\beta} \mod \h$, satisfying the cocycle conditions
    \eqref{eq:defcocyc} as well as
    \begin{equation}
        \label{eq:compat2}
        T_\alpha(T_\beta)^{-1}(f)
        =
        G_{\alpha\beta} * f * G_{\alpha\beta}^{-1}
        \quad
        \textrm{for all}
        \quad
        f \in \cO(U_\alpha \cap U_\beta)[[\h]].
    \end{equation}
    We will do that by constructing a family of functions
    $G_{\alpha\beta}(t)$ that will satisfy the desired properties for
    $t=1$.
    \begin{claim}
        \label{claim:TT}
        The operator $T_\alpha^t(T_\beta^t)^{-1}$ is a self-equivalence of $*$
        satisfying
        \[
        \frac{d}{dt}T_\alpha^t(T_\beta^t)^{-1}
        =
        \ad_*(T^t_\alpha h_{\alpha\beta}) T_\alpha^t(T_\beta^t)^{-1},
        \;\;\;\;
        T_\alpha^t(T_\beta^t)^{-1} |_{t=0} = \id.
        \]
    \end{claim}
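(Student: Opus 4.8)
The plan is to verify the claimed differential equation for $T_\alpha^t(T_\beta^t)^{-1}$ by a direct computation, using the product rule together with equations \eqref{eq:localT} and \eqref{V-al-beta}, and then to note that self-equivalence of $*$ follows formally once the differential equation is established. First I would compute $\frac{d}{dt}\left(T_\alpha^t(T_\beta^t)^{-1}\right)$. Applying the Leibniz rule, this equals $\left(\frac{d}{dt}T_\alpha^t\right)(T_\beta^t)^{-1} + T_\alpha^t\,\frac{d}{dt}\left((T_\beta^t)^{-1}\right)$. For the first term, \eqref{eq:localT} gives $\frac{d}{dt}T_\alpha^t = T_\alpha^t\circ V_\alpha^t$, while differentiating the identity $T_\beta^t(T_\beta^t)^{-1} = \id$ yields $\frac{d}{dt}(T_\beta^t)^{-1} = -(T_\beta^t)^{-1}\circ V_\beta^t\circ (T_\beta^t)^{-1}$. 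Hence
\[
\frac{d}{dt}\left(T_\alpha^t(T_\beta^t)^{-1}\right)
= T_\alpha^t V_\alpha^t (T_\beta^t)^{-1} - T_\alpha^t V_\beta^t (T_\beta^t)^{-1}
= T_\alpha^t\,(V_\alpha^t - V_\beta^t)\,(T_\beta^t)^{-1}.
\]
Now I would substitute \eqref{V-al-beta}, writing $V_\alpha^t - V_\beta^t = \ad_{*_t}(h_{\alpha\beta}(t))$, to obtain $T_\alpha^t\,\ad_{*_t}(h_{\alpha\beta}(t))\,(T_\beta^t)^{-1}$.

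The key step is then to push the $\ad_{*_t}$ across $T_\alpha^t$ and $(T_\beta^t)^{-1}$. Since by Lemma~\ref{lem:Bexact} the map $T_\alpha^t$ is an algebra isomorphism $(\cO(U_\alpha)[[\h]],*_t)\to(\cO(U_\alpha)[[\h]],*)$, it intertwines the adjoint actions: $T_\alpha^t\circ\ad_{*_t}(k) = \ad_*(T_\alpha^t(k))\circ T_\alpha^t$ for any $k$. Likewise, applying this with $\alpha$ replaced by $\beta$ and inverting, $\ad_{*_t}(k)\circ(T_\beta^t)^{-1} = (T_\beta^t)^{-1}\circ\ad_*(T_\beta^t(k))$. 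Combining these, and noting that on $U_\alpha\cap U_\beta$ the two local equivalences differ by a self-equivalence of $*$ (so that $T_\alpha^t(h_{\alpha\beta}(t))$ is the relevant function; one uses that $T_\alpha^t$ and $T_\beta^t$ both quantize the identity mod $\h$ and that $h_{\alpha\beta}$ is built from $\log g_{\alpha\beta}$ which lives on the overlap), one gets
\[
\frac{d}{dt}\left(T_\alpha^t(T_\beta^t)^{-1}\right)
= \ad_*\!\left(T_\alpha^t h_{\alpha\beta}(t)\right)\, T_\alpha^t(T_\beta^t)^{-1},
\]
which is the asserted equation; the initial condition $T_\alpha^0(T_\beta^0)^{-1}=\id\circ\id^{-1}=\id$ is immediate from \eqref{eq:localT}.

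Finally, to see that $T_\alpha^t(T_\beta^t)^{-1}$ is a self-equivalence of $*$: it equals $\id\mod\h$ by \eqref{eq:Tclass}, and it is an algebra automorphism of $(\cO(U_\alpha\cap U_\beta)[[\h]],*)$ because it is the composition $T_\alpha^t\circ(T_\beta^t)^{-1}$ of an isomorphism $(\cO,*_t)\to(\cO,*)$ with the inverse of another such isomorphism $(\cO,*_t)\to(\cO,*)$, both furnished by Lemma~\ref{lem:Bexact}; this composition therefore carries $*$ to $*$. I expect the only genuine subtlety to be bookkeeping: making sure the restriction of all objects to $U_\alpha\cap U_\beta$ is consistent, that $\log g_{\alpha\beta}$ and hence $h_{\alpha\beta}(t)$ is well-defined there (which holds by contractibility of the overlaps), and that the $\ad_{*_t}$-versus-$\ad_*$ transfer is applied with the correct map on each side — but no hard estimate or new idea is needed, the content is entirely a Leibniz-rule computation combined with the intertwining property already packaged in Lemma~\ref{lem:Bexact}.
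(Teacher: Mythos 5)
Your proposal is correct and follows essentially the same route as the paper: differentiate $T_\alpha^t(T_\beta^t)^{-1}$ via the Leibniz rule, insert \eqref{V-al-beta}, and use that $T_\alpha^t$ intertwines $\ad_{*_t}$ with $\ad_*$ because it is an algebra isomorphism $(\cO(U_\alpha)[[\h]],*_t)\to(\cO(U_\alpha)[[\h]],*)$. One small transcription slip: since \eqref{eq:localT} gives $\frac{d}{dt}T_\beta^t = T_\beta^t\circ V_\beta^t$, the derivative of the inverse is $\frac{d}{dt}(T_\beta^t)^{-1} = -V_\beta^t\circ(T_\beta^t)^{-1}$ rather than $-(T_\beta^t)^{-1}\circ V_\beta^t\circ(T_\beta^t)^{-1}$ as you wrote; your subsequent display already uses the correct form, so the argument is unaffected.
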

    \begin{subproof}
        It is clear that $T_\alpha^t(T_\beta^t)^{-1} = \id \mod \h$,
        that it is an automorphism of
        $(\cO(U_{\al}\cap U_{\beta})[[\h]], *)$, and that it equals
        $\id$ when $t=0$.  Since $T_\alpha^t$ satisfies
        \eqref{eq:localT}, differentiating the identity
        $T^t_\alpha(T^t_\alpha)^{-1} = \id$ implies that
        \[
        \frac{d}{dt}(T^t_\alpha)^{-1}
        = -V_\alpha^t(T^t_\alpha)^{-1}.
        \]
        Using \eqref{V-al-beta}, we obtain
        \[
        \frac{d}{dt}T_\alpha^t(T_\beta^t)^{-1}
        =
        T_\alpha^t V_\alpha^t (T^t_\beta)^{-1}
        -
        T^t_\alpha V_\beta^t(T^t_\beta)^{-1}
        =
        T^t_\alpha (\ad_{*_t}(h_{\alpha\beta}))(T^t_\beta)^{-1}.
        \]
        Since $T_\alpha^t$ is an algebra homomorphism from
        $(\cO(U_{\al})[[\h]], *_t)$ to $(\cO(U_{\al})[[\h]], *)$, we
        have the identity $T_\alpha^t \ad_{*_t}(a) = \ad_*(T^t_\alpha
        a) T^t_\alpha$, from which the claim follows.
    \end{subproof}

    Using Prop.~\ref{proposition:ODEExponential} in Appendix
    \ref{App-A}, we can define the family of functions
    $G_{\alpha\beta}(t)$ as the unique solution to the differential
    equation
    \begin{equation}
        \label{eq:Galbe}
        \frac{d}{dt}G_{\alpha\beta}(t)= (T_\alpha^t
        h_{\alpha\beta}(t))*G_{\alpha\beta}(t),
        \;\;\;\;
        G_{\alpha\beta}(0)=1.
    \end{equation}
    The solution of \eqref{eq:Galbe} is $*$-invertible, and its
    $*$-inverse satisfies
    \begin{equation}
        \label{eq:Galbe2}
        \frac{d}{dt}(G_{\alpha\beta}(t))^{-1}=
        -(G_{\alpha\beta}(t))^{-1}*(T_\alpha^t h_{\alpha\beta}(t)).
    \end{equation}
    From \eqref{h-al-beta}, we see that $h_{\alpha\beta}(t)=
    c_{\alpha\beta}\mod \h$, where
    $c_{\alpha\beta}=\log(g_{\alpha\beta})$. When $\h=0$, the
    differential equation \eqref{eq:Galbe} becomes
    \[
    \frac{d}{dt}G_{\alpha\beta}^{(0)}(t) = c_{\alpha\beta}
    G_{\alpha\beta}^{(0)}(t),
    \]
where
\[
G_{\alpha\beta}^{(0)}(t) = G_{\al\beta}(t) \Big|_{\h=0}\,.
\]
Hence $G_{\alpha\beta}^{0}(t) = e^{t c_{\alpha\beta}}$ and, in
particular
    \begin{equation}
        \label{eq:classG}
        G_{\alpha\beta}(1) = g_{\alpha\beta} \mod \h.
    \end{equation}
    \begin{claim}
        If $G_{\alpha\beta}(t)$ is a solution to \eqref{eq:Galbe} then
        \begin{equation}
        \label{eq:compat-t}
        T^t_\alpha (T^t_\beta)^{-1}(f)
        =
        G_{\alpha\beta}(t) * f * G_{\alpha\beta}^{-1}(t)
        \quad
        \textrm{for all}
        \quad
        f \in \cO(U_\alpha \cap U_\beta)[[\h]].
    \end{equation}
    \end{claim}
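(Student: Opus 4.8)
The plan is to establish \eqref{eq:compat-t} by the uniqueness-of-solutions argument that has been used repeatedly in this section: I will show that both sides of \eqref{eq:compat-t}, regarded as $t$-dependent $\bbC[[\h]]$-linear operators on $\cO(U_\alpha\cap U_\beta)[[\h]]$, solve one and the same linear initial value problem, and then appeal to Proposition~\ref{proposition:ODEGeneral} in Appendix~\ref{App-A}. Put $\Theta^t := T^t_\alpha(T^t_\beta)^{-1}$ and, for $f\in\cO(U_\alpha\cap U_\beta)[[\h]]$, define $\Xi^t(f) := G_{\alpha\beta}(t)*f*G_{\alpha\beta}^{-1}(t)$; this makes sense since $G_{\alpha\beta}(t)$ is $*$-invertible by construction. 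By Claim~\ref{claim:TT}, $\Theta^t$ is a self-equivalence of $(\cO(U_\alpha\cap U_\beta)[[\h]],*)$ and satisfies
\[
\frac{d}{dt}\Theta^t=\ad_*\!\big(T^t_\alpha h_{\alpha\beta}(t)\big)\,\Theta^t,\qquad \Theta^0=\id .
\]

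Next I would verify that $\Xi^t$ satisfies exactly the same equation. Writing $g:=G_{\alpha\beta}(t)$ and $a:=T^t_\alpha h_{\alpha\beta}(t)$, the Leibniz rule combined with \eqref{eq:Galbe} and \eqref{eq:Galbe2} gives, for every $f$,
\[
\frac{d}{dt}\big(g*f*g^{-1}\big)=(a*g)*f*g^{-1}-g*f*(g^{-1}*a)=a*\big(g*f*g^{-1}\big)-\big(g*f*g^{-1}\big)*a=\ad_*(a)\big(\Xi^t(f)\big),
\]
where I used associativity of $*$. Since $G_{\alpha\beta}(0)=1$ is the unit of $*$ by \eqref{star-and-unit}, we also have $\Xi^0(f)=1*f*1=f$, i.e. $\Xi^0=\id$. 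Thus $\Theta^t$ and $\Xi^t$ solve the same linear ODE with the same initial datum.

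It remains to invoke uniqueness, and the only point requiring care is to check that the hypotheses of Proposition~\ref{proposition:ODEGeneral} are met: the coefficient $t\mapsto \ad_*\!\big(T^t_\alpha h_{\alpha\beta}(t)\big)$ is polynomial in $t$, and for any $b\in\cO(U_\alpha\cap U_\beta)[[\h]]$ the operator $\ad_*(b)$ strictly raises the $\h$-adic filtration because $b*g-g*b\in\h\,\cO(U_\alpha\cap U_\beta)[[\h]]$ for all $g$. Hence the initial value problem above is of the admissible type (right-hand side polynomial in $t$ and filtration-increasing), and Proposition~\ref{proposition:ODEGeneral} yields $\Theta^t=\Xi^t$ for all $t$, which is precisely \eqref{eq:compat-t}. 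I do not anticipate a genuine obstacle here: the computation in the second paragraph is routine, and the main thing to get right is simply matching the setup to the uniqueness statement of the appendix. Specialized to $t=1$, and combined with \eqref{eq:classG}, this supplies exactly the data required to apply Proposition~\ref{prop:crit} and thereby complete the proof of Theorem~\ref{ona}.
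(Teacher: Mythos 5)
Your argument is correct and coincides with the paper's own proof: both compute $\frac{d}{dt}\bigl(G_{\alpha\beta}(t)*f*G_{\alpha\beta}(t)^{-1}\bigr)=\ad_*\bigl(T^t_\alpha h_{\alpha\beta}(t)\bigr)\bigl(G_{\alpha\beta}(t)*f*G_{\alpha\beta}(t)^{-1}\bigr)$ from \eqref{eq:Galbe} and \eqref{eq:Galbe2}, match this against the ODE for $T^t_\alpha(T^t_\beta)^{-1}$ from Claim~\ref{claim:TT}, and conclude by uniqueness of solutions with initial condition $\id$. Your extra verification of the hypotheses of Proposition~\ref{proposition:ODEGeneral} only makes explicit what the paper leaves implicit.
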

    \begin{subproof}
        Let $\mathrm{Ad}_*(G_{\alpha\beta}(t))$ be the conjugation
        operator with respect to $*$,
        \[
        \mathrm{Ad}_*(G_{\alpha\beta}(t))(f)
        =
        G_{\alpha\beta}(t)* f * G_{\alpha\beta}(t)^{-1},
        \]
        for $f \in \cO(U_\alpha \cap U_\beta)[[\h]]$.  Then from
        \eqref{eq:Galbe} and \eqref{eq:Galbe2} we see that
        \begin{align*}
            \frac{d}{dt} \mathrm{Ad}_*(G_{\alpha\beta}(t))(a)
            &=
            (T_\alpha^t h_{\alpha\beta}(t))
            * G_{\alpha\beta}(t) * f * G_{\alpha\beta}(t)^{-1}
            -
            G_{\alpha\beta}(t) * f * (G_{\alpha\beta}(t))^{-1}
            * (T_\alpha^t h_{\alpha\beta}(t))\\
            &=
            \ad_*(T_\alpha^t h_{\alpha\beta}(t))
            \mathrm{Ad}_{*}(G_{\alpha\beta}(t))(f)
        \end{align*}
        Since $\mathrm{Ad}_*(G_{\alpha\beta}(t))|_{t=0} = \id$, we
        conclude from Claim~\ref{claim:TT} that
        $T^t_\alpha(T^t_\beta)^{-1} = \mathrm{Ad}_*(G_{\alpha\beta}(t))$.
    \end{subproof}

    The next claim implies that the functions
$$
G_{\alpha\beta}: = G_{\al\beta}(1)
$$
    satisfy the
    desired cocycle conditions.
    \begin{claim}
        The following identities hold: $G_{\alpha\alpha} = 1$,
        $G_{\alpha\beta}*G_{\beta\alpha} = 1$, $G_{\alpha\beta} *
        G_{\beta\gamma} * G_{\gamma\alpha} = 1$.
    \end{claim}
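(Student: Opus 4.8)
The plan is to promote each of the three identities to a first-order ODE in $t$ and invoke uniqueness of solutions (Proposition~\ref{proposition:ODEExponential} in Appendix~\ref{App-A}) to pin down the value at $t=1$; the assertions for $G_{\al\beta}=G_{\al\beta}(1)$ then drop out. Two ingredients will be used throughout. First, each structure map $\cK_{n+1}$ is $\bbC$-linear in its last slot and, by Proposition~\ref{cK-constant}, $\sum_{n\ge 0}\frac{1}{n!}\cK_{n+1}(\pi_t,\dots,\pi_t,c)=c$ for any constant $c$; combined with \eqref{h-al-beta} this gives $h_{\al\al}(t)=0$ (since $g_{\al\al}=1$), $h_{\beta\al}(t)=-h_{\al\beta}(t)$ (since $\log g_{\beta\al}=-\log g_{\al\beta}$), and $h_{\al\beta}(t)+h_{\beta\ga}(t)+h_{\ga\al}(t)=\log g_{\al\beta}+\log g_{\beta\ga}+\log g_{\ga\al}$ on $U_\al\cap U_\beta\cap U_\ga$. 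Second, the preceding claim \eqref{eq:compat-t}, i.e. $T^t_\al(T^t_\beta)^{-1}=\mathrm{Ad}_*(G_{\al\beta}(t))$, rewrites as the commutation rule $G_{\al\beta}(t)*(T^t_\beta a)=(T^t_\al a)*G_{\al\beta}(t)$ for $a\in\cO(U_\al\cap U_\beta)[[\h]]$.

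From $h_{\al\al}(t)=0$ and \eqref{eq:Galbe} we get $\frac{d}{dt}G_{\al\al}(t)=0$ with $G_{\al\al}(0)=1$, so $G_{\al\al}(t)\equiv 1$. For the pair identity, set $P(t)=G_{\al\beta}(t)*G_{\beta\al}(t)$; differentiating with \eqref{eq:Galbe} and using the commutation rule to move $T^t_\beta h_{\beta\al}(t)$ to the left of $G_{\al\beta}(t)$, the two terms combine into $\big(T^t_\al(h_{\al\beta}(t)+h_{\beta\al}(t))\big)*P(t)=0$. Since $P(0)=1$, uniqueness gives $P(t)\equiv 1$, hence $G_{\al\beta}*G_{\beta\al}=1$.

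For the triple identity, put $Q(t)=G_{\al\beta}(t)*G_{\beta\ga}(t)*G_{\ga\al}(t)$ on $U_\al\cap U_\beta\cap U_\ga$. Differentiating \eqref{eq:Galbe} and using the commutation rule (for the pairs $(\al,\beta)$ and $(\beta,\ga)$) to push each factor $T^t_\bullet h_{\bullet\bullet}(t)$ all the way to the left past the $G$'s, all three terms acquire the common right factor $Q(t)$, giving $\frac{d}{dt}Q(t)=\big(T^t_\al\, c(t)\big)*Q(t)$ with $c(t)=h_{\al\beta}(t)+h_{\beta\ga}(t)+h_{\ga\al}(t)$. By the first ingredient $c(t)=\log g_{\al\beta}+\log g_{\beta\ga}+\log g_{\ga\al}$; as $e^{c(t)}=g_{\al\beta}g_{\beta\ga}g_{\ga\al}=1$, this is locally constant with values in $2\pi i\bbZ$, hence equal to a fixed scalar $2\pi i m$ ($m=m_{\al\beta\ga}\in\bbZ$) on the connected triple intersection. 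Since $T^t_\al$ is unital it fixes this scalar, so $\frac{d}{dt}Q(t)=2\pi i m\,Q(t)$, $Q(0)=1$, whence $Q(t)=e^{2\pi i m t}$ and $Q(1)=1$, as required.

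The one genuine subtlety worth flagging is that $Q(t)$ is \emph{not} identically $1$: it equals $e^{2\pi i m t}$, which records the Chern class of $L$ through the $2\pi i\bbZ$-valued \v{C}ech $2$-cocycle $\{m_{\al\beta\ga}\}$. Thus one cannot hope to prove $Q(t)\equiv 1$ and must instead settle for the endpoint value $Q(1)=1$ — but $t=1$ is precisely where the cocycle conditions \eqref{eq:defcocyc} are needed. The remaining work is the routine (if slightly fiddly) bookkeeping of applying the commutation rule in the correct order so as to extract the common right factor $P(t)$, resp. $Q(t)$, from each term of the derivative.
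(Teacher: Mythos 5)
Your proof is correct and follows essentially the same route as the paper: differentiate each product of deformed transition functions, use the conjugation identity \eqref{eq:compat-t} to move the factors $T^t_\bullet h_{\bullet\bullet}(t)$ across the $G$'s, reduce the sum $h_{\al\beta}+h_{\beta\ga}+h_{\ga\al}$ to the constant $2\pi i n_{\al\beta\ga}$ via Proposition~\ref{cK-constant}, and conclude by uniqueness of solutions of the resulting formal ODEs, evaluating at $t=1$. The only cosmetic difference is in the pair identity, where your ordering of the commutation rule makes the derivative vanish outright, whereas the paper writes it as $\ad_*(T^t_\al h_{\al\beta}(t))$ applied to the product; both pin down the constant solution $1$.
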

    \begin{subproof}
        Since $h_{\alpha\alpha}(t) = 0$, it is clear from
        \eqref{eq:Galbe} that $G_{\alpha\alpha} = 1$. For the second
        identity, we have
        \begin{eqnarray*}
            \frac{d}{dt}(G_{\alpha\beta}(t) * G_{\beta\alpha}(t))
            &=&
            (T_\alpha^t h_{\alpha\beta}(t))
            * G_{\alpha\beta}(t) * G_{\beta\alpha}(t) + \\
            &&
            G_{\alpha\beta}(t) * (T_\beta^t h_{\beta\alpha}(t))
            * G_{\beta\alpha}(t)\\
            &=&
            (T_\alpha^t h_{\alpha\beta}(t))
            * G_{\alpha\beta}(t) * G_{\beta\alpha}(t) + \\
            &&
            G_{\alpha\beta}(t) * G_{\beta\alpha}(t)
            *
            (T_\alpha^th_{\beta\alpha}(t))
            *
            G_{\beta\alpha}^{-1}(t) * G_{\beta\alpha}(t)\\
            &=&
            \ad_*(T_{\alpha}^t h_{\alpha\beta}(t))
            (G_{\alpha\beta}(t)*G_{\beta\alpha}(t)),
        \end{eqnarray*}
        where we have used  \eqref{eq:Galbe}, \eqref{eq:compat-t} and
        $h_{\alpha\beta}(t) = -h_{\beta\alpha}(t)$. Note that
        $G_{\alpha\beta}(t) * G_{\beta\alpha}(t) = 1$ is the unique
        solution with initial condition $G_{\alpha\beta}(0) *
        G_{\beta\alpha}(0) = 1$.

        We proceed similarly to prove that $G_{\alpha\beta} *
        G_{\beta\gamma} * G_{\gamma\alpha} = 1$. Using
         \eqref{eq:Galbe} and \eqref{eq:compat-t}, we obtain:
        \begin{eqnarray}
            \frac{d}{dt}
            G_{\alpha\beta}(t) * G_{\beta\gamma}(t) *
            G_{\gamma\alpha}(t)
            &=&
            (T_\alpha^t h_{\alpha\beta}(t))
            * G_{\alpha\beta}(t)
            * G_{\beta\gamma}(t)
            * G_{\gamma\alpha}(t) \label{eq:cocyc}\\
            &&
            +
            G_{\alpha\beta}(t)
            * (T_\beta^t h_{\beta\gamma}(t))
            * G_{\beta\gamma}(t)
            * G_{\gamma\alpha}(t)\nonumber\\
            &&
            +
            G_{\alpha\beta}(t)
            * G_{\beta\gamma}(t)
            * (T_\gamma^t h_{\gamma\alpha}(t))
            * G_{\gamma\alpha}(t)\nonumber\\
            &=&
            (T_\alpha^t h_{\alpha\beta}(t))
            * G_{\alpha\beta}(t)
            * G_{\beta\gamma}(t)
            * G_{\gamma\alpha}(t)\nonumber\\
            &&
            +
            G_{\alpha\beta}(t)
            * G_{\alpha\beta}^{-1}(t)
            * (T_\alpha^t h_{\beta\gamma}(t))
            * G_{\alpha\beta}(t)
            * G_{\beta\gamma}(t)
            * G_{\gamma\alpha}(t)\nonumber\\
            &&
            +
            G_{\alpha\beta}(t)
            * G_{\beta\gamma}(t)
            * G_{\beta\gamma}^{-1}(t)
            * (T_\beta^t h_{\gamma\alpha}(t))
            * G_{\beta\gamma}(t)
            * G_{\gamma\alpha}(t)\nonumber\\
            &=&
            (T_\alpha^t h_{\alpha\beta}(t))
            * G_{\alpha\beta}(t)
            * G_{\beta\gamma}(t)
            * G_{\gamma\alpha}(t)\nonumber\\
            &&
            +
            (T_\alpha^t h_{\beta\gamma}(t))
            * G_{\alpha\beta}(t)
            * G_{\beta\gamma}(t)
            * G_{\gamma\alpha}(t)\nonumber\\
            &&
            +
            (T_\alpha^t h_{\gamma\alpha}(t))
            * G_{\alpha\beta}(t)
            * G_{\beta\gamma}(t)
            * G_{\gamma\alpha}(t)\nonumber\\
            &=&
            \left(
                T_\alpha^t
                \left(
                    h_{\alpha\beta}(t)
                    + h_{\beta\gamma}(t)
                    + h_{\gamma\alpha}(t)
                \right)
            \right)
            *G_{\alpha\beta}(t)*G_{\beta\gamma}(t)*G_{\gamma\alpha}(t)
            \nonumber
        \end{eqnarray}
        Using Proposition \ref{cK-constant}, we see that
        \[
        h_{\alpha\beta}(t)
        + h_{\beta\gamma}(t)
        + h_{\gamma\alpha}(t)
        =
        \sum_{n=0}^\infty\frac{1}{n!}
        \mathcal{K}_{n+1}\left(
            \pi_t, \ldots, \pi_t,
            c_{\alpha\beta} + c_{\beta\gamma} + c_{\gamma\alpha}
        \right)
        = c_{\alpha\beta} + c_{\beta\gamma} + c_{\gamma\alpha},
        \]
        where $c_{\alpha\beta} = \log(g_{\alpha\beta})$. Since
        $g_{\alpha\beta}g_{\beta\gamma}g_{\gamma\alpha} = 1$, we have
        that $c_{\alpha\beta} + c_{\beta\gamma} + c_{\gamma\alpha}=
        2\pi i n_{\alpha\beta\gamma}$, for $n_{\alpha\beta\gamma} \in
        \mathbb{Z}$ (note that $n_{\alpha\beta\gamma}$ is the \v{C}ech
        cocycle in $\check{H}^2(M,\mathbb{Z})$ representing the line
        bundle $L$). Hence the unique solution of \eqref{eq:cocyc}
        with initial condition $1$ is
        \begin{equation}
            \label{eq:TheFinalTransitionFunctions}
            G_{\alpha\beta}(t)
            * G_{\beta\gamma}(t)
            * G_{\gamma\alpha}(t)
            =
            e^{2\pi i n_{\alpha\beta\gamma} t}.
        \end{equation}
        In particular, for $t=1$ we have
        $G_{\alpha\beta} * G_{\beta\gamma} * G_{\gamma\alpha} = 1$.
    \end{subproof}
    This finishes the proof of Theorem \ref{ona}.
\end{proof}

%
% Fedosov's classes versus Kontsevich's classes
%%%%%%%%%%%%%%%%%%%%%%%%%%%%%%%%%%%%%%%%%%%%%%%%%%%%%%%%%%%%%%%%%%%%%%%%

\section{Fedosov's classes versus Kontsevich's classes}
\label{sec:fedosov}

In this section we focus on formal Poisson structures $\pi=\h\pi_1+
\h^2\pi_2+\ldots$ on a manifold $M$ for which the leading term
$\pi_1\in \cX^2(M)$ is a nondegenerate bivector field, i.e., we
assume that the associated vector-bundle map $\pi_1^\sharp:T^*M\to
TM$, $\pi^\sharp(\xi)=i_\xi\pi$, is an isomorphism. In this case,
$\pi_1$ corresponds to a symplectic form $\omega_{-1}\in
\Omega^2(M)$, uniquely defined by
$$
i_{\pi_1^\sharp(\xi)}\omega_{-1}=\xi,\;\;\;\; \forall ~ \xi \in
\Omega^1(M),
$$
and the Kontsevich star product (\ref{star-K}) defines a deformation
quantization of the  symplectic manifold $(M, \omega_{-1})$.

The fact that $\pi_1$ is nondegenerate implies, more generally, that
the $\cO(M)[[\h]]$-linear map $\pi^\sharp: \Omega^1(M)[[\h]]\to
\h\X^1(M)[[h]]$ (see \eqref{eq:pisharp}) is an isomorphism, and its
inverse uniquely defines a formal series of 2-forms
\begin{equation}
    \label{F-class}
    \omega
    =
    \frac{1}{\h} \omega_{-1} + \omega_0 + \h \omega_1 +
    \h^2 \omega_2 + \cdots.
\end{equation}
The integrability condition $[\pi,\pi]_{SN}=0$ is equivalent to
$d\omega_j=0$, $\forall ~ j=-1,0,1,\ldots$. This gives us a 1-1
correspondence between formal Poisson structures $\pi=\h\pi_1
+\ldots $ for which $\pi_1$ is nondegenerate and series of closed
2-forms $\omega$ as in \eqref{F-class} for which $\omega_{-1}$ is
symplectic (cf. \cite[Sec.~3]{Gutt}). Furthermore, under this
correspondence, the action
 of the group \eqref{group-pvect} boils down to the action
\[
\omega \mapsto \omega + d\, \ve,
\]
where $\ve= \ve_0 + \h \ve_1+\ldots \in \Omega^1(M)[[\h]]$ is an
arbitrary formal power series of $1$-forms. In particular, for a
fixed nondegenerate Poisson structure $\pi_1$, the set of
equivalence classes $[\pi]\in \FPois(M)$ such that
$\pi=\h\pi_1+\ldots$ is in bijective correspondence with
$H^2(M, \bbC)[[\h]]$ (cf. \cite[Prop.~13]{Gutt}).

On the other hand, Fedosov's construction \cite{F} leads to a
parametrization of the set of equivalence classes of star products
on a given symplectic manifold $(M,\omega)$ by elements
$\sum_{j=0}^\infty \h^j[\omega_j] \in H^2(M, \bbC)[[\h]]$ (see e.g.
\cite{BCG,Deligne,NT}); the elements $\frac{1}{\h}[\omega] +
\sum_{j=0}^\infty \h^j[\omega_j]$ are known as \textit{Fedosov
classes}.

\begin{teo}
    \label{F-versus-K} Let $\pi=\h\pi_1+\h^2\pi_2+\ldots$ be a
    formal Poisson structure such that $\pi_1$ is a
    nondegenerate bivector field, and let $\omega$ be the associated
    formal series of closed 2-forms as in \eqref{F-class}. Then the Fedosov class of the Kontsevich star
    product (\ref{star-K}) of $\pi$ is represented by $\omega$.
\end{teo}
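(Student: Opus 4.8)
The plan is to connect $*_K$, the Kontsevich star product of $\pi$, with an ordinary Fedosov star product built from the formal symplectic form $\om$ of~\eqref{F-class} --- whose Fedosov class is $[\om]$ by construction --- through an intermediate \emph{modified Fedosov construction}. The whole argument rests on the freedom, provided by Theorem~\ref{nezalezh}, to choose the geometric Fedosov differential $D$ of~\eqref{DDD} at will when computing $[*_K]$.

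\textbf{Step 1 (a modified Fedosov construction).} First I would set up a Fedosov-type construction that, like Fedosov's original one, is built from the fiberwise Moyal--Weyl product $\starWeyl$ on $\SM[[\h]]$ (a product whose structure constants are constant in the fiber coordinates $y$, once a Darboux-type trivialization adapted to $\pi$ is fixed), but organised inside the formal-geometry framework of Section~\ref{subsec-Fedosov-resolutions}, i.e.\ relative to a geometric Fedosov differential and its flat-section map. As in Fedosov's theory, $\si$ of~\eqref{sigma} restricts to a bijection on the flat sections, and composing its inverse with the projection yields a star product $\mstar$ on $\cO(M)[[\h]]$. The standard Fedosov curvature computation then shows that the characteristic class of $\mstar$ is the cohomology class of $\om$, because $\om$ (more precisely its tautological fiberwise lift) is the $\starWeyl$-curvature of the flattening differential.

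\textbf{Step 2 (Emmrich--Weinstein connection; $*_K\sim\mstar$).} The heart of the argument is to choose the data $(\n,A)$ of the geometric Fedosov differential $D$ so that the lifted bivector $\tau(\pi)$, defined via the iteration~\eqref{iter-tau} and its extension~\eqref{tau-need1}, has \emph{constant} coefficients in the fiber coordinates; this is arranged by taking $\n$ and $A$ to come from a version of the Emmrich--Weinstein connection \cite{EW}, i.e.\ a symplectic connection together with a compatible choice of $A$ for which $D$ preserves the tautological fiberwise symplectic form, so that the iteration~\eqref{iter-tau} applied to $\pi$ stabilises at once. For a constant Poisson structure on $\bbR^d$, Kontsevich's $\Linf$ quasi-isomorphism \cite{K} produces exactly the Moyal product --- the only graphs surviving in $K_n(\pi,\dots,\pi)$ are those with no edge between aerial vertices (such an edge would differentiate a constant coefficient), and their total weight reproduces the Moyal term. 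Feeding this into the twisted formality $K^{tw}$ of~\eqref{K-tw} and using Properties~P~\ref{P-symmetry}--P~\ref{P-constant} to control the twist by $\mu^D_U$ (in particular P~\ref{P-vect-linear} and P~\ref{P-arg-vect} kill the contributions of the linear and $\de$-parts, while the symplectic character of $A$ accounts for the rest), one concludes that the fiberwise star product produced by the globalized construction~\eqref{upper} is $\starWeyl$. Hence, with this choice of $D$, the Kontsevich star product~\eqref{star-K} of $\pi$ coincides with the modified Fedosov star product $\mstar$ of Step~1 with input $\om$; in particular $*_K\sim\mstar$.

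\textbf{Step 3 (modified Fedosov $=$ original Fedosov; conclusion).} Finally, $\mstar$ is identified with Fedosov's original star product associated with $\om$: both are Fedosov-type star products built from the same fiberwise Weyl product $\starWeyl$, differing only in the connection used for the flattening differential and the $\h$-normalisation of its corrections, so a fiberwise automorphism intertwining the two flattening differentials --- as in Fedosov's uniqueness arguments \cite{F} --- identifies the two star products up to equivalence and matches their Fedosov classes. Combining the three steps, $*_K\sim\mstar\sim(\text{Fedosov star product of }\om)$, and since the last has Fedosov class $[\om]$ by construction, the Fedosov class of $*_K$ is represented by $\om$, which proves Theorem~\ref{F-versus-K}. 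I expect Step~2 to be the main obstacle: verifying that, after the Emmrich--Weinstein twist, the induced fiberwise product is \emph{literally} $\starWeyl$ (this is where the vanishing Properties~P~\ref{P-vect-linear}, P~\ref{P-arg-vect}, P~\ref{P-constant} and Kontsevich's computation for constant Poisson structures are used), and carefully carrying all the intermediate equivalences --- the $\tau$-maps of~\eqref{tau-need},~\eqref{tau-need1} and the reduction of~\eqref{upper} to the single quasi-isomorphism $\cK$ --- down to the level of $\cO(M)[[\h]]$ rather than merely fiberwise.
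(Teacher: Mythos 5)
Your proposal follows the paper's proof essentially step for step: the same three-stage plan (a modified Fedosov construction built from the fiberwise Moyal product of the \emph{full} series $\pi$, an Emmrich--Weinstein-type choice of geometric Fedosov differential making $\tau(\pi)$ fiberwise constant so that the twisted Kontsevich formality outputs exactly that fiberwise Moyal product, and a final fiberwise equivalence identifying the modified construction with Fedosov's original one). The one caution is that the claim in your Step~1 that $\mstar$ already has Fedosov class $[\omega]$ is only available after your Step~3, because the modified construction uses the Moyal product of the whole series $\pi$ rather than of $\h\pi_1$ alone; the intertwining automorphism there must be taken linear in the fiber coordinates (constructed order by order in $\h$ using the nondegeneracy of $\pi_1$), so that it commutes with $\sigma$, turns the modified data into genuine original-Fedosov data with characteristic form $\omega$, and makes the two star products literally coincide.
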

%\begin{remark}
%    \label{remark:FedosovKontsevichConjectured}
%    The statement of this theorem was formulated in \cite{CR} as a
%    conjecture. (See Conjecture~4 in \cite{CR}.)  Combining this
%    theorem with Deligne's paper \cite{Deligne} in which Fedosov's
%    approach is compared to that of De Wilde and Lecomte
%    \cite{DeWilde} we close partially the project mentioned in item 1)
%    of Section~0.2 in \cite{K}.
%\end{remark} ----- This is already at the introduction
%\begin{remark}
 %   \label{remark:ReproducesSymplecticMoritaStuff}

    It directly follows from this result that the description of Morita
    equivalent star products in terms of Kontsevich's classes of
    Theorem~\ref{ona} reduces, in the symplectic case, to the description of \cite[Theorem~3.1]{BW1}
    in terms of Fedosov's classes.

%\end{remark}

The plan of the proof is depicted on the following diagram:
\begin{equation}
    \label{plan}
    \begin{array}{ccccc}
        \begin{array}{c}
            \mbox{Kontsevich's} \\
            \mbox{star product}~ *_K
        \end{array}
        & ~\sim~  &
        \begin{array}{c}
            \mbox{modified Fedosov's} \\
            \mbox{star product}~ \mstar
        \end{array}
        & ~ = ~ &
        \begin{array}{c}
            \mbox{original Fedosov's} \\
            \mbox{star product}~ *_F
        \end{array}
    \end{array}
\end{equation}

This diagram will be turned into a proof of Theorem~\ref{F-versus-K}
in this section. In Subsection~\ref{modified} we construct the
modified Fedosov's star product $\mstar$. The difference between the
constructions of $\mstar$ and the original Fedosov's star product
$*_F$ is that for $\mstar$ we use the fiberwise multiplication
(\ref{t-bullet}), which involves the whole series $\pi$, whereas to
construct $*_F$ we use only the first term $\h \pi_1$. In
Subsection~\ref{subsec:DEW} we introduce a version of the
Emmrich-Weinstein differential. Using this differential in
Subsection~\ref{K-equiv-m}, we show that Kontsevich's star product
$*_K$ (\ref{star-K}) is equivalent to $\mstar$. Finally, in
Subsection~\ref{m-equiv-F}, we prove that $\mstar$ coincides with
the original Fedosov's star product $*_F$ whose equivalence class is
represented by $\omega$ (\ref{F-class}). In following these steps,
it will be important to recall (see Subsection
\ref{subsec:star-products}) that the bijective correspondence
between equivalence classes of Maurer-Cartan elements induced from
the direct $L_{\infty}$ quasi-isomorphism $\cK$ (\ref{cK}) and the
sequence of quasi-isomorphisms in (\ref{upper}) coincide and are
independent of the choice of the connection/Fedosov's differential.

%\comment{
%We should remark that in this paper we use two different constructions
%of Kontsevich's star product $*_K$.  In the first construction we use
%the direct $L_{\infty}$ quasi-isomorphism $\cK$ (\ref{cK}) and obtain
%desired Kontsevich's star product $*_K$ using equation (\ref{star-K}).
%In the second construction we use the sequence of quasi-isomorphisms
%in (\ref{upper}) which also gives us a bijection between moduli spaces
%of Maurer-Cartan elements of the DGLAs $\mathcal{X}^{\bul+1}M$ and
%$\sCbu(\OM)$. Both constructions involve the choice of the
%connection/Fedosov's differential.

%According to Theorem~\ref{nezalezh} the bijection between the set of
%the equivalence classes of star products and the moduli space of
%Maurer-Cartan elements of the DGLA $\mathcal{X}^{\bul+1}M$ depends
%neither on the choice of the construction nor on the choice of the
%connection/Fedosov's differential.}
%
% Modified Fedosov's construction
%%%%%%%%%%%%%%%%%%%%%%%%%%%%%%%%%%%%%%%%%%%%%%%%%%%%%%%%%%%%%%%%%%%%%%%

\subsection{The modified Fedosov construction}
\label{modified}

In this subsection we consider a modification of Fedosov's
construction based on the following associative product on the sheaf
$\SM[[\h]]$ of $\OM[[\h]]$-modules:
\begin{equation}
    \label{t-bullet}
    a_1 \mdiamond a_2
    =
    a_1
    \exp \left(
        \pi^{ij}(x)
        \frac{\overleftarrow{\pa}}{\pa y^i} \,
        \frac{\overrightarrow{\pa}}{\pa y^i}
    \right)
    a_2,
\end{equation}
where $\pi=\h\pi_1+\ldots$ is a formal Poisson structure (in
particular, the coefficients $\pi^{ij}$ are series in $\h$) and
 $\pi_1$ is nondegenerate.  Recall that
the sheaf $\SM[[\h]]$ is equipped with the descending filtration of
Remark~\ref{remark-filtr}, and one can check that the product
(\ref{t-bullet}) is compatible with this filtration.  We may view
the product (\ref{t-bullet}) as a quantization of the fiberwise
Poisson structure
\begin{equation}
    \label{pi-y}
    \pifib = \pi^{ij}(x) \frac{\pa}{\pa y^i} \wedge
    \frac{\pa}{\pa y^j}.
\end{equation}
Since $\pi_1$ is a non-degenerate Poisson bivector field, there
exists a torsion-free connection form
\[
d x^j (\G_{\h})^i_{jk}(x)
=
d x^j \G^i_{jk}(x)
+ d x^j \h (\G_1)^i_{jk}(x)
+ d x^j \h^2 (\G_2)^i_{jk}(x)
+ \cdots,
\]
satisfying the compatibility condition
\begin{equation}
    \label{G-h-pi}
    \n \{a_1, a_2 \}_{\pifib}
    = \{\n a_1 , a_2 \}_{\pifib} + \{a_1 , \n a_2 \}_{\pifib},
\end{equation}
where
\begin{equation}
    \label{nabla}
    \n
    = d x^i \frac{\pa}{\pa x^i}
    - d x^i (\G_{\h})^k_{ij}(x) y^j \frac{\pa}{\pa y^k},
\end{equation}
and, for $a_1, a_2$ local sections of $\SM[[\h]]$,
\begin{equation}
    \label{brack-pi}
    \{a_1 , a_2 \}_{\pifib}
    = \pi^{ij}(x) \frac{\pa a_1}{\pa y^i} \frac{\pa a_2}{\pa y^j}
\end{equation}
is the fiberwise Poisson bracket on $\SM[[\h]]$ coming from the
fiberwise Poisson structure $\pifib$. Note that (\ref{G-h-pi})
implies that the connection $\n$ is also compatible with the product
(\ref{t-bullet}), i.e.,
\begin{equation}
    \label{n-tbul}
    \n (a_1 \mdiamond a_2)
    = (\n a_1) \mdiamond a_2 + a_1 \mdiamond (\n a_2).
\end{equation}

In general, the connection $\n$ is not flat. In fact,
\[
\n^2 = \frac{1}{\h}[R, \,]_{\mdiamond},
\]
where
\begin{equation}
    \label{R-h}
    R
    =
    \frac{1}{2}d x^i\, d x^j R_{ij\,\,kl}(x) y^k y^l,
\end{equation}
$R_{ij\,\,kl}(x) =
    \h\omega_{km}(x)R_{ij\,\,\,l}^{\,\,\,\,\, m}(x)$,
and $R_{ij\,\,\,l}^{\,\,\,\,\, m}(x)$ are the components (possibly
depending on $\h$) of the curvature tensor.  Even though $R$ is not
vanishing in general, we can modify $\n$ to the following flat
connection:
\begin{equation}
    \label{D-F-h}
    D^F_{\h} = \n -\de + \frac{1}{\h}[r, \cdot]_{\mdiamond},
\end{equation}
where $r$ is an element of $\Om^1(M, \cF^3 \SM[[\h]])$ obtained by
iterating the equation
\begin{equation}
    \label{r-iter}
    r =
    \de^{-1} R + \de^{-1} \left(
        \n r + \frac{1}{2\h}
        [r,r]_{\mdiamond}
    \right).
\end{equation}
It can be shown that by iterating (\ref{r-iter}) we get an element
$r\in \Om^1(M, \cF^3 \SM[[\h]])$ satisfying
\begin{equation}
    \label{r-eq}
    R + \n r -\de r + \frac{1}{2\h} [r,r]_{\mdiamond} = 0,
\end{equation}
and this equation implies that $(D^F_{\h})^2 =0$.  Notice that the
derivation $\de$ (\ref{delta}) of the algebra $\Omb(M, \SM[[\h]])$
is inner. More precisely,
\begin{equation}
    \label{de-inner-tbul}
    \de = [d x^i \omega_{ij}(x,\h) y^j, \cdot]_{\mdiamond}.
\end{equation}
As a consequence, the differential (\ref{D-F-h}) can be rewritten as
\begin{equation}
    \label{D-F-h1}
    D^F_{\h} = \n + \frac{1}{\h}[b, \cdot ]_{\mdiamond},
\end{equation}
where
\begin{equation}
    \label{b}
    b = r - \h\, d x^i \omega_{ij}(x, \h) y^j.
\end{equation}
It follows from (\ref{r-eq}) that the element $b$ satisfies
\begin{equation}
    \label{Fed-class}
    \frac{1}{\h} R + \frac{1}{\h} \n b +
    \frac{1}{2\h^2} [b, b]_{\mdiamond} = - \omega.
\end{equation}

As already used in Section~\ref{sec:prelim}, we have the obvious map
\begin{equation}
    \label{sigma-h}
\sigma:
    \G(M, \SM)[[\h]] \cap \ker D^F_{\h}
    \longrightarrow
    \cO(M)[[\h]],\;\;\; \si(c)= c \Big|_{y^i = 0},
\end{equation}
from the $\bbC[[\h]]$-module of $D^F_{\h}$-flat sections of
$\SM[[\h]]$ to the $\bbC[[\h]]$-module $\cO(M)[[\h]]$.  The map
(\ref{sigma-h}) turns out to be an isomorphism, and the inverse map
\begin{equation}
    \label{lift}
    \mtau :
    \cO(M)[[\h]]
    \longrightarrow
    \G(M, \SM)[[\h]] \cap \ker D^F_{\h}.
\end{equation}
is defined by the following iterative procedure:
\begin{equation}
    \label{iter-tau-h}
    \mtau(f)
    =
    f + \de^{-1}(\n \mtau (f) + [r, \mtau(f)]_{\mdiamond}),
\end{equation}
where $f\in \cO(M)[[\h]]$ and the iteration in (\ref{iter-tau-h}) goes
with respect to the filtration (\ref{filtr-SM-h}).

Using the isomorphism (\ref{lift}), we obtain the \emp{modified
Fedosov
  star product} $\mstar$:
\begin{equation}
    \label{star-m}
    f_1 \mstar f_2
    = \sigma (\mtau(f_1) \mdiamond \mtau(f_2)),
\end{equation}
where $f_1, f_2 \in \cO(M)[[\h]]$.  In Subsection~\ref{m-equiv-F} we
will show that $\mstar$ coincides with the original Fedosov's star
product whose equivalence class is represented by $\omega$ (\ref{F-class}).

%%%%%%%%%%%%%%%%%%%%%%%%%%%%%%%%%%%%%%%%%%%%%%%%%%%%%%%%%%%%%%%%%%%%%%%%%
% The Emmrich-Weinstein differential
%

\subsection{The Emmrich-Weinstein differential}
\label{subsec:DEW}
The compatibility between the ``deformed'' connection $\n$
(\ref{nabla}) and the fiberwise Poisson bracket $\{\cdot,
\cdot\}_{\pifib}$ (\ref{brack-pi}) allows us to construct the
Emmrich-Weinstein differential \cite{EW}
\begin{equation}
    \label{D-EW}
    \DEW = \n - \de + \frac{1}{\h}\{r^{cl}, \cdot \}_{\pifib},
\end{equation}
where $r^{cl}$ is the element of $\Om^1(M, \cF^3 \SM[[\h]])$
obtained by iterating the equation
\begin{equation}
    \label{r-cl-iter}
    r^{cl}
    =
    \de^{-1} R + \de^{-1} \left(
        \n r^{cl} + \frac{1}{2\h} \{r^{cl},r^{cl}\}_{\pifib}
    \right),
\end{equation}
where $R$ is defined in \eqref{R-h}.  By iterating
(\ref{r-cl-iter}), we obtain an element $r^{cl}\in \Om^1(M, \cF^3
\SM[[\h]])$ satisfying the equation
\begin{equation}
    \label{r-cl-eq}
    R + \n r^{cl} -\de r^{cl} + \frac{1}{2\h}
    \{r^{cl},r^{cl} \}_{\pifib}
    = 0,
\end{equation}
and this equation implies that $(\DEW)^2 =0$.  Similarly to equation
(\ref{de-inner-tbul}), we have
\[
\de = \{d x^i \omega_{ij}(x, \h) y^j, \cdot \}_{\pifib}.
\]
Therefore we can rewrite (\ref{D-EW}) as
\begin{equation}
    \label{D-EW-with-b-cl}
    \DEW
    =
    \n + \frac{1}{\h}\{b^{cl}, \cdot \}_{\pifib},
\end{equation}
where
\begin{equation}
    \label{b-cl-r-cl}
    b^{cl} = - \h d x^i \omega_{ij}(x, \h) y^j + r^{cl}.
\end{equation}
Equation (\ref{r-cl-eq}) implies that
\begin{equation}
    \label{b-cl-eq}
    \frac{1}{\h} R + \frac{1}{\h} \n b^{cl}
    +\frac{1}{2\h^2} \{b^{cl}, b^{cl}\}_{\pifib} = - \omega\,.
\end{equation}

We remark that the differential $\DEW$ (\ref{D-EW}) differs from the
original one introduced by Emmrich and Weinstein in \cite[Sect.
8]{EW}. The fiberwise Poisson bracket considered in \cite{EW} does
not involve $\h$, whereas $\pifib$ (\ref{pi-y}) is a series in
$\h$\,. So, even though the recursion for $r^{cl}$ looks
``classical'',  the element $r^{cl}$ does contain higher orders of
$\hbar$\,. In particular, $r^{cl}$ is not just obtained by setting
$\hbar = 0$ in the element $r$ (\ref{r-iter}). However we have the
following:
\begin{pred}
\label{prop:r-r-cl} Let $r$ and $r^{cl}$ be the elements of
$\Om^1(M, \cF^3 \SM[[\h]])$ defined by iterating equations
(\ref{r-iter}) and (\ref{r-cl-iter}), respectively. Then
    \begin{equation}
        \label{r-r-cl}
        r - r^{cl} = 0 \mod \h^2.
    \end{equation}
\end{pred}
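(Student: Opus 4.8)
The plan is to show that, after reduction modulo $\h^2$, the elements $r$ and $r^{cl}$ satisfy one and the same recursion, and then to conclude by uniqueness of its solution. Since both $r$ and $r^{cl}$ are, by construction, obtained by iterating \eqref{r-iter} (resp. \eqref{r-cl-iter}) with respect to the complete filtration \eqref{filtr-SM-h}, it suffices to work modulo $\h^2$ throughout.

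The first step I would carry out is to compare the two fiberwise operations appearing in \eqref{r-iter} and \eqref{r-cl-iter}. Expanding the exponential in \eqref{t-bullet}, the part of $a_1 \mdiamond a_2$ of even order in $\pi$ is symmetric under $a_1 \leftrightarrow a_2$ (using that $\pi^{ij} = -\pi^{ji}$), and therefore drops out of the $\mdiamond$-commutator; what remains is the term linear in $\pi$ — which is the fiberwise bracket $\{a_1, a_2\}_{\pifib}$ of \eqref{brack-pi}, with the normalization already fixed in \eqref{r-cl-iter} — together with terms of order $\geq 3$ in $\pi$. Since $\pi = \h\pi_1 + \h^2\pi_2 + \cdots$ has positive $\h$-adic order, and $r \in \Om^1(M,\cF^3\SM[[\h]])$ has non-negative $\h$-adic order, this gives
\[
[r,r]_{\mdiamond} - \{r,r\}_{\pifib} \in \h^3\, \Om^1(M,\SM[[\h]]),
\qquad\text{hence}\qquad
\frac{1}{2\h}[r,r]_{\mdiamond} \equiv \frac{1}{2\h}\{r,r\}_{\pifib} \pmod{\h^2}.
\]
Moreover, because $\{\cdot,\cdot\}_{\pifib}$ carries a single factor of $\pi$, the quantity $\frac{1}{2\h}\{r,r\}_{\pifib}$ modulo $\h^2$ depends only on $r$ modulo $\h^2$; likewise, $\n r$ and $R$ modulo $\h^2$ depend only on $r$ modulo $\h^2$. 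Writing $\bar r$, $\bar r^{cl}$ for the images of $r$, $r^{cl}$ in $\Om^1(M,\SM[[\h]])/\h^2\Om^1(M,\SM[[\h]])$ — which still carries the complete filtration induced from \eqref{filtr-SM-h} — the reduction of \eqref{r-iter} modulo $\h^2$ therefore reads
\[
\bar r \equiv \de^{-1} R + \de^{-1}\!\Big(\n \bar r + \frac{1}{2\h}\{\bar r, \bar r\}_{\pifib}\Big) \pmod{\h^2},
\]
which is precisely the reduction modulo $\h^2$ of \eqref{r-cl-iter}, so $\bar r^{cl}$ satisfies the identical equation. To finish I would invoke uniqueness: the map $F(u) = \de^{-1}R + \de^{-1}(\n u + \frac{1}{2\h}\{u,u\}_{\pifib})$ preserves $\Om^1(M,\cF^3\SM[[\h]])$ (here $R \in \cF^2\SM[[\h]]$, $\de^{-1}$ raises the filtration degree of \eqref{filtr-SM-h} by one, $\n$ does not lower it, and $\frac{1}{2\h}\{\cdot,\cdot\}_{\pifib}$ raises it); and for $u, u'$ in $\cF^3\SM[[\h]]$ one has $F(u) - F(u') = \de^{-1}\n(u - u') + \frac{1}{2\h}\de^{-1}\{u + u',\, u - u'\}_{\pifib}$, which has strictly larger filtration degree than $u - u'$. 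By completeness of the filtration, $F$ has a unique fixed point, and the same is true of its reduction modulo $\h^2$; hence $\bar r = \bar r^{cl}$, i.e. $r - r^{cl} = 0 \bmod \h^2$.

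The step I expect to be the main obstacle is the first one: checking carefully that the discrepancy between the product $\mdiamond$ and the bracket $\{\cdot,\cdot\}_{\pifib}$ is controlled by terms containing at least \emph{three} copies of $\pi$ (the quadratic-in-$\pi$ contribution being symmetric and hence cancelling in the commutator), so that the overall factor $1/\h$ still leaves something of order $\h^2$ rather than $\h$. This is precisely where the antisymmetry of $\pi^{ij}$ enters. Once this is in place, the remainder is the standard contraction argument underlying all Fedosov-type iterations.
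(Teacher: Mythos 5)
Your proof is correct and rests on the same key observation as the paper's, namely that the antisymmetry of $\pi^{ij}$ kills the quadratic-in-$\pi$ term in the $\mdiamond$-commutator, so that $[a_1,a_2]_{\mdiamond}-\{a_1,a_2\}_{\pifib}=0 \bmod \h^3$ (the paper's equation \eqref{skobka-tbul}). The only difference is cosmetic: the paper concludes by induction on the iteration index, showing $r_k - r^{cl}_k = 0 \bmod \h^2$ at each step, whereas you reduce both recursions modulo $\h^2$ and invoke uniqueness of the fixed point of the resulting common contraction — two packagings of the same argument.
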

\begin{proof}
Let $r_k$ (resp. $r^{cl}_k$) be the approximation of $r$ (resp.
    $r^{cl}$) which we obtain on the $k$-th step of the iterative
    procedure (\ref{r-iter}) (resp. (\ref{r-cl-iter})). Namely, $r_0 =
    r^{cl}_0 = \de^{-1} R$ and $r_k$ is related to $r_{k-1}$ via the
    equation
    \begin{equation}
        \label{r-iter-k}
        r_k
        =
        \de^{-1} R + \de^{-1}
        \left(
            \n r_{k-1} + \frac{1}{2\h} [r_{k-1}, r_{k-1}]_{\mdiamond}
        \right),
    \end{equation}
    while $r^{cl}_k$ is related to $r^{cl}_{k-1}$ via the equation
    \begin{equation}
        \label{r-cl-iter-k}
        r^{cl}_k
        =
        \de^{-1} R + \de^{-1}
        \left(
            \n
            r^{cl}_{k-1} + \frac{1}{2 \h}
            \{r^{cl}_{k-1}, r^{cl}_{k-1}\}_{\pifib}
        \right).
    \end{equation}
    Let us show by induction that
    \begin{equation}
        \label{r-r-cl-k}
        r_k - r^{cl}_k = 0 \mod \h^2
    \end{equation}
    for all $k$.  For $k=0$, this is obvious. To
    perform the inductive step, we observe that
    \begin{equation}
        \label{skobka-tbul}
        [a_1 , a_2]_{\mdiamond} - \{ a_1, a_2\}_{\pifib} = 0
        \mod \h^3
    \end{equation}
    for all $a_1, a_2 \in \G(M, \SM)[[\h]]$.  This observation
    and the inductive hypothesis imply that
    \begin{align*}
        r_k- r^{cl}_k
        &=
        \de^{-1} \left(
            \n (r_{k-1} - r^{cl}_{k-1})
            +
            \frac{1}{2\h} [r_{k-1}, r_{k-1}]_{\mdiamond}
            -
            \frac{1}{2\h} \{r^{cl}_{k-1}, r^{cl}_{k-1}\}_{\pifib}
        \right) \\
        &=
        \de^{-1} \left(
            \frac{1}{2\h} [r_{k-1}, r_{k-1}]_{\mdiamond}
            -
            \frac{1}{2\h} [r^{cl}_{k-1}, r^{cl}_{k-1}]_{\mdiamond}
        \right) \mod \h^2.
    \end{align*}
    On the other hand,
    \begin{align*}
        &\frac{1}{2\h} [r_{k-1}, r_{k-1}]_{\mdiamond}
        -
        \frac{1}{2\h} [r^{cl}_{k-1}, r^{cl}_{k-1}]_{\mdiamond}
        \\
        &\qquad=
        \frac{1}{2\h} [r_{k-1}, r_{k-1}]_{\mdiamond}
        -
        \frac{1}{2\h} [r_{k-1}, r^{cl}_{k-1}]_{\mdiamond}
        +
        \frac{1}{2\h} [r_{k-1}, r^{cl}_{k-1}]_{\mdiamond}
        -
        \frac{1}{2\h} [r^{cl}_{k-1}, r^{cl}_{k-1}]_{\mdiamond} \\
        &\qquad=
        \frac{1}{2\h}
        [r_{k-1}, (r_{k-1} - r^{cl}_{k-1})]_{\mdiamond}
        +
        \frac{1}{2\h}
        [(r_{k-1} - r^{cl}_{k-1}), r^{cl}_{k-1}]_{\mdiamond}
        = 0 \mod \h^2.
    \end{align*}
    Therefore Equation (\ref{r-r-cl-k}) holds for all $k$ and
    the proof is concluded.

\end{proof}

%
% Kontsevich's star product $*_K$ is equivalent to $\mstar$
%

\subsection{The Kontsevich star product $*_K$ is equivalent to $\mstar$}
\label{K-equiv-m}

Since the differential $\DEW$ (\ref{D-EW}) has the 
form\footnote{See Remark \ref{remark:adding-hbar}.} (\ref{DDD}),
we may use it to construct the Kontsevich star product as in
\cite{CEFT}. The class of the star product does not depend on this
particular choice of differential due to Theorem~\ref{nezalezh}. In
particular, we denote by $\tauEW$ the corresponding map
\eqref{tau-need}.

We will use the sequence of $\Linf$ quasi-isomorphisms (\ref{upper})
with $D = \DEW$ to the obtain the Kontsevich star product $*_K$
corresponding to $\pi$. Going through details of this construction,
we will produce an equivalence transformation between $*_K$ and
$\mstar$ (\ref{star-m}).

Let us consider the (fiberwise) Poisson-Lichnerowicz differential on
$\Omb(M, \cTp)[[\h]]$,
\begin{equation}
    \label{Lich-pa}
    \pa_{\pifib} = [\pifib, \cdot]_{SN},
\end{equation}
corresponding to the fiberwise Poisson structure $\pifib$
(\ref{pi-y}).  We can then rewrite $\DEW$ (\ref{D-EW-with-b-cl}) as
\begin{equation}
    \label{D-EW-with-Lich}
    \DEW
    =
    \n - \frac{1}{\h} \pa_{\pifib} (b^{cl}).
\end{equation}
Combining this last equation with the compatibility between $\n$ and
$\pifib$, we conclude that
\begin{equation}
    \label{D-EW-pi-y}
    \DEW \pifib = 0.
\end{equation}
In other words, the lift $\tauEW(\pi)$ of the formal Poisson
structure (\ref{pi}) to a $\DEW$-flat section of
$\cT^2_{poly}[[\h]]$ takes the following simple form:
\[
\tauEW(\pi)
=
\pi^{ij}(x) \pa_{y^i} \wedge \pa_{y^j}.
\]
An important consequence of this observation is that the components
of $\tauEW(\pi)$ do not depend on the fiber coordinates $y$'s.

Following Subsection~\ref{subsec:sequence}, we consider the ``tail''
of the differential $\DEW$,
\begin{equation}
    \label{tail}
    \mu^{EW}_U
    =
    - \G_{\h} - \frac{1}{\h} \, \pa_{\pifib} b^{cl},
\end{equation}
as a Maurer-Cartan element of the DGLA
\begin{equation}
    \label{DGLA-scTp-U}
    (\Omb(U, \scTp)[[\h]], d, [\cdot, \cdot]_{SN}),
\end{equation}
where $U$ is a coordinate open subset of $M$.  Then twisting the
$\Linf$ quasi-isomorphism $K$ (\ref{K-U}) by $\mu^{EW}_U$, we obtain
the $\Linf$ quasi-isomorphism
\begin{equation}
    \label{K-tw-mu-EW-U}
    K^{\mu^{EW}_U}:
    (\Omb(U, \scTp)[[\h]], \DEW,[\, , \,]_{SN})
    \brarrow
    (\Omb(U, \sCbu(\SM))[[\h]], \DEW + \pa^{\Hoch},[\, ,\,]_G).
\end{equation}
As explained in Subsection~\ref{subsec:sequence}, the $\Linf$
quasi-isomorphism $K^{\mu^{EW}_U}$ does not depend on the choice of
local coordinates on $U$. Hence we get a global $\Linf$
quasi-isomorphism
\begin{equation}
    \label{K-tw-mu-EW}
    K^{tw}:
    (\Omb(M, \scTp)[[\h]], \DEW,[\,,\,]_{SN})
    \brarrow
    (\Omb(M, \sCbu(\SM))[[\h]], \DEW + \pa^{\Hoch},[\, ,\,]_{G}).
\end{equation}

Let us denote by $\mu^K$ the Maurer-Cartan element of the DGLA
\begin{equation}
    \label{Cbu-D-EW}
    \left(
        \Omb(M, \sCbu(\SM))[[\h]],
        \DEW + \pa^{\Hoch}, [\cdot, \cdot]_{G}
    \right)
\end{equation}
obtained from $\tauEW(\pi) = \pifib$ via the $\Linf$
quasi-isomorphism $K^{tw}$:
\begin{equation}
    \label{mu-K}
    \mu^{K}
    =
    \sum_{n=1}^{\infty} \frac{1}{n!}
    K^{tw}_n (\tauEW(\pi), \tauEW(\pi), \ldots, \tauEW(\pi)).
\end{equation}
A simple degree bookkeeping shows that $\mu^K = \mu^K_0 + \mu^K_1 +
\mu^K_2$, where $\mu^K_0$ is a $0$-form with values in
$C^2(\SM)[[\h]]$, $\mu^K_1$ is a $1$-form with values in
$C^1(\SM)[[\h]]$, and $\mu^K_2$ is a $2$-form with values in
$C^0(\SM)[[\h]] = \SM[[\h]]$. More precisely,
\begin{align}
    \label{MC-Konts-0}
    &\mu^{K}_0 =
    \sum_{n=1}^{\infty}
    \frac{1}{n!} K_n (\pifib , \pifib, \ldots, \pifib),\\
    \label{MC-Konts-1}
&    \mu^{K}_1 =
    \sum_{n=1}^{\infty}
    \frac{1}{n!} K_{n+1}
    \left(
        \pifib , \pifib, \ldots, \pifib,
        -\G_{\h} - \frac{1}{\h} \pa_{\pifib} (b^{cl})
    \right),\\
    \label{MC-Konts-2}
&    \mu^{K}_2 =
    \sum_{n=1}^{\infty}
    \frac{1}{n! \, 2} K_{n+2}
    \left(
        \pifib , \pifib, \ldots, \pifib,
        -\G_{\h} - \frac{1}{\h} \pa_{\pifib} (b^{cl}),
        -\G_{\h} - \frac{1}{\h} \pa_{\pifib} (b^{cl})
    \right).
\end{align}

It is known that Kontsevich's star product corresponding to the
constant Poisson structure coincides with the Moyal star product,
see e.g. \cite{Zotov}. Therefore, since the components of $\pifib$
do not depend on the fiber coordinates $y$'s, we conclude
that\footnote{Due to
  (\ref{MC-Konts-0-1}), the product $\mdiamond$ can be written as $a_1
  \mdiamond a_2 = a_1 a_2 + \mu^K_0(a_1, a_2)$.}
\begin{equation}
    \label{MC-Konts-0-1}
    \mu^{K}_0 (a_1, a_2) =  a_1\,
    \sum_{n=1}^{\infty} \frac{1}{n!}
    \left(\pi^{ij}(x)
        \frac{\overleftarrow{\pa}}{\pa y^i} \,
        \frac{\overrightarrow{\pa}}{\pa y^j} \right)^n
    \, a_2
\end{equation}
where $a_1, a_2$ are local sections of $\SM[[\h]]$.

Using property P~\ref{P-vect-linear} from
Subsection~\ref{subsec:sequence}, we simplify $\mu^K_1$ and
$\mu^K_2$ as follows:
\begin{align}
    \label{MC-Konts-1-simpler}
&    \mu^{K}_1 = -
    \sum_{n=1}^{\infty}
    \frac{1}{n!} K_{n+1}
    \left(
        \pifib, \pifib, \ldots, \pifib,
        \frac{1}{\h} \pa_{\pifib} (b^{cl})
    \right),\\
    \label{MC-Konts-2-simpler}
&    \mu^{K}_2 =
    \sum_{n=1}^{\infty}
    \frac{1}{n! \, 2} K_{n+2}
    \left(
        \pifib, \pifib, \ldots, \pifib,
        \frac{1}{\h} \pa_{\pifib} (b^{cl}),
        \frac{1}{\h} \pa_{\pifib} (b^{cl})
    \right).
\end{align}
Due to property P~\ref{P-K1} from Subsection~\ref{subsec:sequence},
we can write
\begin{align*}
    -\frac{1}{\h} \pa_{\pifib} (b^{cl}) + \mu^K_1
    &=
    -\frac{1}{\h} K_1 (\pa_{\pifib} (b^{cl}) ) -
    \sum_{n=1}^{\infty}
    \frac{1}{n!} K_{n+1}
    \left(
        \pifib, \pifib, \ldots, \pifib,
        \frac{1}{\h} \pa_{\pifib} (b^{cl})
    \right) \\
    &=
    -\sum_{n=0}^{\infty} \frac{1}{\h n!} K_{n+1}
    \left(
        \pifib , \pifib, \ldots,
        \pifib, \pa_{\pifib} (b^{cl})
    \right).
\end{align*}
In other words,
\begin{equation}
    \label{b-cl-mu-K-1}
    -\frac{1}{\h}\pa_{\pifib} (b^{cl}) + \mu^K_1 =
    -\frac{1}{\h} K^{\pifib}_1 (\,\pa_{\pifib} (b^{cl}) \,)\,,
\end{equation}
where $K^{\pifib}$ is the $\Linf$ quasi-isomorphism obtained from
\[
K : (\Omb(M, \scTp)[[\h]], 0) \brarrow (\Omb(M, \sCbu(\SM))[[\h]],
\pa^{\Hoch})
\]
via twisting by $\pifib$ (\ref{pi-y}).  Using (\ref{F-1-al-inter})
from Appendix~\ref{App-B}, the map $K^{\pifib}_1$ intertwines the
Poisson-Lichnerowicz differential $\pa^{\pifib}$ (\ref{Lich-pa})
with the Hochschild differential $\pa^{\Hoch}_{\mdiamond}$
corresponding to the product (\ref{t-bullet}). Hence
(\ref{b-cl-mu-K-1}) can be rewritten as
\begin{equation}
    \label{b-cl-mu-K-1-simpler}
    - \frac{1}{\h} \pa_{\pifib} (b^{cl}) +
    \mu^K_1 = - \frac{1}{\h}
    \pa^{\Hoch}_{\mdiamond} K^{\pifib}_1 (b^{cl})
     = \frac{1}{\h} [K^{\pifib}_1 (b^{cl}), \cdot]_{\mdiamond}.
\end{equation}
Since the components of $\pifib$ do not depend on the fiberwise
coordinates $y$'s, it follows that, for every $n\ge 1$,
\[
K_{n+1} (\pifib, \pifib, \dots, \pifib, b^{cl}) = 0
\]
Hence
\begin{equation}
    \label{K-b-cl}
    K^{\pifib}_1 (b^{cl}) = b^{cl}
\end{equation}
and
\begin{equation}
    \label{b-cl-mu-K-1-simplest}
    - \frac{1}{\h} \pa_{\pifib} (b^{cl}) +
    \mu^K_1 = \frac{1}{\h} [b^{cl}, ~]_{\mdiamond}\,.
\end{equation}
Let us now find a simpler expression for $\mu^K_2$
(\ref{MC-Konts-2-simpler}).  Property P~\ref{P-arg-vect} from
Subsection~\ref{subsec:sequence} implies that
\[
K_2\left(
    \frac{1}{\h} \pa_{\pifib} (b^{cl}),
    \frac{1}{\h} \pa_{\pifib} (b^{cl})
\right)
= 0.
\]
Hence
\begin{align*}
    \mu^{K}_2
    &= K_2 \left(
        \frac{1}{\h} \pa_{\pifib} (b^{cl}),
        \frac{1}{\h} \pa_{\pifib} (b^{cl})
    \right)
    +
    \sum_{n=1}^{\infty}
    \frac{1}{n! \, 2} K_{n+2}
    \left(
        \pifib, \pifib, \ldots, \pifib,
        \frac{1}{\h} \pa_{\pifib} (b^{cl}),
        \frac{1}{\h} \pa_{\pifib} (b^{cl})
    \right) \\
    &=
    \sum_{n=0}^{\infty} \frac{1}{2 \h^2 n!} K_{n+2}
    \left(
        \pifib, \pifib, \ldots, \pifib,
        \pa_{\pifib} (b^{cl}), \pa_{\pifib} (b^{cl})
    \right).
\end{align*}
In other words,
\begin{equation}
    \label{mu-K-2}
    \mu^K_2
    =
    \frac{1}{2 \h^2} K^{\pifib}_2
    \left(
        \pa_{\pifib} (b^{cl}), \pa_{\pifib} (b^{cl})
    \right).
\end{equation}
Since $K^{\pifib}_{2}(b^{cl}, \pa_{\pifib} (b^{cl}))$ vanishes for
degree reasons, the component $\mu^K_2$ can be written as
\[
\mu^{K}_2
=
\frac{1}{2\h^2}\pa^{\Hoch}_{\mdiamond}
K^{\pifib}_{2}(b^{cl}, \pa_{\pifib} (b^{cl}))
+
\frac{1}{2\h^2} K^{\pifib}_{2}
\left(\pa_{\pifib} (b^{cl}), \pa_{\pifib} (b^{cl})\right)
+
\frac{1}{2\h^2} K^{\pifib}_{2}
\left(b^{cl}, (\pa_{\pifib})^2 (b^{cl})\right).
\]
Using Equation (\ref{F-2-and-F-1}) from Appendix~\ref{App-B}, we
obtain
\begin{equation}
    \label{MC-Konts-2-111}
    \mu^{K}_2
    =
    \frac{1}{2\h^2} \left(
        K^{\pifib}_{1}
        \left(
            \left[
                b^{cl}, \pa_{\pifib} (b^{cl})
            \right]_{SN}
        \right)
        -
        \left[
            K^{\pifib}_{1}(b^{cl}),
            K^{\pifib}_{1}(\pa_{\pifib} (b^{cl}))
        \right]_G
    \right).
\end{equation}
Thus, due to Equation (\ref{K-b-cl}),
\begin{equation}
    \label{MC-Konts-2-1111}
    \mu^{K}_2
    =
    \frac{1}{2\h^2} \{b^{cl}, b^{cl}\}_{\pifib}
    -\frac{1}{2\h^2} [b^{cl}, b^{cl}]_{\mdiamond}\,.
\end{equation}
Combining equations (\ref{Fed-class}) and (\ref{b-cl-eq}), we deduce
that
\[
\frac{1}{2 \h^2} [b,b]_{\tdiamond} - \frac{1}{2 \h^2} \{b^{cl},
b^{cl}\}_{\pifib} + \frac{1}{\h} \n (b - b^{cl})
= 0.
\]
Therefore, Equation~(\ref{MC-Konts-2-1111}) can be rewritten as
\begin{equation}
    \label{MC-Konts-2-5}
    \mu^{K}_2
    =
    \frac{1}{\h} \n (b - b^{cl})
    +
    \frac{1}{2 \h^2}[b, b]_{\mdiamond}
    - \frac{1}{2 \h^2} [b^{cl}, b^{cl}]_{\mdiamond}.
\end{equation}
Combining this equation with (\ref{b-cl-mu-K-1-simplest}), we obtain
\begin{equation}
    \label{mu-K-and}
    -\G_h - \frac{1}{\h} \pa_{\pifib}  b^{cl} + \mu^K
    =
    -\G_h + \frac{1}{\h}[b^{cl}, \cdot]_{\mdiamond}
    + \frac{1}{\h} \n (b - b^{cl})
    + \frac{1}{2 \h^2}[b, b]_{\mdiamond}
    - \frac{1}{2 \h^2} [b^{cl}, b^{cl}]_{\mdiamond} + \mu^K_0.
\end{equation}
The left-hand side of (\ref{mu-K-and}) is a Maurer-Cartan element of
the DGLA
\begin{equation}
    \label{DGLA-sCbu-U}
    (\Omb(U, \sCbu(\SM))[[\h]], d + \pa^{\Hoch}, [\cdot, \cdot]_G),
\end{equation}
where $U$ is a coordinate open subset of $M$.

The next step in the construction of the star product on $M$ is to
eliminate the components $\mu^K_2$ and $\mu^K_1$ via an equivalence
transformation. Our goal is to show that the component $\mu^K_2$ can
be eliminated in a way that gives us a Maurer-Cartan element which
combines both the defining part
\[
-\G_{\h} + \frac{1}{\h}[b, \cdot]_{\mdiamond}
\]
of the quantum Fedosov differential $D^F_{\h}$ (\ref{D-F-h1}) and
the defining part $\mu^K_0$ (\ref{MC-Konts-0-1}) of the fiberwise
product $\mdiamond$ (\ref{t-bullet}). We have the following result:
\begin{pred}
    \label{ubit'}
    There exists an element $\xi \in \h \Om^1(M,\SM)[[\h]]$ such that
    \begin{equation}
        \label{mu-K-mu-F}
         \left(
            -\G_{\h}
         - \frac{1}{\h} \pa_{\pifib} b^{cl}
            + \mu^K
        \right)^{\exp(\xi)}
        = -\G_{\h} + \frac{1}{\h}[b, \cdot]_{\mdiamond} + \mu^K_0
    \end{equation}
    in the DGLA (\ref{DGLA-sCbu-U}) for every coordinate open subset
    $U$ of $M$.
\end{pred}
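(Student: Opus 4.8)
\noindent The plan is to write down the gauge parameter $\xi$ explicitly. Put $\eta := b - b^{cl}$; by \eqref{b} and \eqref{b-cl-r-cl} this is just $r - r^{cl}$, which is globally defined on $M$ because $r$ and $r^{cl}$ arise from the global recursions \eqref{r-iter}, \eqref{r-cl-iter}. Proposition~\ref{prop:r-r-cl} says $\eta \in \h^2\,\Om^1(M, \cF^3\SM[[\h]])$, so
\[
\xi := \pm\,\tfrac{1}{\h}\,\eta \;=\; \pm\,\tfrac{1}{\h}(b - b^{cl}) \;\in\; \h\,\Om^1(M, \cF^3\SM[[\h]]) \;\subset\; \h\,\Om^1(M,\SM)[[\h]]
\]
is a legitimate gauge parameter (note that $[\xi,\cdot]_G$ lowers the Hochschild cochain degree, so the gauge action is a finite sum on $\SM$-valued forms). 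I claim that, for the correct sign, this $\xi$ satisfies \eqref{mu-K-mu-F} over every coordinate chart.

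The next step is to compare the two sides of \eqref{mu-K-mu-F} bidegree by bidegree. By \eqref{mu-K-and} together with \eqref{b-cl-mu-K-1-simplest} and \eqref{MC-Konts-2-5}, the exterior-degree $0$, $1$, and $2$ components of $\mu := -\G_{\h} - \tfrac1\h\pa_{\pifib}b^{cl} + \mu^K$ are, respectively,
\[
\mu^K_0\,,\qquad -\G_{\h} + \tfrac1\h[b^{cl},\cdot]_{\mdiamond}\,,\qquad \mu^K_2 = \tfrac1\h\n\eta + \tfrac{1}{2\h^2}\big([b,b]_{\mdiamond} - [b^{cl},b^{cl}]_{\mdiamond}\big)\,,
\]
while the target $-\G_{\h} + \tfrac1\h[b,\cdot]_{\mdiamond} + \mu^K_0$ has components $\mu^K_0$, $\;-\G_{\h} + \tfrac1\h[b,\cdot]_{\mdiamond}$, and $0$. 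Since $\xi$ is a $1$-form with values in $C^0(\SM) = \SM$, which occupies the bottom Hochschild degree, we have $\pa^{\Hoch}\xi = 0$ and $[\xi,\cdot]_G$ raises the exterior degree by $1$ and lowers the Hochschild degree by $1$; therefore the gauge-action series truncates and $\mu^{\exp(\xi)}$ is computed exterior degree by exterior degree, the only new terms being $[\xi,\mu^K_0]_G$, $\;[\xi, -\G_{\h} + \tfrac1\h[b^{cl},\cdot]_{\mdiamond}]_G$, $\;\tfrac12[\xi,[\xi,\mu^K_0]_G]_G$, and $d\xi$.

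Matching exterior degree $0$ is automatic. In exterior degree $1$ the identity needed is $[\xi,\mu^K_0]_G = \tfrac1\h[\eta,\cdot]_{\mdiamond}$, which is just the relation $\pa^{\Hoch}_{\mdiamond} = \pa^{\Hoch} + [\mu^K_0,\cdot]_G$ — valid since, by the footnote to \eqref{MC-Konts-0-1}, $\mu^K_0$ is the Hochschild $2$-cochain with $a_1\mdiamond a_2 = a_1 a_2 + \mu^K_0(a_1,a_2)$ — evaluated on the $0$-cochain $\xi$; it simultaneously pins down the sign of $\xi$. In exterior degree $2$ one must check
\[
\mu^K_2 \;+\; \big[\xi,\; -\G_{\h} + \tfrac1\h[b^{cl},\cdot]_{\mdiamond}\big]_G \;+\; \tfrac12\big[\xi,[\xi,\mu^K_0]_G\big]_G \;-\; d\xi \;=\; 0 .
\]
Here I would substitute $\mu^K_2$ from \eqref{MC-Konts-2-5}, expand $[b,b]_{\mdiamond} - [b^{cl},b^{cl}]_{\mdiamond} = 2[b^{cl},\eta]_{\mdiamond} + [\eta,\eta]_{\mdiamond}$, and evaluate the remaining Gerstenhaber brackets using that bracketing a Hochschild $1$-cochain with the $0$-cochain $\xi$ merely substitutes $\xi$ into its single slot. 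The terms $\tfrac12[\xi,[\xi,\mu^K_0]_G]_G$ and $\tfrac1\h[\xi,[b^{cl},\cdot]_{\mdiamond}]_G$ then cancel, respectively, the $[\eta,\eta]_{\mdiamond}$- and $[b^{cl},\eta]_{\mdiamond}$-contributions coming from $[b,b]_{\mdiamond} - [b^{cl},b^{cl}]_{\mdiamond}$, while the leftover terms $\tfrac1\h\n\eta + [\xi, -\G_{\h}]_G - d\xi$ cancel by the explicit form \eqref{nabla} of $\n$ and the Leibniz compatibility \eqref{n-tbul}. This gives \eqref{mu-K-mu-F}.

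The main obstacle is precisely this exterior-degree $2$ verification. The delicate point is the sign bookkeeping: one has to keep track of the Koszul signs of the bigraded tensor product $\Omb(M)\otimes\sCbu(\SM)$ — in which a $0$-cochain has shifted degree $-1$, so the relevant signs are genuinely nontrivial — together with the nonstandard Gerstenhaber sign convention of this paper, and verify that one and the same normalization $\xi = \pm\tfrac1\h(b-b^{cl})$ makes the exterior-degree $1$ and exterior-degree $2$ identities hold simultaneously. The conceptual content, by contrast, is entirely in Proposition~\ref{prop:r-r-cl}, which supplies the factor of $\h$ that makes $\xi$ a gauge parameter at all.
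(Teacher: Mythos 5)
Your proposal is correct and follows essentially the same route as the paper: the paper also takes $\xi=\frac{1}{\h}(b^{cl}-b)$ (fixing your sign ambiguity), truncates the gauge-action series after the quadratic term because the Gerstenhaber bracket of two $C^0(\SM)$-valued elements vanishes, absorbs $\pa^{\Hoch}\xi+[\mu^K_0,\xi]_G$ into $-[\xi,\cdot]_{\mdiamond}$, and checks the resulting cancellations against \eqref{MC-Konts-2-5}, invoking Proposition~\ref{prop:r-r-cl} at the end to see that $\xi=0\bmod\h$. Your bidegree-by-bidegree organization and the identity $[b,b]_{\mdiamond}-[b^{cl},b^{cl}]_{\mdiamond}=2[b^{cl},\eta]_{\mdiamond}+[\eta,\eta]_{\mdiamond}$ are just a repackaging of the paper's single display \eqref{exp-xi-tmu}.
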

\begin{proof}
    Let us denote by $\tmu^K$ the left-hand side of
    (\ref{mu-K-and}):
    \begin{equation}
        \label{tmu-K}
        \tmu^K
        =
        -\G_{\h} - \frac{1}{\h} \pa_{\pifib}  b^{cl} + \mu^K.
    \end{equation}
    As we have remarked, $\tmu^K$ is a Maurer-Cartan element of the
    DGLA (\ref{DGLA-sCbu-U}) for every open coordinate subset $U$.
    According to the formula in (\ref{action-A}) in
    Appendix~\ref{App-B}, we have
    \begin{equation}
        \label{action1}
         \big(\tmu^K \big)^{\exp(\xi)}
        =
        \tmu^K + \frac{\exp([\cdot ,\xi]_G) -1}{[\cdot ,\xi ]_G}
        \left(d \xi + \pa^{\Hoch} \xi + [\tmu^K, \xi]_G\right).
    \end{equation}
    Since the Gerstenhaber bracket is zero if both arguments take
    values in $C^0(\SM)$, we conclude that only the first two terms $1
    + \frac{1}{2}[\cdot ,\xi ]_G$ of the series $\frac{\exp([\cdot
      ,\xi ]_G) -1}{[\cdot ,\xi ]_G}$ contribute to the right-hand
    side of (\ref{action1}). Hence,
\begin{equation}
\label{with2terms}
\big(\tmu^K \big)^{\exp(\xi)}
= \tmu^K + \left( 1 + \frac{1}{2}[\cdot ,\xi ]_G
\right)
\left(d \xi + \pa^{\Hoch} \xi + [\tmu^K, \xi]_G\right).
\end{equation}
    Using (\ref{mu-K-and}), we write (\ref{with2terms}) as
    follows:
    \begin{align*}
        \big( \tmu^K \big)^{\exp(\xi)}
        &=
        - \G_h
        + \frac{1}{\h}[b^{cl}, \cdot]_{\mdiamond}
        + \frac{1}{\h} \n (b - b^{cl})
        + \frac{1}{2 \h^2}[b, b]_{\mdiamond}
        - \frac{1}{2 \h^2} [b^{cl}, b^{cl}]_{\mdiamond} + \mu^K_0 \\
        &\quad
        + \n \xi
        + \pa^{\Hoch} \xi
        + \frac{1}{\h}[b^{cl}, \xi]_{\mdiamond}
        + [\mu^K_0, \xi]_G
        + \frac{1}{2} \left[
            \pa^{\Hoch} \xi + [\mu^K_0, \xi]_G, \xi
        \right]_G.
    \end{align*}
    Using (\ref{pa-Hoch-0}) and
    (\ref{MC-Konts-0-1}), we combine $ \pa^{\Hoch}
    \xi + [\mu^K_0, \xi]_G$ into $-[\xi, \cdot]_{\mdiamond}$. Thus
    $\big(\tmu^K \big)^{\exp(\xi)}$ can be further simplified as
    \begin{align}
    \big( \tmu^K \big)^{\exp(\xi)}
        &=
        - \G_h
        + \frac{1}{\h}[b^{cl}, \cdot]_{\mdiamond}
        + \frac{1}{\h} \n (b - b^{cl})
        + \frac{1}{2 \h^2}[b, b]_{\mdiamond}
        - \frac{1}{2 \h^2} [b^{cl}, b^{cl}]_{\mdiamond}
        + \mu^K_0
        \nonumber \\
        &\quad
        \label{exp-xi-tmu}
        + \n \xi
        - [\xi, \cdot]_{\mdiamond}
        + \frac{1}{\h}[b^{cl}, \xi]_{\mdiamond}
        - \frac{1}{2}[\xi, \xi]_{\mdiamond}.
    \end{align}
    One can now show that, by plugging
    \begin{equation}
        \label{xi}
        \xi = \frac{1}{\h} (b^{cl} - b)
    \end{equation}
    into (\ref{exp-xi-tmu}), we obtain the desired identity
    \[
     \big( \tmu^K \big)^{\exp(\xi)}
    =
    - \G_{\h} + \frac{1}{\h}[b, \cdot]_{\mdiamond} + \mu^K_0.
    \]

Equations (\ref{b}) and (\ref{b-cl-r-cl}) imply that
$$
\xi = \frac{1}{\h} (r^{cl} - r)\,.
$$
By Proposition \ref{prop:r-r-cl}, it follows that $\xi = 0 ~ {\rm
mod}~ \h$, and this concludes the proof.
\end{proof}

Let us denote the right-hand side of (\ref{mu-K-mu-F}) by $\mmu_U$:
\begin{equation}
    \label{mu-Fed}
    \mmu_U
    =
    -\G_{\h} + \frac{1}{\h}[b, \, ]_{\mdiamond} + \mu^K_0,
\end{equation}
where $b$ is defined in (\ref{b}), and an expression for $\mu^K_0$
is given in (\ref{MC-Konts-0-1}).  As we remarked above, $\mmu_U$ is
a Maurer-Cartan element of the DGLA (\ref{DGLA-sCbu-U}), and
Proposition~\ref{ubit'} says that $\mmu_U$ is equivalent to the
Maurer-Cartan element $\tmu^K$.

Twisting the DGLA (\ref{DGLA-sCbu-U}) by $\mmu_U$ (\ref{mu-Fed}), we
get the DGLA $\Omb(U, \sCbu(\SM))[[\h]]$ with differential $D^F_{\h}
+ \pa^{\Hoch}_{\mdiamond}$, where $D^F_{\h}$ is given in
(\ref{D-F-h1}).  Since the differential $D^F_{\h} +
\pa^{\Hoch}_{\mdiamond}$ does not depend on the choice of
coordinates on $U$, twisting by $\mmu_U$ (\ref{mu-Fed}) gives us the
DGLA
\begin{equation}
    \label{DGLA-F}
    \left(
        \Omb(M, \sCbu(\SM))[[\h]],
        D^F_{\h} + \pa^{\Hoch}_{\mdiamond},
        [\cdot, \cdot]_G
    \right).
\end{equation}
To obtain Kontsevich's star product we need to further modify the
Maurer-Cartan element (\ref{mu-Fed}) by an equivalence
transformation of the form
\begin{equation}
    \label{equiv}
    T = \exp(\xi_1),
    \quad
    \textrm{with}
    \quad
    \xi_1\in \h\, \Om^0(M, C^1(\SM))[[\h]],
\end{equation}
to get the Maurer-Cartan element\footnote{Recall that the differential
  $D$ (\ref{DDD}) we use for the construction of Kontsevich's star
  product is $\DEW$ (\ref{D-EW}).}
\begin{equation}
    \label{mu-U}
    \mu_U = -\G_{\h} - \de +  \frac{1}{\h}\{r^{cl}, \,
    \}_{\pifib}  + \Pi^K,
\end{equation}
where $\Pi^K\in \Om^0(M, C^2(\SM))[[\h]]$\,.  Then the element $\Pi^K
\in \Om^0(M, C^2(\SM))[[\h]]$ is a Maurer-Cartan element of the DGLA
\begin{equation}
    \label{DGLA-K}
    \left(
        \Omb(M, \sCbu(\SM))[[\h]],
        \DEW + \pa^{\Hoch},
        [\cdot, \cdot]_G
    \right),
\end{equation}
which is, in turn, quasi-isomorphic to $\sCbu(\OM)[[\h]]$ via the
map $\tauEW$ in (\ref{tau-need}).  Since $\Pi^K$ has zero exterior
degree, the Maurer-Cartan equation for $\Pi^K$ is equivalent to two
equations:
\begin{align}
    \label{Pi-K-pa-Hoch}
    &\pa^{\Hoch} \Pi^K
    +
    \frac{1}{2} [\Pi^K, \Pi^K]_{G}
    = 0,\\
    \label{D-EW-Pi-K}
&    \DEW \Pi^K = 0.
\end{align}
The last equation implies that $\Pi^K$ lies in the image of $\tauEW$
(\ref{tau-iso}), i.e., $\Pi^K = \tauEW(\Pi)$ for a unique $\Pi \in
\h C^2(\cO_M)[[\h]]$.  Then (\ref{Pi-K-pa-Hoch}) implies that the
product
\begin{equation}
    \label{star-K1}
    f_1 * f_2 = f_1 f_2 + \Pi(f_1, f_2)
\end{equation}
on $\cO(M)[[\h]]$ is associative. This is exactly Kontsevich's star
product corresponding to the formal Poisson structure $\pi$
(\ref{pi}).

A more explicit way to get the star product (\ref{star-K1}) from the
element $\Pi^K$ is to use the isomorphism of $\bbC[[\h]]$-modules
$\tauEW : \cO(M)[[\h]]\to \G(M, \SM)[[\h]] \cap \ker \DEW.$ This
isomorphism is obtained by iterating the following equation in
degrees in the fiber coordinates $y$'s:
\begin{equation}
    \label{iter-tau-EW}
    \tauEW(f)
    =
    f + \de^{-1}
    \left(
        \n \tauEW(f)+ \{r^{cl}, \tauEW(f)\}_{\pifib}
    \right),
    \quad
    \textrm{for}
    \quad
    f \in \cO(M)[[\h]].
\end{equation}
Equation (\ref{Pi-K-pa-Hoch}) implies that the formula
\begin{equation}
    \label{bul-Pi-K}
    a_1 \diamond a_2
    =
    a_1 a_2 + \Pi^K (a_1,a_2),
    \quad
    \textrm{for}
    \quad
    a_1, a_2 \in \G(M, \SM)[[\h]],
\end{equation}
defines an associative product on $\SM[[\h]]$. Equation
(\ref{D-EW-Pi-K}), in turn, implies that the differential $\DEW$ is
a derivation of the product (\ref{bul-Pi-K}).  Thus the formula
\begin{equation}
    \label{star-K11}
    f_1 * f_2
    =
    \sigma\left(
        \tauEW(f_1) \diamond \tauEW(f_2)
    \right)
    \quad
    \textrm{for}
    \quad f_1, f_2 \in \cO(M)[[\h]]
\end{equation}
defines an associative product on $\cO(M)[[\h]]$. According to the
construction of the map $\tauEW$ (see
Subsection~\ref{subsec-Fedosov-resolutions}) the star product
(\ref{star-K11}) coincides with (\ref{star-K1}).

To construct an equivalence transformation between the star products
(\ref{star-m}) and (\ref{star-K11}), we recall that the
Maurer-Cartan elements $\mmu_U$ (\ref{mu-Fed}) and $\mu_U$
(\ref{mu-U}) are connected by the equivalence transformation
(\ref{equiv}):
\begin{equation}
    \label{mu-U-mu-m-U}
    \mu_U
    =
    \mmu_U
    + \frac{e^{[\cdot, \xi_1]_G} - 1}{[\cdot, \xi_1]_G}
    \left(d \xi_1 + \pa^{\Hoch} \xi_1 + [\mmu_U, \xi_1]_G\right).
\end{equation}
This equation can be rewritten as
\begin{equation}
    \label{mu-U-mu-m-U1}
    - \G_{\h} + \frac{1}{\h}\{b^{cl}, \cdot \}_{\pifib} + \Pi^K
    =
    - \G_{\h} + \frac{1}{\h}[b, \cdot]_{\mdiamond} + \mu^K_0
    + \frac{e^{[\cdot, \xi_1]_G} - 1}{[\cdot, \xi_1]_G}
    \left(D^F_{\h} \xi_1 + \pa^{\Hoch}_{\mdiamond} \xi_1\right).
\end{equation}
Since $\xi_1$ has zero exterior degree,  (\ref{mu-U-mu-m-U1}) is
equivalent to the pair of equations
\begin{align}
    \label{mu-U-mu-m-U-raz}
&    \frac{1}{\h}\{b^{cl}, \cdot\}_{\pifib} -\G_{\h}
    =
    -\G_{\h} + \frac{1}{\h}[b, \cdot]_{\mdiamond}
    + \frac{e^{[~, \xi_1]_G} - 1}{[\cdot, \xi_1]_G}
    \, D^F_{\h} \xi_1,\\
    \label{mu-U-mu-m-U-dva}
&    \Pi^K
    =
    \mu^K_0 + \frac{e^{[\cdot, \xi_1]_G} - 1}{[\cdot, \xi_1]_G}
    \, \pa^{\Hoch}_{\mdiamond} \xi_1.
\end{align}
Equation (\ref{mu-U-mu-m-U-raz}) says that the transformation
(\ref{equiv}) intertwines the differentials $\DEW$ and $D^F_{\h}$:
\begin{equation}
    \label{D-F-D-EW-xi1}
    D^F_{\h} e^{\xi_1}\, (a)
    =
    e^{\xi_1}\, \DEW (a),
    \quad
    \textrm{for}
    \quad
    a \in \G(M, \SM)[[\h]],
\end{equation}
and Equation (\ref{mu-U-mu-m-U-dva}) implies that the transformation
(\ref{equiv}) intertwines the fiberwise products $\mdiamond$ from
(\ref{t-bullet}) and $\diamond$ as in (\ref{bul-Pi-K}):
\begin{equation}
    \label{Pi-K-tbul-xi1}
    e^{\xi_1} (a_1) \mdiamond e^{\xi_1}(a_2)
    =
    e^{\xi_1} (a_1 \diamond a_2),
    \quad
    \textrm{with}
    \quad
    a_1,a_2 \in \G(M, \SM)[[\h]].
\end{equation}
Let us consider the map $E : \cO(M)[[\h]] \to \cO(M)[[\h]]$,
\begin{equation}
    \label{equiv-K-F}
    E(f) =
    \sigma \left(e^{\xi_1} \tauEW (f)\right)
    \quad
    \textrm{for}
    \quad
    f\in \cO(M)[[\h]]\,.
\end{equation}
Since $e^{\xi_1}$ intertwines the differentials $D^F_{\h}$ and
$\DEW$ we conclude that
\begin{equation}
    \label{E-D-F-D-EW}
    e^{\xi_1} \circ  \tauEW (f)
    =
    \mtau \circ E (f),
    \quad
    \textrm{for}
    \quad f\in \cO(M)[[\h]],
\end{equation}
where the map $\mtau$ is defined in (\ref{iter-tau-h}). Using the
definition of $\mstar$ (\ref{star-m}) and equations
(\ref{Pi-K-tbul-xi1}) and (\ref{E-D-F-D-EW}), we get the following
identities:
\begin{align*}
    E(f_1 * f_2)
    &=
    \sigma \left(
        e^{\xi_1} \tauEW (f_1 * f_2)
    \right)
    =
    \sigma \left(
        e^{\xi_1} \left(
            \tauEW(f_1) \diamond \tauEW(f_2)
        \right)
    \right) \\
    &=
    \sigma \left(
        e^{\xi_1} (\tauEW (f_1)) \mdiamond e^{\xi_1}(\tauEW (f_2))
    \right)
    =
    \sigma\left(
        \mtau (E (f_1)) \mdiamond \mtau (E(f_2))
    \right) \\
    &= E(f_1) \mstar E(f_2)
\end{align*}
for all $f_1, f_2 \in \cO(M)[[\h]]$. Since $E$ starts with the
identity in the zeroth order in $\h$, Kontsevich's star product (\ref{star-K11})
is indeed equivalent to the modified Fedosov star product (\ref{star-m}) via
$E$.

%
% The star product $\mstar$ coincides with the original Fedosov
% star product
%

\subsection{The star product $\mstar$ coincides with the original
Fedosov star product}
\label{m-equiv-F}

The construction of the original Fedosov star product $*_F$ is based
on a fiberwise product different from $\mdiamond$ (\ref{t-bullet}),
namely, one uses the following product on $\SM[[\h]]$:
\begin{equation}
    \label{bullet}
    a_1 \Fdiamond a_2
    =
    a_1 \exp \left(
        \h \pi^{ij}_1(x)
        \frac{\overleftarrow{\pa}}{\pa y^i} \,
        \frac{\overrightarrow{\pa}}{\pa y^i}
    \right)
    a_2.
\end{equation}
We will show that the product $\Fdiamond$ can be connected to the
product $\mdiamond$ (\ref{t-bullet}) by an equivalence
transformation of a specific form:
\begin{lem}
    \label{lemma:NiceEquivalence}
    The products $\mdiamond$ and $\Fdiamond$ are fiberwise
    equivalent,
    \begin{equation}
        \label{eq:FiberwiseEquivalence}
        P(a_1) \Fdiamond P(a_2) = P (a_1\, \mdiamond\, a_2),
    \end{equation}
    via an equivalence transformation of the form
    \begin{equation}
        \label{eq:NiceEquivalence}
        P = \exp(\chi)
        \quad
        \textrm{with}
        \quad
        \chi =
        \left(\hbar \chi_1 + \hbar^2 \chi_2 + \cdots\right)^i_j
        y^j \partial_{y^i},
    \end{equation}
    where $\chi_r \in \G(M, TM \otimes T^*M)$ for each $r$.
\end{lem}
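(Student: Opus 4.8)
The plan is to realize $P$ as the exponential of a fiberwise \emph{linear} vector field on $\SM[[\h]]$, i.e.\ as the algebra automorphism induced by a linear substitution $y\mapsto L(x,\h)\,y$, with $L$ chosen by a square-root construction. First note that both $\mdiamond$ and $\Fdiamond$ are fiberwise Moyal products: $\mdiamond$ is the Moyal product of the ($x$-dependent, fiberwise-constant) bivector $\pi^{ij}(x)$, while $\Fdiamond$ is the Moyal product of $\h\,\pi_1^{ij}(x)$. Write $\pi=\h\,\Pi$ with $\Pi=\pi_1+\h\pi_2+\cdots$. For $L\in\G(M,\End(TM))[[\h]]$ with $L=\id\mod\h$, let $\rho_L$ be the algebra automorphism of $\SM[[\h]]$ acting by $a(y)\mapsto a(L(x,\h)\,y)$. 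A direct computation with the Leibniz rule (using that $L$ acts linearly in the $y$'s) gives $\rho_L(a_1)\star_\beta\rho_L(a_2)=\rho_L(a_1\star_{L\beta L^{T}}a_2)$ for any constant bivector $\beta$, where $\star_\beta$ is the corresponding Moyal product and $(L\beta L^{T})^{ij}=L^i_k\beta^{kl}L^j_l$; applying this with $\beta=\h\pi_1$ shows
\[
\rho_L(a_1)\Fdiamond\rho_L(a_2)=\rho_L(a_1\mdiamond a_2)\ \text{ for all }a_1,a_2
\quad\Longleftrightarrow\quad
L\,\pi_1\,L^{T}=\Pi .
\]
The equation $L\pi_1L^{T}=\Pi$ is coordinate independent — under a coordinate change with Jacobian $J$ one has $\pi_1\mapsto J\pi_1J^{T}$, $\Pi\mapsto J\Pi J^{T}$ and $L\mapsto JLJ^{-1}$ — so any solution $L\in\G(M,\End(TM))[[\h]]$ equal to $\id$ modulo $\h$ will define a global $\chi$.

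Next I would solve $L\pi_1L^{T}=\Pi$ using the nondegeneracy of $\pi_1$. Let $\pi_1^{-1}=\om_{-1}$ denote the inverse of $\pi_1^{\sharp}$, and set $S:=\Pi\,\pi_1^{-1}=\id+\h(\pi_2\pi_1^{-1})+\cdots\in\G(M,\End(TM))[[\h]]$, which is $\id\mod\h$. Skew-symmetry of $\pi_1$ and of $\Pi$ gives $S^{T}=\pi_1^{-1}\Pi=\pi_1^{-1}S\,\pi_1$. Since $S=\id\mod\h$, its principal square root $L:=S^{1/2}=\id+\tfrac12(S-\id)-\cdots$, defined by the binomial series, is a well-defined element of $\G(M,\End(TM))[[\h]]$, again $\id\mod\h$, and conjugation commutes with it, so $L^{T}=(S^{T})^{1/2}=\pi_1^{-1}L\,\pi_1$. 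Hence
\[
L\,\pi_1\,L^{T}=L\,\pi_1\,(\pi_1^{-1}L\,\pi_1)=L^{2}\pi_1=S\,\pi_1=\Pi ,
\]
so this $L$ works. (Alternatively one can solve $L\pi_1L^{T}=\Pi$ recursively in powers of $\h$ starting from $L=\id$, but the square-root formula is cleaner.)

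Finally I would set $\chi:=C^i_j\,y^j\,\partial_{y^i}$ with $C:=\log L=\sum_{k\ge1}\tfrac{(-1)^{k+1}}{k}(L-\id)^{k}$; since $L=\id\mod\h$ this series converges and lies in $\h\,\G(M,TM\otimes T^*M)[[\h]]$, so $\chi=(\h\chi_1+\h^2\chi_2+\cdots)^i_j\,y^j\,\partial_{y^i}$ with $\chi_r\in\G(M,TM\otimes T^*M)$, as required in \eqref{eq:NiceEquivalence}. The operator $P:=\exp(\chi)=\sum_{n\ge0}\tfrac1{n!}\chi^n$ is well defined: $\chi$ preserves the polynomial degree in $y$ and is divisible by $\h$, so $\chi^n$ is divisible by $\h^n$ and the sum converges $\h$-adically. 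Moreover $P=\rho_L$ with this $L=e^{C}$, so \eqref{eq:FiberwiseEquivalence} follows from the first two paragraphs. As a byproduct, $\chi$ raises the filtration degree of Remark~\ref{remark-filtr} by at least two, so $P$ preserves that filtration and $P=\id\mod\h$.

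I do not expect a serious obstacle. The points needing care are the coordinate independence of $L\pi_1L^{T}=\Pi$ (which is what makes $\chi$ a global section) and the $\h$-adic convergence of $\exp(\chi)$; the one genuinely structural step is the square-root construction of the second paragraph, where skew-symmetry of $\pi_1$ and $\Pi$ is used to make $S=\Pi\pi_1^{-1}$ conjugate transposition into $\pi_1^{-1}(\,\cdot\,)\pi_1$, which is exactly what lets $L=S^{1/2}$ satisfy $L\pi_1L^{T}=\Pi$.
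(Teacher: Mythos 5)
Your proof is correct, but it takes a genuinely different route from the paper's. The paper reduces to constant coefficients on $\bbR^{2n}$ and constructs $P$ perturbatively: at each order $\h^m$ it conjugates by an elementary exponential $P_m=\exp(\h^m(\chi_m)^i_jy^j\pa_{y^i})$, using that linear vector fields act on constant bidifferential operators by the antisymmetrization of $\chi_m\pi_1$ (so nondegeneracy of $\pi_1$ lets one kill the $\h^m$-term of $\pi-\h\pi_1$), and then takes the $\h$-adically convergent infinite product $P=\cdots P_4P_3P_2$, invoking Campbell--Hausdorff to see it is again $\exp$ of a linear vector field. You instead solve the problem in closed form: the observation that a linear substitution $y\mapsto Ly$ carries $\star_\beta$ to $\star_{L\beta L^{T}}$ reduces everything to the single matrix equation $L\pi_1L^{T}=\Pi$, which you solve by the square root $L=(\Pi\pi_1^{-1})^{1/2}$, with skew-symmetry of $\pi_1$ and $\Pi$ supplying the identity $L^{T}=\pi_1^{-1}L\pi_1$ that makes the verification a one-line computation. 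Your route buys an explicit global formula for $P$ and makes the coordinate-independence (hence globality of $\chi$) transparent, at the cost of the slightly delicate but correctly handled points that transposition commutes with power series in a single matrix and that $\exp$ of a linear vector field is pullback by $e^{C}$; the paper's route avoids the square root but has to argue convergence of the infinite product and that it collapses to a single exponential. Both hinge on the same essential input, the nondegeneracy of $\pi_1$. I see no gap in your argument.
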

\begin{proof}
    The statement is fiberwise so it suffices to consider the
    following situation on $\mathbb{R}^{2n}$. Let $\star$ be the
    ordinary Weyl-Moyal star product, i.e.,
    \[
    f_1 \star f_2
    = \mu_0 \circ \exp\left(
        \hbar \pi_1^{ij} \partial_{y^i}
        \otimes \partial_{y^j}
    \right) (f_1 \otimes f_2),
    \]
    where $\mu_0(f_1 \otimes f_2) = f_1 f_2$ denotes the ordinary
    commutative product on $\cO(\bbR^{2n})[[\h]]$, and $\pi_1^{ij}$ is
    a constant antisymmetric nondegenerate $2n \times 2n$-matrix (with
    complex entries).
    We must construct a specific equivalence transformation intertwining
    the product $\star$ with the star product
    \begin{equation}
        \label{tstarWeyl}
        f_1 \wstar f_2
        =
        \mu_0 \circ \exp\left(
            \pi^{ij} \partial_{y^i} \otimes \partial_{y^j}
        \right) (f_1 \otimes f_2),
    \end{equation}
    where $\pi$ is a formal power series of constant antisymmetric
    $2n\times 2n$-matrices (with complex entries) starting with $\h
    \pi_1$
    \[
    \pi^{ij} = \hbar \pi_1^{ij} + \hbar^2 \pi_2^{ij} + \hbar^3
    \pi_3^{ij}
    + \cdots.
    \]

 Let us consider the following sets of operators,
    \[
    \cB =
    \left\{
        B^{ij} \partial_{y^i} \otimes \partial_{y^j}
        \; \big| \;
        B^{ij} \in \hbar \mathbb{C}[[\hbar]]
    \right\},\;\;\;\;
    \cA =
    \left\{
        \chi \otimes \id + \id \otimes \chi
        \; \big | \;
        \chi = \chi^i_j y^j \partial_{y^i}
        \; \textrm{with} \;
        \chi^i_j \in \hbar \mathbb{C}[[\hbar]]
    \right\},
    \]
    acting on the tensor product of two copies of
    $\cO(\mathbb{R}^{2n})[[\hbar]]$.  Note that $\mathcal{A}$ is a
    subalgebra, while $\mathcal{B}$ is an abelian subalgebra of the Lie
    algebra of all endomorphisms of $\cO(\mathbb{R}^{2n})[[\hbar]]
    \otimes_{\bbC[[\h]]} \cO(\mathbb{R}^{2n})[[\hbar]]$. The property
    \[
    [\mathcal{A}, \mathcal{B}] \subseteq \mathcal{B}
    \]
    implies that
    \begin{equation}
        \label{exp-A-B}
        \exp(A) \circ \exp(B) \circ \exp(-A)
        =
        \exp(e^{[A, \cdot]}(B))
        =
        \exp(\tilde{B})
    \end{equation}
    with $\tilde{B} = B + [A, B] + \cdots \in \mathcal{B}$.  Let
    us suppose that the matrix $\pi^{ij}$ in (\ref{tstarWeyl}) has the
    form $(m \ge 2)$
    \[
    \pi^{ij}
    =
    \hbar \pi_1^{ij} + \hbar^m \pi_m^{ij} +
    \hbar^{m+1} \pi_{m+1}^{ij} + \hbar^{m+2} \pi_{m+2}^{ij}
    + \cdots.
    \]
    In other words, $\pi^{ij} - \h\pi^{ij}_1 = 0 \mod \h^m$.  Consider
    an equivalence transformation $P_m$ of the form
    \begin{equation}
        \label{P-m}
        P_m
        =
        \exp \left(\h^m (\chi_m)^i_j y^j \pa_{y^i} \right),
    \end{equation}
    where $(\chi_m)^i_j$ is a constant $2n\times 2n$ matrix.  Due to
    (\ref{exp-A-B}), we have
    \begin{align*}
        P_m (P^{-1}_m f_1 \wstar P_m^{-1} f_2)
        &=
        \mu_0
        \circ
        \exp(\chi \otimes \id + \id \otimes \chi)
        \circ
        \exp(\pi)
        \circ
        \exp(-\chi \otimes \id - \id \otimes \chi)
        (f_1 \otimes f_2) \\
        &=
        \mu_0
        \circ
        \exp(\pi')
        \circ
        (f_1 \otimes f_2),
    \end{align*}
    where
    \[
    (\pi')^{ij}
    =
    \h \pi_1^{ij}
    +
    \hbar^m \left(
        \pi_m^{ij} - \pi_1^{kj}(\chi_m)^i_k + \pi_1^{ki}(\chi_m)^j_k
    \right)
    +
    \hbar^{m+1} \pi_{m+1}^{ij}
    +
    \hbar^{m+2} \pi_{m+2}^{ij} + \cdots.
    \]
    Since the matrix $\pi_1^{ij}$ is nondegenerate, we can choose
    the matrix $(\chi_m)^i_j$ in such way that
    \[
    \pi_m^{ij} - \pi_1^{kj}(\chi_m)^i_k + \pi_1^{ki}(\chi_m)^j_k = 0.
    \]
    The new product $f_1, f_2 \mapsto P_m (P^{-1}_m f_1
    \wstar P_m^{-1} f_2)$ has the form
    \[
    P_m (P^{-1}_m f_1 \wstar P_m^{-1} f_2)
    =
    \mu_0 \circ
    \exp(\pi')
    \circ
    (f_1 \otimes f_2)\,,
    \]
    where $\pi' = \h\pi_1 \mod \h^{m+1}$.  Thus the desired
    equivalence transformation $P$ is obtained as an infinite
    product
    \[
    P = \dots P_4 \, P_3\, P_2,
    \]
    where the $m$-th transformation $P_m$ has the form (\ref{P-m}).
    This infinite product converges in the $\h$-adic
    topology, and it is clear that $P$ has the form
    \[
    P = \exp(\chi)
    \]
    where $\chi$ is a linear vector field. This concludes the proof.
\end{proof}

Since the exponent of $P$ is a vector field, it is also an
automorphism of the undeformed product:
\begin{equation}
    \label{P-prod}
    P(a_1)  P(a_2) = P (a_1 a_2),
\end{equation}
for all $a_1, a_2\in \G(M, \SM)[[\h]]$\,. Furthermore, $P$ preserves
the degree in $y$'s and transforms the differential $D^F_{\h}$
(\ref{D-F-h1}) to
%\[
%\DF = P \,D^F_{\h} \,P^{-1}
%\]
\begin{equation}
    \label{D-F}
    \DF =P \,D^F_{\h} \,P^{-1}= P\, \n\, P^{-1}  + \frac{1}{\h}[P(b), \cdot ]_{\Fdiamond}
\end{equation}
with the element $b$  defined in (\ref{b}).  Since $P$ has the form
\eqref{eq:NiceEquivalence}, the operator
\[
\n_0 =  P\, \n \, P^{-1}
\]
is again a connection (possibly with Christoffel symbols depending
on $\h$). Furthermore, $\n_0$ is a derivation of $\Fdiamond$ because
$\n$ is a derivation of $\mdiamond$.  As for the curvature form
(\ref{R-h}), we have
\[
(\n_0)^2
= P\, \n^2\, P^{-1}
= \frac{1}{\h} P\, [R, \cdot]_{\mdiamond}\, P^{-1}
= \frac{1}{\h} [P(R), \,]_{\Fdiamond}.
\]
Since the operator $P$ preserves the degree in $y$'s, we have
\begin{equation}
    \label{P-R}
    P(R) = \frac{1}{2}d x^i\, d x^j R^0_{ij\,\,kl}(x) y^k y^l,
\end{equation}
where $R^0_{ij\,\,kl}(x)$ are components (possibly depending on
$\h$) of the curvature tensor for $\n_0$.

Applying the operator $P$ to Equation (\ref{Fed-class}), and using
the fact that the components of $\omega$ (\ref{F-class}) do not
depend on the fiber coordinates $y$'s, we have that
\begin{equation}
    \label{Fed-class1}
    \frac{1}{\h} P(R) + \frac{1}{\h} \n_0 P(b) +
    \frac{1}{2\h^2} [P(b) , P(b)]_{\Fdiamond} = - \omega,
\end{equation}
Thus $\DF$ in (\ref{D-F}) is the quantum Fedosov differential from
the original construction in \cite{F}, and the Fedosov class of the
resulting star product is represented by $\omega$ (\ref{F-class}).

Let $\Ftau$ be the isomorphism
\begin{equation}
    \label{lift1}
    \Ftau:
    \cO(M)[[\h]] \to \G(M, \SM)[[\h]]\, \cap\, \ker \DF
\end{equation}
which lifts functions on $M$ to $\DF$-flat sections of the sheaf
$\SM[[\h]]$. Similarly to $\mtau$, the isomorphism $\Ftau$ is
defined by iterating the equation
\begin{equation}
    \label{iter-tau-DF}
    \Ftau(a)
    =
    a + \de^{-1}\left(
        \n_0 \Ftau (a) + [r^{\mathrm{F}}, \Ftau(a)]_{\Fdiamond}
    \right),
\end{equation}
where $a\in \G(M, \SM)[[\h]]$ and $r^{\mathrm{F}} = P(b)+ \h\, d x^i
\omega_{ij}(x, \h) y^j$.  The (original) Fedosov star product is defined
in terms of $\Ftau$ and the fiberwise product $\Fdiamond$ for $f_1,
f_2 \in \cO(M)[[\h]]$ as
\begin{equation}
    \label{star-F}
    f_1 *_F f_2
    =
    \sigma\left(
        \tau^F(f_1) \Fdiamond \tau^F(f_2)
    \right).
\end{equation}

Since the transformation $P$ (\ref{eq:NiceEquivalence}) intertwines
the differentials $\DF$ and $D^F_{\h}$, we conclude that $P\circ
\tau^{m} (f)$ is $\DF$-flat for every $f\in \cO(M)[[\h]]$.  On the
other hand, since the transformation $P$ preserves the degree in the
fiber coordinates $y$'s, we have that $ P \circ \mtau(f) \Big|_{y=0}
= f$. Hence
\begin{equation}
    \label{P-tau-m-tau-F}
    P \circ \mtau (f) = \Ftau(f).
\end{equation}

Combining the last equation with (\ref{eq:FiberwiseEquivalence}),
and using the fact that $P$ preserves the degree in fiber
coordinates $y$'s, we get the following series of identities: for
$f_1, f_2 \in \cO(M)[[\h]]$,
\begin{align*}
    f_1 *_F f_2
    &=
    \sigma\left(
        \Ftau(f_1) \Fdiamond \Ftau(f_2)
    \right)
    =
    \sigma\left(
        P \circ \mtau(f_1) \Fdiamond P \circ \mtau(f_2)
    \right) \\
    &=
    \sigma\left(
        P (\mtau(f_1) \mdiamond \mtau(f_2))
    \right)
    =
    \sigma\left(
        (\mtau(f_1) \mdiamond \mtau(f_2))
    \right) \\
    &=
    f_1 \mstar f_2.
\end{align*}
Thus the star product $\mstar$ (\ref{star-m}) coincides with the
original Fedosov product $*_F$ (\ref{star-F}), and
Theorem~\ref{F-versus-K} is proved.

%
% Here are the appendices
%

\appendix

%
% Formal differential equations
%

\section{Formal differential equations}
\label{App-A}

In this appendix we collect some results on differential equations
in $\mathbb{C}[[\hbar]]$-modules which are needed throughout Section
\ref{sec:Morita}. Most of the material is well-known or can be
easily reconstructed from well-known results, see e.g. the textbook
\cite[Sect.~3]{waldmann:2007a}.

Let us consider the following purely algebraic situation. We fix a
commutative ring $\ring{C}$ containing $\mathbb{Q}$, let $V$ be a
$\ring{C}$-module, and let $\mathcal{D} \subseteq
\End_{\ring{C}}(V)$ be a unital sub-algebra. In our case we usually
have $\ring{C} = \mathbb{C}$, $V = \cO(M)$ or $\Gamma(M, E)$ for
some vector bundle $E \longrightarrow M$, and $\mathcal{D}$ being
the differential operators on $V$. Let us also consider the
$\hbar$-adically complete $\ring{C}$-module $(V[t])[[\hbar]]$, i.e.,
in each order of $\hbar$ we have a polynomial in $t$ with
coefficients in $V$. Note that this is different from
$(V[[\hbar]])[t]$, which is a proper sub-module of
$(V[t])[[\hbar]]$. Let $D(t) \in \hbar(\mathcal{D}[t])[[\hbar]]$ and
$w(t)\in \h (V[t])[[\hbar]] $ be given, and consider the
differential equation
\begin{equation}
    \label{eq:vdotDv}
    \frac{d}{d t} v(t) = w(t) + D(t) v(t)
\end{equation}
with initial condition $v(0) = v_0 \in V[[\hbar]]$.
\begin{pred}
    \label{proposition:ODEGeneral}
    For each initial condition $v(0) = v_0$
    equation~\eqref{eq:vdotDv} has a unique solution $v(t) \in
    (V[t])[[\hbar]]$. Moreover, if $w = 0$ then the flow map $v_0
    \mapsto v(t)$ is a formal series $\id + \sum_{r=0}^\infty \hbar^r
    D_r(t)$ with $D_r(t) \in \mathcal{D}[t]$.
\end{pred}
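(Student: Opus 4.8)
The plan is to solve \eqref{eq:vdotDv} order by order in $\hbar$. Write $v(t) = \sum_{k\ge 0}\hbar^k v_k(t)$, $w(t) = \sum_{k\ge 1}\hbar^k w_k(t)$, and $D(t) = \sum_{k\ge 1}\hbar^k D_k(t)$, where $v_k(t)\in V[t]$, $w_k(t)\in V[t]$ and $D_k(t)\in \mathcal{D}[t]$. Substituting into \eqref{eq:vdotDv} and collecting the coefficient of $\hbar^k$ gives, for each $k\ge 0$, an equation of the form
\[
\frac{d}{dt}v_k(t) = w_k(t) + \sum_{j=1}^{k} D_j(t)\, v_{k-j}(t),
\]
where we set $w_0 = 0$. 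The key observation is that the right-hand side involves only $v_0(t),\dots,v_{k-1}(t)$ (since $D$ and $w$ start at order $\hbar$), so the system is triangular and can be solved recursively. At order $k=0$ we simply get $v_0(t) = v_0$, the constant in $t$. Having determined $v_0(t),\dots,v_{k-1}(t)$ as elements of $V[t]$, the right-hand side above is a fixed element of $V[t]$, and $v_k(t)$ is obtained by a single integration in $t$ with the initial value $v_k(0)$ (which is $v_0$ if $k=0$ via the given $v_0\in V[[\hbar]]$, and $0$ otherwise after absorbing the $\hbar$-expansion of $v_0$, or more precisely the $k$-th component of $v_0$). Since the integral of a polynomial in $t$ with coefficients in $V$ is again such a polynomial (here we use $\mathbb{Q}\subseteq\ring{C}$ so that $\int_0^t s^n\,ds = t^{n+1}/(n+1)$ makes sense), each $v_k(t)$ lies in $V[t]$. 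This proves existence; uniqueness follows from the same recursion, since at each order the solution is forced.

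For the second statement, suppose $w = 0$. Then the recursion reads $\frac{d}{dt}v_k(t) = \sum_{j=1}^k D_j(t)v_{k-j}(t)$, and one checks inductively that $v_k(t)$ depends $\ring{C}$-linearly on the initial datum $v_0$ through a differential operator: write $v_k(t) = D_k^{\mathrm{op}}(t)\,v_0$ where $D_k^{\mathrm{op}}(t)$ is to be produced. For $k=0$, $D_0^{\mathrm{op}}(t) = \id$. Assuming $v_{k-j}(t) = D_{k-j}^{\mathrm{op}}(t)v_0$ with $D_{k-j}^{\mathrm{op}}(t)\in\mathcal{D}[t]$, integrating gives
\[
v_k(t) = \Bigl(\int_0^t \sum_{j=1}^k D_j(s)\,D_{k-j}^{\mathrm{op}}(s)\,ds\Bigr) v_0,
\]
and since $\mathcal{D}$ is a unital subalgebra, $D_j(s)D_{k-j}^{\mathrm{op}}(s)$ lies in $\mathcal{D}[s]$, whence its $t$-integral lies in $\mathcal{D}[t]$; call it $D_k^{\mathrm{op}}(t)$. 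Thus the flow map $v_0\mapsto v(t)$ equals $\id + \sum_{k\ge 1}\hbar^k D_k^{\mathrm{op}}(t)$ with $D_k^{\mathrm{op}}(t)\in\mathcal{D}[t]$, as claimed.

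I do not expect any genuine obstacle here; the only points requiring a little care are (i) keeping track that the expansion of the initial condition $v_0\in V[[\hbar]]$ feeds into the correct orders of the recursion, and (ii) confirming that "polynomial in $t$" is preserved — which it is, because differentiation and (antiderivative with vanishing constant term) both preserve $V[t]$ once $\mathbb{Q}\subseteq\ring{C}$. The triangular structure, coming from the fact that $D$ and $w$ have no $\hbar^0$ component, is what makes the whole argument elementary; it is exactly the standard Picard-iteration/formal-flow argument adapted to the $\hbar$-adic setting, as in \cite[Sect.~3]{waldmann:2007a}.
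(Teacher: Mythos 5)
Your proof is correct and is essentially the paper's argument: the paper rewrites \eqref{eq:vdotDv} as the integral equation $v(t)=v(0)+\int_0^t(w(\tau)+D(\tau)v(\tau))\,d\tau$ and invokes the $\hbar$-adic contraction/fixed-point principle on the complete module $(V[t])[[\hbar]]$, which is exactly the triangular order-by-order recursion you carry out explicitly (including the same uses of $\mathbb{Q}\subseteq\ring{C}$ for the formal antiderivative and of the algebra structure of $\mathcal{D}$ for the flow map). No gaps.
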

\begin{proof}
    First we rewrite \eqref{eq:vdotDv} as the integral equation
    \begin{equation}
        \label{eq:iter-v}
        v(t) = v(0) + \int_0^t  (w(\tau) + D(\tau) v(\tau)) d \tau,
    \end{equation}
    incorporating the initial condition. Since in each
    order of $\hbar$, $w(t)$ and $D(\tau)v(\tau)$ are polynomials in
    $\tau$, the integral operator is a purely algebraic gadget defined
    by linear extension of $\int_0^t \tau^n d \tau = \frac{1}{n+1}
    \tau^{n+1}$ (this is also the reason why we require $\mathbb{Q}
    \subseteq \ring{C}$).  Since by assumption $D(t)$ and $w(t)$ are at
    least of order $\hbar$, the right-hand side of (\ref{eq:iter-v})
    is directly shown to be a \emp{contracting} endomorphism in the
    $\hbar$-adic topology of the complete module $(V[t])[[\hbar]]$.
    It follows from the usual fixed point argument that there is a unique
    solution of (\ref{eq:iter-v}) which is the unique solution of
    \eqref{eq:vdotDv} with correct initial condition, see
    e.g.~\cite[Sect.~6.2.1]{waldmann:2007a}. When $w = 0$, the
    iteration clearly produces a flow map of the specified type.
\end{proof}
\begin{example}
    Let $\mathcal{A}$ be a $\ring{C}$-algebra and let $\star$ be an
    associative deformation of $\mathcal{A}$, so that
    $\mathcal{A}[[\hbar]]$ is a $\ring{C}[[\hbar]]$-algebra with
    respect to $\star$. The product $\star$ extends to
    $(\mathcal{A}[t])[[\hbar]]$ in the obvious way, making it a
    $\ring{C}[[\hbar]]$-algebra. Let $d(t) \in
    \hbar(\mathcal{A}[t])[[\hbar]]$ be given. Then for every $a_0 \in
    \mathcal{A}[[\hbar]]$ the differential equation
    \begin{equation}
        \label{eq:ODEAlgebra}
        \frac{d}{d t} a(t) = d(t) \star a(t)
        \quad
        \textrm{with}
        \quad
        a(0) = a_0
    \end{equation}
    has a unique solution by Proposition~\ref{proposition:ODEGeneral}.
\end{example}
\begin{example}
    \label{example:ODEDiffOps}
    Let $\mathcal{A} = \DiffOp(\Gamma(M, E))$ be the differential
    operators on some vector bundle $E \longrightarrow M$ and let
    $\star$ be the undeformed multiplication of differential
    operators. Then for any $D(t) \in \hbar
    (\DiffOp(\Gamma(M, E))[t])[[\hbar]]$ the equation
    \begin{equation}
        \label{eq:ODEDiffOps}
        \frac{d}{d t} A(t) = D(t) \star A(t)
    \end{equation}
    has a unique solution for every initial condition $A(0) = A_0\in
    \DiffOp(\Gamma(M, E))[[\hbar]] $.  The important point here is that
    the solution is again in $(\DiffOp(\Gamma(M, E))[t])[[\hbar]]$. This
    is the situation which we encountered in Section~\ref{sec:Morita}
    frequently.
\end{example}

Another situation refers to $\ring{C} = \mathbb{C}$ and smooth
functions on a manifold only. Let $D(t) \in
\hbar(\DiffOp(M)[t])[[\hbar]]$ be a formal series of differential
operators depending polynomially on $t$ at each order of $\h$. Let
$d_0 \in \cO(M)$ be a function and consider the differential
equation
\begin{equation}
    \label{eq:ODEManifold}
    \frac{d}{d t} f(t) = (d_0 + D(t)) f(t)
    \quad
    \textrm{with}
    \quad
    f(0) = h
\end{equation}
with some \emp{invertible} $h \in \cO(M)[[\hbar]]$. Note that $h$ is
invertible iff the zeroth order $h_0$ is invertible.
Proposition~\ref{proposition:ODEGeneral} does not directly apply in
this case due to the non-trivial zeroth order contribution coming
from $d_0$. But the following holds.

\begin{pred}
    \label{proposition:ODEExponential}
    For any invertible $h \in \cO(M)[[\hbar]]$,
    \eqref{eq:ODEManifold} has a unique solution $f(t)$, for all $t \in
    \mathbb{R}$, of the form
    \begin{equation}
        \label{eq:AlmostAnExponential}
        f(t) = \E^{td_0} h_0 g(t),
    \end{equation}
    where $g(t) = 1 + \sum_{n=1}^\infty \hbar^n g_n(t)$ with $g_n(t)
    \in \cO(M)[t]$. In particular, $f(t)$ is invertible for all $t \in
    \mathbb{R}$.
\end{pred}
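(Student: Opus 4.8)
The plan is to substitute the ansatz \eqref{eq:AlmostAnExponential} into \eqref{eq:ODEManifold}, reducing the statement to a direct application of Proposition~\ref{proposition:ODEGeneral}. Write $\E^{td_0}$ for the function $e^{td_0}\in\cO(M)$, which is smooth and nowhere vanishing for every $t\in\bbR$ (hence an invertible element of $\cO(M)$), as well as for the associated multiplication operator; note that $\frac{d}{dt}\E^{td_0}=d_0\E^{td_0}$. First I would plug $f(t)=\E^{td_0}h_0\,g(t)$ into \eqref{eq:ODEManifold}; since $h_0$ does not depend on $t$, the product rule gives $\frac{d}{dt}f(t)=d_0 f(t)+\E^{td_0}h_0\frac{d}{dt}g(t)$, so $f(t)$ solves \eqref{eq:ODEManifold} if and only if $g(t)$ solves
\[
\frac{d}{dt}g(t)=\tilde{D}(t)\,g(t),\qquad g(0)=h_0^{-1}h,
\]
where $\tilde{D}(t)=h_0^{-1}\circ\E^{-td_0}\circ D(t)\circ\E^{td_0}\circ h_0$ (a composition of operators, with $h_0^{\pm1}$ and $\E^{\pm td_0}$ acting by multiplication). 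Since $h$ is invertible, $g(0)=h_0^{-1}h=1\mod\h$.

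The main point — and the only nonroutine step I anticipate — is to verify that $\tilde{D}(t)$ again lies in $\h\,(\DiffOp(M)[t])[[\h]]$. For this I would use the identity $\E^{-td_0}\circ\partial_{x^i}\circ\E^{td_0}=\partial_{x^i}+t\,(\partial_{x^i}d_0)$: writing a differential operator of order $k$ as a polynomial in the $\partial_{x^i}$ with function coefficients and conjugating it by $\E^{td_0}$ yields a differential operator of order $\le k$ whose coefficients are polynomials in $t$ of degree $\le k$ with coefficients smooth in $x$. Conjugating further by the $t$-independent invertible function $h_0$ preserves both the differential-operator property and the polynomial dependence on $t$, and no conjugation changes the $\h$-adic order; hence $\tilde{D}(t)\in\h\,(\DiffOp(M)[t])[[\h]]$.

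Granting this, Proposition~\ref{proposition:ODEGeneral}, applied with $\ring{C}=\bbC$, $V=\cO(M)$, $\mathcal{D}=\DiffOp(M)$, $w=0$ and $D(t)=\tilde{D}(t)$, gives a unique solution $g(t)\in(\cO(M)[t])[[\h]]$; since $\tilde{D}(t)=0\mod\h$ and $g(0)=1\mod\h$, the $\h^0$-component of $g(t)$ is the constant function $1$, so $g(t)=1+\sum_{n\ge1}\h^n g_n(t)$ with $g_n(t)\in\cO(M)[t]$. Then $f(t)=\E^{td_0}h_0\,g(t)$ solves \eqref{eq:ODEManifold}, is defined for all $t\in\bbR$, has the required form \eqref{eq:AlmostAnExponential}, and is invertible, being the product of the nowhere-vanishing function $\E^{td_0}$, the invertible $h_0$, and $g(t)=1\mod\h$. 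Finally, for uniqueness of the solution of \eqref{eq:ODEManifold} I would expand in powers of $\h$: the $\h^0$-equation $\dot f_0=d_0 f_0$, $f_0(0)=h_0$, has the unique solution $\E^{td_0}h_0$, and for $n\ge1$ the equation governing the $\h^n$-component of $f(t)$ is linear with an inhomogeneity determined by $f_0,\dots,f_{n-1}$ and by $D(t)$, hence has a unique solution with the prescribed initial value.
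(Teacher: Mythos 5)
Your proposal is correct and follows essentially the same route as the paper: solve the $\h^0$ equation by $\E^{td_0}h_0$, substitute the ansatz $f(t)=\E^{td_0}h_0\,g(t)$ to reduce to a first-order equation for $g$ with operator $\tilde{D}(t)=h_0^{-1}\E^{-td_0}D(t)\E^{td_0}h_0\in\h(\DiffOp(M)[t])[[\h]]$, and invoke Proposition~\ref{proposition:ODEGeneral}. Your explicit conjugation identity $\E^{-td_0}\circ\partial_{x^i}\circ\E^{td_0}=\partial_{x^i}+t(\partial_{x^i}d_0)$ just spells out the paper's remark that the exponentials cancel and only polynomials in $t$ remain.
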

\begin{proof}
    We write $f(t) = \sum_{n=0}^\infty \hbar^n f_n(t)$. Then in order
    $\hbar^0$ \eqref{eq:ODEManifold} reads
    \[
    \frac{d}{d t} f_0(t) = d_0 f_0(t)
    \quad
    \textrm{with}
    \quad
    f_0(0) = h_0,
    \]
    so $f_0 = \E^{t d_0} h_0$ is the unique solution. Making the
    Ansatz $f(t) = \E^{td_0} h_0 g(t)$, we see that $f(t)$ is a solution of
    \eqref{eq:ODEManifold} if and only if $g(t)$ satisfies
    \[
    \frac{d}{d t} g(t)
    =
    \E^{-td_0} \frac{1}{h_0} D(t) \left(\E^{td_0} h_0 g(t)\right)
    =
    \tilde{D}(t) g(t),
    \]
    with initial condition $g(0) = \frac{h}{h_0}$. Note that $\tilde{D}
    \in \hbar(\DiffOp(M)[t])[[\hbar]]$ since every differentiation in
    $D(t)$ reproduces the exponential function, which in the end
    cancels. Thus only polynomials in $t$ remain. We can now apply
    Proposition~\ref{proposition:ODEGeneral} and obtain a unique
    solution  $g(t) \in (\cO(M)[t])[[\hbar]]$. Since the solution
    is obtained by iteration, we have
    $g_0(t) = 1$, for all $t$, in zeroth order. The invertibility of $f(t)$
    follows since its zeroth order is invertible.
\end{proof}
\begin{example}
    \label{example:StarExponential}
    Let $d_0 \in \cO(M)$ and $d_+(t) \in \hbar(\cO(M)[t])[[\hbar]]$ be
    given. Then we have a unique invertible solution $f(t)$ to the
    equation
    \begin{equation}
        \label{eq:AlmostStarExponential}
        \frac{d}{d t} f(t) = d(t) \star f(t)
        \quad
        \textrm{with}
        \quad
        f(0) = 1,
    \end{equation}
    where $d(t) = d_0 + d_+(t)$ and $\star$ is a star
    product on $M$. If $d(t) \equiv d$ is time independent then $f(t)$
    is the $\star$-exponential $\Exp_{\star}(t d)$ as in \cite{Bayen},
    \cite[App.~A]{BW1}, and \cite[Thm.~6.3.4]{waldmann:2007a}.
    Moreover, $f(t)$ is $\star$-invertible for all $t$ and the
    $\star$-inverse $f(t)^{-1}$ is determined by the equation
    \begin{equation}
        \label{eq:StarInverseAlmostExp}
        \frac{d}{d t} f(t)^{-1} = - f(t)^{-1} \star d(t)
        \quad
        \textrm{with}
        \quad
        f(0) = 1,
    \end{equation}
    so we also can apply
    Proposition~\ref{proposition:ODEExponential} to this situation.
\end{example}

%
% Maurer-Cartan elements and the twisting procedure
%

\section{Maurer-Cartan elements and the twisting procedure}
\label{App-B}

In this section we recall some general facts about Maurer-Cartan
elements and the twisting procedure. Further details can be found in
Sections 2.3 and 2.4 in \cite{thesis} and in \cite{erratum}.

%
% Maurer-Cartan elements in DGLAs
%

\subsection{Maurer-Cartan elements in DGLAs}
\label{subsec:MCinDGLA}

Recall that every DGLA $(\cL, d_{\cL}, [\cdot, \cdot]_{\cL})$ in
this paper is equipped with a complete descending filtration
\begin{equation}
    \label{Filt-cL1}
    \dots \supset \cF^{-2} \cL
    \supset \cF^{-1} \cL
    \supset \cF^0 \cL \supset \cF^1 \cL \supset \dots\,, \qquad \cL =
    \lim_{n} \cL/\cF^n\cL\,,
    \end{equation}
which means, in particular, that $\cF^1\cL$ is a projective limit of
nilpotent DGLAs.

By definition, $\al$ is a \textit{Maurer-Cartan element} of $\cL$ if
$\al\in \cF^1 \cL^1$ (i.e., $\al\in \cF^1 \cL$ and has degree $1$ in
$\cL$) and satisfies the equation
\begin{equation}
    \label{MC-def}
    d_{\cL} \al + \frac1{2} [\al, \al]_{\cL} = 0\,.
\end{equation}

Notice that $\mg(\cL) = \cF^1 \cL^0$ forms an ordinary (not graded)
Lie algebra which is the projective limit of nilpotent Lie algebras.
Hence $\mg(\cL)$ can be exponentiated to the group
\begin{equation}
    \label{mG}
    \mG(\cL) = \exp (\,\cF^1 \cL^0\,),
\end{equation}
and this group acts on Maurer-Cartan elements of $\cL$ via
\begin{equation}
    \label{action-A}
    \alpha \mapsto \al^{\exp(\xi)} = \exp([\cdot, \xi]_{\cL}) \al +
    \frac{\exp([\cdot, \xi]_{\cL})-1}{[\cdot, \xi]_{\cL}} \,
    d_{\cL}\xi\,,
\end{equation}
where $\xi \in \cF^1 \cL^0$\,, and the expression
\[
\frac{\exp([\cdot, \xi]_{\cL})-1}{[\cdot, \xi]_{\cL}}
\]
is defined via the Taylor expansion of the function $\frac{e^x -
1}{x}$ around the point $x=0$\,. Both terms on the right-hand side
of (\ref{action-A}) are well defined because the filtration on $\cL$
is complete. We remark that (\ref{action-A}) defines a right action,
i.e., for all $\xi, \eta\in \cF^1 \cL^0$ we have
\begin{equation}
    \label{action-is-right}
    \big(\al^{\exp(\xi)}\big)^{\exp(\eta)} =
    \al^{\exp(\CH(\xi, \eta))}\,,
\end{equation}
where $\CH(\xi, \eta)$ is the Campbell-Hausdorff series:
\begin{equation}
\label{eq:CH}
    \CH (\xi,\eta) = \log(e^{\xi} e^{\eta}) = \xi + \eta +
    \frac{1}{2}[\xi,\eta] + \dots \,.
\end{equation}

We let $\MC(\cL)$ denote the transformation groupoid  of the action
(\ref{action-A}), called the \textit{Goldman-Millson groupoid}
\cite{Goldman-M}: its objects are the Maurer-Cartan elements of
$\cL$ and morphisms between two Maurer-Cartan elements $\al_1$ and
$\al_2$ are elements of the group $\mG$ (\ref{mG}) which transform
$\al_1$ to $\al_2$. We call Maurer-Cartan elements
\textit{equivalent} if they are isomorphic in $\MC(\cL)$ and denote
by $\pi_0(\MC(\cL))$ the set of equivalence classes of Maurer-Cartan
elements.

Every morphism  $f: \cL \to \tcL$ of DGLAs defines a functor
\begin{equation}
    \label{f-star}
    f_* : \MC(\cL) \to \MC(\tcL).
\end{equation}
According to \cite{Ezra-Lie,Goldman-M,SS-MC}, we have the following
result.
\begin{teo}
    \label{MC-teo}
    If $f:\cL\to \tcL$ is a quasi-isomorphism of DGLAs,
    then the functor (\ref{f-star}) induces a
    bijection from $\pi_0 (\MC(\cL)) $ to $\pi_0(\MC(\tcL))$.
\end{teo}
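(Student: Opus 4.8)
The plan is to deduce the statement from the corresponding fact for \emph{nilpotent} DGLAs by a limit argument, and then to treat the nilpotent case by induction along the filtration using obstruction theory for abelian extensions. Since Maurer--Cartan elements of a DGLA $\cL$ lie in $\cF^1\cL^1$ and the gauge group $\mG(\cL)$ is built from $\cF^1\cL^0$, the groupoid $\MC(\cL)$ depends only on $\cF^1\cL$. For each $n$ set $\cL_{(n)} = \cL/\cF^n\cL$, so that $\cF^1\cL_{(n)}$ is nilpotent, $\cL = \lim_n \cL_{(n)}$, and $\MC(\cL) = \lim_n \MC(\cL_{(n)})$. From the short exact sequences $0 \to \cF^{m+1}\cL \to \cF^m\cL \to \cF^m\cL/\cF^{m+1}\cL \to 0$, Condition~\ref{condition}, and the five lemma, $f$ induces a quasi-isomorphism on each associated graded piece, hence (by a finite filtration argument on $\cF^1\cL_{(n)}$) a quasi-isomorphism $f_{(n)}:\cL_{(n)}\to\tcL_{(n)}$ for every $n$. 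Because the transition maps on Lie algebras $\cF^1\cL_{(n+1)}^0 \to \cF^1\cL_{(n)}^0$ are surjective, the tower of groupoids $\MC(\cL_{(n)})$ satisfies the relevant Mittag--Leffler condition, and a standard argument gives $\pi_0(\MC(\cL)) = \lim_n \pi_0(\MC(\cL_{(n)}))$; so it suffices to prove the theorem when $f$ is a quasi-isomorphism of nilpotent DGLAs.

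For the nilpotent case I would argue by induction on the length of the filtration. The base case, $\cF^1\cL = 0$, is trivial since then $\pi_0(\MC(\cL))$ is a point. For the inductive step, take the smallest $N \ge 2$ with $\cF^N\cL = 0$, put $\ma = \cF^{N-1}\cL$ and $\cL' = \cL/\ma$, so that $0 \to \ma \to \cL \to \cL' \to 0$ is an extension in which $\ma$ is an abelian ideal with $[\ma,\cF^1\cL] \subseteq \cF^N\cL = 0$, i.e.\ $\ma$ is just a cochain complex on which Maurer--Cartan elements act trivially; the same holds for $\tcL$, and the five lemma shows $f$ restricts to a quasi-isomorphism $\ma \to \widetilde{\ma}$ and descends to a quasi-isomorphism $\cL' \to \tcL'$, for which $f_*$ is a bijection on $\pi_0$ by induction. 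One now invokes the deformation-theoretic package attached to such an extension: (i) to each $\alpha'\in\MC(\cL')$ one associates an obstruction class $\mathrm{ob}(\alpha')\in H^2(\ma)$, depending only on the gauge class of $\alpha'$, which vanishes if and only if $\alpha'$ lifts to $\MC(\cL)$; (ii) when it vanishes, the fiber of $\pi_0(\MC(\cL)) \to \pi_0(\MC(\cL'))$ over $[\alpha']$ is identified with the quotient of $H^1(\ma)$ by the linear action of the group of gauge automorphisms of $\alpha'$ coming from $H^0(\ma)$. Since $\ma$ carries the zero bracket and is annihilated by $\cF^1\cL$, all of this data is functorial in the DGLA, and $f$ intertwines obstruction classes via $H^2(\ma)\cong H^2(\widetilde{\ma})$ and the fiber descriptions via $H^1(\ma)\cong H^1(\widetilde{\ma})$ and $H^0(\ma)\cong H^0(\widetilde{\ma})$.

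Bijectivity of $f_*$ on $\pi_0$ then follows by a five-lemma-style chase of the two resulting ``fibration sequences''. For surjectivity: given $[\tilde\alpha]$, its image $[\tilde\alpha']\in\pi_0(\MC(\tcL'))$ lifts to some $[\alpha']\in\pi_0(\MC(\cL'))$ by the inductive hypothesis, and $\mathrm{ob}(\alpha')\in H^2(\ma)$ maps to $\mathrm{ob}(\tilde\alpha')=0$ under the isomorphism $H^2(\ma)\cong H^2(\widetilde{\ma})$, hence vanishes, so $\alpha'$ lifts to some $\alpha\in\MC(\cL)$; then $f_*[\alpha]$ and $[\tilde\alpha]$ have the same image in $\pi_0(\MC(\tcL'))$, so they differ by an element of the fiber $H^1(\widetilde{\ma})$ modulo automorphisms, which is corrected using $H^1(\ma)\cong H^1(\widetilde{\ma})$. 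Injectivity is the symmetric argument. I expect the main obstacle to be item (ii) above: carefully establishing that the obstruction class is well defined on gauge-equivalence classes, pinning down the fibers as exactly those quotients of $H^1(\ma)$, and checking the naturality of this whole package with respect to $f$ — all of which is the cocycle bookkeeping behind the theorem. Everything else reduces to completeness of the filtrations, the five lemma, and the $\lim_n$ argument. These facts are due to Goldman--Millson, Schlessinger--Stasheff, and Getzler \cite{Ezra-Lie,Goldman-M,SS-MC}.
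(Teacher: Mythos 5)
The paper does not actually prove Theorem~\ref{MC-teo}: it is quoted with the proof delegated to the references \cite{Ezra-Lie,Goldman-M,SS-MC} that you name at the end, so there is no in-paper argument to compare against. Your outline is precisely the strategy those references implement --- reduce along the filtration to central square-zero extensions $0\to\ma\to\cL\to\cL'\to 0$ with $\ma=\cF^{N-1}\cL$, run obstruction theory with the obstruction in $H^2(\ma)$ and the fibers controlled by $H^1(\ma)$, and use Condition~\ref{condition} together with the five lemma to see that $f$ induces quasi-isomorphisms on the graded pieces. As a roadmap it is correct; the question is only whether the sketch is complete, and two steps are more delicate than your wording suggests.

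First, the reduction to the nilpotent case via $\pi_0(\MC(\cL))\cong\lim_n\pi_0(\MC(\cL/\cF^n\cL))$ is not automatic: for a tower of groupoids, $\pi_0$ of the limit need not inject into the limit of the $\pi_0$'s (there is a $\lim^1$-type obstruction on the stabilizer groups), and "Mittag--Leffler" does not dispose of it without further work. The standard proofs sidestep this by running the successive-approximation argument directly on the complete DGLA, building the lift (for surjectivity) or the gauge equivalence (for injectivity) order by order and using completeness to assemble a convergent infinite product of gauge transformations --- compare the proof of Proposition~\ref{nuzhno} in Appendix~\ref{App-B}, where exactly such a product $\exp[\eta_m]\exp[\eta_{m+1}]\cdots$ is constructed. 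Second, the fiber of $\pi_0(\MC(\cL))\to\pi_0(\MC(\cL'))$ over $[\al']$ is an $H^1(\ma)$-torsor modulo the action of the \emph{full} stabilizer of $\al'$ in $\mG(\cL')$ (lifted through the surjection $\mG(\cL)\to\mG(\cL')$), not merely of automorphisms coming from $H^0(\ma)$; the central part $\exp(\ma^0)$ acts on lifts by adding exact elements of $\ma^1$, which is what makes the torsor statement true, while the residual stabilizer action is what actually has to be tracked in the injectivity step. You flag this second point yourself as the main obstacle, so the proposal should be read as an honest and correctly oriented outline of the Goldman--Millson/Schlessinger--Stasheff proof, with the decisive verifications (naturality of the obstruction package and convergence of the order-by-order corrections) still to be carried out.
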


Every Maurer-Cartan element $\al$ of $\cL$ can be used to modify the
DGLA structure on $\cL$. This modified structure is called the DGLA
structure \textit{twisted by the Maurer-Cartan $\al$}  \cite{Q}.
The Lie bracket of the twisted DGLA structure is unchanged, and the
differential is given by
\begin{equation}
    \label{d-al}
    d^{\al}_{\cL} = d_{\cL} + [\al, \cdot]_{\cL}\,.
\end{equation}
The DGLA resulting from twisting $\cL$ by $\al$ will denote by
$\cL^{\al}$.

%
% $\Linf$-morphisms of DGLAs
%

\subsection{$\Linf$-morphisms of DGLAs}
\label{subsec:LinftyMorphismsDGLA}

Two DGLAs $\cL$ and $\tcL$ are called \textit{quasi-isomorphic} if
there is a sequence of quasi-isomorphisms $f$, $f_1$, $f_2$,
$\dots$, $f_n$ connecting $\cL$ with $\tcL$:
\begin{equation}
    \label{cL-tcL}
    \cL \,\stackrel{f}{\rightarrow}\, \cL_1
    \,\stackrel{f_1}{\leftarrow}\, \cL_2
    \,\stackrel{f_2}{\rightarrow}\, \dots
    \,\stackrel{f_{n-1}}{\rightarrow}\,
    \cL_n \,\stackrel{f_n}{\leftarrow}\,  \tcL\,.
\end{equation}
It follows from Theorem \ref{MC-teo} that a sequence of
quasi-isomorphisms (\ref{cL-tcL}) between the DGLAs $\cL$ and $\tcL$
defines a bijection between the sets of equivalence classes of
Maurer-Cartan elements.

We will need to extend the class of morphisms between DGLAs to
$L_{\infty}$ morphisms. To this end, we need the Chevalley-Eilenberg
complex $C(\cL)$ of a DGLA $\cL$. As a graded vector space, $C(\cL)$
is the direct sum of all symmetric powers of the desuspension (see
Subsection~\ref{subsec:notation})
%\footnote{Recall that by the desuspension $\bs^{-1} V$ of $V$ we mean $\ve' \otimes V$, where $\ve'$ is a one-dimensional vector space placed in degree $-1$\,. }
$\bs^{-1} \cL$ of $\cL$:
\begin{equation}
    \label{CE}
    C(\cL) = \bigoplus_{k=1}^{\infty} S^k ( \bs^{-1}\cL )\,.
\end{equation}
The space $C(\cL)$ is equipped with the following cocommutative
comultiplication:
\begin{equation}
    \label{copro-C}
    \D\, :\, C(\cL)\longrightarrow C(\cL) \otimes C(\cL),
\end{equation}
defined by
\begin{align}
&\D(v_1)=0\,,\nonumber \\
    \label{copro-eq}
&    \D (v_1, v_2, \dots , v_n)  = \sum_{k=1}^{n-1}
    \sum_{\si \in {\rm Sh}(k,n-k)} \pm (v_{\si(1)}, \dots, v_{\si(k)})
    \otimes ( v_{\si(k+1)}, \dots , v_{\si(n)})\,,
\end{align}
where $v_1, \dots,\, v_n$ are homogeneous elements of $\bs^{-1}\cL$\,,
${\rm Sh}(k,n-k)$ is the set of $(k,n-k)$-shuffles in $S_n$\,, and the
signs are determined using the Koszul rule.

It can be shown that every coderivation $Q$ of the coalgebra
$C(\cL)$ is uniquely determined by its composition $p \circ Q$ with
the natural projection
\begin{equation}
    \label{p}
    p: C(\cL) \to \bs^{-1}\cL\,.
\end{equation}
This statement follows from the fact that $C(\cL)$ is a cofree
cocommutative coalgebra\footnote{In fact, $C(\cL)$ is a cofree
cocommutative  coalgebra without counit.}. A similar statement holds
for cofree coalgebras of other types, see \cite[Prop.~2.14]{GJ}.

We define the coboundary operator $Q$ of the complex $C(\cL)$ by
requiring that $Q$ is a coderivation of the coalgebra structure and
by setting
\[
p \circ Q(v) = - d_{\cL} v\,, \;\;\; p \circ Q(v_1, v_2) =
(-1)^{|v_1|+1} [v_1, v_2]_{\cL}\,,\;\;\; p \circ Q(v_1, v_2, \dots,
v_k) = 0\, \;\;\; (k
> 2),
\]
where $v, v_1, \dots, v_k$ are homogeneous elements of $\cL$\,.  The
equation $Q^2=0$ readily follows from the Leibniz rule and the Jacobi
identity.  Thus to every DGLA $(\cL, d_{\cL}, [\cdot, \cdot]_{\cL})$
we assign a DG cocommutative coalgebra
\[
(C(\cL), Q)
\]
without counit.
\begin{defi}
    \label{Linf-morph}
    An $\Linf$ morphism
    \[
    F:  \cL \brarrow \tcL
    \]
    from a DGLA $\cL$ to a DGLA $\tcL$ is a (degree zero) morphism of
    the corresponding DG cocommutative coalgebras:
    \[
    F: (C(\cL), Q) \to (C(\tcL), \tQ)\,.
    \]
\end{defi}
The compatibility of $F$ with the comultiplication $\D$
(\ref{copro-C}) implies that $F$ is uniquely determined by its
composition $p \circ F$ with the projection
\[
p : C(\tcL) \to \bs^{-1}\tcL\,.
\]
We denote by $F_n$ the following restriction of $p \circ F$:
\begin{equation}
    \label{F-n}
    F_n = p \circ F \Big|_{S^n( \bs^{-1}\cL ) } S^n (
    \bs^{-1}\cL ) \to \bs^{-1} \tcL\,.
\end{equation}
The maps $F_n$'s are the \textit{structure maps} of the $\Linf$
morphism $F$\,. The presence of the desuspensions in (\ref{F-n})
simply means that the map $F_n$ can be thought of as a map from
$\cL^{\otimes n}$ to $\tcL$ of degree $1-n$ with the following
symmetry in the arguments:
\[
F_n(\dots, \ga_1, \ga_2, \dots) = -(-1)^{|\ga_1||\ga_2|} F_n(\dots,
\ga_2, \ga_1, \dots)\,,
\]
where $|\ga_i|$ is the degree of $\ga_i$ in $\cL$\,. We tacitly use
this identification in our paper.

The compatibility of $F$ with the codifferentials $Q$ and $\tQ$ is
equivalent to a sequence of quadratic relations on $F_n$.
 The first of these relations
says that the map
$$
F_1 : \cL \to \tcL
$$
intertwines the differentials $d_{\cL}$ and $d_{\tcL}$:
\begin{equation}
\label{F-1-and-diff}
F_1 (d_{\cL} \ga) = d_{\tcL}\, F_1 (\ga)\,, \qquad \ga \in \cL\,.
\end{equation}
The second relation says that $F_1$ is compatible with the brackets
up to homotopy:
\begin{equation}
\label{F-2-and-F-1}
d_{\tcL}\, F_2(\ga_1, \ga_2) + F_2(d_{\cL} \ga_1, \ga_2)
+ (-1)^{|\ga_1|} F_2(\ga_1, d_{\cL} \ga_2)  =
F_1([\ga_1, \ga_2]_{\cL})
- [F_1(\ga_1), F_1(\ga_2)]_{\tcL}\,,
\end{equation}
where $ \ga_1, \ga_2 \in \cL$.

Condition (\ref{F-1-and-diff}) motivates the following definition:
\begin{defi}
    \label{Linf-q-iso}
    An $\Linf$ morphism $F: \cL \brarrow \tcL$ is an $\Linf$
    quasi-isomorphism if $F_1$ induces an isomorphism from
    $H^{\bul}(\cL, d_{\cL})$ to $H^{\bul}(\tcL, d_{\tcL})$\,.
\end{defi}

Just as ordinary quasi-isomorphisms, an $\Linf$ quasi-isomorphism
between DGLAs induces a bijection between the sets of equivalence
classes of Maurer-Cartan elements. More precisely, if $F$ is an
$\Linf$ morphism from $\cL$ to $\tcL$ then, for every Maurer-Cartan
element $\al$ of $\cL$,
\begin{equation}
    \label{MC-F-al}
    \beta = \sum_{n=1}^{\infty} \frac{1}{n!} F_n(\al,
    \al, \dots, \al)
\end{equation}
is a Maurer-Cartan element\footnote{The infinite series in
  (\ref{MC-F-al}) is well defined since $\al\in \cF^1\cL$ and $\tcL$ is complete with respect to its filtration.} of the DGLA
$\tcL$\,. Furthermore, if $\al$ is equivalent to $\al'$ in $\cL$,
then $\beta$ is equivalent to the Maurer-Cartan element
\[
\beta' = \sum_{n=1}^{\infty} \frac{1}{n!} F_n(\al', \al', \dots,
\al')
\]
of $\tcL$. As a result, the correspondence
\begin{equation}
    \label{corresp}
    \al \mapsto \beta= \sum_{n=1}^{\infty} \frac{1}{n!}
    F_n(\al, \al, \dots, \al)
\end{equation}
induces a map
\begin{equation}
    \label{F-star}
    F_{*} : \pi_0 (\MC(\cL)) \to  \pi_0 (\MC(\tcL))\,.
\end{equation}
Due to \cite[Prop.~4]{thesis} we have:
\begin{pred}
    \label{MC-teo-Linf}
    If $F: \cL \brarrow \tcL$ is an $\Linf$ quasi-isomorphism, then
    the map (\ref{F-star}) is a bijection.
\end{pred}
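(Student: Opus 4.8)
The plan is to reduce the assertion to the strict case, Theorem~\ref{MC-teo}, by combining the twisting construction of Subsection~\ref{subsec:MCinDGLA} --- applied both to the DGLAs and to the $\Linf$ morphism $F$, as in \cite{thesis} --- with an obstruction-theoretic induction over the complete filtration \eqref{Filt-cL1}. Write $\al\mapsto\beta(\al)=\sum_{n\ge1}\frac1{n!}F_n(\al,\dots,\al)$ for the correspondence \eqref{corresp}. By hypothesis $F_1$ is a quasi-isomorphism, and by the conventions of this paper (cf.\ Condition~\ref{condition} and Remark~\ref{remark:adding-hbar}) it is compatible with the filtrations, hence induces isomorphisms on the cohomology of every associated graded complex $\mathrm{gr}^m\cL=\cF^m\cL/\cF^{m+1}\cL$. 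The central auxiliary fact is that, for a Maurer--Cartan element $\al$ of $\cL$ with $\beta=\beta(\al)$, the maps $F^{\al}_k(\ga_1,\dots,\ga_k)=\sum_{j\ge0}\frac1{j!}F_{k+j}(\al,\dots,\al,\ga_1,\dots,\ga_k)$ assemble into an $\Linf$ morphism $F^{\al}:\cL^{\al}\brarrow\tcL^{\beta}$; since $\al\in\cF^1\cL$, the map $F^{\al}_1$ differs from $F_1$ by operators strictly raising filtration degree, so (by completeness) it is again a filtered quasi-isomorphism. A short computation with the symmetry of the $F_n$ gives $\beta(\al+\ga)=\beta(\al)+\sum_{k\ge1}\frac1{k!}F^{\al}_k(\ga,\dots,\ga)$ and shows that $\ga\mapsto\al+\ga$ is an isomorphism of Goldman-Millson groupoids $\MC(\cL^{\al})\erarrow\MC(\cL)$ taking the zero object to $\al$ (the group $\mG$ being unchanged by twisting).

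For surjectivity, given $\tilde\al\in\MC(\tcL)$ I would construct, by successive approximation over the filtration, an element $\al\in\cF^1\cL^1$ and a family $\xi_m\in\cF^1\tcL^0$ with $\al$ a Maurer--Cartan element modulo $\cF^{m+1}\cL$ and $\beta(\al)^{\exp(\xi_m)}\equiv\tilde\al\pmod{\cF^{m+1}\tcL}$; in the associated graded $\mathrm{gr}^m$, where the brackets vanish, each step reduces to a cohomological obstruction computation, solved using that $F_1$ is an isomorphism on $H^\bullet(\mathrm{gr}^m\cL)$ and on $H^\bullet(\mathrm{gr}^m\tcL)$ together with the fact that $\tilde\al$ is a genuine Maurer--Cartan element. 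Completeness of the filtration allows passing to the limit, giving $\al$ with $F_{*}([\al])=[\tilde\al]$; this is the direct analogue for $\Linf$ morphisms of the proof of Theorem~\ref{MC-teo}.

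For injectivity, suppose $\al,\al'\in\MC(\cL)$ with $[\beta(\al)]=[\beta(\al')]$; after replacing $\al'$ within its class we may assume $\beta(\al)=\beta(\al')$. Then $\ga:=\al'-\al$ is a Maurer--Cartan element of $\cL^{\al}$, and by the identity for $\beta(\al+\ga)$ above its $F^{\al}$-image represents $\beta(\al')-\beta(\al)=0$, i.e.\ $F^{\al}_{*}([\ga])=[0]$ in $\pi_0(\MC(\tcL^{\beta}))$. Since $F^{\al}$ is again an $\Linf$ quasi-isomorphism of complete filtered DGLAs and $\ga\mapsto\al+\ga$ identifies equivalences in $\cL^{\al}$ with equivalences in $\cL$, it suffices to prove the special statement: if an $\Linf$ quasi-isomorphism $F$ satisfies $F_{*}([\ga])=[0]$ for some $\ga\in\MC(\cL)$, then $[\ga]=[0]$. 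This follows from the same filtration induction --- gauging $\ga$ away order by order, the obstruction at step $m$ lying in $\mathrm{gr}^m\cL$ and vanishing because its $F_1$-image does.

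The main obstacle is bookkeeping rather than a new idea: one must propagate the $\Linf$ relations \eqref{F-1-and-diff}--\eqref{F-2-and-F-1} and their higher analogues, with their Koszul signs, through the induction, and in particular check that the ``obstruction cocycles'' assembled from the $F_n$ are genuinely closed in the associated graded complexes and correspond under $F_1$ as claimed --- equivalently, that twisting and passage to $\mathrm{gr}^m$ really produce morphisms of complexes. With this granted, each inductive step is a routine use of the fact that $F_1$ is a filtered quasi-isomorphism. A more conceptual alternative would be to build an $\Linf$ quasi-inverse $G$ of $F$ and appeal to invariance of the induced map on $\pi_0(\MC)$ under $\Linf$-homotopy, reducing everything to ``$\id$ induces $\id$'', but that requires first developing the notion of $\Linf$-homotopy, which the present paper does not.
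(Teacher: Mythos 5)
The paper does not actually prove Proposition~\ref{MC-teo-Linf}: it is stated with a bare citation to Prop.~4 of \cite{thesis}, so there is no in-text argument to compare yours against. That said, your strategy --- twist $F$ by a Maurer--Cartan element to obtain $F^{\al}:\cL^{\al}\brarrow\tcL^{\beta}$, observe that $F^{\al}_1$ and the twisted differentials differ from the untwisted ones by filtration-raising terms so that everything is controlled on the associated graded, and then run an obstruction-theoretic induction over the complete filtration for surjectivity and injectivity --- is the standard proof of this statement and, as far as one can tell, the argument of the cited reference; so you are reconstructing the intended proof rather than taking a genuinely different route.

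Two remarks. First, in the injectivity step the normalization ``after replacing $\al'$ within its class we may assume $\beta(\al)=\beta(\al')$'' is not justified: a gauge transformation in $\tcL$ carrying $\beta(\al')$ to $\beta(\al)$ need not be the image of one in $\cL$, and producing such a lift is essentially what is to be proved. Fortunately you do not need it: $\ga=\al'-\al$ is Maurer--Cartan in $\cL^{\al}$, your expansion gives $F^{\al}_*([\ga])=[\beta(\al')-\beta(\al)]$, and this is the zero class in $\pi_0(\MC(\tcL^{\beta(\al)}))$ precisely because $[\beta(\al')]=[\beta(\al)]$ in $\pi_0(\MC(\tcL))$ (transport along the groupoid isomorphism $\delta\mapsto\beta(\al)+\delta$); so the reduction to your special statement goes through with equality replaced by equivalence. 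Second, the surjectivity and injectivity inductions, which carry all the actual content, are only asserted; for them to converge one needs exactly the hypotheses the paper builds in --- completeness of the filtrations, Condition~\ref{condition} (so that $F_1$ is a quasi-isomorphism on every $\cF^m$ and hence on every associated graded piece), and the fact that Maurer--Cartan elements lie in $\cF^1$. You do invoke these, so the proposal is sound as a sketch, but the ``bookkeeping'' you defer is where the proof actually lives.
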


Given a Maurer-Cartan element $\al\in \cL$, any $\Linf$ morphism $F:
\cL \brarrow \tcL$ can be modified to an $\Linf$ morphism $F^{\al}$
between the twisted DGLAs $\cL^{\al}$ and $\tcL^{\beta}$, where
$\beta$ is as in (\ref{MC-F-al}), see \cite[Prop.~1]{thesis}. We say
that the $\Linf$ morphism $F^{\al}: \cL^\alpha \brarrow \tcL^\beta$
is \emp{twisted} by the Maurer-Cartan element $\al$; its structure
maps $F^{\al}_n$ are given by
\begin{equation}
    \label{F-n-al}
    F^{\al}_n (\ga_1, \ga_2, \dots, \ga_n) =
    \sum_{k=0}^{\infty} \frac{1}{k!} F_{k+n}(\al, \al, \dots, \al,
    \ga_1, \ga_2, \dots, \ga_n)\,,
\end{equation}
where $\ga_i \in \cL$.  In particular,  $F^{\al}_1$ intertwines the
differentials in $\cL^{\al}$ and $\tcL^{\beta}$:
\begin{equation}
    \label{F-1-al-inter}
    \sum_{k=0}^{\infty} \frac{1}{k!} F_{k+1}(\al,
    \al, \dots, \al, d_{\cL} \ga + [\al, \ga]_{\cL}) =
    \sum_{k=0}^{\infty} \frac{1}{k!} \,(d_{\tcL} + [\beta,
    \,\,]_{\tcL})\, F_{k+1}(\al, \al, \dots, \al, \ga )\,.
\end{equation}
According to \cite[Prop.~1]{thesis}, twisting an $\Linf$
quasi-isomorphism by a Maurer-Cartan element gives an $\Linf$
quasi-isomorphism.

One can identify $\Linf$ morphisms from a DGLA $\cL$ to a DGLA
$\tcL$ with Maurer-Cartan elements of another DGLA, denoted by
$\cH$\,. As a graded vector space,
\begin{equation}
    \label{Hom-cL-tcL}
    \cH = \Hom(C(\cL), \tcL)\,.
\end{equation}
The differential $d_{\cH}$ and the bracket $[\cdot, \cdot]_{\cH}$ are
given by the formulas:
\begin{align}
    \label{d-cH}
    &d_{\cH} \Psi = d_{\tcL} \Psi  - (-1)^{|\Psi|} \Psi Q\,,\\
    \label{brack-cH}
    &[\Psi, \Theta]_{\cH}(X) = \sum_i
    (-1)^{|\Theta|\,|X_i|} [\Psi(X_i), \Theta(X'_i)]_{\tcL}\,,
\end{align}
where $\D X = \sum_{i} X_i \otimes X'_i $, and $Q$ is the
codifferential on $C(\cL)$\,.  The DGLA $\cH$ is equipped with the
following descending filtration:
\begin{align}
&\cH = \cF^1 \cH \supset \cF^2 \cH \supset \dots \supset \cF^k \cH
\supset \dots\nonumber \\
    \label{filtr-cH}
 &   \cF^k \cH
    =
    \left\{
        f \in \Hom(C(\cL), \tcL)
        \; \big| \;
        f \big|_{S^{<k} (\bs^{-1}\cL )} = 0
    \right\}.
\end{align}
The DGLA structure defined by (\ref{d-cH}) and (\ref{brack-cH}) is
compatible with this filtration, and the DGLA $\cH$ is complete with
respect to this filtration.  Thus the group $\mG(\cH)$ is defined
for $\cH$ and acts on Maurer-Cartan elements of $\cH$ according to
(\ref{action-A}).

Following \cite{erratum,Shoikhet}, the correspondence
\begin{equation}
    \label{identif-MC}
    F \mapsto p \circ F\,.
\end{equation}
identifies an $\Linf$ morphism $F : \cL \brarrow \tcL$ with a
Maurer-Cartan element of the DGLA $\cH$. Moreover, for two $\Linf$
morphisms $F$ and $\tF$, the Maurer-Cartan elements $p \circ F$ and
$p \circ \tF$ are connected by the action (\ref{action-A}) of the
group $\mG(\cH)$, and the structure maps $F_1$ and $\tF_1$ are chain
homotopic. As a result, if the Maurer-Cartan elements $p \circ F$
and $p \circ \tF$ are equivalent and $F$ is an $\Linf$
quasi-isomorphism, then so is $\tF$\,. We say that two $\Linf$
morphisms $F$ and $\tF$ are \textit{homotopy equivalent} if the
corresponding Maurer-Cartan elements $p \circ F$ and $p \circ \tF$
are connected by the action (\ref{action-A}) of the group
$\mG(\cH)$\,.

It is natural to ask whether two homotopy equivalent $\Linf$
morphisms induce the same map from $\pi_0(\MC(\cL))$ to
$\pi_0(\MC(\tcL))$\,.  The following lemma gives a positive answer
to this question.
\begin{lem}
    \label{lemma-equiv}
    Let $\cL$ and $\tcL$ be DGLAs, and let $F$ and $\tF$ be two $\Linf$ morphisms from $\cL$ to $\tcL$\,.
    If the corresponding Maurer-Cartan elements $p
    \circ F$ and $p \circ \tF$ of the DGLA $\cH$ (\ref{Hom-cL-tcL})
    are equivalent, then $F$ and $\tF$ induce the same map from
    $\pi_0(\MC(\cL))$ to $\pi_0(\MC(\tcL))$\,.
\end{lem}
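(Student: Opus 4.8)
The plan is to realize the equivalence between the Maurer–Cartan elements $p\circ F$ and $p\circ\widetilde F$ of $\cH$ as a homotopy in the technical sense used for $\Linf$ morphisms, and then to transport that homotopy through the correspondence $\al\mapsto\beta=\sum_n\frac{1}{n!}F_n(\al,\dots,\al)$ so that it produces, for each Maurer–Cartan element $\al$ of $\cL$, an explicit element of $\mG(\tcL)$ conjugating $\beta$ to $\widetilde\beta$. Concretely, suppose $p\circ\widetilde F=(p\circ F)^{\exp(\eta)}$ for some $\eta\in\cF^1\cH^0=\Hom(C(\cL),\tcL)$ of degree $0$. I would interpolate: set $\eta_s=s\eta$ (or use the flow of the differential equation defining the $\mG(\cH)$-action, as in Appendix~\ref{App-A}, Proposition~\ref{proposition:ODEGeneral}) and let $\Phi_s\in\mG(\cH)$ be the resulting one-parameter family with $\Phi_0=\id$, $\Phi_1=\exp(\eta)$, so that $F^{(s)}:=\Phi_s\cdot F$ is a family of $\Linf$ morphisms with $F^{(0)}=F$, $F^{(1)}=\widetilde F$. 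The first step is thus to make this family precise and to record that $F^{(s)}_n$ depends polynomially (at each $\h$-order) on $s$.

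Next, I would differentiate the induced family of Maurer–Cartan elements $\beta_s=\sum_{n\ge1}\frac{1}{n!}F^{(s)}_n(\al,\dots,\al)$ of $\tcL$ with respect to $s$. The key computation is that, because $\tfrac{d}{ds}\Phi_s=[\,\cdot\,,\eta_s']_{\cH}\Phi_s+\ldots$ (the infinitesimal form of the $\mG(\cH)$-action \eqref{action-A}), and because $\al$ is Maurer–Cartan in $\cL$, the derivative $\tfrac{d}{ds}\beta_s$ takes the form $d_{\tcL}\zeta_s+[\beta_s,\zeta_s]_{\tcL}=d^{\beta_s}_{\tcL}\zeta_s$ for an explicit $\zeta_s\in\cF^1\tcL^0$ built by evaluating $\eta_s'$ (a coderivation-level object) on the group-like element $\sum_{k\ge0}\frac{1}{k!}(\al,\dots,\al)\in C(\cL)$. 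This is the ``fundamental identity'' that the action \eqref{action-A} of $\mG(\cH)$ on $\Linf$ morphisms covers the action of $\mG(\tcL)$ on Maurer–Cartan elements through the correspondence \eqref{corresp}; it is essentially the statement that $p\circ F$ and $\beta$ are compatibly functorial, and it should be checkable either by a direct (if somewhat involved) bookkeeping on the coalgebra side, or by citing the relevant compatibility in \cite{erratum,thesis}. Having this, $\beta_s$ solves the differential equation $\tfrac{d}{ds}\beta_s=d^{\beta_s}_{\tcL}\zeta_s$ with $\beta_0=\beta$, which is exactly the infinitesimal form of a path inside a single $\mG(\tcL)$-orbit.

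Finally, I would invoke Proposition~\ref{proposition:ODEExponential} (or rather its DGLA analogue, which is the content of Lemma~\ref{claim} / Section~\ref{subsect-example} applied to the nilpotent-type DGLA $\tcL^{\beta}$) to integrate: the solution $g_s\in\mG(\tcL)$ of $\tfrac{d}{ds}g_s=\zeta_s\cdot g_s$, $g_0=\id$, satisfies $\beta_s=\beta^{g_s}$ for all $s$, hence at $s=1$ we get $\widetilde\beta=\beta^{g_1}$, i.e. $\beta$ and $\widetilde\beta$ are equivalent in $\tcL$. Since this holds for every Maurer–Cartan $\al$, and since an analogous (easier) argument shows the equivalence is compatible with morphisms in the Goldman–Millson groupoid, $F$ and $\widetilde F$ induce the same map $\pi_0(\MC(\cL))\to\pi_0(\MC(\tcL))$. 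The main obstacle is the middle step: verifying cleanly that the $\mG(\cH)$-action on $\Linf$ morphisms intertwines with the $\mG(\tcL)$-action on Maurer–Cartan elements via \eqref{corresp} — i.e. producing the explicit $\zeta_s$ and checking the identity $\tfrac{d}{ds}\beta_s=d^{\beta_s}_{\tcL}\zeta_s$. Everything else is a formal consequence of completeness of the filtrations and the existence–uniqueness results of Appendix~\ref{App-A}.
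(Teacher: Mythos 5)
Your core idea is the right one, and it is the same as the paper's: the gauge element in $\mG(\tcL)$ relating $\beta$ and $\tbeta$ is obtained by evaluating the gauge element of $\mG(\cH)$ on the group-like element $\sum_{k\ge 1}\frac{1}{k!}(\al,\dots,\al)$ of (the completion of) $C(\cL)$, yielding $\xi=\sum_{k}\frac{1}{k!}\psi(\al,\dots,\al)\in\cF^1\tcL^0$. The difference is one of execution. The paper does not interpolate at all: it writes down the finite gauge-action formula \eqref{action-A} in $\cH$ relating $p\circ F$ and $p\circ\tF$, applies both sides to the group-like element, and observes -- using $\Delta$-grouplikeness for the bracket terms and the Maurer--Cartan equation for $\al$ to kill the $\psi\circ Q$ part of $d_\cH\psi$ -- that the result is verbatim the formula $\tbeta=\beta^{\exp(\xi)}$ in $\tcL$. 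That single computation is exactly the ``main obstacle'' you flag but leave unverified (your intertwining identity $\tfrac{d}{ds}\beta_s=d^{\beta_s}_{\tcL}\zeta_s$ is just its infinitesimal shadow), so your proposal defers the only nontrivial step rather than carrying it out. Your detour through a one-parameter family also costs something technically: the existence--uniqueness results of Appendix~\ref{App-A} and Proposition~\ref{nuzhno} are formulated for the $\h$-adic setting with coefficients polynomial in $t$, whereas Lemma~\ref{lemma-equiv} concerns arbitrary DGLAs with complete descending filtrations, so you would have to either restate and reprove the ODE machinery in that generality or note that with $\eta_s=s\eta$ the generator $\zeta$ is constant in $s$ and the flow integrates explicitly to $\beta^{\exp(s\zeta)}$ -- at which point you have reproduced the paper's finite formula and the interpolation was unnecessary. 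In short: correct strategy, same key mechanism, but the direct evaluation of the finite formula is both shorter and avoids the integration issues your route introduces.
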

\begin{proof}
    We need to show that for every Maurer-Cartan element $\al$ of
    $\cL$, the Maurer-Cartan elements
    \[
    \beta = \sum_{n=1}^{\infty} \frac{1}{n!} F_n (\al, \al, \dots,
    \al),\;\;\;\mbox{ and }\;\;\;
    \tbeta = \sum_{n=1}^{\infty} \frac{1}{n!} \tF_n (\al, \al, \dots,
    \al)
    \]
    are connected by the action (\ref{action-A}) of the group
    $\mG(\tcL)$.  Let us denote by $f$ (resp. $\tf$) the composition
    $p \circ F$ (resp. $p \circ \tF$):
    \[
    f= p\circ F\,, \qquad \quad \tf= p\circ \tF\,.
    \]
    We know that $f$ and $\tf$ are equivalent Maurer-Cartan elements
    of the DGLA $\cH$. Hence there exists
    an element $\psi\in \cF^1 \cH^0$ such that
    \begin{equation}
        \label{tf-f-psi}
        \tf = \exp([\cdot, \psi]_{\cH}) f +
        \frac{\exp([\cdot, \psi]_{\cH})-1}{[\cdot, \psi]_{\cH}} \,
        d_{\cH}\psi.
    \end{equation}
    Let us consider the element
    \begin{equation}
        \label{cxp-al}
        \sum_{k=1}^{\infty}\frac{1}{k!} (\underbrace{\al,
          \al, \dots, \al}_{k})
    \end{equation}
    in the completion of the coalgebra $C(\cL)$ with respect to the
    natural filtration coming from $\cL$\,.  A direct computation
    shows that applying both sides of equation (\ref{tf-f-psi}) to the
    element (\ref{cxp-al}) and using the Maurer-Cartan equation
    (\ref{MC-def}), we obtain
    \[
    \tbeta = \exp([\cdot, \xi]_{\tcL}) \beta + \frac{\exp([\cdot,
      \xi]_{\tcL})-1}{[\cdot, \xi]_{\tcL}} \,  d_{\tcL} \xi \,,
    \]
    where the element $\xi \in \cF^1\tcL^0$ is defined by
    \[
    \xi = \sum_{k=1}\frac{1}{k!} \psi(\underbrace{\al, \al, \dots,
      \al}_{k})\,.
    \]
    It follows that $\tbeta$ is connected to the
    Maurer-Cartan element $\beta$ by the action (\ref{action-A}) of
    the group $\mG(\tcL)$, concluding the proof.
\end{proof}

%
% The case of $\h$-adic filtration
%

\subsection{The case of $\h$-adic filtration}
\label{subsect-example}

If $(\cL, d, [\cdot, \cdot])$ is a DGLA\footnote{In this subsection we
  omit the subscript $\cL$ for the differential $d_{\cL}$ and for the
  bracket $[\cdot, \cdot]_{\cL}$\,.} which is not equipped with a
descending filtration then, extending the differential $d$ and the
Lie bracket $[\cdot, \cdot]$ by $\bbC[[\h]]$-linearity, we get the
DGLA $\cL[[\h]]$ over the ring $\bbC[[\h]]$ with the obvious
descending filtration
\begin{equation}
    \label{h-filtr-cL}
    \cF^k\cL = \h^k \cL[[\h]].
\end{equation}
The new DGLA $\cL[[\h]]$ is clearly complete with respect to this
filtration.  This case is of central importance in our paper and,
here, we will give an alternative description of (iso)morphisms in
the Goldman-Millson groupoid $\MC(\cL[[\h]])$.

Let $\al \in \h \cL^1[[\h]]$ and $\xi \in \h (\cL^0[t])[[\h]]$. Due
to Proposition~\ref{proposition:ODEGeneral}, the differential
equation
\begin{equation}
    \label{diffura}
    \frac{d}{dt} \al(t) = d \xi + [\al(t), \xi]
\end{equation}
with initial condition
\begin{equation}
    \label{initial-cond}
    \al(0) = \al
\end{equation}
has a unique solution in $\h(\cL^1[t])[[\h]]$\,.  We claim that if
$\al$ satisfies the Maurer-Cartan equation
\[
d \al + \frac{1}{2}[\al, \al] = 0,
\]
then so does $\al(t)$\,.  Indeed, let
\[
\Psi(t) = d \al(t) + \frac{1}{2}[\al(t), \al(t)].
\]
Taking a derivative in $t$ and using (\ref{diffura}), we have
\[
\frac{d}{d t} (d \al(t) + \frac{1}{2}[\al(t), \al(t)] ) = [d \al(t)
+ \frac{1}{2}[\al(t), \al(t)], \xi],
\]
that is,
\[
\frac{d}{d t}\Psi(t) = [\Psi(t), \xi]\,.
\]
Note that $\Psi(0) =0$, since $\al$ satisfies the Maurer-Cartan
equation. Then Proposition~\ref{proposition:ODEGeneral} implies that
$\Psi(t) \equiv 0$, i.e., $\al(t)$ satisfies the Maurer-Cartan
equation for all $t$.

If $\xi$ does not depend on $t$ (that is $\xi \in \h \cL^0[[\h]]$ )
then the initial value problem (\ref{diffura}), (\ref{initial-cond})
can be solved explicitly. Indeed, in this case we have
$$
\al(t) =  \exp(t[\cdot, \xi]) \al +
    \frac{\exp(t[\cdot, \xi])-1}{[\cdot, \xi]_{\cL}} \,
    d \xi\,.
$$
In other words, if  $\xi$ does not depend on $t$, then the
evaluation of $\al(t)$ at $t=1$ is connected with $\al$ by the
action (\ref{action-A}) of the group $\mG(\cL[[\h]])$\,.

We will now show that, for an arbitrary element $\xi \in \h
(\cL^0[t])[[\h]]$, the evaluation $\al(1)$ is also connected with
$\al$ by the action of the group $\mG(\cL[[\h]])$. We need the
following technical statement:
\begin{lem}
    \label{nuzhna}
    Consider a Maurer-Cartan element $\al$ of $\cL[[\h]]$, let $\xi$ be
    an element of $\h(\cL^0[t])[[\h]]$, and $\al(t)$ be the unique
    solution of (\ref{diffura}) with initial condition
    (\ref{initial-cond}). Then for every $\eta\in \h \cL^0[[\h]]$ and
    every nonnegative integer $k$, the element
    \begin{equation}
        \label{la-t}
        \la(t) = \exp\left(\frac{t^{k+1}}{k+1} [\cdot,
            \eta]\right) \al(t) + \frac{\exp\left(\frac{t^{k+1}}{k+1} [\cdot,
              \eta] \right) - 1}{[\cdot, \eta]} ~ d \eta
    \end{equation}
    satisfies the differential equation
    \begin{equation}
        \label{diffura-la-t}
        \frac{d}{d t} \la(t) = d \txi + [\la(t),
        \txi ],
    \end{equation}
    where
    \[
    \txi = t^k \eta + \exp\left(\frac{t^{k+1}}{k+1}[\cdot, \eta]
    \right) \xi\,.
    \]
\end{lem}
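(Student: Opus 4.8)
The plan is to prove the identity by a direct computation: differentiate the explicit formula for $\la(t)$ in $t$, substitute the defining differential equation \eqref{diffura} for $\al(t)$, and reorganize the result into the form $d\txi + [\la(t),\txi]$. Throughout I would abbreviate $\ad = [\,\cdot\,,\eta]$ --- which, since $\eta$ has degree $0$, is a derivation of the Lie bracket by the graded Jacobi identity, so that $E_s := \exp(s\,\ad)$ is a Lie algebra automorphism of $\cL[[\h]]$ --- put $s(t) = \tfrac{t^{k+1}}{k+1}$, so $\dot s(t) = t^k$, and write $B_s := \tfrac{E_s-1}{\ad}\,d\eta$, so that $\la(t) = E_{s(t)}\al(t) + B_{s(t)}$ and $\txi = \dot s(t)\,\eta + E_{s(t)}\xi$.

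First I would record three elementary facts, each an identity of formal power series in $s$ (legitimate because $\ad$ is $\h$-filtration decreasing and $\cL[[\h]]$ is $\h$-adically complete): (i) $\tfrac{d}{ds}E_s = \ad\,E_s = E_s\,\ad$ and $\tfrac{d}{ds}B_s = E_s\,d\eta$; (ii) $\ad B_s = E_s\,d\eta - d\eta$; and (iii) the ``twisted Leibniz'' relation $E_s\,d\xi = d(E_s\xi) + [B_s, E_s\xi]$. The last is the only non-immediate point: both sides vanish at $s=0$, and using that $d$ is a derivation of the bracket together with the degree-$0$ hypothesis on $\xi$ and $\eta$, one checks that $\Phi(s) := E_s\,d\xi - d(E_s\xi)$ and $s\mapsto [B_s, E_s\xi]$ satisfy the same linear differential equation $\tfrac{d}{ds}X = \ad X - [E_s\xi, d\eta]$; uniqueness (Prop.~\ref{proposition:ODEGeneral}, applied in the variable $s$) then forces $\Phi(s) = [B_s, E_s\xi]$. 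The cancellation making this work is $[d\eta,\xi] + [\xi,d\eta] = 0$, which is just graded antisymmetry in degree $0$.

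With these in hand the main computation is short. By the chain rule, $\tfrac{d}{dt}\la(t) = \dot s\,\ad\!\big(E_s\al(t)\big) + E_s\,\dot\al(t) + \dot s\,E_s\,d\eta$ with $s=s(t)$; substituting $\dot\al(t) = d\xi + [\al(t),\xi]$, applying the automorphism property $E_s[\al(t),\xi] = [E_s\al(t),E_s\xi]$ and fact (iii) to $E_s\,d\xi$, and then replacing $E_s\al(t) = \la(t) - B_s$ throughout, one collects the $\la(t)$-dependent part into $\dot s\,[\la(t),\eta] + [\la(t),E_s\xi] = [\la(t),\txi]$, while the remaining terms --- after one more use of $\ad B_s = E_s\,d\eta - d\eta$ --- reorganize into $\dot s\,d\eta + d(E_s\xi) = d\txi$. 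This is exactly \eqref{diffura-la-t}.

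I expect the only genuine obstacle to be the bookkeeping of the ``affine'' correction terms: establishing fact (iii) and verifying that the $B_s$-terms generated by $E_s\al(t) = \la(t)-B_s$ cancel precisely against $d(E_s\xi)$ and $\dot s\,d\eta$ in $d\txi + [\la(t),\txi]$. All of this is routine once the degree-$0$ hypotheses on $\eta$ and $\xi$ are used to kill the signs in the graded Jacobi and Leibniz identities. Note that the Maurer-Cartan equation for $\al$ plays no role here --- only the evolution equation \eqref{diffura} is used --- so no appeal to that structure is needed.
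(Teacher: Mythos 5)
Your proof is correct and follows essentially the same route as the paper: a direct differentiation of $\la(t)$, substitution of \eqref{diffura}, and a reorganization that hinges on the conjugation identity for $d$ by $\exp\bigl(\tfrac{t^{k+1}}{k+1}[\cdot,\eta]\bigr)$ — your fact (iii) is exactly the paper's key identity \eqref{Exp-d-Exp} applied to $E_s\xi$, and both you and the paper establish it by checking that the two sides solve the same linear formal ODE with the same initial condition and invoking Proposition~\ref{proposition:ODEGeneral}. The only differences are notational (your change of variable to $s$ and the isolation of facts (i)--(iii) make the bookkeeping cleaner), not mathematical.
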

\begin{proof}
    We compute the derivative explicitly and obtain
    \begin{align*}
        \frac{d}{d t} \la(t)
        &=
        t^k[\cdot, \eta]
        \exp\left(\frac{t^{k+1}}{k+1} [\cdot, \eta]\right) \al(t)
        +
        \exp\left(\frac{t^{k+1}}{k+1} [\cdot, \eta]\right)  d\xi
        \\
        &\quad+
        \exp\left(\frac{t^{k+1}}{k+1} [\cdot, \eta]\right)
        [\al(t), \xi]
        +
        t^k \exp\left(\frac{t^{k+1}}{k+1} [\cdot, \eta]\right)
        d \eta.
    \end{align*}
    The latter can be rewritten as
    \begin{align*}
        \frac{d}{d t} \la(t)
        &=
        \left[
            \exp\left(
                \frac{t^{k+1}}{k+1}
                [\cdot, \eta]
            \right) \al(t),
            t^k \eta
            +
            \exp\left(
                \frac{t^{k+1}}{k+1}
                [\cdot, \eta]
            \right) \xi
        \right] \\
        &\quad+
        d(t^k \eta)
        +
        [\cdot, \eta]
        \frac{\exp\left(\frac{t^{k+1}}{k+1} [\cdot, \eta]\right)-1}
        {[\cdot, \eta]} d(t^k \eta) \\
        &\quad+
        \exp\left(\frac{t^{k+1}}{k+1} [\cdot, \eta]\right) d
        \exp\left(-\frac{t^{k+1}}{k+1} [\cdot, \eta]\right)
        \exp\left(\frac{t^{k+1}}{k+1} [\cdot, \eta]\right) \xi \\
        &\quad-
        d \exp\left(\frac{t^{k+1}}{k+1} [\cdot, \eta]\right) \xi
        +
        d \exp\left(\frac{t^{k+1}}{k+1} [\cdot, \eta]\right) \xi.
    \end{align*}
    Hence
    \begin{align*}
        \frac{d}{d t} \la(t)
        &=
        d \left(
            t^k \eta
            +
            \exp\left(\frac{t^{k+1}}{k+1} [\cdot, \eta]\right) \xi
        \right) \\
        &\quad+
        \left[
            \la(t),
            t^k \eta
            +
            \exp\left(\frac{t^{k+1}}{k+1} [\cdot, \eta]\right) \xi
        \right] \\
        &\quad+
        \left(
            \exp\left(\frac{t^{k+1}}{k+1} [\cdot, \eta]\right)
            d
            \exp\left(-\frac{t^{k+1}}{k+1} [\cdot, \eta]\right)
            -
            d
        \right)
        \exp\left(\frac{t^{k+1}}{k+1} [\cdot, \eta]\right) \xi \\
        &\quad-
        \left[
            \frac{
              \exp\left(\frac{t^{k+1}}{k+1} [\cdot, \eta] \right) - 1
            }
            {[\cdot, \eta]}
            d \eta,
            \exp\left(\frac{t^{k+1}}{k+1} [\cdot, \eta]\right) \xi
        \right].
    \end{align*}
    Thus, in order to prove the proposition, we need to show that
    \begin{equation}
        \label{Exp-d-Exp}
        \exp\left(\frac{t^{k+1}}{k+1} [\cdot, \eta]\right)
        \left[
            d,
            \exp\left(-\frac{t^{k+1}}{k+1} [\cdot, \eta] \right)
        \right]
        =
        \left[
            \frac{
              \exp\left(\frac{t^{k+1}}{k+1} [\cdot, \eta] \right) - 1
            }
            {[\cdot, \eta]}
            d \eta, \cdot
        \right].
    \end{equation}
    One now verifies that both sides of (\ref{Exp-d-Exp}) satisfy
    the same differential equation:
    \[
    \frac{d}{d t} \Theta(t)
    =
    \left[
        t^k [\cdot, \eta], \Theta(t)
    \right]
    +
    t^k [d\eta, \cdot],
    \]
    with the same initial condition
    \[
    \Theta(0) =0.
    \]
    Therefore, by Proposition~\ref{proposition:ODEGeneral},
    (\ref{Exp-d-Exp}) holds and the result
    follows.
\end{proof}

We can now prove the main result of this subsection.
\begin{pred}
    \label{nuzhno}
    Let $\al$ be a Maurer-Cartan element of $\cL[[\h]]$\,, $\xi$ be
    an element of $\h(\cL^0[t])[[\h]]$\,, and $\al(t)$ be the unique
    solution of (\ref{diffura}) with the initial condition
    (\ref{initial-cond}). Then the Maurer-Cartan element $\al(1)$ is
    connected with $\al$ by the action (\ref{action-A}) of the group
    $\mG(\cL[[\h]])$\,.
\end{pred}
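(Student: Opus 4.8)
The plan is to reduce to the case handled in the paragraph preceding Lemma~\ref{nuzhna}, where $\xi$ does not depend on $t$ and $\al(1)=\exp([\cdot,\xi])\al+\frac{\exp([\cdot,\xi])-1}{[\cdot,\xi]}\,d\xi=\al^{\exp(\xi)}$ lies by definition in the $\mG(\cL[[\h]])$-orbit of $\al$. For a general $\xi\in\h(\cL^0[t])[[\h]]$ the idea is to strip the $t$-dependence off $\xi$ one power of $\h$ at a time using Lemma~\ref{nuzhna}, keeping track of the fact that each such move changes $\al(1)$ only by the action \eqref{action-A} of $\mG(\cL[[\h]])$, and then to pass to an $\h$-adic limit. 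Here I would first record the two consequences of Lemma~\ref{nuzhna} that are actually used: (a) if $\al(t)$ solves \eqref{diffura}--\eqref{initial-cond} with parameter $\xi$, then for any $\eta\in\h\cL^0[[\h]]$ and $k\ge 0$ the element $\la(t)$ of \eqref{la-t} is the unique solution of the \emph{same} initial value problem with $\xi$ replaced by $\txi=t^k\eta+\exp(\frac{t^{k+1}}{k+1}[\cdot,\eta])\xi$ (since $\la(0)=\al$); and (b) evaluating \eqref{la-t} at $t=1$ and comparing with \eqref{action-A} gives $\la(1)=\al(1)^{\exp(\eta/(k+1))}$. In particular, replacing $\xi$ by such a $\txi$ is harmless for the statement.

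The core step is an induction on $N\ge 1$ producing parameters $\xi=\xi^{[0]},\xi^{[1]},\dots\in\h(\cL^0[t])[[\h]]$, each obtained from the previous by finitely many substitutions $\xi\mapsto\txi$ of the above form, with $\xi^{[N]}$ $t$-independent modulo $\h^{N+1}$, with $\xi^{[N+1]}\equiv\xi^{[N]}\bmod\h^{N+1}$, and with the gauge parameter relating the time-$1$ values $\al^{[N]}(1)$ and $\al^{[N+1]}(1)$ lying in $\h^{N+1}\cL^0[[\h]]$ (where $\al^{[N]}(t)$ solves \eqref{diffura}--\eqref{initial-cond} with parameter $\xi^{[N]}$). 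For the inductive step one expands the coefficient of $\h^{N+1}$ in $\xi^{[N]}$ as a polynomial $q(t)$ in $t$; if it is nonconstant, of degree $D$, one applies Lemma~\ref{nuzhna} with $k=D$ and $\eta=-\h^{N+1}\cdot(\text{leading coefficient of }q)\in\h^{N+1}\cL^0[[\h]]$. Since $\eta$ has $\h$-order $N+1$, the exponential contribution in $\txi$ enters only in $\h$-orders $\ge N+2$, so $\txi\equiv\xi^{[N]}\bmod\h^{N+1}$ while the coefficient of $\h^{N+1}$ in $\txi$ has lost its top $t$-monomial; finitely many such steps remove the $t$-dependence in order $\h^{N+1}$ without disturbing the lower orders, giving $\xi^{[N+1]}$, and by (b) the accompanying gauge parameters all lie in $\h^{N+1}\cL^0[[\h]]$.

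Finally I would take limits. The $\xi^{[N]}$ are $\h$-adically Cauchy, with limit a $t$-independent $\xi^{[\infty]}\in\h\cL^0[[\h]]$; since the solution of \eqref{diffura}--\eqref{initial-cond} at any fixed order of $\h$ depends only on the parameter at that order and lower (by the iterative construction in Proposition~\ref{proposition:ODEGeneral}), the values $\al^{[N]}(1)$ converge to $\al^{[\infty]}(1)=\al^{\exp(\xi^{[\infty]})}$, which lies in the orbit of $\al$. On the other hand, composing the successive gauge moves via \eqref{action-is-right} and using that their parameters have $\h$-order tending to infinity, the total parameters $Z_N$ with $\al^{[N]}(1)=\al(1)^{\exp(Z_N)}$ converge to some $Z\in\h\cL^0[[\h]]$, so $\al^{[\infty]}(1)=\al(1)^{\exp(Z)}$; hence $\al(1)$ is equivalent to $\al^{[\infty]}(1)$, which is equivalent to $\al$. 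The step needing the most care is precisely this concluding bookkeeping: verifying that the countably many gauge transformations produced by the induction compose to a well-defined element of $\mG(\cL[[\h]])$, and that the $\h$-adic limit of the $\xi^{[N]}$ is compatible, through the action \eqref{action-A}, with the telescoped limit of the $\al^{[N]}(1)$. The individual reductions are routine applications of Lemma~\ref{nuzhna}.
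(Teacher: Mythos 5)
Your proposal is correct and follows essentially the same route as the paper's proof: both rest on Lemma~\ref{nuzhna}, strip the $t$-dependent monomials of $\xi$ order by order in $\h$, and compose the resulting gauge transformations into an $\h$-adically convergent infinite product acting on $\al(1)$. The only cosmetic difference is that you remove monomials from the top degree down and reduce to a $t$-independent parameter (then invoke the explicit solution formula), whereas the paper removes all monomials, including the constant ones, from the lowest degree up and reduces to $\xi=0$.
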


\begin{proof}
    Let us denote by $\rE(\xi, \al)$ the evaluation of $\al(t)$ at
    $t = 1$:
    \begin{equation}
        \label{E}
        \rE(\xi, \al) = \al(t) \Big|_{t=1}\,,
    \end{equation}
    where $\al(t)$ is the solution of the differential equation
    (\ref{diffura}) satisfying $\al(0)= \al$\,.  In
    general, we have $\xi\in \h^m (\cL^0[t])[[\h]]$, where $m$ is a
    positive integer, and
    \begin{equation}
        \label{xi-series}
        \xi = t^k  \h^m \xi_{m,k} +  t^{k+1} \h^m
        \xi_{m,k+1} +
        t^{k+2}  \h^m \xi_{m,k+2}+ \dots +  t^N \h^m \xi_{m,N}
        \qquad {\rm mod} \quad \h^{m+1}\,,
    \end{equation}
    $\xi_{m,j}\in \cL^0$, for some nonnegative integers $k$ and $N$ with $k\le N$\,.
    Lemma~\ref{nuzhna} implies that
\begin{equation}
\label{kill-Bill}
\big(\rE(\xi, \al)\big)^{\exp\left[-\frac{ \h^m \xi_{m,k}}{k+1}\right]} =
\rE(\xi', \al)\,,
\end{equation}
    where $\xi'\in \h^{m} (\cL^0[t])[[\h]]$ and
    \begin{equation}
        \label{xi-p-series}
        \xi' =  t^{k+1} \h^m \xi'_{m,k+1} +
        t^{k+2}  \h^m \xi'_{m,k+2}+ \dots +  t^N \h^m \xi'_{m,N}
        \qquad {\rm mod} \quad \h^{m+1},
    \end{equation}
    with
    $\xi'_{m,j}\in \cL^0\,. $

    Repeating this argument $N-k-1$ times, we see that there exists
    elements
   $ \txi_1\in \h^{m+1} (\cL^0[t])[[\h]]$
    and $\eta_m \in \h^m\cL^0[[\h]]$ such that
\begin{equation}
\label{kill-Bill-m}
\rE(\txi_1, \al) = \big(\rE(\xi, \al)\big)^{\exp[\eta_m]}\,.
\end{equation}
    Therefore we have an infinite series of elements $\eta_{m+n-1} \in
    \h^{m+n-1}\cL^0[[\h]]$ and elements
    \[
    \txi_{n}\in \h^{m+n} (\cL^0[t])[[\h]]\,, \qquad n \ge 1,
    \]
    such that
\begin{equation}
\label{kill-Bill-m-n}
\rE(\txi_n, \al) = \big(\rE(\xi, \al)\big)^{\La_{n,m}}\,,
\end{equation}
where
$$
\La_{n,m} =
\exp[\eta_{m}] \exp[\eta_{m+1}] \dots
 \exp[\eta_{m+n-1}]\,.
$$

Since for large $n$ the element $\eta_{m+n-1}$ lies in the deeper
filtration subalgebra $\h^{m+n-1}\cL^0[[\h]]$, the infinite product
\[
\La = \exp[\eta_{m}] \exp[\eta_{m+1}] \dots \exp[\eta_{m+n}] \dots
\]
is a well defined element of $\mG(\cL[[\h]])$\,.  Furthermore, due
to (\ref{kill-Bill-m-n}), we have
    \[
    \al = \big(\al(1)\big)^{\La}
    \]
    and the proposition follows.
\end{proof}

%%%%%%%%%%%%%%%%%%%%%%%%%%%%%%%%%%%%%%%%%%%%%%%%%%%%%%%%%%%%%%%%%%%%%%%%%%%%%%%%%%%%%%
\section{Independence of Fedosov's differential: proof of Theorem~\ref{nezalezh}}
\label{App-C}

    This section presents the proof of Theorem~\ref{nezalezh}, asserting that 
    the correspondence between equivalence
    classes of star products and equivalence classes of formal Poisson
    structures induced by the sequence of $\Linf$
    quasi-isomorphisms (\ref{upper}) does not depend on the choice of
    the connection/Fedosov differential. 

We would like to emphasize that we prove (and use) Theorem 
\ref{nezalezh} in the setting where the ground field $\bbC$
is replaced by the ground ring\footnote{See Remark \ref{remark:adding-hbar}.} 
$\bbC[[\h]]$. In particular, the connection form $\G$ in (\ref{nabla-prelim})
is replaced by a general formal Taylor power series in
  $\h$:
   \[
   \G_{\h} = \G_0 + \h \G_1 + \h^2 \G_2 + \dots,
   \]
and the element $A$ (\ref{A}) is allowed to have
the more general form:
\[
    A=\sum_{p=2, r=0}^{\infty} d x^k  \h^r A^j_{r; k
        i_1\dots i_p}(x) y^{i_1} \dots y^{i_p}\frac{\pa}{\pa y^j} \in
      \Om^1(M, \cT^1_{poly})[[\h]]\,.
\]

    Although the proof is long and technical, the
    general idea is simple. The key point is observing that
    changing the geometric Fedosov differential corresponds to
    twisting the DGLAs $\Omb(M, \scTp)[[\h]]$ and $\Omb(M,
    \sCbu(\SM))[[\h]]$ by a Maurer-Cartan element which is equivalent
    to zero. What makes the proof intricate is
    that one needs filtrations on these DGLAs which are more
    subtle than the $\h$-adic ones.

\begin{proof}[ of Theorem~\ref{nezalezh}]
Let us introduce the following 
descending filtrations on the DGLAs
$\Omb(M,\cT^{\bul+1}_{poly})[[\h]]$ and $\Omb(M,
    \sCbu(\SM))[[\h]]$: The $m$-th subspace $\mF^m
    \Omb(M, C^k(\SM))[[\h]]$ of the filtration on $\Omb(M,
    C^k(\SM))[[\h]]$ consists of the elements $P$ of $\Omb(M,
    C^k(\SM))[[\h]]$ satisfying
    \begin{equation}
        \label{mF-Cbu}
        P \,\big(\,\cF^{p_1} \SM[[\h]]  \otimes \cF^{p_2}
        \SM[[\h]] \otimes \dots \otimes  \cF^{p_k} \SM[[\h]] \, \big)\,
        \subset
        \bigoplus_{s+t = m + p_1 + p_2 + \dots + p_k}
        \Om^{s}(\cF^t \SM[[\h]])\,,
    \end{equation}
    where the filtration $\cF^{\bul}\SM[[\h]]$ is defined in
    Remark~\ref{remark-filtr}; the $m$-th subspace $\mF^m \Omb(M,
    \cT^{k}_{poly})[[\h]]$ of the filtration on $\Omb(M,
    \cT^{k}_{poly})[[\h]]$ is specified by the same condition:
    for $\ga\in \Omb(M, \cT^{k}_{poly})[[\h]]$, viewed as a
    element of $\Omb(M, C^k(\SM))[[\h]]$,
    \begin{equation}
        \label{mF-cT}
        \ga \,\big(\,\cF^{p_1} \SM[[\h]]  \otimes \cF^{p_2}
        \SM[[\h]] \otimes \dots \otimes  \cF^{p_k} \SM[[\h]] \, \big)\,
        \subset
        \bigoplus_{s+t = m + p_1 + p_2 + \dots + p_k}
        \Om^{s}(\cF^t \SM[[\h]]).
    \end{equation}
    The filtrations $\mF^{\bul} \Omb(M, \cT^{\bul+1}_{poly})[[\h]]$ and
    $\mF^{\bul} \Omb(M, \Cbu(\SM))[[\h]]$ assign to $y^i$, $d x^i$,
    $\pa_{y^i}$ and $\h$ the degrees $1$, $1$, $-1$, and $2$,
    respectively.  For the filtration $\mF^{\bul}$ on $\Omb(M,
    \cT^{\bul+1}_{poly})[[\h]]$ we have
    \begin{equation}
        \label{complete-cT}
        \Omb(M, \cT^{\bul+1}_{poly})[[\h]] = \lim_{m}
        \Omb(M, \cT^{\bul+1}_{poly})[[\h]] \, \big/ \, \mF^{m} \Omb(M,
        \cT^{\bul+1}_{poly})[[\h]]
    \end{equation}
    and
    \begin{equation}
        \label{mF-cT-bounded}
        \Omb(M, \cT^{\bul+1}_{poly})[[\h]] = \mF^{-d}
        \Omb(M, \cT^{\bul+1}_{poly})[[\h]]\,,
    \end{equation}
    where $d$ is the dimension of $M$.  Although the filtration
    $\mF^{\bul}$ on $\Omb(M, \sCbu(\SM))[[\h]]$ is unbounded in both
    directions, we still have the following important properties:
    \begin{equation}
        \label{complete-Cbu}
        \Omb(M, \sCbu(\SM))[[\h]] = \lim_{m} \Omb(M,
        \sCbu(\SM))[[\h]]  \, \big/ \, \mF^{m} \Omb(M, \sCbu(\SM))[[\h]]\,,
    \end{equation}
    \begin{equation}
        \label{cocomplete-Cbu}
        \Omb(M, \sCbu(\SM))[[\h]] = \bigcup_{m}
        \mF^{m} \Omb(M, \sCbu(\SM))[[\h]]\,.
    \end{equation}
    Property (\ref{complete-Cbu}) follows from the fact that
    the local sections of the sheaf $C^k(\SM)$ are continuous
    ($\cO(U)$-polylinear) maps from $\G(U, \SM)^{\otimes \, k}$ to
    $\G(U, \SM)$ in the $y$-adic topology on $\G(U, \SM)$\,.

    Let us consider two different geometric Fedosov differentials
    \begin{equation}
        \label{DDD1}
        D = \n - \de + A\,,\;\;\; \tD = \widetilde{\n} - \de + \tA,
    \end{equation}
%    and
%    \begin{equation}
%        \label{DDD-tilde}
 %       \tD = \widetilde{\n} - \de + \tA
 %   \end{equation}
    and let $\tau$, $\tilde{\tau}$ be the corresponding
    isomorphisms (see (\ref{tau-iso1})),
    \begin{equation}
        \label{la-D}
        \begin{array}{c}
         \tau: \mathcal{X}^{\bul}(M)[[\h]]
        \stackrel{\cong}{\longrightarrow}
        \G(M, \cTp)[[\h]] \cap \ker D\,,\\[0.3cm]
        \tilde{\tau} : \mathcal{X}^{\bul}(M)[[\h]]
        \stackrel{\cong}{\longrightarrow}
        \G(M, \cTp)[[\h]] \cap \ker \tD\,.
        \end{array}
    \end{equation}
    The geometric Fedosov differential $\tD$ can be rewritten as
    \begin{equation}
        \label{H}
        \tD = D + H,
        \quad
        \textrm{ where}
        \quad
        H \in \mF^1 \Om^1(M, \cT^{1}_{poly})[[\h]].
    \end{equation}
    Since $\tD^2 = 0$, the element $H$ satisfies the Maurer-Cartan
    equation
    \[
    D H + \frac{1}{2}[H, H]_{SN} = 0.
    \]

    Let us consider the natural extension of the map $\si$ (\ref{sigma}) to $\Omb(M,
    \cTp)[[\h]]$,
    \[
    \si(\ga) = \ga\Big|_{y^i = d x^i = 0}\,.
    \]
    For example, the subspace $\Om^0(M, \cT^{p}_{poly})[[\h]] \cap
    \ker \si$ consists of fiberwise polyvectors of the form
    \[
    \ga = \sum_{k \ge 0} \sum_{q \ge 1} \h^k \, \ga^{i_1 i_2 \dots
      i_p}_{k; j_1 j_2 \dots j_q} (x) \, y^{j_1} y^{j_2} \dots y^{j_q} \,
    \pa_{y^{i_1}} \wedge  \pa_{y^{i_2}} \wedge \dots \wedge
    \pa_{y^{i_p}},
    \]
    (with summation in $q$ starting with $1$). The element
    $H$ (\ref{H}) is a Maurer-Cartan element of the
    the following truncation of the DGLA $(\Omb(M, \scTp), D,
    [\,,\,]_{SN})$:
    \begin{equation}
        \label{DGLA-for-B}
        \cL_{\cT} =
        \bigoplus_{k\ge 0} \Om^0(M, \cT^{k+1}_{poly})[[\h]]
        \cap \ker \si
        \;\; \oplus \;\;
        \bigoplus_{k\ge 0} \Om^{\ge 1}(M, \cT^{k+1}_{poly})[[\h]].
    \end{equation}
    At the level of the associated
    graded complex
    \[
    \bigoplus_m \mF^m \cL_{\cT} \big/ \mF^{m+1}\cL_{\cT},
    \]
    the differential $D$ (\ref{DDD1}) boils down to $-\de$\,.
    Due to \eqref{Hodge} and Remark~\ref{remark-Hodge},
    the associated graded complex of $\cL_{\cT}$ is
    acyclic. Using properties (\ref{complete-cT}) and
    (\ref{mF-cT-bounded}), we conclude that, for all $m$, the sub DGLAs $\mF^m
    \cL_{\cT}$ and the sub DGLA $\cL_{\cT}$ are
    acyclic.  Theorem~\ref{MC-teo} then implies that every
    Maurer-Cartan element of the DGLA (\ref{DGLA-for-B}) can be
    brought to zero via the action of the group
    \begin{equation}
        \label{mG-for-cL-cT}
        \exp
        \Big(
        \mF^1 \,\Om^0(M, \cT^{1}_{poly})[[\h]]\cap \ker \si
        \Big).
    \end{equation}
    Since $H$ is a Maurer-Cartan element, it follows
    that there exists an element
    \begin{equation}
        \label{xi-for-B}
        X \in
        \mF^1 \,\Om^0(M, \cT^{1}_{poly})[[\h]]
        \cap
        \ker \si
    \end{equation}
    such that
    \begin{equation}
        \label{B-and-xi}
        H =
        \frac{\exp([\cdot, X]_{SN})- 1 }{[\cdot, X]_{SN}} D X.
    \end{equation}

Since components of $H$ have degrees in $y$ greater than or equal to
$1$ and the contracting homotopy $\de^{-1}$ for $\de$ raises the
degree in $y$ by $1$, we conclude that one can find the element $X$
(\ref{xi-for-B}) satisfying (\ref{B-and-xi}) as well as the
additional property
   \begin{equation}
        \label{xi-yy}
        \pa_{y^i} X  \Big|_{y = 0}  = 0,
        \quad
        \textrm{for all}
        \quad i.
    \end{equation}
In other words, we can find $X$ whose components have degrees in
fiber coordinates $y$ greater than or equal to $2$.

    It follows from (\ref{B-and-xi}) that the operator $e^{X}$
    intertwines the differentials $D$ and $\tD$:
    \begin{equation}
        \label{exp-xi-D-tD}
        \tD = e^{-X} \circ D \circ e^{X}.
    \end{equation}
    Furthermore, combining equation (\ref{exp-xi-D-tD}) with property
    (\ref{xi-yy}) we deduce that, for every formal Poisson structure
    $\pi$,
    \begin{equation}
        \label{exp-xi-la-pi}
        \exp(-[\cdot, X]_{SN})\tilde{\tau}(\pi) = \tau(\pi).
    \end{equation}
    Indeed, \eqref{exp-xi-D-tD} implies that both sides of
    (\ref{exp-xi-la-pi}) are $D$-flat. Then, (\ref{xi-yy}) implies
    that
    \[
    \si (\tau(\pi)) = \si(\,\exp(-[\cdot, X]_{SN})\tilde{\tau}(\pi)\,).
    \]
    Therefore, since every $D$-flat section $\ga$ is uniquely
    determined by its image $\si(\ga)$, we conclude that
    (\ref{exp-xi-la-pi}) holds.  Combining equations (\ref{B-and-xi})
    and (\ref{exp-xi-la-pi}), we deduce that
    \begin{equation}
        \label{exp-xi-nado}
        H + \tilde{\tau}(\pi) =
        \exp([\cdot, X]_{SN})\tau(\pi)
        +
        \frac{\exp([\cdot, X]_{SN})- 1 }{[\cdot, X]_{SN}} DX.
    \end{equation}
Hence the Maurer-Cartan elements $\tau(\pi)$ and $H+
\tilde{\tau}(\pi)$ of the DGLA $(\Omb(M, \scTp)[[\h]], D,
[\,,\,]_{SN})$ are equivalent.

Let $K^{tw}$ be the $L_{\infty}$ quasi-isomorphism
$$
K^{tw}: (\Omb(M, \scTp)[[\h]], D, [\,,\,]_{SN})  \brarrow (\Omb(M,
\sCbu(\SM))[[\h]], D + \pa^{\Hoch}, [\,,\,]_G)
$$
of Subsection~\ref{subsec:sequence}, and let $\tK^{tw}$ be the
analogous $L_{\infty}$ quasi-isomorphism obtained by replacing $D$
with the other geometric Fedosov differential $\tD$ (see
(\ref{DDD1})).
%DGLA
%\begin{equation}
%\label{Om-cTp-D} (\Omb(M, \scTp)[[\h]], D, [\,,\,]_{SN})
%\end{equation}
% to the DGLA
%\begin{equation}
%\label{Om-sCbu-D} (\Omb(M, \sCbu(\SM))[[\h]], D + \pa^{\Hoch},
%[\,,\,]_G)
%\end{equation}
%obtained along the lines of Subsection~\ref{subsec:sequence}.
%Similarly, let $\tK^{tw}$ be the $L_{\infty}$ quasi-isomorphism from
%the DGLA $(\Omb(M, \scTp)[[\h]], \tD, [\,,\,]_{SN})$ to the DGLA
%\begin{equation}
%\label{Om-sCbu-tD} (\Omb(M, \sCbu(\SM))[[\h]], \tD + \pa^{\Hoch},
%[\,,\,]_G)
%\end{equation}
%with another geometric Fedosov differential $\tD$ in (\ref{DDD1}).
Let $\mu$ be the Maurer-Cartan element of the DGLA
\begin{equation}\label{Om-sCbu-D}
(\Omb(M, \sCbu(\SM))[[\h]], D + \pa^{\Hoch}, [\,,\,]_G)
\end{equation}
corresponding to the Maurer-Cartan element $\tau(\pi)$ via the
$L_{\infty}$ quasi-isomorphism $K^{tw}$, i.e.,
    \begin{equation}
        \label{mu}
        \mu =
        \sum_{n=1}^{\infty}\frac{1}{n!}
        K^{tw}_n (\tau(\pi), \tau(\pi), \dots, \tau(\pi)).
    \end{equation}
    Similarly, we let $\tmu$ be the Maurer-Cartan element in
 $(\Omb(M, \sCbu(\SM))[[\h]],\tD + \pa^{\Hoch},[\,,\,]_G)$ corresponding
    to the Maurer-Cartan element $\tilde{\tau}(\pi)$ via $\tK^{tw}$:
    \begin{equation}
        \label{tmu}
        \tmu =
        \sum_{n=1}^{\infty}\frac{1}{n!}
        \tK^{tw}_n
        \left(
            \tilde{\tau}(\pi), \tilde{\tau}(\pi), \dots,
            \tilde{\tau}(\pi)
        \right).
    \end{equation}
    Combining $\tmu$ with $H = \tD - D$, we get a Maurer-Cartan element
$H + \tmu$ of the DGLA (\ref{Om-sCbu-D}).

\begin{claim}
\label{claim-H-tmu-H-ttau-pi} The Maurer-Cartan element $H + \tmu$
corresponds to the Maurer-Cartan element $H + \tilde{\tau}(\pi)$ via
the $L_{\infty}$ quasi-isomorphism $K^{tw}$, that is,
    \begin{equation}
        \label{B+tmu}
        H + \tmu
        =
        \sum_{n=1}^{\infty}\frac{1}{n!}
        K^{tw}_n \left(
            H + \tilde{\tau}(\pi),
            H + \tilde{\tau}(\pi),
            \ldots, H + \tilde{\tau}(\pi)
        \right).
    \end{equation}
\end{claim}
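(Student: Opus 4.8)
The plan is to identify $\tK^{tw}$ with the twist of $K^{tw}$ by the Maurer--Cartan element $H$ and then read off the claim from the behaviour of $\Linf$ morphisms under twisting. Recall from Subsection~\ref{subsec:sequence} that $K^{tw}$ is the global $\Linf$ quasi-isomorphism whose structure maps restrict over each coordinate patch $U$ to the twist $K^{\mu^D_U}$ of Kontsevich's $K$ by the ``tail'' $\mu^D_U$ \eqref{mu-D-U} of $D$, and that $\tK^{tw}$ is defined the same way with $D$ replaced by $\tD$. Since $\tD = D+H$ and $H\in\Om^1(M,\cT^1_{poly})[[\h]]$ contains no $dx^i\,\pa/\pa x^i$ term, the tail of $\tD$ is $\mu^{\tD}_U = \mu^D_U + H$. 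Expanding $K^{\mu^{\tD}_U}_n(\ga_1,\dots,\ga_n) = \sum_{l\ge0}\tfrac1{l!}K_{l+n}((\mu^D_U+H)^{l},\ga_1,\dots,\ga_n)$ by multilinearity, using the graded symmetry P~\ref{P-symmetry} of $K$ to regroup, and comparing with the twisting formula \eqref{F-n-al}, one obtains over each $U$ the identity
\[
K^{\mu^{\tD}_U}_n(\ga_1,\dots,\ga_n)
= \sum_{k\ge0}\tfrac1{k!}\,K^{tw}_{k+n}(H,\dots,H,\ga_1,\dots,\ga_n),
\]
i.e.\ $\tK^{tw} = (K^{tw})^H$, the twist of $K^{tw}$ by $H$ regarded as a Maurer--Cartan element of $(\Omb(M,\scTp)[[\h]],D,[\,,\,]_{SN})$ (it is one since $\tD^2 = 0$ gives $DH+\tfrac12[H,H]_{SN}=0$).

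For this twisting to produce a morphism landing in $(\Omb(M,\sCbu(\SM))[[\h]],\tD+\pa^{\Hoch},[\,,\,]_G)$, the accompanying twist of the target must be by $\beta:=\sum_{n\ge1}\tfrac1{n!}K^{tw}_n(H,\dots,H)$, so I need $\beta = H$. This I would check directly: over each $U$, $\beta|_U = \sum_{n\ge1}\sum_{k\ge0}\tfrac1{n!\,k!}K_{n+k}(\mu^D_U,\dots,\mu^D_U,H,\dots,H)$, and each constituent of $\mu^D_U$ --- the linear vector field $-dx^i\G^k_{ij}(x)y^j\pa_{y^k}$, the term $-\de$, and the series $A$ of \eqref{A-more-gen} --- as well as $H$ is, fibrewise, a vector field; hence Property~P~\ref{P-arg-vect} annihilates all terms with $n+k\ge2$, while Property~P~\ref{P-K1} gives $K_1(H)=H$, so $\beta = H$ and the target twist is indeed the correct one.

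With $\tK^{tw} = (K^{tw})^H$ and $\beta = H$ established, the claim is an instance of the additivity of $\Linf$ morphisms under twisting: for $F = K^{tw}$, a Maurer--Cartan element $\al$ of the source, and a Maurer--Cartan element $\ga$ of the $\al$-twisted source,
\[
\sum_{m\ge1}\tfrac1{m!}F_m(\al+\ga,\dots,\al+\ga)
= \sum_{k\ge1}\tfrac1{k!}F_k(\al,\dots,\al) + \sum_{n\ge1}\tfrac1{n!}F^{\al}_n(\ga,\dots,\ga),
\]
a direct rearrangement of \eqref{F-n-al} and \eqref{MC-F-al}. Take $\al = H$ and $\ga = \tilde\tau(\pi)$: the latter is a Maurer--Cartan element of $(\Omb(M,\scTp)[[\h]],\tD,[\,,\,]_{SN})$ because $\tD\,\tilde\tau(\pi)=0$ by construction of $\tilde\tau$ and $[\tilde\tau(\pi),\tilde\tau(\pi)]_{SN} = \tilde\tau([\pi,\pi]_{SN}) = 0$, and then $H+\tilde\tau(\pi)$ is Maurer--Cartan for $(\Omb(M,\scTp)[[\h]],D,[\,,\,]_{SN})$ (as already recorded in \eqref{exp-xi-nado}). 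The right-hand side of the displayed identity becomes $H + \sum_{n\ge1}\tfrac1{n!}\tK^{tw}_n(\tilde\tau(\pi),\dots,\tilde\tau(\pi)) = H + \tmu$ by \eqref{tmu}, which is exactly \eqref{B+tmu}.

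The main obstacle is the first step, the identification $\tK^{tw} = (K^{tw})^H$: although it ultimately reduces to the multinomial rearrangement indicated above together with the vanishing $K_n(v_1,\dots,v_n)=0$ for fibrewise vector fields, carrying it out requires care with the infinite sums (those built only from $\mu^D_U$ and $H$ terminate by P~\ref{P-arg-vect}, while the remaining ones converge $\h$-adically since $\tilde\tau(\pi) = 0\bmod\h$) and with the bookkeeping of the filtration $\mF^{\bullet}$ of Appendix~\ref{App-C}, for which $H\in\mF^1$.
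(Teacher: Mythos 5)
Your proposal is correct and takes essentially the same route as the paper: the paper likewise rests on the identification of $\tK^{tw}$ with the twist of $K^{tw}$ by the Maurer--Cartan element $H$, performs the same rearrangement of $\sum_n\frac{1}{n!}K^{tw}_n(H+\tilde{\tau}(\pi),\ldots,H+\tilde{\tau}(\pi))$ into the two sums, and evaluates $\sum_n\frac{1}{n!}K^{tw}_n(H,\ldots,H)=H$ using P~\ref{P-K1} and P~\ref{P-arg-vect}. The only difference is that you spell out the verification of $\tK^{tw}=(K^{tw})^{H}$ via the tails $\mu^{\tD}_U=\mu^D_U+H$, a step the paper simply asserts.
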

\begin{subproof}
   We notice that $\tK^{tw}$ is obtained from $K^{tw}$ via twisting
    by the Maurer-Cartan element $H$.  Therefore the right hand side
    of (\ref{B+tmu}) can be rewritten as
    \begin{align*}
        \sum_{n=1}^{\infty}\frac{1}{n!}
        & K^{tw}_n \left(
            H + \tilde{\tau}(\pi),
            H + \tilde{\tau}(\pi),
            \ldots, H + \tilde{\tau}(\pi)
        \right)
        \\
        &=
        \sum_{n=1}^{\infty}\frac{1}{n!}
        K^{tw}_n \left(H, H, \ldots, H\right)
        +
        \sum_{n=1}^{\infty}\frac{1}{n!}
        \tK^{tw}_n \left(
            \tilde{\tau}(\pi), \tilde{\tau}(\pi), \ldots,
            \tilde{\tau}(\pi)
        \right).
    \end{align*}
    Using the properties P~\ref{P-K1} and P~\ref{P-arg-vect}, we
    rewrite the first sum in the previous equation as
    \[
    \sum_{n=1}^{\infty}\frac{1}{n!}
    K^{tw}_n (H, H, \dots, H) = H.
    \]
    This proves Claim \ref{claim-H-tmu-H-ttau-pi}.
\end{subproof}

    Since the Maurer-Cartan elements $\tau(\pi)$ and $H +
    \tilde{\tau}(\pi)$ are equivalent in the DGLA
\begin{equation}
\label{Om-cTp-D} (\Omb(M, \scTp)[[\h]], D, [\,,\,]_{SN}),
\end{equation}
     Claim~\ref{claim-H-tmu-H-ttau-pi} and
    Proposition~\ref{MC-teo-Linf}
    from Appendix~\ref{App-B} imply that the Maurer-Cartan elements
    $\mu$ and $H + \tmu$ are equivalent in the DGLA (\ref{Om-sCbu-D}).
    Furthermore, since the $L_{\infty}$ quasi-isomorphism $K^{tw}$ is
    compatible with the filtrations (\ref{mF-Cbu}) and
    (\ref{mF-cT}),
    we conclude that the transformation connecting $\mu$ and $H +
    \tmu$ has the form
    \begin{equation}
        \label{exp-eta}
        \exp(\eta),
    \end{equation}
    where $\eta$ is an element of $\mF^1 \Omb(M, \sCbu(\SM))[[\h]]$ of
    total degree $0$.

In general the element $\mu$ (\ref{mu}) have components in exterior
degrees $0$, $1$, and $2$. Let us show that the components of
exterior degrees $1$ and $2$ can be eliminated by a transformation
of the form (\ref{exp-eta}).

\begin{claim}
\label{claim-how-to-kill-1-2} There exists an element $\eta\in \mF^1
\Omb(M, \sCbu(\SM))[[\h]]$ of total degree $0$ such that the
Maurer-Cartan element
\begin{equation}
\label{Pifib} \Pifib = \exp([\cdot, \eta]_{G}) \mu +
    \frac{\exp([\cdot, \eta]_{G})-1}{[\cdot, \eta]_{G}} \,
    (D\eta + \pa^{\Hoch}\eta)\,
\end{equation}
belongs to $\Om^0(M, C^2(\SM))[[\h]]$\,.
\end{claim}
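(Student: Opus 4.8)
The plan is to build $\eta$ as an $\mF^\bullet$-convergent infinite product of elementary gauge transformations, each removing the lowest-order piece of the ``unwanted'' part of a successively transformed Maurer--Cartan element. Write $\mu=\mu_0+\mu_1+\mu_2$ according to exterior degree, so $\mu_0\in\Om^0(M,C^2(\SM))[[\h]]$, $\mu_1\in\Om^1(M,C^1(\SM))[[\h]]$, $\mu_2\in\Om^2(M,\SM)[[\h]]$ (recall $\mu$ has no components in exterior degree $>2$). Since $\mu$ is a Maurer--Cartan element it lies in $\mF^1$, and the filtration $\mF^\bullet$ of \eqref{mF-Cbu} respects the decomposition by exterior degree, so $\mu_1,\mu_2\in\mF^1$ as well. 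The aim is an $\eta$ of total degree $0$ with $\eta\in\mF^1$ for which the transformed element $\Pifib$ of \eqref{Pifib} has $\Pifib_1=\Pifib_2=0$; its surviving component then lies in $\Om^0(M,C^2(\SM))[[\h]]$ since $\Pifib$ has total degree $1$.

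The mechanism is the contracting homotopy $\de^{-1}$. The geometric Fedosov differential is $D=\n-\de+A$ with $\n$ and $A$ raising the $\mF$-degree by at least one, so within the positive-exterior-degree subspace the $\mF$-leading term of $D+\pa^{\Hoch}$ is $-\de$, which is acyclic there: by the Hodge-type identity \eqref{Hodge} and Remark~\ref{remark-Hodge} one has $\de\de^{-1}+\de^{-1}\de=\id$ on forms of positive exterior degree, and the (extended) operator $\de^{-1}$ preserves $\mF$-degree. I would then read the Maurer--Cartan equation $(D+\pa^{\Hoch})\mu+\tfrac12[\mu,\mu]_G=0$ off in each exterior and $\mF$-degree: the $\mF$-lowest piece of the positive-exterior-degree part of the current Maurer--Cartan element is $\de$-closed modulo strictly higher $\mF$-order (the $\n$-, $A$-, $\pa^{\Hoch}$- and Gerstenhaber-bracket contributions all being of strictly higher order), hence $\de$-exact; applying $\de^{-1}$ produces a correction $\eta^{(k)}$ of total degree $0$ --- a form in $\Om^1(M,\SM)[[\h]]$ or in $\Om^0(M,C^1(\SM))[[\h]]$, according to whether one is cancelling a piece of $\mu_2$ or of $\mu_1$ --- lying in $\mF^1$. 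Using the gauge-action formula \eqref{action-A}, one checks that $\de\eta^{(k)}$ cancels the targeted term modulo higher $\mF$-order while $\n\eta^{(k)}$, $A\eta^{(k)}$, $\pa^{\Hoch}\eta^{(k)}$, $[\mu,\eta^{(k)}]_G$ and the higher terms of \eqref{action-A} are of strictly higher $\mF$-order; so $\mu^{\exp(\eta^{(k)})}$ is again a Maurer--Cartan element of the DGLA \eqref{Om-sCbu-D} whose positive-exterior-degree part has strictly larger $\mF$-order.

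Iterating alternately on $\mu_2$ and $\mu_1$ gives $\eta^{(1)},\eta^{(2)},\dots$ of total degree $0$ in $\mF^1$ with the $\mF$-order of the positive-exterior-degree obstruction strictly increasing; since $\Omb(M,\sCbu(\SM))[[\h]]$ is complete with respect to $\mF^\bullet$ by \eqref{complete-Cbu}, the ordered product $\exp(\eta^{(1)})\exp(\eta^{(2)})\cdots$ converges to $\exp(\eta)$ with $\eta\in\mF^1\Omb(M,\sCbu(\SM))[[\h]]$ of total degree $0$, and by construction $\Pifib:=\mu^{\exp(\eta)}$ has vanishing exterior-degree-$1$ and $2$ parts. (Abstractly this is Theorem~\ref{MC-teo} applied to the filtered quasi-isomorphic inclusion of the sub-DGLA of $D$-flat cochains; the explicit iteration is what makes the filtration control transparent.)

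The hard part is precisely this $\mF^\bullet$-bookkeeping --- the filtration of \eqref{mF-Cbu} assigns the weights $1,1,-1,2$ to $y^i,dx^i,\partial_{y^i},\h$ and is \emph{unbounded below} on $\Omb(M,\sCbu(\SM))[[\h]]$, so one cannot fall back on $\h$-adic completeness. At every stage one must isolate exactly the right exterior- and $\mF$-graded slice of the Maurer--Cartan equation, verify that the $\de$-image of the one-higher-exterior-degree component together with the $\pa^{\Hoch}$- and bracket-terms are genuinely of strictly higher $\mF$-order, and check that $\eta^{(k)}=\pm\de^{-1}(\cdots)$ both stays in $\mF^1$ and strictly raises the $\mF$-order of the remaining obstruction, so that convergence is governed by \eqref{complete-Cbu}.
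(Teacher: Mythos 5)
Your overall mechanism (kill the exterior-degree $1$ and $2$ components of $\mu$ by an iteration built from $\de^{-1}$, using that $D+\pa^{\Hoch}$ reduces to $-\de+\pa^{\Hoch}$ on the $\mF^\bullet$-associated graded, that $\de$ is acyclic in positive exterior degree, and that $\Omb(M,\sCbu(\SM))[[\h]]$ is $\mF^\bullet$-complete) is exactly the one the paper uses in the second half of its proof. But your argument has a genuine gap at its starting point: you justify $\mu_1,\mu_2\in\mF^1$ by asserting ``since $\mu$ is a Maurer--Cartan element it lies in $\mF^1$.'' That premise is false. Maurer--Cartan elements in this paper lie in $\cF^1$ for the \emph{$\h$-adic} filtration (they are $O(\h)$), and this does not imply membership in $\mF^1$: the filtration \eqref{mF-Cbu} assigns weight $2$ to $\h$ but weight $-1$ to each $\pa_{y^i}$, so e.g. $\h\,\pi_1^{ij}(x)\,\pa_{y^i}\otimes\pa_{y^j}$ is of order $\h$ yet has $\mF$-degree $0$. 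Indeed $\mu_0$ contains exactly such a term, so $\mu\notin\mF^1$, and there is no a priori reason for $\mu_1$ and $\mu_2$ to lie in $\mF^1$ either. If they did not, your elementary corrections $\eta^{(k)}=\pm\de^{-1}(\cdots)$ would only lie in $\mF^0$ (since $\de^{-1}$ preserves $\mF$-degree), the claim's requirement $\eta\in\mF^1$ would fail, and the terms $[\mu,\eta^{(k)}]_G$, $\n\eta^{(k)}$, etc.\ would no longer be of strictly higher $\mF$-order, so the convergence of your iteration would break down.

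Establishing $\mu_1,\mu_2\in\mF^1$ is precisely the substantive content of the paper's proof of this claim, and it cannot be read off from general filtration nonsense: one must use the explicit formulas
\[
\mu_1=\sum_{n\ge 1}\tfrac{1}{n!}K_{n+1}\big(\mu^D_U,\tau(\pi),\dots,\tau(\pi)\big),\qquad
\mu_2=\sum_{n\ge 1}\tfrac{1}{n!}K_{n+2}\big(\mu^D_U,\mu^D_U,\tau(\pi),\dots,\tau(\pi)\big),
\]
observe that modulo $\mF^1$ only the $\mF$-degree-zero leading terms $K_{n+1}(-dx^i\pa_{y^i},\pi^y,\dots,\pi^y)$ and $K_{n+2}(-dx^i\pa_{y^i},-dx^j\pa_{y^j},\pi^y,\dots,\pi^y)$ survive, and then show these vanish --- the second because all arguments are independent of the fiber coordinates $y$, and the first because every term contains a $y$-derivative of $\pi^{ij}(x)\pa_{y^i}\pa_{y^j}a$, which is zero by the antisymmetry of $\pi$. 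Without this computation (or some substitute for it) the claim is not proved.
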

\begin{subproof}
Let us denote by $\mu_1$ (resp. by $\mu_2$) the component of $\mu$
of exterior degree $1$ (resp. $2$). According to the definition of
    $K^{tw}$ (\ref{define-K-tw}), we have
    \begin{align}
        \label{mu-1}
&        \mu_1 =  \sum_{n=1}\frac{1}{n!} K_{n+1}
        \left(
            \mu^D_U, \tau(\pi), \tau(\pi), \dots, \tau(\pi)
        \right)\\
        \label{mu-2}
 &       \mu_2 =  \sum_{n=1}\frac{1}{n!} K_{n+2}
        \left(
            \mu^D_U,
            \mu^D_U, \tau(\pi), \tau(\pi), \dots, \tau(\pi)
        \right),
    \end{align}
    where $\mu^D_U$ is defined in (\ref{mu-D-U}).  Since the
    series $\pi$ \eqref{pi} starts with $\h$, we have
    \begin{align}
        \label{mu-1-mF}
        &\mu_1 =  \sum_{n=1}\frac{1}{n!} K_{n+1} \big( -d x^i
        \pa_{y^i} , \pi^y, \pi^y, \dots, \pi^y \big)
        \quad {\rm mod} \quad
        \mF^1 \Om^1(M, C^1(\SM))[[\h]]\\
        \label{mu-2-mF}
       & \mu_2 =  \sum_{n=1}\frac{1}{n!} K_{n+2} \big( -d x^i
        \pa_{y^i} , -d x^j  \pa_{y^j}, \pi^y, \pi^y, \dots, \pi^y \big)
        \quad {\rm mod} \quad
        \mF^1 \Om^2(M, C^0(\SM))[[\h]]\,,
    \end{align}
    where
    \[
    \pi^y = \pi^{ij}(x) \frac{\pa}{\pa y^i} \wedge \frac{\pa}{\pa
      y^j}.
    \]
    (Note that since $\pi$ is a series in $\h$, the coefficients $\pi^{ij}$ are $\h$-dependent.)
    We claim that
    \[
    K_{n+1} \big( -d x^i \pa_{y^i} , \pi^y, \pi^y, \dots, \pi^y \big)
    = 0\,,\;\;\;\;
    K_{n+2} \big( -d x^i \pa_{y^i} , -d x^j  \pa_{y^j}, \pi^y, \pi^y,
    \dots, \pi^y \big) = 0
    \]
    for all $n \ge 1$. The latter equality follows from the
    fact that the components of the vector $d x^i \pa_{y^i}$ and the
    bivector $\pi^y$ do not depend on $y$. As for the former equality,
    we note that every term in
    \[
    K_{n+1} \big( -d x^i \pa_{y^i} , \pi^y, \pi^y, \dots, \pi^y
    \big)(a)\,, \qquad a \in \G(M, \SM)
    \]
    contains a $y$-derivative of the expression $\pi^{ij}(x) \pa_{y^i}
    \pa_{y^j} a(x,y)$ as a factor, and $\pi^{ij}(x) \pa_{y^i} \pa_{y^j}
    a(x,y) =0$ due to the antisymmetry of $\pi$.  Thus both components
    $\mu_1$ and $\mu_2$ belong to $\mF^1 \Omb(M, \sCbu(\SM))[[\h]]$.
    On the other hand, the differential $D+ \pa^{\Hoch}$ boils down to
    $-\de + \pa^{\Hoch}$ at the level of the associated graded
    complex
    \[
    \bigoplus_m  \mF^m \Omb(M, \sCbu(\SM))[[\h]] /  \mF^{m+1} \Omb(M,
    \sCbu(\SM))[[\h]]\,.
    \]
Thus Claim \ref{claim-how-to-kill-1-2} follows from the fact that
the differential $\de$ is acyclic in positive exterior degree.
\end{subproof}

Since $\Pifib$ \eqref{Pifib} has exterior degree $0$, the
Maurer-Cartan equation for $\Pifib$,
    \[
    D \Pifib + \pa^{\Hoch}\Pifib + \frac{1}{2} [\Pifib, \Pifib]_G =
    0,
    \]
    is equivalent to the pair of equations
    \begin{align}
        \label{D-Pi}
&        D \Pifib = 0\\
        \label{pa-Hoch-Pi}
&        \pa^{\Hoch} \Pifib + \frac{1}{2} [\Pifib, \Pifib]_G = 0.
    \end{align}
    Equation~(\ref{pa-Hoch-Pi}) implies that $\Pifib$ gives us a new
    associative product on $\SM[[\h]]$:
    \begin{equation}
        \label{bul-Pi}
        a_1 \diamond a_2 = a_1 a_2 + \Pifib(a_1,a_2),
    \end{equation}
    where $a_1, a_2\in \G(M, \SM)[[\h]]$.  Equation~(\ref{D-Pi})
    implies that $D$ is a derivation of the product $\diamond$\,.

Similarly, the Maurer-Cartan element $\tmu$ is equivalent to a
Maurer-Cartan element $\tPifib \in \Om^0(M, C^2(\SM))[[\h]]$ of the
DGLA
\begin{equation}
\label{Om-sCbu-tD} (\Omb(M, \sCbu(\SM))[[\h]], \tD + \pa^{\Hoch},
[\,,\,]_G).
\end{equation}
Just as $\Pifib$, the element $\tPifib$ gives us an
    associative product on $\SM[[\h]]$ by
    \begin{equation}
        \label{bul-tPi}
        a_1 \tdiamond a_2 = a_1 a_2 + \tPifib(a_1, a_2),
    \end{equation}
and the differential $\tD$ is a derivation of $\tdiamond$.

    To explain how $\Pifib$ and $\tPifib$ are related to the corresponding
    star products $*$ and $\tstar$ on $M$, recall that we have the
 isomorphisms
    \begin{equation}
        \label{tau-nezalezh}
        \tau : \cO(M)[[\h]] \to \G(M, \SM)[[\h]] \cap \ker D
        \quad
        \textrm{and}
        \quad
        \ttau : \cO(M)[[\h]] \to \G(M, \SM)[[\h]] \cap \ker \tD.
    \end{equation}
These isomorphisms are constructed by iterating the following
equations:
    \begin{align}
        \label{iter-tau-nezalezh}
        &\tau(f) = f + \de^{-1}(\n \tau(f)+ A\cdot
        \tau(f))\,, \qquad f \in \cO(M)[[\h]],\\
        \label{iter-ttau-nezalezh}
        &\ttau(f) = f + \de^{-1}(\tn \ttau(f)+
        \tA\cdot \ttau(f))\,, \qquad f \in \cO(M)[[\h]],
    \end{align}
    in degrees in the fiber coordinates $y$'s, respectively.  The
    star products $*$, corresponding to $\Pifib$, and $*'$, corresponding
    to $\tPifib$, are defined by
    \begin{equation}
        \label{star-Pi-nezalezh}
        f_1 * f_2
        = f_1 f_2 + \Pifib(\tau(f_1), \tau(f_2)) \Big|_{y=0}
        \quad
        \textrm{and}
        \quad
        f_1 \tstar f_2
        = f_1 f_2 + \tPifib(\ttau(f_1), \ttau(f_2)) \Big|_{y=0},
    \end{equation}
    respectively, where $f_1, f_2\in \cO(M)[[\h]]$.  Our final goal is
    to show that $*$ is equivalent to $\tstar$.

Let us combine $\tPifib$ with the difference $H = \tD- D$ to get a
Maurer-Cartan element $H + \tPifib$ of the DGLA \eqref{Om-sCbu-D}.
Next, we will show that the Maurer-Cartan elements $\Pifib$ and $H +
\tPifib$ of the DGLA (\ref{Om-sCbu-D}) are connected by an
equivalence transformation of a special form.

\begin{claim}
\label{claim-H-tPi-Pi} There exists an element $\psi \in \mF^1 \,
\Om^0(M, C^1(\SM))[[\h]] $ such that
    \begin{equation}
        \label{konets1}
        H + \tPifib
        = \exp([\cdot, \psi]_G) \Pifib
        + \frac{\exp([\cdot, \psi]_G) -1}{[\cdot, \psi]_G}
        \, (D \psi + \pa^{\Hoch} \psi).
    \end{equation}
\end{claim}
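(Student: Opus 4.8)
The plan is to upgrade the statement — already available in the body of the section — that $\Pifib$ and $H+\tPifib$ are equivalent Maurer--Cartan elements of the DGLA \eqref{Om-sCbu-D}, to the assertion that they are connected by a gauge transformation of the restricted form $\exp(\psi)$ with $\psi$ of exterior degree $0$ and values in $C^1(\SM)$. The two ingredients are: the element $X$ of \eqref{xi-for-B}, which realises $H$ as a gauge transform of $0$ in the geometric Fedosov direction; and the fact (recalled in Section~\ref{subsec:sequence}, following \cite[Ch.~4]{thesis}) that the inclusion of the $\tD$-flat sub-DGLA into \eqref{Om-sCbu-tD} is a quasi-isomorphism.

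First I would transport $X$ to the Hochschild side. The canonical embedding of polyvector fields into polydifferential operators, $\iota\colon\Omb(M,\scTp)[[\h]]\to\Omb(M,\sCbu(\SM))[[\h]]$, is a morphism of DGLAs from $(\Omb(M,\scTp)[[\h]],D,[\,,\,]_{SN})$ to the DGLA \eqref{Om-sCbu-D}: it intertwines the two copies of $D$ because it is equivariant for the $\cT^1_{poly}$-action out of which $D$ is built, its image lies in $\ker\pa^{\Hoch}$, and it carries $[\,,\,]_{SN}$ to $[\,,\,]_G$ (this is where the sign convention for the Gerstenhaber bracket is used). Set $X'=\iota(X)\in\mF^1\Om^0(M,C^1(\SM))[[\h]]$. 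Since $\iota$ sends Maurer--Cartan elements and gauge transformations to Maurer--Cartan elements and gauge transformations, \eqref{B-and-xi} gives $0^{\exp(X')}=H$ in \eqref{Om-sCbu-D}. Because $X'$ is a derivation of $\SM$ we have $\pa^{\Hoch}X'=0$, so the affine term of the gauge action \eqref{action-A} of $\exp(X')$ equals $H$, whence $\Pifib^{\exp(X')}=H+\Pifib'$ with $\Pifib':=\exp([\cdot,X']_G)\Pifib$ of exterior degree $0$. Observe also that, since $\tD-D$ acts on $\Omb(M,\sCbu(\SM))[[\h]]$ through the Gerstenhaber bracket with $H$, the DGLA \eqref{Om-sCbu-tD} is the twist of the DGLA \eqref{Om-sCbu-D} by the Maurer--Cartan element $H$, so that $\beta\mapsto H+\beta$ is the induced bijection between their Maurer--Cartan sets and is compatible with the gauge action.

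Now $\Pifib^{\exp(X')}=H+\Pifib'$ is a Maurer--Cartan element of \eqref{Om-sCbu-D}, so $\Pifib'$ is a Maurer--Cartan element of \eqref{Om-sCbu-tD}; since its exterior degree is $0$ and the Maurer--Cartan equation splits according to exterior degree, $\Pifib'$ lies in $\G(M,C^2(\SM))[[\h]]\cap\ker\tD$ and is a Maurer--Cartan element of the sub-DGLA $(\G(M,C^{\bul+1}(\SM))[[\h]]\cap\ker\tD,\pa^{\Hoch},[\,,\,]_G)$, exactly as $\tPifib$ is. Next I would chain the equivalences already proven: $\Pifib$ is equivalent to $\mu$ (Claim~\ref{claim-how-to-kill-1-2}); $\mu$ is equivalent to $H+\tmu$ (Claim~\ref{claim-H-tmu-H-ttau-pi} together with Proposition~\ref{MC-teo-Linf}); and the equivalence $\tmu\sim\tPifib$ in \eqref{Om-sCbu-tD} becomes, via the twisting bijection, $H+\tmu\sim H+\tPifib$ in \eqref{Om-sCbu-D}. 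Hence $H+\Pifib'\sim\Pifib\sim H+\tPifib$ in \eqref{Om-sCbu-D}, and therefore $\Pifib'\sim\tPifib$ in \eqref{Om-sCbu-tD}. Since $\tD$ is acyclic in positive exterior degree, the inclusion of the sub-DGLA above into \eqref{Om-sCbu-tD} is a quasi-isomorphism, hence (Theorem~\ref{MC-teo}) induces a bijection on equivalence classes of Maurer--Cartan elements; so $\Pifib'$ and $\tPifib$ are already equivalent inside the sub-DGLA, i.e.\ $\tPifib=(\Pifib')^{\exp(\zeta)}$ for some $\zeta\in\mF^1\bigl(\G(M,C^1(\SM))[[\h]]\cap\ker\tD\bigr)\subseteq\mF^1\Om^0(M,C^1(\SM))[[\h]]$.

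It then remains to take $\psi=\CH(X',\zeta)$. Using that \eqref{action-A} is a right action and that the twisting bijection intertwines the gauge actions, one obtains $\Pifib^{\exp(\psi)}=\bigl(\Pifib^{\exp(X')}\bigr)^{\exp(\zeta)}=(H+\Pifib')^{\exp(\zeta)}=H+\tPifib$, which is precisely \eqref{konets1}; and $\psi\in\mF^1\Om^0(M,C^1(\SM))[[\h]]$ because this subspace is closed under the Campbell--Hausdorff operation ($[\Om^0(M,C^1(\SM)),\Om^0(M,C^1(\SM))]_G\subseteq\Om^0(M,C^1(\SM))$, and the filtration $\mF^\bul$ is multiplicative). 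I expect the only genuinely delicate points to be the identification of \eqref{Om-sCbu-tD} as the $H$-twist of \eqref{Om-sCbu-D} and the compatibility of that twist with the gauge action, together with the verification that $\exp(X')$ contributes exactly $H$ to the affine part of the gauge action — that is, the interaction of $\pa^{\Hoch}X'=0$ with the fact that $\iota$ is a strict morphism of DGLAs. Everything else is routine filtration bookkeeping.
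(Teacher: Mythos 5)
Your argument is correct in substance but takes a genuinely different route from the paper's. The paper extracts from the chain of equivalences $\Pifib\sim\mu\sim H+\tmu\sim H+\tPifib$ a gauge element $\psi\in\mF^1\Omb(M,\sCbu(\SM))[[\h]]$ of total degree $0$, which a priori has a second component $\psi_1\in\mF^1\Om^1(M,C^0(\SM))[[\h]]$, and then kills $\psi_1$ by induction on the filtration $\mF^{\bul}$: at each stage the $\Om^2$-component of \eqref{konets1} forces $\de\psi_1=0$ modulo deeper filtration, and the acyclicity of $\de$ produces a ``2-morphism'' $\exp(D\te+\pa^{\Hoch}\te+[\Pifib,\te]_G)$ stabilizing $\Pifib$ that pushes $\psi_1$ one step down; completeness of $\mF^{\bul}$ finishes the argument. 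You instead exhibit the gauge element directly as $\CH(X',\zeta)$ with both factors manifestly in $\mF^1\Om^0(M,C^1(\SM))[[\h]]$, which is conceptually cleaner and makes the exterior-degree bookkeeping automatic.

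Two points in your justification need repair, though neither invalidates the strategy. First, the blanket assertion that the canonical embedding $\iota:\Omb(M,\scTp)[[\h]]\to\Omb(M,\sCbu(\SM))[[\h]]$ is a morphism of DGLAs carrying $[\,,\,]_{SN}$ to $[\,,\,]_G$ is false (the failure of exactly this is what formality theory is about). What is true, and all you actually use, is that $\iota$ restricted to $\Omb(M,\cT^{1}_{poly})[[\h]]$ intertwines $D$ and carries the Schouten bracket to the Gerstenhaber bracket, since the Gerstenhaber bracket of two fiberwise vector fields is their commutator; as $X$, $DX$ and $H$ all live there, the identities $0^{\exp(X')}=H$ and $\Pifib^{\exp(X')}=H+\exp([\cdot,X']_G)\Pifib$ do hold, but the justification should be restated in this restricted form. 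Second, your appeal to Theorem~\ref{MC-teo} for the inclusion of the $\tD$-flat sub-DGLA into \eqref{Om-sCbu-tD} requires not merely that this inclusion be a quasi-isomorphism, but that it satisfy Condition~\ref{condition} with respect to the filtration $\mF^{\bul}$ --- it is the injectivity on $\pi_0(\MC)$ that you need, and that is only guaranteed in the filtered setting. This does hold, because the contracting homotopy of \cite{thesis} onto the $\tD$-flat sub-DGLA is built from $\de^{-1}$, which preserves the $\mF$-degree, but it must be checked; the paper's inductive elimination deliberately avoids invoking this and uses only the acyclicity of $\de$ in positive exterior degrees. With these two repairs your proof is complete.
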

\begin{subproof}
    Since $\tmu$ is equivalent to
    $\tPifib$ in the DGLA (\ref{Om-sCbu-tD}), one can check that the Maurer-Cartan
    element $H + \tmu$ is equivalent to $H + \tPifib$ in the DGLA
    (\ref{Om-sCbu-D}).
Using that the Maurer-Cartan element $\mu$ is equivalent to $\Pifib$
and $H+ \tmu$ in (\ref{Om-sCbu-D}), we conclude that the
Maurer-Cartan elements $\Pifib$ and $H+\tPifib$ are also equivalent.
In addition, using Claim \ref{claim-how-to-kill-1-2}, we see that
the equivalence transformation which connects the Maurer-Cartan
elements $\Pifib$ and $H+ \tPifib$ has the form
$$
\exp(\psi),
$$
where $\psi$ is an element of $\mF^1 \Omb(M, \sCbu(\SM))[[\h]]$ of
total degree $0$.

    In general $\psi$ may have two non-zero components,
    \[
    \psi = \psi_0 + \psi_1,
    \]
    where $\psi_0 \in \mF^1 \Om^0(M, C^1(\SM))[[\h]]$ and $\psi_1 \in
    \mF^1 \Om^1(M, C^0(\SM))[[\h]]$.
Our purpose is to show that $\psi_1$ can be eliminated by adjusting
    $\psi$ via the following transformation\footnote{Following E.
      Getzler \cite{Ezra-higher,Ezra-Lie} the Goldman-Millson
      groupoid of the DGLA (\ref{Om-sCbu-D}) can be upgraded to a
      2-groupoid. The transformation (\ref{adjusting-psi}) is an
      example of a $2$-morphism in this 2-groupoid.}:
    \begin{equation}
        \label{adjusting-psi}
        \psi \mapsto \mathrm{CH}\big(D \theta +
        \partial^{Hoch} \theta + [\Pifib, \theta]_G\,\, ,\, \, \psi\big),
    \end{equation}
    where
    $\te \in \mF^1\, \Om^0(M, C^0(\SM))[[\h]]$
    and $\CH$ is the Campbell-Hausdorff series (\ref{eq:CH}).
    The key point is that the element $\exp(D \te + \pa^{\Hoch} \te +
    [\Pifib, \te]_G)$ leaves the Maurer-Cartan element $\Pifib$
    unchanged.  Hence the element $\psi$ in (\ref{konets1}) can
    always be replaced by the right-hand side of
    \eqref{adjusting-psi}.

    % $\CH\big(D \te + \pa^{\Hoch} \te +[\Pifib, \te]_G\,, \, \psi \big)$.

Let us suppose that
    \begin{equation}
        \label{psi-1-mF-m}
        \psi_1 \in \mF^m \, \Om^1(M, C^0(\SM))[[\h]],
    \end{equation}
    for $m \ge 1$.  Combining the contributions to $\Om^2(M,
    C^0(\SM))[[\h]]$ in (\ref{konets1}) we see that
    \begin{equation}
        \label{psi-1}
        \de \psi_1 = 0 \qquad {\rm mod} \qquad \mF^{m+1} \,
        \Om^2(M, C^0(\SM))[[\h]].
    \end{equation}
    Using the acyclicity of $\de$ in positive exterior degrees, we
    conclude that there exists a
    \[
    \te_m \in \mF^m \Om^0(M, C^0(\SM))[[\h]]
    \]
    such that
    \[
    \psi_1 - \de \te_m \in  \mF^{m+1} \, \Om^1(M, C^0(\SM))[[\h]].
    \]
    The latter means that the $\Om^1$-component of $\CH\big(D \te + \pa^{\Hoch}
    \te + [\Pifib, \te]_G\,,\, \psi \big)$ lies in the ``smaller'' filtration
    subspace $\mF^{m+1} \, \Om^1(M, C^0(\SM))[[\h]]$.  Iterating
    this argument infinitely many times and using the completeness of
    the filtration $\mF^{\bul}$, we conclude that there exists an
    element
    \[
    \te \in \mF^1\, \Om^0(M, C^0(\SM))[[\h]]
    \]
    such that
    \[
    \CH\big(D \te + \pa^{\Hoch} \te + [\Pifib, \te]_G \,,\, \psi \big)
    \in \Om^0(M, C^1(\SM))[[\h]].
    \]
    This completes the proof of Claim \ref{claim-H-tPi-Pi}\,.
\end{subproof}

Since the element $\psi$ has zero exterior degree, equation
(\ref{konets1}) splits into its homogeneous exterior degree
components:
    \begin{align}
        \label{konets1-1}
        &H = \frac{ \exp([\cdot, \psi]_G) -1}{[\cdot, \psi]_G} \, D
        \psi,\\
        \label{konets1-0}
        &\tPifib
        = \exp([\cdot, \psi]_G) \Pifib
        + \frac{ \exp([\cdot, \psi]_G) -1}{[\cdot, \psi]_G}
        \,
        \pa^{\Hoch} \psi.
    \end{align}
    Equation~(\ref{konets1-1}) implies that the operator
    \[
    T_{\psi} = \exp(\psi) : \G(M, \SM)[[\h]] \to \G(M, \SM)[[\h]]
    \]
    intertwines the geometric Fedosov differentials $D$ and $\tD$:
    \begin{equation}
        \label{T-psi-D-tD}
        T_{\psi} \circ \tD = D \circ T_{\psi}.
    \end{equation}
    Similarly, (\ref{konets1-0}) implies that $T_{\psi}$
    intertwines the fiberwise products (\ref{bul-Pi}),
    (\ref{bul-tPi}):
    \begin{equation}
        \label{T-psi-bul-tbul}
        a_1 \tdiamond a_2
        = T_{-\psi}(T_{\psi}(a_1) \diamond T_{\psi} (a_2))
        \quad
        \textrm{for}
        \quad
        a_1, a_2 \in \G(M, \SM)[[\h]].
    \end{equation}
    Using $T_{\psi}$, we define a $\bbC[[\h]]$-linear map
    \begin{equation}
        \label{T-for-nezalezh}
        T : \cO(M)[[\h]] \to  \cO(M)[[\h]], \;\;\; T(f) = \si(T_{\psi}\circ \ttau
        (f)),
    \end{equation}
    for $f \in \cO(M)[[\h]]$, where $\si$ is defined in (\ref{sigma})
    and $\ttau$ is defined in (\ref{iter-ttau-nezalezh}).  Just as
    $\tau$, the map $\ttau$ satisfies property
    \eqref{tau-property}. Combining this observation with the fact
    that $\psi$ belongs to the first filtration subspace, it follows
    that
    \[
    T = \id + \h T_1 + \h^2 T_2 + \cdots,
    \]
    where $T_1, T_2, \dots$ are differential operators on $M$.  Since
    $T_{\psi}$ intertwines the geometric Fedosov differentials $D$ and
    $\tD$, we get that
    \begin{equation}
        \label{D-T-psi-tau}
        D T_{\psi} \circ \ttau (f) = 0,
    \end{equation}
    for all $f\in \cO(M)[[\h]]$.  On the other hand, every $D$-flat
    section $\ga$ of $\SM[[\h]]$ is uniquely determined by its image
    $\si(\ga)$\,. Hence (\ref{T-for-nezalezh}) and (\ref{D-T-psi-tau})
    imply that
    \begin{equation}
        \label{T-psi-T-tau}
        T_{\psi} \ttau (f) = \tau (T (f))
    \end{equation}
    for all $f\in \cO(M)[[\h]]$.  Combining this observation with
    (\ref{T-psi-bul-tbul}), we conclude that $T$ intertwines the
    star products (\ref{star-Pi-nezalezh}):
    \[
    T (f_1) * T (f_2) = T(f_1\, \tstar \, f_2)\,, \qquad \quad f_1, f_2
    \in \cO(M)[[\h]].
    \]

    Thus we proved that the correspondence between equivalence classes
    of star products and equivalence classes of formal Poisson
    structures produced by the sequence of $\Linf$
    quasi-isomorphisms (\ref{upper}) does not depend on the choice of
    the connection/Fedosov differential.

%    If instead of the sequence (\ref{upper}) we use the direct
%    $L_{\infty}$ quasi-isomorphism $\cK$ (\ref{cK}) then we still get
%    the same correspondence between equivalence classes. The latter
%    follows easily from Lemma~\ref{lemma-equiv} in
%    Appendix~\ref{App-B}.
\end{proof}

%
% Here comes the bibliography
%

~\\

\noindent\textsc{Instituto Nacional de Matematica Pura e Aplicada\\
Estrada Dona Castorina 110 \\
Rio de Janeiro, 22460-320, Brasil\\
\emp{E-mail address:} {\bf henrique@impa.br} }

~\\

\noindent\textsc{Department of Mathematics,
University of California at Riverside, \\
900 Big Springs Drive,\\
Riverside, CA 92521, USA \\
\emp{E-mail address:} {\bf vald@math.ucr.edu}}

~\\

\noindent\textsc{Fakult\"at f\"ur Mathematik und Physik\\
Physikalisches Institut\\
Hermann Herder Strasse 3\\
D 79104 Freiburg, Germany \\
\emp{E-mail address:} {\bf Stefan.Waldmann@physik.uni-freiburg.de}
}

\end{document}